\definecolor{webgreen}{rgb}{0,.5,0}
\definecolor{Maroon}{HTML}{800000}
\crefname{app}{Appendix}{Appendices}
\theoremstyle{plain}
\newtheorem{theorem}{Theorem}
\newtheorem{corollary}[theorem]{Corollary}
\newtheorem*{corollary*}{Corollary}
\newtheorem{lemma}[theorem]{Lemma}
\newtheorem{proposition}[theorem]{Proposition}
\theoremstyle{definition}
\newtheorem{definition}[theorem]{Definition}
\newtheorem{assumption}{Assumption}
\theoremstyle{remark}
\newtheorem{remark}[theorem]{Remark}
\newtheorem*{remark*}{Remark}
\newtheorem*{notation*}{Notational remark}
\numberwithin{theorem}{section}
\numberwithin{assumption}{section}
\numberwithin{equation}{section}
\newcommand{\procG}{(\cG^\ell,\cT^\ell)_{\ell \ge 0}}
\newcommand{\procH}{(\cH^i)_{i \ge 0}}
\newcommand{\si}{\sigma}
\newcommand{\ent}{{\mathrm{ent}}}
\newcommand{\tv}{{\textsc{tv}}}
\newcommand{\tent}{T_{\ent}}
\newcommand{\mix}{{\mathrm{mix}}}
\newcommand{\tmix}{T_{\mix}}
\newcommand{\out}{{\mathrm{out}}}
\newcommand{\muout}{\mu_{\out}}
\newcommand{\muin}{\mu_{\rm in}}
\newcommand{\var}{{\rm Var}}
\newcommand{\wt}{\widetilde}
\newcommand{\w}{\mathbf{w}}
\newcommand{\PP}{\mathbf{P}}
\newcommand{\ind}{\mathbf{1}}
\newcommand{\p}{\mathfrak{p}}
\newcommand{\tx}{\mathtt{Tx}}
\newcommand{\ee}{\mathrm{e}}
\newcommand\hh{\mathrm{H}}
\newcommand\hhigh{\overline{\hh}}
\newcommand\bfd{\mathbf{d}}
\newcommand{\cA}{\ensuremath{\mathcal A}}
\newcommand{\cB}{\ensuremath{\mathcal B}}
\newcommand{\cC}{\ensuremath{\mathcal C}}
\newcommand{\cE}{\ensuremath{\mathcal E}}
\newcommand{\cF}{\ensuremath{\mathcal F}}
\newcommand{\cG}{\ensuremath{\mathcal G}}
\newcommand{\cH}{\ensuremath{\mathcal H}}
\newcommand{\cI}{\ensuremath{\mathcal I}}
\newcommand{\cJ}{\ensuremath{\mathcal J}}
\newcommand{\cL}{\ensuremath{\mathcal L}}
\newcommand{\cM}{\ensuremath{\mathcal M}}
\newcommand{\cP}{\ensuremath{\mathcal P}}
\newcommand{\cQ}{\ensuremath{\mathcal Q}}
\newcommand{\cR}{\ensuremath{\mathcal R}}
\newcommand{\cS}{\ensuremath{\mathcal S}}
\newcommand{\cT}{\ensuremath{\mathcal T}}
\newcommand{\cW}{\ensuremath{\mathcal W}}
\newcommand{\cY}{\ensuremath{\mathcal Y}}
\newcommand{\cZ}{\ensuremath{\mathcal Z}}
\newcommand{\bbE}{{\ensuremath{\mathbb E}}}
\newcommand{\bbN}{{\ensuremath{\mathbb N}}}
\newcommand{\bbP}{{\ensuremath{\mathbb P}}}
\newcommand{\bbR}{{\ensuremath{\mathbb R}}}
\newcommand{\E}{\ensuremath{\mathbb{E}}}
\newcommand{\N}{\ensuremath{\mathbb{N}}}
\newcommand{\R}{\ensuremath{\mathbb{R}}}
\renewcommand{\P}{\ensuremath{\mathbb{P}}}
\newcommand{\pl}{\ensuremath{\left\langle}}
\newcommand{\pr}{\ensuremath{\right\rangle}}
\def\({\left(}
        \def\){\right)}
\def\[{\left[}
    \def\]{\right]}
\DeclarePairedDelimiter{\floor}{\lfloor}{\rfloor}
\DeclarePairedDelimiter{\ceil}{\lceil}{\rceil}
\DeclarePairedDelimiter{\abs}{\lvert}{\rvert}%
\DeclareAcronym{whp}{
    short = whp,
    long  = with high probability
}
\DeclareAcronym{scc}{
    short = \textsc{scc},
    long  = strongly connected component
}
\DeclareAcronym{dcm}{
    short = \textsc{dcm},
    long  = directed configuration model
}
\DeclareAcronym{bfs}{
    short = \textsc{bfs},
    long  = breath-first search
}
\DeclareAcronym{wrt}{
    short = wrt,
    long  = with respect to
}
\DeclareAcronym{iid}{
    short = iid,
    long  = independent and identically distributed
}
\DeclareAcronym{sfpe}{
    short = \textsc{sfpe},
    long  = stochastic fixed point equation
}
\DeclareAcronym{irg}{
    short = \textsc{irg},
    long  = inhomogeneous random graph
}
\DeclareAcronym{dpa}{
    short = \textsc{dpa},
    long  = directed preferential attachment model
}
\DeclareAcronym{rhs}{
    short = \textsc{rhs},
    long  = right-hand-side
}
\DeclareAcronym{lhs}{
    short = \textsc{lhs},
    long  = left-hand-side
}
\newcommand\cond{\:\middle|\:}
\newcommand{\eql}{\,{\buildrel d \over \sim}\,}
\newcommand{\Bin}{\mathop{\mathrm{Bin}}}
\newcommand\inprob{\overset{\bbP}{\longrightarrow}}
\begin{document}
\title[]{Rankings in directed configuration models \\ with heavy tailed in-degrees}

\author{Xing Shi Cai$^{\dagger}$}
\address{$^{\dagger}$Mathematics Department, Duke Kunshan University, China.}
\email{\href{mailto:xingshi.cai@dukekunshan.edu.cn}{xingshi.cai@dukekunshan.edu.cn}}

\author{Pietro Caputo$^{\dagger\dagger}$}
\address{$^{\dagger\dagger}$ Dipartimento di Matematica e Fisica, Universit\`a di Roma Tre, Largo S. Leonardo Murialdo 1, 00146 Roma, Italy.}
\email{\href{mailto:caputo@mat.uniroma3.it}{caputo@mat.uniroma3.it}}

\author{Guillem Perarnau$^{*}$}
\address{$^{*}$ IMTech, Universitat Polit\`ecnica de Catalunya, and Centre de Recerca Matem\`atica, Spain.}
\email{\href{mailto:guillem.perarnau@upc.edu}{guillem.perarnau@upc.edu}}

\author[M.~Quattropani]{Matteo Quattropani$^{**}$}
\address{$^{**}$ Dipartimento di Economia e Finanza, LUISS, Viale Romania 32, 00197 Roma, Italy.}
\email{\href{mailto:mquattropani@luiss.it}{mquattropani@luiss.it}}

\date{\today}

\begin{abstract}
    We consider the extremal values of the stationary distribution of sparse directed random
    graphs with given degree sequences 
    and their relation to the extremal values of the in-degree sequence.
    The graphs are generated by the directed configuration model. Under the assumption of  bounded $(2+\eta)$-moments on the in-degrees and of bounded out-degrees,
    we obtain tight comparisons between the maximum value of the stationary distribution and the
    maximum in-degree.
    Under the further assumption that the order statistics of the in-degrees have a power-law
    behavior, we show that the extremal values of the stationary distribution also have a power-law
    behavior with the same index.
    In the same setting, we prove that these results extend to the PageRank  scores of the random
    digraph, thus confirming a version of the so-called \emph{power-law hypothesis}.
    Along the way, we establish several facts about the model, including the mixing time cutoff and
    the characterization of the typical values of the stationary distribution, which were previously
    obtained under the assumption of bounded in-degrees.
\end{abstract}
\maketitle    

\tableofcontents{}

\section{Introduction}

The stationary distribution of the simple random walk on a directed graph (digraph) provides a natural measure
of the ranking of its nodes.
The potentially non-local nature of the stationary distribution in
directed networks makes the analysis of ranking a challenging task,
and it is of interest to relate the ranking statistics to much simpler local statistics such as the
in-degrees of the nodes.
In this paper we compare the maximum values of the stationary distribution to the maximum values of
the in-degrees in the setting of directed configuration models with sparse degree sequences.
We start with a  presentation of the model and the main results, and then return to a general
discussion of the problems involved, main motivations and relations to previous work.

\subsection{The model and the assumptions}

Let $[n]\coloneqq \{1,\dots,n\}$ be a set of $n$ nodes.
Let $\bfd_n=((d^{-}_1,d_1^{+}),\dots, (d^{-}_n,d^{+}_n)) \in \N^{2 \times n}$ be a bi-degree sequence with
\begin{equation}\label{eq:def-m}
m\coloneqq \sum_{v\in [n]} d^{+}_v = \sum_{v \in [n]}d^{-}_{v}.
\end{equation}

The \ac{dcm} is the random directed multigraph (digraph) on $[n]$,
$G=G_n$, generated as follows:
Assign $d^{-}_{v}$ \emph{heads} and $d^{+}_{v}$ \emph{tails} to vertex $v$,
match the $m$ heads and the $m$ tails with a uniformly random bijection,
and finally add a directed edge $(u,v)$ for each tail from $u$ that is matched to a head from $v$.

For a node $v\in[n]$, $d_v^-$ and $d_v^+$ are called the in-degree and the out-degree of $v$ respectively.
Let $\Delta^{\pm}=\Delta^{\pm}_n \coloneqq \max_{v\in[n]} d^{\pm}_v$ denote the maximum in/out-degree.
Unless otherwise specified, we will \emph{always} assume that the sequence of bi-degree sequences $(\bfd_n, n\in \bbN)$ satisfies the following condition:
\begin{assumption}\label{cond:main}
There exist constants $\eta,C>0$ and $K\geq 2$ such that for all $n\in\N$
    \begin{enumerate}[(i)]
        \item\label{it1}  minimum out-degree: $\min_{v\in [n]}d_v^+\ge 2$;

        \item\label{it2}  bounded  maximum out-degree: $\Delta^+_n\le K$

        \item\label{it3} bounded $(2+\eta)$-moment for in-degrees:
              \begin{equation}\label{eq:2+eta}
                  \sum_{v\in[n]}(d_v^-)^{2+\eta}\le C n.
              \end{equation}
    \end{enumerate}
\end{assumption}
Note that we do not assume any lower bound on the in-degrees. For the sake of brevity, we will use $\bfd_n$ to refer to the sequence $(\bfd_n, n\in \bbN)$.

\subsection{Definitions and notations} We recall some standard definitions and fix some notations.
We write
$\ind(E)=\ind_{E}$ for the indicator function of an event $E$.
A sequence of events $(E_{n})_{n \ge 0}$ occurs
\ac{whp}
if $\P(E_{n}) =\bbE[\ind(E_n)]\to 1$ as $n \to \infty$.
We write $X_{n} \inprob X$ whenever a sequence of random variables $X_{n}$
converges in probability to the random variable $X$, i.e., when $\P\(\abs{X-X_{n}} > \varepsilon\) \to 0$ for all $\varepsilon>0$.
We also use $o_{\P}(1)$ to denote an implicit sequence random variables which converges to $0$ in
probability~\cite{janson2011}. To avoid repetitions, it is often understood that our inequalities hold provided that $n$ is sufficiently large.

Under \cref{cond:main}, the probability that $G$ is simple (neither loops nor multiple edges) is bounded away from zero~\cite{blanchet2013,janson2009}. Furthermore, conditional on being simple, $G$ has the uniform distribution over simple digraphs on $[n]$ with degree sequence $\bfd_n$. Thus, all results in this paper that hold whp can be transferred to uniform simple digraphs.

Under an assumption weaker than \cref{cond:main},
it is known that \ac{whp}
the resulting digraph has a unique strongly connected component
which is globally attractive~\cite{cai2020}; this is false in general if vertices of out-degree at most one are allowed.
In particular, there exists a unique stationary distribution $\pi$ characterized by the equations
\begin{equation}\label{eq:stat}
    \pi(x)=\sum_{y\in [n]}\pi(y)P(y,x)\,,\qquad x\in[n],
\end{equation}
with the normalization $\sum_{x\in[n]}\pi(x)=1$. Here $P=P_G$ is the transition matrix of  the simple random walk on the multigraph $G$, defined as
\begin{equation}\label{eq:Pxy}
    P(y,x)
    =\frac{m(y,x)}{d^+_y},
\end{equation}
where we write $m(y,x)$ for the multiplicity of the directed edge $(y,x)$ in  $G$.

We write $(X_{t}, t \ge 0)$ for the simple random walk on $G$.
Thus, $P^{t}(v, \cdot)$ denotes the distribution of $X_{t}$ on $[n]$ conditioned on $X_{0}=v$.
If $t\ge 0$ is not an integer,
for simplicity we often write $t$ instead of $\floor{t}$ so that for example $P^t(v,\cdot)$
represents the distribution of the walk after $\floor{t}$ steps.
A standard measure of the distance to stationarity is $\|P^{t}(v, \cdot)- \pi\|_{\tv}$,
where the total variation distance between two probability measures $\mu,\nu$ on $[n]$ is defined by
\begin{equation}
    \label{eq:def-TV}
    \|\mu - \nu\|_{\tv}
    =  \max_{A\subset[n]}\,
    \abs*{\mu(A) - \nu(A)}.
\end{equation}
 The \emph{mixing time} of the random walk $(X_{t}, t \ge 0)$
is defined, for $\varepsilon\in(0,1)$, by
\begin{equation}
    \label{eq:def-mix}
    \tmix(\varepsilon)
    =
    \inf
    \{
        t \in \N \;:\;
        \max_{v \in [n]}
        \|P^{t}(v, \cdot), \pi\|_{\tv}
        < \varepsilon
    \}
    .
\end{equation}
The \emph{in-degree distribution} $\muin$ and \emph{out-degree distribution} $\muout$ on $[n]$ are defined by
\begin{equation}
    \label{eq:def-muout}
    \muin(v)
    =
    \frac{d_{v}^{-}}{m} \,,\quad
    \muout(v)
    =
    \frac{d_{v}^{+}}{m}
    ,
    \qquad
    \forall v \in [n]
    .
\end{equation}
Following~\cite{bordenave2019}, we define the
\emph{entropic time} by
\begin{equation}\label{eq:H}
    \tent
    =\frac{\log n}{\hh}
    ,
    \qquad
    \text{where}
    \qquad
    \hh=\sum_{v\in[n]}\muin(v)\log(d^+_v)
    .
\end{equation}

\subsection{Results}

\subsubsection{Mixing time}
Our first result concerns the mixing time and the cutoff phenomenon.
\begin{theorem}\label{cutoff}
    Let ${\bfd}_n$ be a bi-degree sequence satisfying \cref{cond:main}.
    Then, for all $\rho\neq 1$,
    \begin{equation}
        \max_{x\in [n]}\left|\| P^{\,\rho\,\tent}(x,\cdot)-\pi\|_{\tv} - \ind({\rho<1})\right| \inprob  0.
    \end{equation}
    In particular, for any $\varepsilon\in(0,1)$, \ac{whp} $\tmix(\varepsilon)=(1+o(1))\tent$.
\end{theorem}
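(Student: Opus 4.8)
The plan is to establish a matching upper and lower bound on the total-variation distance at time $\rho\,\tent$, via the standard strategy for cutoff on directed configuration models initiated in \cite{bordenave2019} and refined in \cite{cai2020}. The key heuristic is that, because out-degrees are bounded, the random walk from any start behaves for $t \ll \tent$ essentially like a forward exploration of the in-component tree, and the distribution $P^t(x,\cdot)$ remains highly concentrated: the walk has seen only $n^{\rho+o(1)}$ vertices, so for $\rho<1$ the walk cannot yet have equidistributed (distance near $1$), while for $\rho>1$ enough ``mass mixing'' has occurred that $P^t(x,\cdot)$ is close to $\pi$ uniformly in $x$. I would track the quantity $P^t(x,y)/\pi(y)$ and show that under $\rho>1$ it concentrates around $1$ for a $\pi$-typical $y$, and under $\rho<1$ the walk's support has $\pi$-measure $o(1)$.

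First I would carry out the \emph{upper bound} for $\rho>1$. Here one couples the walk with the forward \ac{bfs} exploration from $x$ and writes $P^t(x,y) = \sum_{\gamma} \prod_{i}(d^+_{\gamma_i})^{-1}$ over length-$t$ paths $\gamma$ from $x$ to $y$; the crucial point is that, conditioning on the environment seen so far, each new tail is matched to a uniformly random head, so the terminal vertex is (approximately) distributed as $\muin$-biased by a factor depending only on the multiplicatively accumulated weight $W_t = \prod_{i<t}(d^+_{X_i})^{-1}$. One shows that $-\log W_t$ satisfies a law of large numbers with speed $\hh$ (by a martingale/ergodic argument over the i.i.d.-like matching, using the $(2+\eta)$-moment bound on in-degrees to control fluctuations of $\log d^+$ along the path — actually $\log d^+$ is bounded by $\log K$, so this is easy; the moment bound enters when controlling how often the walk lands on high-in-degree vertices, which governs $\pi$). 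Then a second-moment / path-counting estimate shows collisions between two independent walks from $x$ are negligible once $t>(1+\varepsilon)\tent$, giving $\|P^t(x,\cdot)-\pi\|_{\tv}\to 0$; the uniformity over $x$ follows since the bound depends on $x$ only through $d^+_x \le K$. This is essentially the argument of \cite{bordenave2019,cai2020} with the bounded-in-degree hypothesis replaced by the $(2+\eta)$-moment one, which is exactly where the main work lies, since \cite{cai2020} used $\Delta^-$ bounded to control the environment.

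Next, the \emph{lower bound} for $\rho<1$: the set $S_t$ of vertices reachable from $x$ in $\le t$ steps satisfies $|S_t| \le \sum_{i\le t}\Delta^{+\,i} \le K^{t+1}$, and more precisely a typical exploration has $\muin(S_t) = n^{-(1-\rho)+o(1)}$ because the total in-degree collected along the tree grows like $e^{\hh t} = n^{\rho+o(1)}$ and $m = \Theta(n)$; here one needs a concentration statement for the in-mass of the \ac{bfs} ball, again using \cref{cond:main}\ref{it3} to rule out a single huge-in-degree vertex dominating $\muin(S_t)$. Since $P^t(x,S_t)=1$ while $\pi(S_t)=o(1)$ (as $\pi$ is comparable to $\muin$ in an averaged sense — one uses the a priori bound $\pi(A)\le \muin(A)+o(1)$ for $\muin(A)$ not too small, or more simply the characterization of typical $\pi$-values stated earlier in the paper), we get $\|P^t(x,\cdot)-\pi\|_{\tv}\ge 1 - \pi(S_t) \to 1$, uniformly in $x$.

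Finally, the concluding sentence ``$\tmix(\varepsilon)=(1+o(1))\tent$ whp'' is a soft consequence: the displayed convergence in probability at $\rho = 1\pm\delta$ for every $\delta>0$, together with monotonicity of $t\mapsto \max_x\|P^t(x,\cdot)-\pi\|_{\tv}$, pins $\tmix(\varepsilon)$ between $(1-\delta)\tent$ and $(1+\delta)\tent$ whp for each $\delta$; letting $\delta\downarrow 0$ along a subsequence gives the claim. \textbf{The main obstacle} I anticipate is the environment control in the upper bound: without a bound on $\Delta^-$, a single step of the exploration can reveal a vertex of in-degree as large as $n^{1/(2+\eta)}$, which could in principle distort the path-weight distribution and create atypically large values of $P^t(x,\cdot)$; handling this requires showing that the walk only rarely hits such vertices and that when it does the excess mass is reabsorbed, which is precisely the technical heart of extending \cite{cai2020} to the heavy-tailed regime, and is presumably done via a truncation of the in-degrees together with the $(2+\eta)$-moment summability \eqref{eq:2+eta}.
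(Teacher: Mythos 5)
There are two genuine gaps, one in each direction. For the upper bound ($\rho>1$), your argument never explains how to compare $P^t(x,\cdot)$ with the \emph{unknown, non-uniform} measure $\pi$: a second-moment/collision estimate for two walks from $x$ only shows that $P^t(x,\cdot)$ is spread over $\approx n$ vertices, and "the terminal vertex is approximately $\muin$-distributed" is not enough, since $\pi$ is not close to $\muin$ in total variation (its typical values are governed by the nontrivial limit law of \cref{th:bulk}). The paper's route is to introduce the proxy $\wt\pi=\muin P^{h}$ of \cref{eq:pitilde}, prove the pointwise domination $\wt P^t(x,y)\le(1+\delta)\wt\pi(y)+\delta/n$ for nice paths (\cref{def:nice}, \cref{prop:proxy}, via Chatterjee's permutation concentration together with the martingale estimate of \cref{lem:bennett} and the law of large numbers \cref{LLN}), and only \emph{afterwards} deduce $\|\pi-\wt\pi\|_{\tv}\to0$ by stationarity; tracking $P^t(x,y)/\pi(y)$ directly, or invoking \cref{th:bulk}, is circular, because the characterization of typical $\pi$-values in the paper is itself derived from this mixing upper bound.

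For the lower bound ($\rho<1$), the step $\pi(S_t)=o(1)$ is asserted but not available: there is no a priori bound of the form $\pi(A)\le\muin(A)+o(1)$ uniformly over small sets (the maximum of $\pi$ can exceed $\muin$ by polylogarithmic and, with unbounded in-degrees, the relevant sets are random and environment-dependent). Also the crude bound $|S_t|\le K^{t+1}$ is useless, since $\hh\le\log K$ makes $K^{t}$ potentially much larger than $n$; the correct restriction, as in the paper, is to endpoints of paths of weight at least $n^{-1+\beta^2}$, whose number is at most $n^{1-\beta^2}$ by \cref{LLN}. The paper then needs \cref{prop:pi-small-set}: \emph{with high probability, every} set of size at most $n^{1-6\delta}$ carries $\pi$-mass at most $n^{-\delta/2}$, proved by annealed moments of order $K\approx L/\delta$ of $\mu_t(S)$ plus a union bound over all $\binom{n}{L}$ sets; this uniformity over sets is essential because $S_x$ varies with the starting vertex $x$ and the maximum over $x$ is required in the statement. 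Your sketch does not supply any substitute for this estimate, and it is the technical heart of the lower bound, not a soft consequence of earlier results.
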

\begin{remark}
\cref{cutoff} is an extension of the cutoff results from~\cite{bordenave2018} which were obtained in
the case of bounded degrees $\Delta^{\pm}=O(1)$.
It shows that $\tmix(\varepsilon)$ is, to leading order,
independent of $\varepsilon$, that is the Markov chain satisfies the \emph{cutoff phenomenon}.
Theorem~2 in~\cite{bordenave2018} considers also the cutoff window,
namely the behavior of the function $t \mapsto \| P^{t}(x,\cdot)-\pi\|_{\tv}$ on the finer
scale $t=\tent + aw_n$,
where $a\in \bbR$  and $w_n=O(\sqrt{\log n})$,
and showed that it approaches a universal Gaussian shape  for all $x \in [n]$.
One can check that the techniques we use here to prove  \cref{cutoff} are sufficient to obtain this refinement in our more general setting.
\end{remark}

\subsubsection{Typical values of the stationary distribution}

Our second result addresses the convergence
of the empirical distribution
\begin{equation}
    \label{eq:def-emperical}
    \psi_{n} \coloneqq \frac{1}{n}\sum_{v\in[n]}\delta_{n\pi(v)},
\end{equation}
where $\delta_a$ is the Dirac distribution centered at $a\in\bbR$.
Note that $\psi_n$ represents the law of $n \pi(v)$ when $v$ is picked uniformly at random in $[n]$.
We recall that the $1$-Wasserstein (or Kantorovich-Rubinstein) distance between two probability
measures $\mu$, $\nu$ on $\R$ is defined by
\begin{equation}
    \label{eq:def-W1}
    \cW_{1}(\mu,\nu )
    =\sup_{f}
    \abs*{\int_{\R}f \,\mathrm{d}\mu - \int_{\R}f \,\mathrm{d}\nu},
\end{equation}
where the supremum runs over of $f:\bbR\to\bbR$ such that $|f(x)-f(y)|\le |x-y|$ (see, for
example,~\cite[Chapter~6]{villani2009}).

Consider the sequence of deterministic
    probability measures on $\bbR_+ \coloneqq [0,\infty)$, $(\cL_n, n\in\N)$, where for $n\in\N$, $\cL_n$ is defined as the law of the random variable $X_n$ that satisfies $\E[X_n]=1$, and
    \begin{equation}\label{eq:distr-lim-mart}
    X_n\overset{d}{=}\frac{n}{m}\sum_{k=1}^{d_{\cI}^-}Z_k,
    \end{equation}
    where $\cI$ is a uniformly sampled vertex in $[n]$ and $\{Z_k\}_{k\geq 1}$
    are \ac{iid} random variables satisfying the \ac{sfpe}
    \begin{equation}\label{eq:distr_rec}
    Z_1\overset{d}{=}\frac{1}{d_{\cJ}^+}\sum_{k=1}^{d_\cJ^-}Z_k,
    \end{equation}
    where $\cJ$ is a random vertex in $[n]$ distributed as $\muout$. Existence and uniqueness of solutions to recursive distributional equations of the type \cref{eq:distr-lim-mart} is well known; see for example~\cite{aldous2005}.  

\begin{theorem}\label{th:bulk}
    Let ${\bfd}_n$ be a bi-degree sequence satisfying \cref{cond:main}.
	We have
    \begin{equation}\label{eq:w1-conv}
        \cW_1\(\psi_{n}, \cL_n\) \inprob 0.
    \end{equation}
\end{theorem}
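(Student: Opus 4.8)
\emph{Proof outline.} The plan is to show that $n\pi(v)$ is, for all but $o(n)$ vertices $v$, a \emph{finite-depth} functional of the in-neighbourhood of $v$, to carry out the matching truncation on $\cL_n$, and then to identify the two truncations through local weak convergence of the in-neighbourhoods; the passage from weak convergence to $\cW_1$-convergence is provided by a uniform integrability estimate that encodes the $(2+\eta)$-moment assumption. Throughout we use that \cref{cond:main} forces $m=\sum_v d^+_v=\sum_v d^-_v=\Theta(n)$, with $n/m\in[1/K,1/2]$.

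\smallskip
\emph{Step 1: a per-edge recursion.} For a directed edge $e=(y,v)$ of $G$ set $Z_e\coloneqq m\,\pi(y)/d^+_y$. Summing \eqref{eq:stat} over the in-edges of a vertex yields the two exact identities
\begin{equation*}
  n\pi(v)=\frac nm\sum_{e\ \text{in-edge of}\ v}Z_e,
  \qquad
  Z_e=\frac1{d^+_y}\sum_{e'\ \text{in-edge of}\ y}Z_{e'}\quad(y=\mathrm{tail}(e)),
\end{equation*}
so that $(Z_e)_e$ solves on $G$ the recursion underlying \eqref{eq:distr_rec}, and $\tfrac1m\sum_e Z_e=\sum_v\pi(v)=1$. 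Hence the law of $n\pi(\cI)$, for $\cI$ uniform in $[n]$, is exactly the law of $\tfrac nm$ times the sum of the $Z$-values over the in-edges of $\cI$ --- the structure of $X_n$ in \eqref{eq:distr-lim-mart} --- and \cref{th:bulk} amounts to saying that the $Z$-values on the in-edges of $\cI$ look like independent samples of the \ac{sfpe} solution $Z_1$.

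\smallskip
\emph{Step 2: truncation, uniformly in $n$.} Fix $h\in\N$ and let $Z^{(h)}_e$ be obtained by unrolling the recursion $h$ times and assigning the boundary value $1$ to every edge reached after $h$ steps; this depends only on the depth-$h$ in-neighbourhood of $\mathrm{tail}(e)$, and $\Pi_h(v)\coloneqq\tfrac nm\sum_{e\ \text{in-edge of}\ v}Z^{(h)}_e$ is then a genuinely local quantity with $\mathbb{E}_{\cI}[\Pi_h(\cI)]=1$ (the underlying operator on edges is column-stochastic). By linearity of the recursion, $n\pi(\cI)-\Pi_h(\cI)=\tfrac nm\sum_{e'}(\text{weight})\,(Z_{e'}-1)$, the sum running over the edges $e'$ reached after $h$ steps from the in-edges of $\cI$; since $\min_v d^+_v\ge2$ each weight is $\le2^{-h}$, while their total is the value of a nonnegative mean-$d^-_{\cI}$ martingale (here $\mathbb{E}_{\cJ\sim\muout}[d^-_{\cJ}/d^+_{\cJ}]=1$ and $\Delta^+\le K$ make this martingale well defined and $L^1$-bounded). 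Combining this with a uniform \emph{a priori} bound $\sup_n\tfrac1n\sum_v(n\pi(v))^{1+\delta}<\infty$ for some $\delta=\delta(\eta)>0$, a recursive $L^{1+\delta}$ estimate (a von Bahr--Esseen / Rosenthal inequality applied generation by generation, absorbing the factors $1/d^+_\cdot\le1/2$) gives $\mathbb{E}\,\mathbb{E}_{\cI}\,|n\pi(\cI)-\Pi_h(\cI)|\le C\,2^{-\delta'h}$ uniformly in $n$, with $\mathbb{E}$ the graph expectation; on the $o(1)$-probability event that the relevant neighbourhood contains a short cycle one reverts to the trivial estimate $n\pi(v)\le n$ on a set of vertices of negligible $\psi_n$-mass. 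The same unrolling of the idealized recursion \eqref{eq:distr_rec} yields $\mathbb{E}\,|X_n-X_n^{(h)}|\le C\,2^{-\delta'h}$, with $X_n^{(h)}$ the corresponding truncation of $X_n$. The \emph{a priori} $L^{1+\delta}$ bound, and the companion bound $\sup_n\mathbb{E}[X_n^{1+\delta}]<\infty$, are the heart of the matter: the cascade multipliers $1/d^+_\cdot\le1/2$ and the ``offspring'' $d^-_\cdot$ have a uniform $(1+\eta)$-moment, since $\tfrac1m\sum_v d^+_v(d^-_v)^{1+\eta}\le\tfrac Km\sum_v(d^-_v)^{1+\eta}=O(1)$ by \eqref{eq:2+eta} and Jensen, which is exactly a Kesten--Stigum-type condition making all finite-depth cascades $L^{1+\delta}$-bounded; one proves the bound first for the truncations $\Pi_h$, $X_n^{(h)}$ uniformly in $h$ and then passes to the limit by Fatou.

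\smallskip
\emph{Step 3: matching the truncations and conclusion.} For fixed $h$, $\Pi_h(\cI)$ is a bounded-depth functional of the in-neighbourhood of a uniform vertex of $G$, while $X_n^{(h)}$ is the same functional evaluated on the Galton--Watson-like tree with root offspring distributed as $d^-_{\cI}$ ($\cI$ uniform) and deeper offspring generated by $\muout$-size-biased sampling, both read off from $\bfd_n$. Since the depth-$h$ in-neighbourhood of $\cI$ converges locally to this tree --- the standard computation for the \ac{dcm}, the only error being the $O(|\cB^-_h(\cI)|^2/m)=o(1)$ discrepancy between sampling the pairing with and without replacement, under control thanks to the moment bound --- and since the $L^{1+\delta}$ bounds supply uniform integrability, a routine first/second-moment argument gives $\cW_1\bigl(\mathrm{law}_{\cI}(\Pi_h(\cI)),\ \mathrm{law}(X_n^{(h)})\bigr)\inprob0$ for each fixed $h$. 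Combining with Step 2 through the triangle inequality for $\cW_1$ and the diagonal coupling,
\begin{equation*}
  \mathbb{E}\bigl[\cW_1(\psi_n,\cL_n)\bigr]\ \le\ C\,2^{-\delta'h}\ +\ o_n(1)\ +\ C\,2^{-\delta'h},
\end{equation*}
and letting $n\to\infty$ and then $h\to\infty$ proves \eqref{eq:w1-conv} via Markov's inequality.

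\smallskip
\emph{Main obstacle.} In the bounded-degree setting of \cite{bordenave2018} all the cascades above are bounded and the estimates of Step 2 are immediate. The new difficulty is to carry out Step 2 \emph{uniformly in $n$} despite unbounded in-degrees: one must establish the \emph{a priori} $L^{1+\delta}$ bound on $n\pi$ and the uniform integrability of the inhomogeneous multiplicative cascade --- delicate precisely because, at the boundary of the truncation, the graph quantities $Z_{e'}$ are neither independent nor of mean exactly $1$ --- and this is exactly where the $(2+\eta)$-moment hypothesis is used, via the Kesten--Stigum input and a careful exploitation of $\min_v d^+_v\ge 2$; one must moreover control the rare vertices whose in-neighbourhood is large or contains a short cycle so that their contribution to $\psi_n$ and $\cL_n$ is $\cW_1$-negligible.
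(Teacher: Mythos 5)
The crux of your argument is Step 2, and that is where there is a genuine gap. Your truncation $\Pi_h(v)$ is exactly $n\,\muin P^h(v)$, and the claimed estimate $\mathbb{E}\,\mathbb{E}_{\cI}\bigl|n\pi(\cI)-\Pi_h(\cI)\bigr|\le C2^{-\delta' h}$ uniformly in $n$, for fixed $h$, is precisely the hard point of the theorem: since there is no teleportation, there is no geometric damping of the tail, and the total weight carried by the depth-$h$ boundary is $\Theta(1)$, so smallness must come from cancellation among the boundary variables $Z_{e'}=m\pi(\cdot)/d^+_\cdot$. These are global functionals of the same graph, correlated with the explored neighbourhood and not of conditional mean $1$; your sketch names this difficulty but does not handle it, and the von Bahr--Esseen/Rosenthal step requires exactly the (approximate) conditional centering and moment control that is missing. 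Moreover, the a priori bound $\sup_n \frac1n\sum_v (n\pi(v))^{1+\delta}<\infty$ that you lean on is itself an unproven global statement of essentially the same depth as the tail bounds of \cref{rem:general_ubtail} (proved in the paper only via the annealed-walk machinery of \cref{sec:UB,sec:PL}), and the proposed derivation of it is circular: the Kesten--Stigum/cascade control of $\Pi_h$ is only available while the in-neighbourhood of $\cI$ is coupled to the marked tree, i.e.\ for $h=O(\log n)$ within the coupling window of \cref{coro:coupling}, whereas the Fatou argument needs uniformity in $h$ all the way past the mixing time, where $\Pi_h$ is already $n\pi$ itself and the tree approximation gives nothing.

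By contrast, the paper does not attempt a fixed-depth local truncation of $\pi$ at all: the missing input is supplied by the mixing-time analysis, namely $\|\pi-\muin P^{h_\varepsilon}\|_{\tv}\inprob 0$ with $h_\varepsilon=\Theta(\log n)$ still inside the tree-coupling window (this is \cref{eqbound2}, a byproduct of the cutoff upper bound via nice paths and Chatterjee's concentration inequality), after which one only needs concentration of $\frac1n\sum_v g(n\muin P^{h_\varepsilon}(v))$ (an edge-switching/bounded-differences argument) and the $L^2$ martingale comparison of $n\muin P^{h}(\cI)$ with the tree martingale of \cref{lem:marting,coro:marting}. Your Steps 1 and 3 are consistent with these last ingredients, but as written the proposal replaces the mixing-time input with an unsubstantiated truncation estimate, so the proof does not go through without either importing that input or a genuinely new argument for Step 2.
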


\subsubsection{Extremal values of the stationary distribution}
Our main concern in this paper will be the behavior of the extremal values
of the stationary distribution. We start with
the maximum
$\pi_{\max} \coloneqq \max_{v\in[n]}\pi(v)$ and its relation to the maximum in-degree $\Delta^-$.

\begin{theorem}\label{th:main1}
    Let ${\bfd}_n$ be a bi-degree sequence satisfying \cref{cond:main}. Then,
    \begin{enumerate}[(i)]
                \item There exists an absolute constant $C>0$ such that, \ac{whp}
              \begin{equation}\label{eq:upb}
                  \pi_{\max}\le C\log(n)\frac{\Delta^-}{m}.
              \end{equation}
      \item If $\Delta^-\to\infty$, then $\forall\varepsilon >0$, \ac{whp}
              \begin{equation}\label{eq:lowb}
                  \pi_{\max}\ge (1-\varepsilon) \frac{\Delta^-}{m}.
              \end{equation}
\end{enumerate}
   \end{theorem}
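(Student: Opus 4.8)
I will prove the lower bound \eqref{eq:lowb} first, since it pins down the true order of magnitude, and then the upper bound \eqref{eq:upb}.

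\textbf{Lower bound.} Let $v^\star$ be a vertex attaining $d^-_{v^\star}=\Delta^-$; note that $v^\star$ depends on $\bfd_n$ only, not on the matching. By \eqref{eq:stat},
\[
\pi(v^\star)=\sum_{e=(y,v^\star)\in E(G)}\frac{\pi(y)}{d^+_y}=\frac1m\sum_{e=(y,v^\star)}\rho_e,\qquad \rho_e:=\frac{m\,\pi(y)}{d^+_y},
\]
the sum ranging over the $\Delta^-$ edges of $G$ into $v^\star$ (distinct, as $G$ is simple \ac{whp}). Since $\tfrac1m\sum_{e\in E(G)}\rho_e=\sum_y\pi(y)=1$, the ``typical'' value of $\rho_e$ is $1$; and because $v^\star$ is matching-oblivious, conditionally on $\bfd_n$ the tails of its in-edges form a uniform random size-$\Delta^-$ set of tails, hence behave like $\Delta^-$ vertices sampled (nearly independently) from $\muout$. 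The plan is to prove $\tfrac1{\Delta^-}\sum_{e=(y,v^\star)}\rho_e\inprob1$, which yields \eqref{eq:lowb}. The two inputs are: (a) \emph{local tree-likeness of the \ac{dcm}}: \ac{whp} the reverse-\ac{bfs} exploration from $v^\star$ has no short directed cycle, so distinct in-edges of $v^\star$ sprout vertex-disjoint in-neighbourhoods and every directed cycle through $v^\star$ is long; this makes the $\rho_e$ asymptotically independent and asymptotically independent of $\pi(v^\star)$ itself (the feedback along cycles through $v^\star$ is exponentially small), and lets each $\rho_e$ converge in law to the solution $Z$ of the \ac{sfpe} \eqref{eq:distr_rec}, which has $\E[Z]=1$; (b) a uniform-integrability strengthening of this convergence, which I would read off from the estimates underlying \cref{th:bulk}. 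Granting (a)--(b), a truncated second-moment argument finishes the job: with $\widehat\rho_e:=\rho_e\wedge M_n$ for $M_n\to\infty$ slowly, one has $\E\sum_e\widehat\rho_e=(1-o(1))\Delta^-$ while $\mathrm{Var}(\sum_e\widehat\rho_e)=o\bigl((\Delta^-)^2\bigr)$ --- the off-diagonal terms being controlled by the near-independence and the diagonal by $M_n=o(\sqrt{\Delta^-})$ --- so Chebyshev gives $\sum_e\rho_e\ge\sum_e\widehat\rho_e\ge(1-\varepsilon)\Delta^-$ \ac{whp}. (That $\pi(v^\star)>0$, i.e.\ $v^\star$ lies in the giant \ac{scc}, is automatic since $\Delta^-\to\infty$.)

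\textbf{Upper bound.} By \cref{cutoff} the chain mixes by time $(1+o(1))\tent=\Theta(\log n)$, and a tail bound on the cutoff profile (available by the techniques behind \cref{cutoff}) gives $\max_x\|P^{L}(x,\cdot)-\pi\|_{\tv}=o(1/n)$ \ac{whp} for $L:=C_0\log n$ with $C_0$ large enough; hence, for every $v$,
\[
\pi(v)\ \le\ (\muin P^{L})(v)+o(1/n)\ =\ \frac1m\sum_{y_0\to y_1\to\cdots\to y_L=v}d^-_{y_0}\prod_{i=0}^{L-1}\frac{m(y_i,y_{i+1})}{d^+_{y_i}}+o(1/n),
\]
the sum running over length-$L$ walks in $G$ ending at $v$. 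Read backwards, this is a weighted reverse exploration from $v$; since every out-degree is at least $2$, each walk carries a discount $\prod_i(d^+_{y_i})^{-1}\le2^{-\ell}$, so the sum is governed by the in-degrees met in the first $\Theta(\log n)$ reverse-levels of $v$. Its expectation is $O(d^-_v/m)=O(\Delta^-/m)$; the real work is the deviation bound. Using Assumption \cref{cond:main} --- crucially the $(2+\eta)$-moment bound $\sum_v(d^-_v)^{2+\eta}\le Cn$ (which also forces $\Delta^-=O(n^{1/(2+\eta)})$) together with $2\le d^+_v\le K$ --- one shows that no vertex's reverse-neighbourhood accumulates more than $O(\log n)$ ``units of $\Delta^-/m$'' of in-degree weight, so that after a union bound over all $n$ vertices $\max_v(\muin P^{L})(v)\le C\log(n)\,\Delta^-/m$ \ac{whp}; as $\Delta^-/m\gg1/n$, this gives \eqref{eq:upb}.

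\textbf{Where the difficulty lies.} The lower bound is essentially routine once the local structure and the uniform integrability from \cref{th:bulk} are in hand. The genuine obstacle is the \emph{uniform} upper bound: the reverse-exploration sum $(\muin P^{L})(v)$ must be controlled simultaneously for all $n$ vertices, a single heavy in-degree in the reverse-neighbourhood of $v$ can already contribute an amount of the target order $\Delta^-/m$, and one needs the heavy-tailed ($(2+\eta)$-moment) in-degree hypothesis to interact just so with the geometric decay $2^{-\ell}$ along paths, the truncation depth $L=\Theta(\log n)$, and the union bound over $n$ vertices --- an interplay that both yields and forces the $\log n$ prefactor in \eqref{eq:upb}.
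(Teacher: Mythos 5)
Your upper-bound sketch follows the same overall strategy as the paper (approximate $\pi$ by a smoothed distribution after $\Theta(\mathrm{poly}\log n)$ steps, then bound its maximum and union bound over vertices), but the one step you label ``the real work'' is exactly the step you do not supply: the assertion that ``no vertex's reverse-neighbourhood accumulates more than $O(\log n)$ units of $\Delta^-/m$ of in-degree weight'' is the whole content of the paper's \cref{lemmaM}, which proves the annealed moment bound $\E[\ind_{\cG}\mu_t(y)^K]\le (10K\Delta^-/n)^K$ with $K=C\log n$ by running $K$ annealed walks, conditioning on the tree-excess event $\cG$, controlling the boundary sizes of $\cB^{-}_y(j)$, and splitting entry times into early and late regimes; the $\log n$ prefactor in \cref{eq:upb} comes out of that computation (via Markov with $K=C\log n$), not merely out of the union bound. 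Without that lemma or a substitute deviation estimate, your upper bound is an outline, not a proof. (Minor point: the paper's \cref{coro:mixing} gives the $o(1/n)$ approximation at $t=\log^3 n$ rather than at $C_0\log n$, which is all you need, so that part is fine.)

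The lower bound is where you genuinely diverge from the paper, and where the gap is more serious. You write $\pi(v^\star)=\tfrac1m\sum_e\rho_e$ with $\rho_e=m\pi(y_e)/d^+_{y_e}$ and want a law of large numbers for the $\Delta^-$ values $\rho_e$, resting on (a) convergence in law of each $\rho_e$ to the solution $Z$ of \cref{eq:distr_rec} with asymptotic independence, and (b) uniform integrability ``read off from the estimates underlying \cref{th:bulk}.'' But \cref{th:bulk} is a $\cW_1$ statement about the empirical law of $n\pi(v)$ for a \emph{uniform} vertex; it says nothing about the joint law of $\pi(y)$ for the specific, out-degree--size-biased in-neighbours of the maximum in-degree vertex, conditioned on the edge into $v^\star$, nor does it give the quantitative covariance control you need for Chebyshev when $\Delta^-$ can be as large as $n^{1/2-\eta/6}$ (the in-neighbourhoods of the $y_e$ can have size $n^{1/2+\varepsilon}$, so disjointness must be argued, and the ``feedback through cycles'' and the transfer of $\pi\approx\muin P^h$ from a total-variation statement to these conditioned vertices are exactly the technical obstacles). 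In short, items (a)--(b) are the hard content and are not available off the shelf. The paper avoids this entirely: \cref{prop:LB2} lower-bounds the \emph{access probabilities} $P^{t}(x,y)\ge(1-\varepsilon)\Delta^-/m$ at $t\approx\tent$ uniformly over typical starting points $x\in V_\varepsilon$, using the weighted out-neighbourhood construction, the in-neighbourhood martingale $M_y(h)$ with Chebyshev (this is the only place $\Delta^-\to\infty$ is used), and Chatterjee's concentration inequality for random permutations; then \cref{lem:access_stationary} converts this to $\pi(y)\ge(1-\varepsilon)\pi(V_\varepsilon)\,\Delta^-/m$ via stationarity and \cref{prop:pi-small-set}. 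Your route might be made to work, but as written it assumes the conclusions of an analysis that would have to be carried out from scratch.
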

\begin{remark}
The bounds  \cref{eq:upb} and \cref{eq:lowb} on $\pi_{\max}$ are essentially optimal under this
generality. Clearly,  if the graph is Eulerian, that is if $d_{v}^{+}=d_{v}^{-}$ for all $v \in
    [n]$, then  $\pi=\muin=\muout$ and thus $\pi_{\max}=\Delta^-/m$. On the other hand, 
Theorem 1.6 in~\cite{caputo2020a} shows the existence of bounded degree sequences for which
$\pi_{\max}= \log^{1-o(1)}(n)\frac{\Delta^-}{m}$.
We remark that the assumption $\Delta^-\to\infty$ in \cref{eq:lowb} is not really restrictive since by~\cite{caputo2020a} we already know that 
the bound \cref{eq:lowb} is always satisfied if $\Delta^-=O(1)$.
It is an interesting open question to determine whether the logarithmic term is necessary given that
$\Delta^-$ diverges sufficiently fast.
In \cref{sec:examples}, we refine \cref{eq:upb} for a wide class of sequences,
called \emph{extremal},
proving that in these cases
\begin{equation}
\pi_{\max}=  (1+o(1))\frac{\Delta^-}{m}.
\end{equation}
In particular, this proves in a strong sense the asymptotic tightness of \cref{eq:lowb}.
Moreover, we will see that for such extremal sequences the vertex with the maximum in-degree
coincides with the vertex with maximum stationary value.
\end{remark}

\subsubsection{Power-law behavior}

We turn to the analysis of the order statistics of the stationary distribution,
in the case where the in-degrees have an approximate power-law behavior.
We will consider the following notion of heavy tails.
Let $\cM_n$ denote the set of empirical distributions of size $n$ on $\bbR_+$,
that is the set of probability measures  of the form
\begin{equation}\label{emp}
    \mu_n=\frac{1}{n}\sum_{v\in[n]}\delta_{x_v},
\end{equation}
for some fixed vector $(x_1,\dots,x_n)\in\bbR_+^n$.
For any $\mu_n\in\cM_n$ and $t\ge 0$, let
\begin{equation}
    \mu_n(t,\infty)=\frac1n\,|\{i\in[n]:\, x_i>t\}|
\end{equation}
denote the right tail of $\mu_n$.
\begin{definition}\label{def:PL}
Given a constant $\kappa>0$, and a sequence of measures $\mu_n\in \cM_n$,
we say that $\mu_n$ has \emph{power-law behavior with index $\kappa$} if for all $\varepsilon>0$ and
for all $a\in(0,1/\kappa)$,
\begin{equation}\label{hp}
    n^{-a\kappa-\varepsilon} \le \mu_n(n^a,\infty) \le n^{-a\kappa+\varepsilon},\qquad \mu_n(n^{1/\kappa + \varepsilon},\infty)=0,
\end{equation}
 for all sufficiently large $n$. If the measures $\mu_n$ are random elements in $\cM_n$,
we say that $\mu_n$ has \emph{power-law behavior with index $\kappa$ with high probability},
if for all $\varepsilon>0$, \ac{whp} \cref{hp} holds for any $a\in(0,1/\kappa)$.
\end{definition}

{Since $\mu_n\in\cM_n$  has minimal mass $1/n$, the upper bounds in \cref{hp} are equivalent to the requirement that $\mu_n(n^a,\infty)\le n^{-a\kappa+\varepsilon}$ for all $\varepsilon>0$ and all $a>0$.}
Notice that if $(x_1,x_2,\dots)$ are \ac{iid} random variables with probability density $f(t)\propto
\min\{1, t^{-1-\kappa}\}$,
$t\in\bbR_+$, for some $\kappa>0$,
then the sequence of random empirical measures $(\mu_n)_{n \in \N}$ in \cref{emp} has power-law behavior with
index $\kappa$ with high probability (see, e.g.,~\cite{resnick2007}).

We apply this notion to our degree sequence.
Let $\phi$ be the empirical in-degree distribution; that is, for $k\ge 0$,
\begin{equation}\label{nt}
    \phi(k) =\phi_{n}(k)
      \coloneqq\frac1n\sum_{v\in[n]}\ind(d_v^-=k).
\end{equation}

Both $\phi,\psi$ define sequences of distributions $\phi_n,\psi_n\in\cM_n$,
but for simplicity we often drop the subscript $n$ from our notation.
The distribution $\phi$ has mean value $m/n$,
while $\psi$ has mean value $1$ for all $n$.

\begin{theorem}\label{th:main2}
    Let ${\bfd}_n$ be a bi-degree sequence satisfying \cref{cond:main}
    and assume that its empirical in-degree distribution $\phi$ has power-law behavior
    with index $\kappa>2$.
    Then, \ac{whp} the distribution $\psi$  has  power-law behavior
    with the same index $\kappa$,
    that is for all $\varepsilon>0$, \ac{whp}, $\psi(n^{1/\kappa + \varepsilon},\infty) =0$ and for all $a\in(0,1/\kappa)$,
    \begin{equation}\label{hppsi1}
       n^{-a\kappa-\varepsilon}\le \psi(n^a,\infty) \le n^{-a\kappa+\varepsilon}.
    \end{equation}
\end{theorem}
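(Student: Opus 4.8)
Morally, \cref{th:bulk} already points to the result, since the limit law $\cL_n$ inherits a $t^{-\kappa}$ tail from $\phi$ through \eqref{eq:distr-lim-mart}--\eqref{eq:distr_rec}; but closeness in $\cW_1$ does not transmit tail behaviour, so the plan is to argue directly, comparing the order statistics of $(n\pi(v))_{v\in[n]}$ with those of $(d_v^-)_{v\in[n]}$ vertex by vertex, using the description of in-neighbourhoods already developed for \cref{th:bulk} and \cref{th:main1}. Two facts form the backbone. The first is a one-big-jump tail bound for the solution $Z$ of the stochastic fixed point equation \eqref{eq:distr_rec}: under \cref{cond:main} and the power law of $\phi$, for every $\varepsilon'>0$, uniformly in $n$ and for $1\le t\le n^{1/\kappa}$,
\begin{equation*}
  \P(Z>t)\ \le\ t^{-\kappa+\varepsilon'}\,,\qquad\text{so that}\qquad \E[Z^q]=O(1)\ \text{ for every }q<\kappa\,.
\end{equation*}
One proves this by unfolding \eqref{eq:distr_rec} into a multiplicative cascade on the (tree-like) in-neighbourhood — critical because $\sum_v\muout(v)\,d_v^-/d_v^+=1$ — and observing that $\{Z>t\}$ is, to leading order, caused by a single cascade vertex of in-degree $\gtrsim t$, an event governed by $\phi(\cdot,\infty)$; this is a finite-$n$, uniform version of the classical tail theory for branching stochastic fixed point equations. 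The second fact is the exact identity $n\pi(v)=\frac nm\sum_{k=1}^{d_v^-}\rho^{(v)}_k$, where $\rho^{(v)}_k=m\,\pi(u_k)/d_{u_k}^+$ and $u_k$ is the vertex owning the tail matched to the $k$-th head of $v$, together with the output of the local analysis that these $d_v^-$ messages are, to the precision needed here, independent samples from the law of $Z$. Finally, \cref{th:main1}(i) together with $\Delta^-\le n^{1/\kappa+\varepsilon/2}$ (forced by the power law of $\phi$) gives whp $n\pi_{\max}\le C\log(n)\,\frac nm\,n^{1/\kappa+\varepsilon/2}\le n^{1/\kappa+\varepsilon}$, i.e., $\psi(n^{1/\kappa+\varepsilon},\infty)=0$; it then remains to establish \eqref{hppsi1} for each fixed $a\in(0,1/\kappa)$ and $\varepsilon>0$, after which a monotonicity argument over a finite grid of values of $a$ gives the statement for all $a$ simultaneously.

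\emph{Upper bound in \eqref{hppsi1}.} Fix $a,\varepsilon$, set $\delta=\varepsilon/(4\kappa)$, and split $[n]=V_{\mathrm{hi}}\cup V_{\mathrm{lo}}$ with $V_{\mathrm{hi}}=\{v:d_v^->n^{a-\delta}\}$. By the power law of $\phi$, $|V_{\mathrm{hi}}|\le n^{1-(a-\delta)\kappa+\varepsilon/4}\le n^{1-a\kappa+\varepsilon}$, so these vertices are few enough to be discarded. For the rest I would run a first-moment bound: $\E\big[\,|\{v\in V_{\mathrm{lo}}:n\pi(v)>n^a\}|\,\big]\le\sum_{v\in V_{\mathrm{lo}}}\P(n\pi(v)>n^a)$, and, up to a local failure probability that sums to $o(1)$ over $v$, $\P(n\pi(v)>n^a)\le\P\big(\sum_{k=1}^{d_v^-}\tilde\rho_k>c\,n^a\big)$ with $\tilde\rho_k$ i.i.d.\ copies of $Z$ and $c=\Theta(1)$. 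Since $d_v^-\le n^{a-\delta}\ll n^a$, this is a deviation far above the mean; splitting it according to whether one $\tilde\rho_k$ exceeds $\asymp n^a$, or two exceed $\asymp n^a/\log n$, or the sum truncated at $\asymp n^a/\log n$ exceeds $\asymp n^a$ (the last handled by Bernstein's inequality, using $\E[Z^2]=O(1)$ as $\kappa>2$) bounds it by $d_v^-\,n^{-a\kappa+o(1)}+(d_v^-)^2\,n^{-2a\kappa+o(1)}+n^{-2}$. Summing over $v\in V_{\mathrm{lo}}$ with $\sum_v d_v^-=m=O(n)$ and $\sum_v(d_v^-)^2=O(n)$ (a consequence of \eqref{eq:2+eta}) bounds the expectation by $n^{1-a\kappa+\varepsilon/2}$, and Markov's inequality gives whp $|\{v\in V_{\mathrm{lo}}:n\pi(v)>n^a\}|\le n^{1-a\kappa+\varepsilon}$; hence whp $\psi(n^a,\infty)\le n^{-a\kappa+\varepsilon}$.

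\emph{Lower bound in \eqref{hppsi1}.} The mechanism is that the $\Theta(n^{1-a\kappa})$ vertices of largest in-degree already carry that many large values of $\pi$. With $\alpha>0$ the constant fixed below, put $t^\ast=\frac2\alpha\frac mn\,n^a$ and $V_\ast=\{v:d_v^->t^\ast\}$; since the constant $\frac2\alpha\frac mn$ costs only $n^{o(1)}$ and $t^\ast<\Delta^-$ for $n$ large (as $a<1/\kappa$), the lower bound in the power law of $\phi$ gives $|V_\ast|\ge n^{1-a\kappa-\varepsilon/2}$. It then suffices to prove the quantitative form of \eqref{eq:lowb}: there is a constant $\alpha>0$ such that whp $\pi(v)\ge\alpha\,d_v^-/m$ simultaneously over all $v$ with $d_v^-\to\infty$. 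Indeed $n\pi(v)=\frac nm\sum_{k=1}^{d_v^-}\rho^{(v)}_k$, and up to a local failure event we may replace the messages by i.i.d.\ copies $\tilde\rho_k$ of $Z$; this is legitimate near $v\in V_\ast$ because $d_v^-\le\Delta^-\le n^{1/\kappa+\varepsilon}\ll n^{1/2}$ (this is where $\kappa>2$ enters), so the $d_v^-$ in-neighbours of $v$ are distinct whp. Since $\E Z=1$ and $\E[Z^2]=O(1)$, Paley--Zygmund gives $\P(Z\ge\tfrac12)\ge c_0>0$, so by the Chernoff bound $|\{k\le d_v^-:\tilde\rho_k\ge\tfrac12\}|\ge\tfrac{c_0}4 d_v^-$ except with probability $\le e^{-c\,d_v^-}$, which forces $\pi(v)\ge\tfrac{c_0}{8m}\,d_v^-=:\alpha\,d_v^-/m$. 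A union bound over the $|V_\ast|\le n$ vertices, valid because $d_v^-\ge t^\ast\to\infty$, proves the claim whp; then $\pi(v)\ge\alpha\,d_v^-/m>\alpha\,t^\ast/m=2n^{a-1}$ for all $v\in V_\ast$, whence whp $\psi(n^a,\infty)\ge|V_\ast|/n\ge n^{-a\kappa-\varepsilon}$. Together with $\psi(n^{1/\kappa+\varepsilon},\infty)=0$ this shows that $\psi$ has power-law behaviour with index $\kappa$ whp.

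The hardest part is the transfer step: making the comparison of the in-graph messages $\rho^{(v)}_k$ with independent copies of $Z$ quantitative, uniformly over all $n$ vertices and at all polynomial scales $n^a$, so that it survives the union bounds above. This requires control of the tree-likeness of in-neighbourhoods up to depth $\asymp\log n$, of the number of collisions, and, above all, of the contribution of the not-yet-mixed part of the walk (one replaces a depth-$\ell$ truncation by its mixed value), via the contraction/mixing machinery underlying \cref{cutoff}; this is precisely where the unbounded-in-degree regime departs substantively from the bounded-degree analyses of \cite{bordenave2018,caputo2020a}. The two remaining technical points — the one-big-jump tail bound for $Z$, uniform in $n$ up to scale $n^{1/\kappa}$, and the strengthening of \eqref{eq:lowb} to a simultaneous statement with failure probability exponentially small in $d_v^-$ — are comparatively routine once the local machinery is in place.
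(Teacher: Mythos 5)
Your outer skeleton agrees with the paper in two places: the reduction $\psi(n^{1/\kappa+\varepsilon},\infty)=0$ from \cref{th:main1}(i) plus $\Delta^-\le n^{1/\kappa+\varepsilon}$, and the lower-bound mechanism (the $\ge n^{1-a\kappa-\varepsilon}$ vertices of in-degree $\gtrsim n^a$ already carry stationary mass $\gtrsim n^{a-1}$, which is exactly how the paper argues via \cref{coro:lower_bound_large_degs}). But the engine you propose for both bounds --- the exact identity $n\pi(v)=\frac nm\sum_{k\le d_v^-}\rho^{(v)}_k$ with the in-messages replaced by i.i.d.\ copies of the \textsc{sfpe} solution $Z$, plus a one-big-jump tail bound for $Z$ --- contains a genuine gap at precisely the step you flag as ``the hardest part''. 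The messages $\rho^{(v)}_k=m\,\pi(u_k)/d^+_{u_k}$ are functionals of the \emph{entire} graph, not of the local in-neighbourhood; nothing in the paper's local machinery (or in the cutoff machinery you invoke) yields a per-vertex, tail-scale replacement of these by independent copies of $Z$. The bulk result \cref{th:bulk} is a $\cW_1$ statement and, as you yourself note, does not transmit tails; the quenched approximation $\|\pi-\muin P^h\|_{\tv}=o_{\P}(1)$ only controls the number of vertices with error $>n^{a-1}$ at the useless level $o(n^{1-a})$; and the tree coupling of \cref{le:coupling} fails with probability $(d_y^-)^2n^{-1/3}$ per vertex, which neither sums to $o(1)$ against the events of probability $n^{-a\kappa+o(1)}$ you need in the first-moment upper bound, nor supplies the exponentially-small-in-$d_v^-$ failure probability your lower-bound union bound requires --- and even on the coupling event the boundary values $\pi(u_k)$ are neither independent nor distributed as $Z$ (your distinctness-of-in-neighbours remark addresses a different issue). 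On top of this, your ``first fact'', the finite-$n$ uniform bound $\P(Z>t)\le t^{-\kappa+\varepsilon'}$ for $t\le n^{1/\kappa}$, concerns a \emph{critical} smoothing transform with only \cref{def:PL}-type control of the degrees and is itself a nontrivial lemma that you assert rather than prove. So the proposal is a programme whose two load-bearing lemmas are both open as written.

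It is worth seeing how the paper avoids ever needing such a transfer. For the upper bound it never touches $\pi(v)$ through its in-messages: it replaces $\pi$ by the annealed average measure $\mu_t$, $t=\log^3 n$, using the exponential approximation of \cref{coro:mixing} (pointwise error $\le 1/n$, so tail counts transfer exactly), builds the $a$-skeleton $\xi_a$ of \cref{suse:skeleton} whose size is $\le n^{1-a\kappa+\varepsilon}$ by $\kappa$-lightness (\cref{lem:sigma}), and shows via the $K$-walk annealed construction that off the skeleton the $K$-th moment of $\mu_t(y)$ is $\le n^{K(a+\varepsilon-1)}$ (\cref{le:mom-indind}), so that a first moment on $Z_a$ (\cref{prop:EZa}) closes the bound --- no information about the law of $Z$ at tail scales is used anywhere. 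For the lower bound, the simultaneous statement $\pi(y)\ge(1-\varepsilon)d_y^-/m$ for all $y\in V(a)$ is proved not by Paley--Zygmund on i.i.d.\ messages but by the nice-path/weighted-out-neighbourhood construction and Chatterjee's concentration inequality for random permutations (\cref{prop:LB1}, then \cref{lem:access_stationary}), which gives per-pair failure probability $o(n^{-2})$ and hence survives the union bound. If you want to salvage your route, the honest statement of what must be proved is a quantitative, uniform-over-vertices comparison of $\big(\rho^{(v)}_k\big)_{k\le d_v^-}$ with i.i.d.\ $Z$'s at deviation scales $n^{a}$ with error probabilities $o(n^{-a\kappa})$ per vertex; that is a result of a different order of difficulty than the ``comparatively routine'' technical points you list, and it is exactly what the skeleton-plus-annealed-moments argument was designed to circumvent.
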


\begin{remark}\label{rem:general_ubtail}
If we only assume \cref{cond:main},
setting $\kappa_0=2+\eta$,
where $\eta>0$ is such that \cref{eq:2+eta} holds,
then we will see that for all $\varepsilon>0$,
$\psi$ satisfies the following upper bound \ac{whp}:
for all $a>0$,
\begin{equation}\label{psitail}
    \psi(n^a,\infty)
  \le n^{-a\kappa_0+\varepsilon},
\end{equation}
that is the right tail of $\psi$ is dominated by a heavy tail with index $\kappa_0$.
In some sense, this indicates that among all in-degree distributions with bounded $\kappa_0$-moment, the ones  with power-law behavior with index $\kappa_0$ ``maximize'' the upper tail of the  stationary distribution.
\cref{psitail} will be proved in~\cref{sec:PL} together with~\cref{th:main2},
as a consequence of a more general upper bound on $\psi(n^a,\infty)$.
\end{remark}

\subsubsection{PageRank surfer}

Next, we discuss the power-law behavior of PageRank.
Fix $\alpha \in(0,1)$ and let $\lambda$ be a probability distribution on $[n]$,
which we refer to as the \emph{teleporting probability}
and the \emph{teleporting distribution} respectively. The factor $1-\alpha$ is also referred in the literature as the \emph{damping factor}.
Consider the $(\alpha,\lambda)$-PageRank surfer,
that is the Markov chain with transition matrix
\begin{equation}\label{def-surfer}
P_{\alpha,\lambda}(x,y)=(1-\alpha)P(x,y)+\alpha \lambda(y).
\end{equation}
We call $(\alpha,\lambda)$-PageRank score the stationary distribution $\pi_{\alpha,\lambda}$ of this Markov chain, which is known to always be unique and to satisfy
\begin{equation}\label{eq:pi-PR}
\pi_{\alpha,\lambda}=\sum_{k=0}^\infty \alpha(1-\alpha)^k\lambda P^k,
\end{equation}
see, e.g.,~\cite{caputo2021}.
We will further assume that the teleporting distribution $\lambda$ is uniform up to multiplicative sub-polynomial factors,
that is, for all $\varepsilon>0$, 
\begin{equation}\label{hp:lambda}
 n^{-1-\varepsilon}\le \lambda(x)\le n^{-1+\varepsilon},
\end{equation}
for all sufficiently large $n$, uniformly in $x\in [n]$.

Let $\psi_{\alpha,\lambda}\in\cM_n$ be the empirical distribution in \cref{emp} corresponding to $x_v=n \pi_{\alpha,\lambda}(v)$.
\begin{theorem}\label{lbtailPR}
    Let ${\bfd}_n$ be a bi-degree sequence satisfying \cref{cond:main}
    and assume that its  empirical in-degree distribution $\phi$ has
    power-law behavior with index $\kappa>2$.
    For any constant $\alpha\in(0,1)$, and probability distribution $\lambda$ satisfying~\cref{hp:lambda},
    \ac{whp} $\psi_{\alpha,\lambda}$ has power-law behavior with the same index $\kappa$,
    that is, for all $\varepsilon>0$,  \ac{whp}, $\psi_{\alpha,\lambda}(n^{1/\kappa +\varepsilon},\infty)=0$ and for all $a\in(0,1/\kappa)$,
    \begin{equation}\label{hppsi}
       n^{-a\kappa-\varepsilon}\le \psi_{\alpha,\lambda}(n^a,\infty) \le n^{-a\kappa+\varepsilon}.
    \end{equation}
\end{theorem}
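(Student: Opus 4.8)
The plan is to prove the two inequalities in \eqref{hppsi} separately, in both cases starting from the series representation \eqref{eq:pi-PR} and the two-sided control \eqref{hp:lambda} on the teleporting distribution; the power-law behavior of $\phi$ (cf.\ \cref{def:PL}) enters only in the upper bound, in the form used to prove \cref{th:main2}. For the lower bound, observe that retaining only the $k=1$ term of \eqref{eq:pi-PR} and using $\lambda(y)\ge n^{-1-\varepsilon}$, $d_y^+\le K$ and $\sum_y m(y,v)=d_v^-$ gives, for every vertex $v$,
\begin{equation*}
\pi_{\alpha,\lambda}(v)\ \ge\ \alpha(1-\alpha)\,(\lambda P)(v)\ =\ \alpha(1-\alpha)\sum_{y}\lambda(y)\,\frac{m(y,v)}{d_y^+}\ \ge\ \frac{\alpha(1-\alpha)}{K}\,n^{-1-\varepsilon}\,d_v^-.
\end{equation*}
Hence $n\pi_{\alpha,\lambda}(v)>n^{a}$ whenever $d_v^->n^{a+2\varepsilon}$ (for $n$ large), so $\psi_{\alpha,\lambda}(n^a,\infty)\ge\phi(n^{a+2\varepsilon},\infty)\ge n^{-(a+2\varepsilon)\kappa-\varepsilon}$ by the assumed power law of $\phi$, valid as long as $a+2\varepsilon<1/\kappa$; letting $\varepsilon\downarrow 0$ gives the left inequality in \eqref{hppsi}. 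Note that this part is deterministic.

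For the upper bound I would first reduce to uniform teleportation. Since $\lambda(y)\le n^{-1+\varepsilon}$, \eqref{eq:pi-PR} yields $\pi_{\alpha,\lambda}(v)\le n^{\varepsilon}\,\pi_{\alpha,\mathrm u}(v)$ for all $v$, where $\mathrm u$ is the uniform law on $[n]$ and $\pi_{\alpha,\mathrm u}:=\alpha\sum_{k\ge0}(1-\alpha)^k\,\mathrm uP^k$; so it suffices to bound the upper tail of $\psi_{\alpha,\mathrm u}\in\cM_n$ and then shift $a$ by $\varepsilon$. Writing $r_k(v):=\sum_y P^k(y,v)=n\,(\mathrm uP^k)(v)$, we have $\sum_v r_k(v)=n$ for every $k$ and $n\pi_{\alpha,\mathrm u}(v)=\alpha\sum_{k\ge0}(1-\alpha)^k r_k(v)$. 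Truncate the series at $T:=C\log n$ for a large constant $C=C(\alpha)$: the remainder $k>T$ contributes at most $(1-\alpha)^{T+1}n\le n^{-1}$ to $n\pi_{\alpha,\mathrm u}(v)$ (using only $r_k(v)\le n$), while the head is at most $\sum_{k=0}^{T}r_k(v)$. Therefore $n\pi_{\alpha,\mathrm u}(v)>n^{a}$ forces $r_k(v)>n^{a-\varepsilon}$ for some $0\le k\le T$, so that
\begin{equation*}
\big|\{v:\ n\pi_{\alpha,\mathrm u}(v)>n^{a}\}\big|\ \le\ \sum_{k=0}^{T}\big|\{v:\ r_k(v)>n^{a-\varepsilon}\}\big|.
\end{equation*}

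The crux is then the following uniform estimate: whp, for all $0\le k\le C\log n$ and all $b\in(0,1/\kappa)$,
\begin{equation*}
\big|\{v:\ r_k(v)>n^{b}\}\big|\ \le\ n^{1-b\kappa+\varepsilon}, \qquad \max_v r_k(v)\ \le\ n^{1/\kappa+\varepsilon}.
\end{equation*}
For $k=0$ this is trivial ($r_0\equiv 1$) and for $k=1$ it is deterministic, from $r_1(v)=\sum_{y\to v}1/d_y^+\le d_v^-/2$ and the power law of $\phi$. For $k\ge2$ I would run a backward exploration from $v$: up to depth $C\log n$ the in-neighbourhood of $v$ is locally tree-like, $r_k(v)$ satisfies $r_k(v)=\sum_{y\to v}r_{k-1}(y)/d_y^+$, and the in-neighbours of a vertex are, to leading order, i.i.d.\ out-degree--size-biased. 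The decisive point, which is also the main obstacle, is that the weights $1/d_y^+$ cancel this size-biasing exactly, so that $\E[r_k(v)]$ stays comparable to $\frac{n}{m}d_v^-$ with \emph{no} geometric growth in $k$ --- a naive bound $1/d_y^+\le 1/2$ would lose a factor $K^{k}$, a genuine power of $n$, which is fatal. Exploiting this cancellation together with a single-big-jump estimate along the exploration one gets, uniformly in $k\le C\log n$, a bound of the shape $\P\big(r_k(v)>s\big)\le C_\varepsilon\,(d_v^-\vee1)\,s^{-\kappa+\varepsilon}$ for $s\ge d_v^-$; splitting the vertices according to whether $d_v^->n^{b}$ and using $\sum_v d_v^-=m=O(n)$ then gives $\E\,\big|\{v:r_k(v)>n^{b}\}\big|\le n^{1-b\kappa+\varepsilon'}$, after which Markov's inequality and a union bound over the $O(\log n)$ values of $k$ finish the estimate.

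Putting this into the displayed bound with $b=a-\varepsilon$ gives $\big|\{v:n\pi_{\alpha,\mathrm u}(v)>n^{a}\}\big|\le (T+1)\,n^{1-(a-\varepsilon)\kappa+\varepsilon}\le n^{1-a\kappa+O(\varepsilon)}$, and similarly $n\pi_{\alpha,\mathrm u}(v)\le (T+1)n^{1/\kappa+\varepsilon}+n^{-1}\le n^{1/\kappa+2\varepsilon}$ for all $v$; undoing the reduction to uniform teleportation (one more $\varepsilon$-shift of $a$, via $\pi_{\alpha,\lambda}\le n^\varepsilon\pi_{\alpha,\mathrm u}$) yields both halves of \eqref{hppsi}. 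I expect the uniform-in-$k$ tail bound for $r_k(v)$ to be the only real difficulty: one must follow the backward exploration over logarithmically many generations while keeping the exact weight/size-bias cancellation, and control the tree-approximation errors and the lower-order multiple-jump contributions using the finiteness of the $(\kappa-\varepsilon)$-th moment of the in-degrees --- exactly the circle of estimates behind \cref{th:main2}.
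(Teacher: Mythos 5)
Your lower bound is correct and is essentially the paper's own argument for constant $\alpha$: keeping one term of \cref{eq:pi-PR} and using $\lambda_{\min}\ge n^{-1-\varepsilon}$, $\Delta^+\le K$ gives $n\pi_{\alpha,\lambda}(v)\gtrsim n^{-\varepsilon}d_v^-$, and the power law of $\phi$ does the rest. The upper bound, however, has a genuine gap. After truncating the series at $T=C\log n$ you reduce everything to the uniform tail estimate $|\{v:r_k(v)>n^b\}|\le n^{1-b\kappa+\varepsilon}$ for \emph{all} $k\le C\log n$, where $r_k(v)=n\mu_k(v)$ in the paper's notation, and you justify it by a backward exploration in which ``up to depth $C\log n$ the in-neighbourhood of $v$ is locally tree-like'' together with a single-big-jump estimate. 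That justification fails: the constant $C=C(\alpha)$ must be large (so that $(1-\alpha)^T n\le n^{-1}$), and already at depths of order the entropic time $\tent=\log n/\hh$ the in-neighbourhood has size comparable to $n$, so for $k$ of order $C\log n$ it is nothing like a tree and the i.i.d.\ size-biased recursion for $r_k$ breaks down. Moreover, the asserted tail bound $\P(r_k(v)>s)\lesssim d_v^-\,s^{-\kappa+\varepsilon}$ uniformly in $k\le C\log n$ is essentially equivalent to the upper-tail statement of \cref{th:main2} itself (for $k$ near or beyond mixing, $r_k(v)\approx n\pi(v)$), so invoking ``the circle of estimates behind \cref{th:main2}'' is circular unless you actually redo them for every $k$; the paper's estimates there are not a tree recursion but a global argument built on the $a$-skeleton $\xi_a$, the good event $\cG^+(h)$ with $h$ only a small multiple of $\log n$, and $K$-th moments of $\mu_t$ computed via the annealed multi-walk construction, all at the single time $t=\log^3 n$.

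The paper avoids your per-$k$ problem altogether: by Proposition~8 of~\cite{caputo2021}, $\max_x\|P^t_{\alpha,\lambda}(x,\cdot)-\pi_{\alpha,\lambda}\|_{\tv}\le 2(1-\alpha)^t\max_x\|P^t(x,\cdot)-\pi\|_{\tv}$, so \cref{coro:mixing} lets one replace $\pi_{\alpha,\lambda}$ by $\mu^{\alpha,\lambda}_t$ at $t=\log^3 n$, and then \cref{lem:zzlemma} bounds $Z^{\alpha,\lambda}_a$ by rerunning the annealed-walk moment argument of \cref{lem:zlemma} for PageRank walks, where teleportation steps reveal no new matchings and only simplify the analysis (one merely replaces $d_z^-/n$ by $d_z^-/n+\lambda(z)$ and uses $\lambda_{\max}\le n^{\varepsilon-1}$). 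To repair your route you would either have to prove the uniform-in-$k$ tail bound for $r_k(v)$ by a genuinely global argument of this type (at which point you are reproving the paper's machinery for every $k$ instead of one $t$), or switch to the paper's reduction via the contraction of the PageRank chain.
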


\begin{remark}
We will actually show a stronger result which holds for non-constant $\alpha$. Namely, that the upper bound $\psi_{\alpha,\lambda}(n^a,\infty)\le n^{-a\kappa + \varepsilon}$ holds \emph{uniformly} for arbitrary sequences $\alpha=\alpha_n\in[0,1]$. Indeed, as far as the upper bounds on the stationary distribution are concerned, it turns out that the presence of the parameter $\alpha$ can only make our analysis simpler; see \cref{rem:remalpha}. Moreover, the lower bound
$\psi_{\alpha,\lambda}(n^a,\infty)\ge  n^{-a\kappa - \varepsilon}$ holds under the only assumption that $\limsup_{n\to\infty}\alpha_n <1$; see \cref{sec:pagerank}.

\end{remark}

\begin{remark}\label{rem:maxPR}
Concerning the maximum PageRank score we will see that the following bounds hold \ac{whp} for any bi-degree sequence satisfying \cref{cond:main}, for all $\alpha\in[0,1]$ and any probability $\lambda$ on $[n]$:
    \begin{gather}\label{eq:pimaxPR}
                \alpha(1-\alpha)\lambda_{\min}\,\frac{\Delta^-}{\Delta^+}\le    \max_{x\in[n]}\pi_{\alpha,\lambda}(x)\le
                  C\log(n)\left(\lambda_{\max}+\frac{\Delta^-}{m}\right),
    \end{gather}
where $C$ is an absolute constant, $\lambda_{\min}=\min_{y\in[n]}\lambda(y)$, and $\lambda_{\max}=\max_{x\in[n]}\lambda(x)$. These bounds will be a simple consequence of our main results; see \cref{rem:pPR}.
       \end{remark}

\subsection{Motivation and related work}

Random walks on random undirected graphs have attracted a lot of attention in the last
decade~\cite{ben-hamou2017,berestycki2018,fountoulakis2008,lubetzky2010}.
Contrarily, much less is known in random digraphs.
The non-reversible nature of random walks in directed environments poses the challenge of developing
new techniques to study their properties.

One of the most natural models for random digraphs is the \acf{dcm},
which has been introduced in the literature as a directed analogue
of the configuration model~\cite{chen2013,cooper2004,newman2001}.
We refer the interested reader to~\cite{cai2020,cooper2004} for results
on its component structure and to~\cite{cai2020a,caputo2020a,hoorn2018} for the study of its distance profile.
Bordenave, the second author and Salez~\cite{bordenave2018}
recently initiated the study of random walks on the \ac{dcm}.
Provided that the minimum out-degree is at least $2$ and the maximum in-degree and out-degree are bounded,
they showed that the mixing time coincides with the \emph{entropic time},
defined in \cref{eq:H},
exhibits cutoff and has a Gaussian behavior inside the cutoff window.
Moreover, they showed that the stationary distribution of a uniformly random vertex $\cI$
converges (in the $1$-Wasserstein sense) to the solution
of the \acf{sfpe} displayed in~\cref{eq:distr-lim-mart}.
These results are extended to other models of  non-reversible sparse random Markov chains in~\cite{bordenave2019}.
Our results in \cref{cutoff} and \cref{th:bulk} show that the hypothesis on the degree sequence in~\cite{bordenave2018}
can be further relaxed to~\cref{cond:main}.

One of the questions left  open in~\cite{bordenave2018} is
the determination of the extremal behavior of the stationary values.
The second and fourth authors~\cite{caputo2020a} showed
that, in the bounded degree setting, the extremal (minimum and maximum) values of the stationary distribution
exhibit logarithmic fluctuations around the average value,
the exponents of the logarithm being essentially determined
by the minimum and maximum in- and out-degrees. In particular, regarding $\pi_{\max}$,
Theorem 1.6 in~\cite{caputo2020a} shows that if ${\bfd}_n$ is a bi-degree sequence satisfying
$2\le d_v^\pm=O(1)$, and such that there are linearly many vertices with degrees
$(\Delta^-,\delta^+)$, where $\delta^+=\min_v d_v^+$,  then there exists a constant $C>1$ such that \ac{whp}
\begin{equation}\label{eq:pi_max_bounded}
\frac{n\pi_{\max}}{\log^{1-\kappa_0} n} \in [C^{-1},C],
\end{equation}
where $\kappa_0=\frac{\log \delta^+}{\log \Delta^-}$.
\cref{th:main1} shows that if we allow the in-degrees to grow with the order of the digraph then
$\pi_{\max}$ will have much larger fluctuations.

In a similar spirit, the first and third authors~\cite{cai2020d}
proved that by dropping the condition on the minimum in-degree,
the minimum stationary value may become polynomially smaller than the average,
with the exponent given by the solution of an optimization problem
involving subcritical branching processes
and large deviation rate functions of the bi-degree distribution. Moreover, their results also give an implicit description of the lower tail of $\psi$, complementing \cref{rem:general_ubtail}.
In both works~\cite{cai2020d,caputo2020a},
controlling the minimum stationary values allows us
to estimate the cover time of a random walk in \ac{dcm}.

Stationary measures have also been studied for other random digraphs models.
Cooper and Frieze~\cite{cooper2012} determined the stationary distribution
of the directed Erd\"os-R\'enyi random graph in the strong connectivity regime,
motivated by their systematic study of the cover time in  random graph models.
Addario-Berry, Balle and the third author~\cite{addario-berry2020}
provided estimations for the extremal values of the stationary distribution
in random out-regular digraphs,
with applications to random deterministic finite automata.

While the analysis of random walks on \ac{dcm} only started recently, its stationary distribution has in
fact received a lot of attention under the framework of the PageRank algorithm.  PageRank was
introduced in~\cite{page1999} as a ranking measure for the webgraph and is a core element in
Google's search engine. The PageRank score is simply defined as the stationary distribution of the
PageRank surfer defined in~\cref{def-surfer}. We refer to~\cite{caputo2021} for
the mixing properties of the PageRank surfer on \ac{dcm}.  Compared to the in-degree ranking, the
PageRank score is less susceptible to assign high priority to spam pages~\cite{haveliwala2003}.
Nevertheless, empirical observations give a high average correlation between in-degrees and
PageRank~\cite{amento2000}. The so-called \emph{power-law hypothesis} ventures a more precise
description for scale-free networks: if the in-degree of a network is power-law distributed, then
its PageRank score also follows a power-law distribution with the same exponent. This has been
experimentally confirmed in several real-world
networks~\cite{donato2004,pandurangan2002,upstill2003}, in the  particular case of the webgraph, the
in-degree and PageRank are both approximately power-law distributions with index $\kappa\approx 1.1$. The
effect of the teleporting factor $\alpha$ has also been studied in~\cite{becchetti2006}, observing
that the top $10\%$ ranked elements follow a power-law distribution regardless of $\alpha$.

The abundance of empirical evidence has motivated the mathematical analysis of the power-law hypothesis.
A series of papers~\cite{litvak2007,volkovich2010,volkovich2007} proposed an idealized stochastic
model proving that the power-law distributions of the in-degree and of the PageRank score of a uniformly random
vertex $\cI$ only differ by a multiplicative factor.
Chen, Litvak
and Olvera-Cravioto~\cite{chen2014,chen2017} initiated the rigorous analysis of PageRank on \ac{dcm},
proving that the score of $\cI$ can be approximated by the PageRank score of the root of certain
infinite random tree, under the assumption that the in- and out-degrees are independently
distributed. In particular, if the in-degree distribution of $\cI$ is a power-law, so is its
PageRank. Olvera-Cravioto has recently extended these results to degree-degree correlated
distributions~\cite{olvera-cravioto2019}. In particular, the distribution of the PageRank of $\cI$
weakly converges to the attractive endogenous solution of an \ac{sfpe} that
generalizes~\cref{eq:distr-lim-mart}. The asymptotic properties of the solution imply that upper
tail of the PageRank of $\cI$ is asymptotically distributed as power-law with the right exponent.

The PageRank has also been studied in other directed random networks such as
\ac{irg}~\cite{lee2020,olvera-cravioto2019} and the \ac{dpa}~\cite{avrachenkov2006,banerjee2021}.
Remarkably, the power-law hypothesis is only partially true in \ac{dpa}: PageRank exhibits a
power-law distribution with different index than the index of the in-degree distribution. An
approach based on local weak convergence was given in~\cite{garavaglia2020}, yielding lower bounds
for the PageRank of a random vertex for any sequence of digraphs that has a local weak limit.

All aforementioned results describe the PageRank score of a vertex 
$\cI$ picked uniformly at random in $[n]$, or of a fixed given vertex, as in the case
of the oldest vertex in \ac{dpa} obtained
in~\cite{banerjee2021}. However, in most of the applications (such as web indexing), it is of
foremost importance to identify the top ranked elements~\cite{avrachenkov2011}.
To our best knowledge,
\cref{lbtailPR} is the first result that establishes the power-law hypothesis in the \emph{large
deviation} sense, providing the shape of the upper tail of the PageRank distribution in \ac{dcm}, not
only the upper tail for a typical vertex in the bulk of the digraph.

We refer to \cref{sec:future} for a discussion of open problems and future research directions.

\section{Preliminary results}

\subsection{Bounded moments}

We will use frequently the following deterministic property of degree sequences
with bounded $2+\eta$ moment of in-degrees.
\begin{lemma}\label{lem:s}
    Let ${\bfd}_n$ be a bi-degree sequence satisfying~\cref{eq:2+eta} with $\eta \in (0,1)$.
    Then
    \begin{equation}\label{eq:Delta:m}
        \Delta^{-} = O(n^{\frac{1}{2}-\frac{\eta}{6}}).
    \end{equation}
    Moreover, for any $S \subset [n]$ with $\abs{S}\le n^{1-\eta}$,
    \begin{equation}\label{eq:nuI}
        \sum_{v\in S} d_v^{-}
        =
        O\Big((\abs{S} n)^{\frac{1}{2}-\frac{\eta^2}{12}}\Big).
    \end{equation}
\end{lemma}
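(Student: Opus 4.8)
The plan is to deduce both estimates directly from the moment bound \eqref{eq:2+eta}; this is a purely deterministic statement about the degree sequence, so no probability is involved.

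For \eqref{eq:Delta:m} I would simply use the crude bound $(\Delta^{-})^{2+\eta}\le\sum_{v\in[n]}(d_v^-)^{2+\eta}\le Cn$, which gives $\Delta^{-}\le(Cn)^{1/(2+\eta)}$, and then observe that $\frac1{2+\eta}\le\frac12-\frac\eta6$ for every $\eta\in(0,1)$: after clearing denominators this is $6\le(2+\eta)(3-\eta)=6+\eta(1-\eta)$, which holds. Hence $\Delta^{-}=O\big(n^{1/(2+\eta)}\big)=O\big(n^{1/2-\eta/6}\big)$.

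For \eqref{eq:nuI} the key tool is Hölder's inequality with the conjugate exponents $2+\eta$ and $\frac{2+\eta}{1+\eta}$, which together with \eqref{eq:2+eta} gives
\begin{equation*}
\sum_{v\in S}d_v^{-}\le\Big(\sum_{v\in S}(d_v^{-})^{2+\eta}\Big)^{\frac1{2+\eta}}|S|^{\frac{1+\eta}{2+\eta}}\le(Cn)^{\frac1{2+\eta}}\,|S|^{\frac{1+\eta}{2+\eta}}.
\end{equation*}
It then remains to check that the right-hand side is $O\big((|S|n)^{1/2-\eta^2/12}\big)$ whenever $1\le|S|\le n^{1-\eta}$ (the case $S=\emptyset$ being trivial). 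The cleanest way to see this is to write the ratio of the two sides as $C^{1/(2+\eta)}n^{\alpha_1}|S|^{\alpha_2}$ with $\alpha_1=\frac1{2+\eta}-\frac12+\frac{\eta^2}{12}$ and $\alpha_2=\frac{1+\eta}{2+\eta}-\frac12+\frac{\eta^2}{12}$; an elementary computation gives $\alpha_1<0<\alpha_2$ and $\alpha_1+(1-\eta)\alpha_2\le0$ (this last inequality boils down, after simplification, to $2+\eta^2\ge0$). Since $\alpha_2>0$, the constraint $|S|\le n^{1-\eta}$ then forces $n^{\alpha_1}|S|^{\alpha_2}\le n^{\alpha_1+(1-\eta)\alpha_2}\le1$, and the ratio is bounded by an absolute constant. (Alternatively, since both exponents of $n$, viewed as functions of $\log|S|/\log n\in[0,1-\eta]$, are affine, it suffices to verify the estimate at the two extreme cases $|S|=1$ and $|S|=n^{1-\eta}$.)

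There is no serious obstacle here: the whole content is the moment bound together with Hölder's inequality, followed by an exponent computation. The only points worth flagging are the choice of the conjugate Hölder exponent and the role of the hypothesis $|S|\le n^{1-\eta}$ — some sub-linear restriction on $|S|$ is genuinely necessary, since the bound in \eqref{eq:nuI} already fails for $|S|=n$ (for instance when all in-degrees equal $2$), and the precise constants $\eta/6$, $\eta^2/12$ and the threshold $n^{1-\eta}$ are chosen for convenience rather than for optimality.
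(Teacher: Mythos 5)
Your proof is correct and follows essentially the same route as the paper: H\"older's inequality with exponent $2+\eta$ together with the moment bound \cref{eq:2+eta}, followed by an elementary comparison of exponents using $|S|\le n^{1-\eta}$ (the paper obtains \cref{eq:Delta:m} by taking $|S|=1$ in the same H\"older estimate, which is equivalent to your crude max bound). The only difference is cosmetic bookkeeping: the paper absorbs the inequality $\tfrac{1}{2+\eta}\le\tfrac12-\tfrac\eta6$ into the H\"older step and then uses $(|S|/n)^{\eta/6}\le (|S|n)^{-\eta^2/12}$, whereas you compare exponents $\alpha_1,\alpha_2$ directly — both computations are valid.
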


\begin{proof}
    By H\"older's inequality~\cite[Theorem~1.5.2]{durrett2010} with $p=2+\eta$,
    \begin{equation}\label{eq:partial}
        \sum_{v\in S} d_v^{-}
        \le
        \Big(\sum_{v\in S} (d_v^{-})^{p}\Big)^{\frac{1}{p}} \abs{S}^{1-\frac{1}{p}}
        = O\Big(\(n/|S|\)^{\frac{1}{p}}\Big)\abs{S} = O\(n^{\frac{1}{2}-\frac{\eta}{6}}\abs{S}^{\frac{1}{2}+\frac{\eta}{6}}\),
    \end{equation}
    where in the last inequality we used that $\frac{1}{p}\le \frac{1}{2}-\frac{\eta}{6}$ for $\eta\in (0,1)$.
    Taking $|S|=1$ we obtain \cref{eq:Delta:m}.
    Finally, using $\abs{S}\le n^{1-\eta}$ we obtain \cref{eq:nuI} from \cref{eq:partial}.
\end{proof}

\subsection{Local structure}\label{suse:local}

Let $\bfd_n=((d^{-}_1,d_1^{+}),\dots, (d^{-}_n,d^{+}_n))$ be a bi-degree sequence. For each $v\in [n]$, assign a set $E^-_v$ of $d^-_v$ labeled heads, and a set $E^+_v$ of $d^+_v$ labeled tails, and let
$E^\pm = \cup_{v\in [n]} E_v^{\pm}$. Throughout the paper, we will use $f$ to denote heads in $E^-$ and $e$ to denote tails in $E^+$. Denote by $v_e$ (or $v_f$) the vertex incident to $e$ (or $f$).
Every bijection $\omega:E^+\to E^-$ induces a multi-digraph $G=G_n(\omega)$ with vertex set $[n]$ and bi-degree sequence $\bfd_n$ by assigning a directed edge to every pair of vertices $(v_e,v_f)$ such that $\omega(e)=f$.
For simplicity, the multi-digraph $G$ will be often referred to as the digraph.

For $h\in\N$, a \emph{path} of length $h$ is a sequence of edges
\begin{equation}\label{eq:traj}
    \mathfrak{p} =
    \left\{
        (e_0,f_1),
        \left(e_1,f_2\right),
        \dots,
        \left(e_{h-1},f_{h}\right)
    \right\}
    ,
\end{equation}
where $e_{j-1}\in E^+$, $f_j\in E^-$, $\omega(e_{j-1})=f_{j}$,
and $v_{f_{j}}=v_{e_{j}}$ for all $j\in [h]$.
If $x=v_{e_0}$ and $y=v_{f_h}$,
we say that $\p$ is a path starting at $x$ and ending at $y$.
The \emph{weight} of the path $\p$ is the product of the inverse of the out-degrees of all
vertices along $\p$ except the last one, that is
\begin{equation}\label{eq:weight}
    \w( \mathfrak{p}) =
    \prod_{j=0}^{h-1}\frac1{d^+_{v_{e_j}}}   .
\end{equation}
Let $\cP(x,y,h,G)$ denote the set of all paths of length $h$ starting at $x$ and ending at $y$ in
the multi-digraph $G$.
A path is called \emph{simple} if it never visits the same vertex more than once.

For any $x\in [n]$ and $h\in \N$, the out-neighborhood of $x$ of depth $h$, $\cB_x^+(h)$, is the subgraph induced by all paths of length at most $h$ starting at $x$. Similarly, for any $y\in [n]$ the in-neighborhood of $y$ of depth $h$, $\cB_y^-(h)$, is the subgraph induced by all paths of length at most $h$ ending at $y$. We often identify $\cB_v^\pm(h)$ with its vertex set.
The boundary of $\cB_x^+(h)$, that is  the set of vertices $v$ such that the shortest path starting at $x$ and ending $v$ has length $h$, is denoted by $\partial \cB_x^+(h)$. Similarly $\partial \cB_y^-(h)$ represents the set of vertices $v$ such that the shortest path starting at $v$ and ending at $y$ has length $h$.

\subsubsection{Sequential generation.}\label{sec:SG}
For each $n\in\N$ the digraph $G=G_n(\omega)$ can be generated by matching tails and heads one at a time as follows.
Given a priority rule $\cR$,
\begin{enumerate}[(i)]
    \item choose an unmatched head $f\in E^-$ (if any) according to $\cR$;
    \item choose an unmatched tail $e\in E^+$ uniformly at random;
    \item set $\omega(e)=f$, and proceed.
\end{enumerate}
Observe that the roles of tails and heads can be reversed.

To explore an in-neighborhood $\cB^-_y(h)$,
we run the previous procedure with the priority rule given by the \ac{bfs} order.
In other words, at each time we choose a head closest from $y$ that has not been matched yet,
and pair it with a uniformly random unmatched tail.
We halt the procedure whenever all unmatched heads are at distance at least $h$ from $y$.
Similarly, reversing tails and heads, one can explore out-neighborhoods.

As in many sparse random models, one may expect that the neighborhoods are locally tree-like.
It will be important to see how much  they differ from a tree, motivating the following definition.
The \emph{tree-excess} of a multi-digraph $G=(V,E)$ is the number of additional edges it has with respect to a tree; that is,
\begin{equation}\label{def:tree_excess}
    \tx(G) \coloneqq 1+|E(G)|-|V(G)|.
\end{equation}
A step of the generating procedure is called a \emph{collision} if the vertex $v_f$ of the head $f$ such that $\omega(e)=f$ had been exposed during one of the previous pairings. Collisions indicate the appearance of additional edges in neighborhoods. In particular, the number of collisions in the \ac{bfs} generation of $\cB^+_x(h)$ is $\tx(\cB^+_x(h))$.

\subsubsection{Coupling with marked Galton-Watson trees.}\label{sec:GW}
For $x\in [n]$, let $\cT^-_y$ be the marked random tree with marks $\ell: V(\cT^-_y) \to [n]$, where $V(\cT^-_y)$ is the set of vertices of the tree,
having root $a_0$ with $\ell(a_0)=y$ and constructed iteratively with the following procedure,
starting with $a = a_{0}$:
\begin{enumerate}[(i)]
    \item Attach $d_{\ell(a)}^-$ children to $a$.
    \item Assign to each child $b$ of $a$ independently at random the mark $\ell(b)=z\in[n]$ with probability
        $d^+_z/m$.
    \item Choose the next $a$ to be the element in the tree which is one of the closest to the root
        among elements whose children have not been exposed.
        Terminate if no such element exists; otherwise go to step (i).
\end{enumerate}

Reversing the roles of in-degrees and out-degrees, we construct the random tree $\cT^+_x$.
Denote by $\cT^-_y(h)$ the subtree of $\cT^-_y$ containing the elements at distance at most $h$ from the root,
and $\partial \cT^-_y(h)$ the subtree containing those at distance exactly $h$;
similarly for $\cT^+_x(h)$ and $\partial \cT^+_x(h)$.
Notice that the random tree $\cT_{y}^{-}$ is obtained by gluing $d_y^-$ independent copies of
a \emph{Galton-Watson tree} with offspring distribution given by
\begin{equation}  \label{eq:GWoff}
    p^-(k)=\frac{1}{m}\sum_{v\in [n]} d^+_v \ind_{d^-_v=k} ,\qquad\forall k\ge 0.
\end{equation}

There is a natural coupling between the generating process of $\cB^\pm_v(h)$ and the construction of $\cT^\pm_v(h)$.
We now describe the coupling of $\cB^-_y(h)$ and $\cT^-_y(h)$.
The corresponding coupling of $\cB^+_x(h)$ and $\cT^+_x(h)$ can be obtained by reversing the role of heads and tails.

Clearly, step (ii) in the construction from \cref{sec:SG} can be modified by picking $e$ uniformly at random
among all (matched or unmatched) tails in $E^+$ and rejecting the proposal if the tail was already matched.
The tree can then be generated by iteration of the same sequence of steps with the difference
that at step (ii) we  never reject the proposal
and at step (iii) we add a new leaf to the current tree,
with mark $v$ if $e\in E_v^+$,
together with a new set of $d^-_v$ unmatched heads attached to it.

Call $\tau$ the first time that a uniform random choice among all tails gives $e\in E^+_v$ for some
mark $v$ already in the tree.
By construction,  the in-neighborhood and the tree coincide up to time $\tau$.
At the $k$-th iteration, the probability of picking a tail with a mark already used is at most $k\Delta^+/m$.
Therefore, by a union bound, for any $k\in\bbN$,
\begin{equation}\label{eq:coup1}
    \bbP(\tau\le k) \le \frac{k^2\Delta^+}{m}.
\end{equation}

\subsection{In-neighborhoods}\label{sec:in}
We start with an estimate of the  size of the in-neighborhoods and
then proceed with the analysis of the coupling with random trees described above.

For all $\varepsilon>0$ define
\begin{equation}\label{def-hslash}
    h_\varepsilon
    \coloneqq
    \frac{\varepsilon \log n}{20\log \Delta^+},
\end{equation}
and the event
\begin{equation}\label{eq:seps-}
    \cS^-_\varepsilon
    \coloneqq\left \{\forall y\in[n],\, \abs{\cB^-_y(h_\varepsilon)}\le n^{1/2+\varepsilon}\right\}.
\end{equation}
\begin{lemma}\label{lemmaGW}
    For all $\varepsilon>0$,  $\P\(\cS^-_\varepsilon \)=1-o(1)$.
\end{lemma}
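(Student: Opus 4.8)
The plan is to bound the size of each in-neighborhood by comparison with the Galton–Watson tree $\cT^-_y$ whose offspring distribution is $p^-$ from \cref{eq:GWoff}, using that $\cB^-_y(h_\varepsilon)$ is dominated (before the first collision) by $\cT^-_y(h_\varepsilon)$, and then controlling the size of the latter. First I would observe that $\abs{\cB^-_y(h)}$ is bounded above by the total progeny of the coupled tree, which is $d_y^-$ copies of a Galton–Watson tree with mean offspring $\mathfrak{m}:=\sum_k k\, p^-(k)=\tfrac1m\sum_v d_v^+ d_v^- $. Under \cref{cond:main}, $d_v^+\le K$, so $\mathfrak{m}\le \tfrac{K}{m}\sum_v d_v^- = K$; hence the expected number of vertices at level $j$ in $\cT^-_y$ is at most $d_y^- K^{j}\le \Delta^- K^{h_\varepsilon}$, and summing over $j\le h_\varepsilon$ gives $\E[\abs{\cT^-_y(h_\varepsilon)}]\le h_\varepsilon \Delta^- K^{h_\varepsilon+1}$. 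By the choice \cref{def-hslash}, $K^{h_\varepsilon}\le (\Delta^+)^{h_\varepsilon}= n^{\varepsilon/20}$, and by \cref{lem:s} (\cref{eq:Delta:m}) $\Delta^-=O(n^{1/2-\eta/6})$, so this expectation is $O(n^{1/2 - \eta/6 + \varepsilon/20}\log n)$, comfortably below $n^{1/2+\varepsilon/2}$.

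The main work is upgrading this bound on the expectation into a high-probability bound that holds \emph{simultaneously for all $y\in[n]$}. For a single $y$, I would get concentration either via a one-sided exponential moment estimate for the total progeny of a subcritical-in-the-relevant-range Galton–Watson process — using that the offspring is bounded in expectation and has, through $\muin$, a bounded $(2+\eta)$-moment, via \cref{eq:2+eta} — or, more robustly, directly on the exploration: the number of vertices revealed in generating $\cB^-_y(h_\varepsilon)$ is a sum along the exploration of the in-degrees of the freshly attached vertices, each of which is distributed (in the tree) according to the size-biased law $p^-$. Since each in-degree is at most $\Delta^-=O(n^{1/2-\eta/6})$ and the expected total count is $\le n^{1/2+\varepsilon/4}$, a Markov-type bound on an appropriate exponential or polynomial moment, or simply summing the second-moment bound $\E[(d^-_{v})^{2+\eta}]$-type control across levels, gives $\P(\abs{\cB^-_y(h_\varepsilon)}> n^{1/2+\varepsilon})\le n^{-2}$, say. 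A union bound over the $n$ choices of $y$ then yields $\P(\cS^-_\varepsilon)=1-o(1)$, after also discarding the negligible coupling-error event $\{\tau\le n^{1/2+\varepsilon}\}$, which by \cref{eq:coup1} has probability $O(n^{1+2\varepsilon}\Delta^+/m)=o(1)$ provided $m=\Omega(n)$ — which holds since $m\ge 2n$ by \cref{it1}.

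The step I expect to be the real obstacle is the tail estimate for a single in-neighborhood with a bound strong enough ($n^{-1-\delta}$) to survive the union over $y$: the offspring distribution $p^-$ has only a $(2+\eta)$-moment (not exponential moments), so the cleanest route is not a Chernoff bound on the Galton–Watson total progeny but a layer-by-layer argument — bound $\abs{\partial\cB^-_y(j)}$ inductively, and at each layer use the bounded out-degrees ($\le K$) to control how many new tails are exposed and the $(2+\eta)$-moment condition \cref{eq:2+eta} together with \cref{lem:s} (specifically \cref{eq:nuI}, applied to the set of vertices exposed so far, which stays below $n^{1-\eta}$) to bound the total in-degree of those vertices, hence the next layer's size. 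Iterating $h_\varepsilon=O(\log n/\log\Delta^+)$ times multiplies at most an $n^{\varepsilon/20}$ factor per level into an overall $n^{O(\varepsilon)}$ slack, and a final Markov inequality at the chosen moment order $p=2+\eta$ against \cref{eq:2+eta} gives the required $n^{-1-\delta}$ per-vertex tail. This is essentially the same computation underlying \cref{lem:s}, just carried through the exploration.
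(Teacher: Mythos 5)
There is a genuine gap, and it sits exactly where you predicted the difficulty would be, plus one quantitative error earlier. First, the coupling-error estimate is wrong: by \cref{eq:coup1}, maintaining the coupling with $\cT^-_y$ for $k=n^{1/2+\varepsilon}$ exploration steps costs $\P(\tau\le k)\le k^2\Delta^+/m=\Theta(n^{2\varepsilon})$, because $m=\Theta(n)$ under \cref{cond:main} (indeed $2n\le m\le Kn$); your ``$=o(1)$ provided $m=\Omega(n)$'' would require $m=\omega(n^{1+2\varepsilon})$, which contradicts bounded out-degrees. So the tree comparison simply cannot be pushed to sizes of order $n^{1/2+\varepsilon}$, let alone with a failure probability $o(n^{-1})$ that survives a union bound over $y$; this is precisely why \cref{le:coupling} in the paper only couples up to sizes like $d_y^-n^{1/4}$ and only gets failure probability $(d_y^-)^2n^{-1/3}$, which is useless per-vertex when $d_y^-$ is close to $\Delta^-=O(n^{1/2-\eta/6})$. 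Second, the claimed per-vertex tail $\P(|\cB^-_y(h_\varepsilon)|>n^{1/2+\varepsilon})\le n^{-2}$ is not justified and is not attainable with the available moment information: the offspring law $p^-$ has only a bounded $(2+\eta)$-moment, so Chebyshev/second-moment arguments give a tail of order $(d_y^-)^2 n^{-1-2\varepsilon}$, which for high-degree vertices is nowhere near $o(n^{-1})$. The fallback you sketch, a deterministic layer-by-layer iteration of \cref{eq:nuI}, also fails: the map $s\mapsto C(sn)^{1/2-\eta^2/12}$ starting from $s_1\le\Delta^-$ iterates upward toward $n^{1-o(1)}$, overshooting the target $n^{1/2+\varepsilon}$ after two levels.

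The missing idea is that the union bound must be weighted by the degrees rather than uniform. The paper avoids the in-neighborhood/tree comparison altogether and uses the dual identity $|\cB^-_y(h)|=\sum_{v\in[n]}\ind_{y\in\cB^+_v(h)}$: out-neighborhoods have at most $(\Delta^+)^{h_\varepsilon}=n^{\varepsilon/20}$ edges by the bounded out-degree assumption, so each step of their generation hits a head of $y$ with probability at most $d_y^-/n$, and a careful count of pairs $\P(y\in\cB^+_v,\,y\in\cB^+_z)$ yields $\E[|\cB^-_y(h_\varepsilon)|^2]\le (d_y^-)^2 n^{\varepsilon}$. Markov then gives the degree-dependent bound $\P(|\cB^-_y|>n^{1/2+\varepsilon})\le (d_y^-)^2 n^{-1-\varepsilon}$, and the union bound closes only because $\sum_y (d_y^-)^2=O(n)$ by \cref{eq:2+eta}. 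Your outline never exploits this summability and instead asserts a uniform $n^{-2}$ bound, so as written the proof does not go through.
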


\begin{proof}
    Fix $y\in[n]$, $\varepsilon>0$, and let $h=h_\varepsilon$ and $\cB^\pm_y=\cB^\pm_y(h_\varepsilon)$.
    It is enough to show that, for all $n$ large enough
    \begin{equation}\label{eq:sec-mom}
        \E\[|\cB^-_y|^2\]\le (d_y^-)^2n^{\varepsilon}.
    \end{equation}
    Indeed, \cref{eq:sec-mom} and Markov's inequality imply
    \begin{equation}
        \P\(|\cB^-_y| > n^{1/2+\varepsilon} \)\le \frac{\E\[|\cB^-_y|^2\]}{n^{1+2\varepsilon}}\le \frac{(d_y^-)^2}{n^{1+\varepsilon}}.
    \end{equation}
    Therefore, by taking a union bound over $y\in[n]$ and applying \cref{eq:2+eta}
    \begin{equation}
        \P(\cS^-_\varepsilon)\ge 1-\sum_{y\in[n]}\frac{(d_y^-)^2}{n^{1+\varepsilon}}=1-o(1).
    \end{equation}
To prove
\cref{eq:sec-mom}, note that
    \begin{equation}
        |\cB^-_y|=\sum_{v\in[n]}\ind_{y\in\cB^+_v},
    \end{equation}
    and therefore
    \begin{equation}\label{eq:sec-mom2}
        \E\[|\cB^-_y|^2 \]=\sum_{v\in[n]}\sum_{z\in[n]}\P\(y\in \cB^+_v,\:y\in\cB^+_z\).
    \end{equation}

    Observe that the out-neighborhood $\cB^+_v$ has at most $(\Delta^+)^h$ edges
    and at each step of the generation of  $\cB^+_v$ one has
    a probability of matching a head of $y$ bounded above
    by $ \frac{d_y^-}{m-(\Delta^+)^h}\le \frac{d_y^-}{n}$ for $n$ large enough.
    Thus, by a union bound over all steps of the generation of $\cB^+_v$, for all $v\in[n]$,
    \begin{equation}\label{eq:outbo}
        \P\(y\in\cB^+_v \)\le (\Delta^+)^h\frac{d_y^-}{n}.
    \end{equation}
Next, for all  $z\neq v$,
    \begin{equation}\label{eq:outbo2}
        \P\(y\in\cB^+_v,\:y\in\cB^+_z \) \le \P\(y,z\in\cB^+_v \)+\P\(y\in\cB^+_v,\:y\in\cB^+_z,\:z\notin \cB^+_v \).
    \end{equation}
    For the event $y,z\in\cB^+_v$ to occur, one must match a head of $y$ and a head of $z$ during the generation of $\cB^+_v$. Since there are at most $(\Delta^+)^h$ steps during the generation of $\cB^+_v$, and as in \cref{eq:outbo} one has a probability at most $\frac{d_y^-}{n}$ to match a head of $y$ at any given step (and $\frac{d_z^-}{n}$ for $z$),  a union bound gives
    \begin{equation}\label{eq:FRDD}
        \P\(y,z\in\cB^+_v \)
        \le
        (\Delta^+)^{2h}\frac{d_z^- d_y^-}{n^2}.
    \end{equation}

    Let us bound the second term in \cref{eq:outbo2}.
        We generate first $\cB^+_v$ and then $\cB^+_z$.
       Given the realization of $\cB^+_v$, and assuming $y\in \cB^+_v$ and $z\notin \cB^+_v$,
the event $y\in\cB^+_z$ can be obtained in two ways: either we match a fresh head of $y$ during the generation of $\cB^+_z$, or we match a fresh head of another vertex $w\neq y$ which was already discovered
during the generation of $\cB^+_v$. Note that the first scenario, reasoning as \cref{eq:outbo}, has probability at most $(\Delta^+)^{2h}\frac{(d_y^-)^2}{n^2}$. To handle the second scenario, define the event
$E_{v,z,w}$ that during the generation of $B_v^+$ we match a fresh head of $y$ and a fresh head of $w$, and then during the generation of $B_z^+$ we match a fresh head of $w$.
This event then satisfies
        \begin{equation}
            \label{eq:JDUN}
            \P(E_{v,z,w})
            \le
            \frac{d_y^-\left(\Delta ^+\right)^h}{n}
           \Big(d_w^- \frac{\left(\Delta ^+\right)^h}{n}\Big)^{2}  = \frac{d_{y}^{-}(d_{w}^{-})^{2}}{n^{3}}
 \, (\Delta^{+})^{3h}            .
        \end{equation}
        Summing over all possible choices of $w$
        and using
         \cref{cond:main}, we have
        \begin{equation}\label{eq:LLXJ}
            \begin{aligned}
&         \P\(y\in\cB^+_v,\:y\in\cB^+_z,\:z\notin \cB^+_v \)\le (\Delta^+)^{2h}\frac{(d_y^-)^2}{n^2}+
           \sum_{w\neq y}\P(E_{v,z,w})\\
            &\qquad \qquad \le (\Delta^+)^{2h}\frac{(d_y^-)^2}{n^2}+
            (\Delta^{+})^{3h} d_{y}^{-} \sum_{w\neq y}\frac{(d_{w}^{-})^{2}}{n^{3}}
            \le
            \left(\Delta ^+\right)^{4 h}\frac{(d_y^-)^2}{n^2}
            .
            \end{aligned}
        \end{equation}
        Combining \cref{eq:outbo,eq:outbo2,eq:FRDD,eq:LLXJ} we obtain,
        \begin{equation}\label{eq:LLXS}
        \P\(y\in \cB^+_v,\:y\in\cB^+_z\) \le (\Delta^+)^h\frac{d_y^-}{n} \ind(v=z) +  (\Delta^+)^{2h}\frac{d_z^- d_y^-}{n^2}+ \left(\Delta^+\right)^{4 h}\frac{(d_y^-)^2}{n^2}\;.
        \end{equation}
        Inserting the above estimates into \cref{eq:sec-mom2}, we obtain \cref{eq:sec-mom}.
\end{proof}
In what follows $\P$ denotes the probability under the coupling defined in \cref{sec:GW}.
We shall often take the parameter $\varepsilon>0$ smaller than some $\varepsilon_0=\varepsilon_0(\eta)$ where $\eta>0$ is the parameter appearing in \cref{cond:main}. To avoid repetitions, we will simply say that our statements hold for $\varepsilon>0$ sufficiently small. 
\begin{lemma}\label{le:coupling}
    Fix $\varepsilon>0$ sufficiently small and $h=h_\varepsilon$ as in \cref{def-hslash}.
    For any $y\in[n]$ ,
    \begin{equation}\label{eq:n2eps1}
    \P\(\cB^-_y(h)\neq  \cT^-_y(h)\)\le (d_y^-)^2 n^{-\frac13}.
    \end{equation}
    Moreover, for any $a>0$
    \begin{equation}\label{eq:n2eps}
        \P\(\abs{\cB^-_y(h)}> d_y^-n^a \)\le n^{\frac{\varepsilon}{4}-2a} + (d_y^-)^2 n^{-\frac13}.
    \end{equation}
    Moreover, for any $y\in [n]$ the coupling of $\cB^-_y(h)$ and $\cT^-_y(h)$ succeeds \ac{whp} and there exists $\eta'>0$ such that
        \begin{equation}\label{eq:n3eps}
        \P\(\abs{\cB^-_y(h)}> n^{\frac{1}{2}-\eta'}\) = o(1).
    \end{equation}
\end{lemma}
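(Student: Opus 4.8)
The plan is to push all three statements onto the marked tree $\cT^-_y(h)$ via the coupling of \cref{sec:GW} and to control that tree by elementary Galton--Watson moment estimates; the moment parameter $\eta>0$ of \cref{cond:main} will enter only through the a priori bound $\Delta^-=O(n^{1/2-\eta/6})$ of \cref{lem:s}. Throughout one may assume $d_y^-\ge1$, since otherwise $\cB^-_y(h)=\cT^-_y(h)=\{y\}$ and everything is trivial. The first step is a master inequality: revealing $\cB^-_y(h)$ in \ac{bfs} order, each step matches one head to a tail drawn uniformly among all tails (re-drawing on repeats in the digraph exploration), and by \cref{sec:GW} this agrees with the leaf-by-leaf generation of $\cT^-_y$ up to the first time $\tau$ that a drawn tail belongs to a vertex already present. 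Generating $\cT^-_y(h)$ takes exactly $\abs{\cT^-_y(h)}-1$ such steps, so $\cB^-_y(h)=\cT^-_y(h)$ whenever $\tau>\abs{\cT^-_y(h)}-1$; splitting on the size of $\cT^-_y(h)$ and using \cref{eq:coup1} gives, for every positive integer $M$,
\begin{equation}\label{pl:master}
   \P\(\cB^-_y(h)\neq\cT^-_y(h)\)\ \le\ \P\(\abs{\cT^-_y(h)}>M\)+\P(\tau\le M)\ \le\ \P\(\abs{\cT^-_y(h)}>M\)+\frac{M^2\Delta^+}{m}.
\end{equation}

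Next I bound the moments of $\abs{\cT^-_y(h)}$. By construction $\cT^-_y$ is the root $y$ with $d_y^-$ independent copies of the Galton--Watson tree with offspring law $p^-$ from \cref{eq:GWoff}. Its offspring mean $\mu=\tfrac1m\sum_v d_v^+d_v^-$ satisfies $2\le\mu\le\Delta^+$ by \cref{cond:main} — the lower bound $\mu\ge2$, which is what lets one sum the geometric series below, uses $d_v^+\ge2$ — and its offspring second moment $\tfrac1m\sum_v d_v^+(d_v^-)^2\le\tfrac Km\sum_v(d_v^-)^{2+\eta}$ is $O(1)$ by \cref{eq:2+eta} and $m\ge2n$. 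Hence the total size $S_\ell$ of the first $\ell$ generations of such a Galton--Watson tree obeys $\E[S_\ell]\le2\mu^\ell$ and $\E[S_\ell^2]\le C\mu^{2\ell}$; summing over the $d_y^-$ subtrees and using $(\Delta^+)^h\le n^{\varepsilon/20}$ (immediate from \cref{def-hslash}) gives
\begin{equation}\label{pl:treemom}
   \E\big[\abs{\cT^-_y(h)}\big]\le 3\,d_y^-\,n^{\varepsilon/20},\qquad \E\big[\abs{\cT^-_y(h)}^2\big]\le C\,(d_y^-)^2\,n^{\varepsilon/10}.
\end{equation}

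Now I conclude. For \cref{eq:n2eps1}, take $M=\lceil d_y^- n^{1/4}\rceil$ in \cref{pl:master}: Markov applied to the square, using the second bound in \cref{pl:treemom}, makes the first term $O(n^{\varepsilon/10-1/2})$ and the collision term $O((d_y^-)^2n^{-1/2})$, so for $\varepsilon$ small each is at most $\tfrac12(d_y^-)^2n^{-1/3}$. For \cref{eq:n2eps}, on the event $\{\cB^-_y(h)=\cT^-_y(h)\}$ the two neighbourhoods agree, so $\P(\abs{\cB^-_y(h)}>d_y^- n^a)\le\P(\abs{\cT^-_y(h)}>d_y^- n^a)+\P(\cB^-_y(h)\neq\cT^-_y(h))$; Markov with \cref{pl:treemom} bounds the first term by $Cn^{\varepsilon/10-2a}\le n^{\varepsilon/4-2a}$, and \cref{eq:n2eps1} bounds the second by $(d_y^-)^2n^{-1/3}$. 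For \cref{eq:n3eps}, return to \cref{pl:master} with the much smaller choice $M=\lceil d_y^- n^{\varepsilon}\rceil$: the first term is $O(n^{-19\varepsilon/10})=o(1)$ by \cref{pl:treemom}, while for the collision term I use $d_y^-\le\Delta^-=O(n^{1/2-\eta/6})$ from \cref{lem:s}, so that $M^2\Delta^+/m=O\big((d_y^-)^2n^{2\varepsilon-1}\big)=O(n^{2\varepsilon-\eta/3})=o(1)$ once $\varepsilon<\eta/6$; hence the coupling succeeds \ac{whp}. On that event $\abs{\cB^-_y(h)}=\abs{\cT^-_y(h)}$, and Markov with the first bound in \cref{pl:treemom}, again together with $d_y^-\le\Delta^-$, gives $\P(\abs{\cT^-_y(h)}>n^{1/2-\eta'})=O(n^{\eta'-\eta/6+\varepsilon/20})=o(1)$ for, say, $\eta'=\eta/12$ and $\varepsilon$ small, which is \cref{eq:n3eps}.

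\emph{Where the work is.} The one genuine subtlety is that \cref{eq:n2eps1}, whose right-hand side scales like $(d_y^-)^2$, is useless for \cref{eq:n3eps} when $\eta$ is small and $d_y^-$ is as large as $\Delta^-\approx n^{1/2-\eta/6}$, since then $(d_y^-)^2n^{-1/3}\approx n^{2/3-\eta/3}\not\to0$. One therefore cannot quote \cref{eq:n2eps1} for the third claim but must re-run \cref{pl:master} with a much smaller $M$, paying for it with the extra a priori information $\Delta^-=O(n^{1/2-\eta/6})$ of \cref{lem:s}. The remaining ingredients — the Galton--Watson size recursions and the Markov-inequality bookkeeping — are routine.
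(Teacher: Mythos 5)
Your proof is correct and follows essentially the same route as the paper's: the same split of the coupling-failure event into ``tree too large'' plus ``early collision'' (with the collision bound \cref{eq:coup1}), Galton--Watson moment estimates made $O(1)$ by the $(2+\eta)$-moment condition, Markov's inequality, and, for \cref{eq:n3eps}, a rerun of the decomposition with a smaller truncation combined with $\Delta^-=O(n^{1/2-\eta/6})$ from \cref{lem:s} (the paper takes $a=\eta/7$ where you take $M=\lceil d_y^- n^{\varepsilon}\rceil$). The only cosmetic difference is that you bound the second moment of the total tree size directly, whereas the paper controls each generation $\abs{\partial\cT^-_y(j)}$ and takes a union bound over $j\le h$.
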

\begin{proof}
    Call $\cC_y=\{\cB^-_y(h)= \cT^-_y(h)\}$.
    Then, for all $a>0$
    \begin{equation}\label{eq:bound-coupling}
        \P\(\cC_y^c \)\le \P\(|\cT^-_y(h)|> d_y^- n^a\) +\P\(|\cT^-_y(h)|\le  d_y^- n^a,\: \cC_y^c \).
    \end{equation}
    Reasoning as in \cref{eq:coup1}
    \begin{equation}\label{eq:bound-CT}
        \P\(|\cT^-_y(h)|\le  d_y^- n^a,\: \cC_y^c \)\le \frac{\Delta^+ (d_y^-n^{a})^2}{m}.
    \end{equation}
    Notice that for each $j\ge 1$,
    \begin{equation}
        |\partial \cT^-_y(j)|=\sum_{i=1}^{d_y^-} Z^{(i)}_j,
    \end{equation}
    where the $Z^{(i)}_j$, $i=1,\dots,d_y^-$ are \ac{iid} random variables
    representing the size of the $j$-th generation of a Galton-Watson process with offspring distribution given by \cref{eq:GWoff}.
    The latter has expected value and variance
    \begin{equation}\label{SLDK}
        \nu= \frac{1}{m}\sum_{v\in[n]}d_v^-d_v^+ = O(1)\,,
        \quad
        \si^2= \frac1m\sum_{v\in [n]} d_v^+(d_v^-)^2 - \nu^2 = O(1),
    \end{equation}
    where the estimates follow from \cref{cond:main}.

    Setting $W_j=\nu^{-j}Z^{(1)}_j$, standard martingale computations
    (see~\cite[Chapter~I.4]{athreya1972}) show that
    \begin{equation}
        \E[Z^{(1)}_{j}] = \nu^{j},
        \qquad
        \var(Z^{(1)}_j)=\nu^{2j}\var(W_j) = \nu^{2j}\sum_{\ell=0}^{j-1}\nu^{-\ell-2}\si^2\le C\nu^{2j} ,
    \end{equation}
    for some constant $C>0$. It follows that
    \begin{equation}\label{eq:bound-CT01}
        \E[{\abs{\partial\cT_{y}^{-}(j)}}]
        =
        d_{y}^{-} \nu^{j},
        \qquad
        \var(|\partial\cT^-_y(j)|) = d_y^- \var(Z^{(1)}_j) \le C\nu^{2j}d_y^- .
    \end{equation}
     By Markov's inequality, uniformly in $j\le h$, for all $s>0$,
    \begin{equation}
        \P\(|\partial\cT^-_y(j)|> s \)
        \le
        \frac{1}{s^2}\,\E[\abs{\partial \cT_{y}^{-}(j)}^{2}]
        =
        \frac{1}{s^2}\,\((d_y^-\nu^{j})^2+ C\nu^{2j}d_y^-\)\le \frac{1}{s^2}\,2C(d_y^-)^2\nu^{2j}.
    \end{equation}
    By a union bound over $j\le h$ and setting $s=d_y^-n^a/h$, it follows that
    \begin{equation}\label{eq:boundT}
        \P\(|\cT_y^-(h)|> d_y^-n^a \)\le \frac{2C h^3\nu^{2h}}{n^{2a}}\le n^{\varepsilon/4-2a}.
    \end{equation}
    for  $n$ large enough.
    By \cref{eq:boundT,eq:bound-CT,eq:bound-coupling} and choosing $a=\frac14$ we conclude that
    \begin{equation}
        \P(\cC_y^c)\le (d_y^-)^2 n^{-1/3}.
    \end{equation}
    The proof of \cref{eq:n2eps} is an immediate consequence of  \cref{eq:n2eps1} and \cref{eq:boundT}. Finally, by setting $a=\frac{\eta}{7}$ and using \cref{lem:s}, we obtain that the left-hand-side of \cref{eq:bound-CT,eq:boundT} is $o(1)$ for $\varepsilon<\eta$.
\end{proof}

\begin{corollary}\label{coro:coupling}
    Fix $\varepsilon>0$ sufficiently small and $h=h_\varepsilon$. Let $\cI$ be a  uniformly random vertex in $[n]$. Then,
    \begin{equation}\P\(\cB^-_\cI(h)\neq \cT^-_\cI(h)\)\le  n^{-1/4 }.\end{equation}
\end{corollary}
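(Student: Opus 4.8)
The plan is to deduce this immediately from the pointwise coupling estimate \eqref{eq:n2eps1} of \cref{le:coupling} by averaging over the uniform vertex $\cI$. Since conditioning on $\{\cI=y\}$ puts us back in the deterministic-$y$ setting, we have
\begin{equation*}
    \P\(\cB^-_\cI(h)\neq \cT^-_\cI(h)\)
    = \frac1n\sum_{y\in[n]}\P\(\cB^-_y(h)\neq \cT^-_y(h)\)
    \le \frac{n^{-1/3}}{n}\sum_{y\in[n]}(d_y^-)^2,
\end{equation*}
so everything reduces to controlling the averaged second moment $\frac1n\sum_{y\in[n]}(d_y^-)^2$.

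For that bound I would invoke \cref{cond:main}: by H\"older's inequality with conjugate exponents $\frac{2+\eta}{2}$ and $\frac{2+\eta}{\eta}$, together with \eqref{eq:2+eta},
\begin{equation*}
    \sum_{y\in[n]}(d_y^-)^2
    \le \Big(\sum_{y\in[n]}(d_y^-)^{2+\eta}\Big)^{\frac{2}{2+\eta}} n^{\frac{\eta}{2+\eta}}
    \le (Cn)^{\frac{2}{2+\eta}} n^{\frac{\eta}{2+\eta}}
    = C^{\frac{2}{2+\eta}}\, n .
\end{equation*}
Plugging this into the previous display gives $\P(\cB^-_\cI(h)\neq \cT^-_\cI(h)) \le C^{2/(2+\eta)}\,n^{-1/3}$, which is at most $n^{-1/4}$ once $n$ is large enough.

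There is no genuine obstacle here: the corollary is a soft averaging consequence of \cref{le:coupling}, and the only point worth stressing is that the extra slack in \eqref{eq:n2eps1} (the exponent $-\tfrac13$ rather than $-\tfrac12$) is precisely what is needed to absorb the $O(n)$ factor coming from the averaged second moment of the in-degrees. Alternatively one could quote \cref{lem:s} (with $S=[n]$, after the obvious adaptation of \eqref{eq:partial}) for the estimate $\sum_y(d_y^-)^2=O(n)$ in place of the one-line H\"older computation above.
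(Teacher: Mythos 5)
Your proposal is correct and is exactly the paper's argument: average the pointwise bound \cref{eq:n2eps1} over the uniform vertex and use \cref{cond:main} to get $\frac1n\sum_y (d_y^-)^2 n^{-1/3}\le n^{-1/4}$ for large $n$. The only difference is that you spell out the $\sum_y(d_y^-)^2=O(n)$ step via H\"older, which the paper leaves implicit.
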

\begin{proof}
    The estimate follows from \cref{le:coupling}, summing over $y\in[n]$ and using
    \begin{equation}
        \frac{1}{n}\sum_{y \in [n]} (d_y^-)^2 n^{-\frac13}\le n^{-\frac14},
    \end{equation}
    which holds for all $n$ large enough because of \cref{cond:main}.
\end{proof}

\subsection{Out-neighborhoods}\label{sec:out}

In this section we focus on tree-excesses of out-neighborhoods.
Recall the definition of $h_\varepsilon$ in \cref{def-hslash}.
For $h \in \N$, consider the event
\begin{equation}\label{eq:defG}
    \cG^+(h) \coloneqq \cap_{x\in [n]} \{\tx(\cB^+_x(h))\le 1\}.
\end{equation}
\begin{lemma}\label{lemmaG}
   For $\varepsilon>0$ sufficiently small,
    \begin{equation}
        \P\(\cG^+(2h_\varepsilon) \) = 1-o(1),
    \end{equation}
    where $h_\varepsilon$ is defined as in \cref{def-hslash}.
\end{lemma}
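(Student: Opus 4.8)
The plan is to control the tree-excess of every out-neighborhood $\cB^+_x(2h_\varepsilon)$ by a first-moment (union bound) argument over the number of collisions in the breadth-first generation described in \cref{sec:SG}. Recall that $\tx(\cB^+_x(h))$ equals the number of collisions during the \ac{bfs} generation of $\cB^+_x(h)$, so $\{\tx(\cB^+_x(2h_\varepsilon))\le 1\}$ fails exactly when at least two collisions occur. I would therefore estimate the probability of seeing at least two collisions in the generation of a single out-neighborhood, and then union bound over the $n$ choices of $x$.

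First I would record the deterministic size bound: since $\Delta^+\le K$, after $h=2h_\varepsilon$ layers the out-neighborhood has at most $(\Delta^+)^{h}\le (\Delta^+)^{2h_\varepsilon}= n^{\varepsilon/10}$ vertices and edges, so the \ac{bfs} generation of $\cB^+_x(2h_\varepsilon)$ takes at most $M\coloneqq n^{\varepsilon/10}$ steps. At each step we match a fresh tail $e$ and pick a uniformly random unmatched head $f$; a collision occurs if $v_f$ has already been exposed. The subtle point is that a collision is more likely to land on a high in-degree vertex, since such a vertex has many heads. At any given step, the set of already-exposed vertices $S$ has $|S|\le M$, and the number of still-unmatched heads is at least $m-M$, so the probability of a collision at that step is at most $\frac{1}{m-M}\sum_{v\in S}d_v^-$. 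Using \cref{eq:nuI} from \cref{lem:s} with $|S|\le M\le n^{1-\eta}$ (valid for $\varepsilon$ small), this is $O\big((Mn)^{1/2-\eta^2/12}\big)/m = O(n^{\varepsilon/20 -\eta^2/12})$, which is $n^{-\beta}$ for some $\beta=\beta(\eta)>0$ once $\varepsilon$ is small enough. Hence the expected number of collisions in the whole generation is at most $M\cdot n^{-\beta} = n^{\varepsilon/10-\beta}$.

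For the probability of \emph{two} collisions I would iterate this bound: conditionally on the history, the chance of a collision at any future step is still at most $n^{-\beta'}$ (the exposed set has grown but stays below $M$, so the same estimate applies, possibly with a slightly worse exponent), so by summing over the $\binom{M}{2}$ ordered pairs of steps at which the two collisions could occur,
\begin{equation}
  \P\big(\tx(\cB^+_x(2h_\varepsilon))\ge 2\big)\le M^2 \, n^{-2\beta'} = n^{\varepsilon/5 - 2\beta'}.
\end{equation}
Choosing $\varepsilon$ small enough that $\varepsilon/5 - 2\beta' < -1$, a union bound over $x\in[n]$ gives $\P\big(\cG^+(2h_\varepsilon)^c\big)\le n^{1+\varepsilon/5-2\beta'} = o(1)$, which is the claim.

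The main obstacle is the correlation between collisions and heavy-tailed in-degrees: one cannot simply bound the per-step collision probability by $M\Delta^+/m$ as in the bounded-degree case, because $\sum_{v\in S}d_v^-$ could a priori be as large as $\Delta^-|S|$, which with $\Delta^-$ up to $n^{1/2-\eta/6}$ is far too big. The resolution is exactly \cref{eq:nuI}, which exploits the $(2+\eta)$-moment bound to show that the in-degree mass on any small set $S$ grows sublinearly, namely like $(|S|n)^{1/2-\eta^2/12}$; this square-root-type saving over the trivial $|S|n^{1/2}$ bound is what makes the per-step collision probability a genuine negative power of $n$. A secondary technical point is to make the conditioning in the two-collision estimate rigorous — one should condition on the generation history up to the first collision, observe that the exposed vertex set still has size at most $M$, and re-apply \cref{eq:nuI}; this is routine but worth stating carefully.
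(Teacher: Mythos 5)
Your overall strategy is the same as the paper's: bound the probability of at least two collisions in the \ac{bfs} generation of a single out-neighborhood by (number of steps $\times$ per-step collision probability)$^2$, then union bound over the $n$ starting vertices. The paper simply bounds the number of heads incident to exposed vertices by $K\Delta^-$ with $K=(\Delta^+)^{2h_\varepsilon}=n^{\varepsilon/10}$, so the per-step collision probability is $q\le K\Delta^-/(m-K)$, and then uses $\Delta^-=O(n^{1/2-\eta/6})$ from \cref{eq:Delta:m} to get $(Kq)^2=o(n^{-1})$ for $\varepsilon<\eta/2$. So your ``main obstacle'' narrative is off: the crude bound $\sum_{v\in S}d_v^-\le |S|\Delta^-$ is \emph{not} too big here, precisely because \cref{lem:s} caps $\Delta^-$ at $n^{1/2-\eta/6}$; invoking \cref{eq:nuI} is a legitimate (indeed sharper) alternative, but it is not what makes the argument work.

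More importantly, as literally written your numbers do not close, due to an arithmetic slip. With $|S|\le M=n^{\varepsilon/10}$, \cref{eq:nuI} gives a per-step collision probability of order $(Mn)^{1/2-\eta^2/12}/m = n^{-1/2+\varepsilon/20-\eta^2/12+O(\varepsilon\eta^2)}$, not $n^{\varepsilon/20-\eta^2/12}$ as you state (you effectively dropped the division by $m\asymp n$ of the factor $n^{1/2}$). With your stated exponent, the best you can claim is $\beta'\approx\eta^2/12$, and then the requirement $\varepsilon/5-2\beta'<-1$ is impossible for small $\eta$, so the union bound over $x\in[n]$ fails. With the corrected exponent you get $\beta'>1/2$, and then $M^2 n^{-2\beta'}=n^{\varepsilon/5-2\beta'}=o(n^{-1})$ for $\varepsilon$ small, which is exactly what is needed; the conditional two-collision argument you sketch (condition on the history, note the exposed set still has size at most $M$, reapply the per-step bound) is fine and mirrors the paper's $\P(\Bin(K,q)\ge 2)\le (Kq)^2$ step. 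So the gap is a fixable computation, not a missing idea, but the proof should be restated with the correct exponent, and the claim that the $K\Delta^-/m$ bound is insufficient should be removed.
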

\begin{proof}
    Fix $x\in [n]$ and $h=h_\varepsilon$.
    To generate $\cB^+_x(2h)$,
    we match at most $K\coloneqq (\Delta^{+})^{2h}= n^{\varepsilon/10}$ tails.
    The probability that at any given step we choose a head incident to
    an already revealed vertex
    is at most
    \begin{equation}\label{eq:qq}
        q\coloneqq \frac{K\Delta^-}{m-K}.
    \end{equation}
    Therefore,
    \begin{equation}
        \P\(\tx(\cB^+_x(2h))\ge 2 \)\le \P(\Bin(K,q) \ge 2)\le
        (K q)^2,
    \end{equation}
   where $\Bin(N,p)$ denotes a binomial random variable with parameters $N,p$ and we use the simple bound $\P(\Bin(N,p) \ge \ell)\le \binom{N}{\ell}p^\ell\le (Np)^\ell$, valid for all $N,\ell\in\bbN$, $p\in[0,1]$.
    By \cref{lem:s}, if $\varepsilon<\frac{\eta}{2}$ it follows that $(K q)^2=o(n^{-1})$, and
    the conclusion follows by a union bound over $x\in [n]$.
\end{proof}

While the previous lemma cannot be improved substantially, the set of vertices with positive tree-excess is small, as the next result shows.
Define
\begin{equation}\label{def-hslash*}
    V_\varepsilon\coloneqq \{x\in [n]: \tx(\cB^+_x(h_\varepsilon))=0\}\,.
\end{equation}
\begin{lemma}\label{lem:V_*}
For any $\varepsilon>0$,
    \begin{equation}\label{YRJF}
        \P\(\abs{[n]\setminus V_\varepsilon} \le n^{\frac{2}{3}}\) \ge 1-n^{-\frac{1}{6}}.
    \end{equation}
\end{lemma}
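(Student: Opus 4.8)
The plan is a first–moment bound on $|[n]\setminus V_\varepsilon|$ followed by Markov's inequality, reusing the collision estimate already developed for \cref{lemmaG}. Since $[n]\setminus V_\varepsilon=\{x\in[n]:\tx(\cB^+_x(h_\varepsilon))\ge 1\}$, we have
\begin{equation*}
    \E\big[\,|[n]\setminus V_\varepsilon|\,\big]=\sum_{x\in[n]}\P\big(\tx(\cB^+_x(h_\varepsilon))\ge 1\big),
\end{equation*}
so it suffices to show that $\P\big(\tx(\cB^+_x(h_\varepsilon))\ge 1\big)=O(n^{-1/2-\delta})$ uniformly in $x\in[n]$ for some $\delta=\delta(\eta)>0$; then $\E[\,|[n]\setminus V_\varepsilon|\,]=O(n^{1/2-\delta})$ and Markov gives $\P\big(|[n]\setminus V_\varepsilon|>n^{2/3}\big)\le n^{-2/3}\,O(n^{1/2-\delta})=O(n^{-1/6-\delta})\le n^{-1/6}$ for $n$ large.

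For the per-vertex bound I would run the \ac{bfs} sequential generation of \cref{sec:SG} for $\cB^+_x(h_\varepsilon)$, which matches at most $K_0:=2(\Delta^+)^{h_\varepsilon}=2n^{\varepsilon/20}$ tails (at most $(\Delta^+)^j$ vertices at depth $j<h_\varepsilon$, each with at most $\Delta^+\ge 2$ tails, and $(\Delta^+)^{h_\varepsilon}=n^{\varepsilon/20}$ by the definition of $h_\varepsilon$). At each step the uniformly chosen unmatched head lands on an already exposed vertex with probability at most $q:=\frac{K_0\Delta^-}{m-K_0}$, and a collision is exactly such a step; hence, exactly as in the proof of \cref{lemmaG} but at the level of a single collision,
\begin{equation*}
    \P\big(\tx(\cB^+_x(h_\varepsilon))\ge 1\big)\le \P\big(\Bin(K_0,q)\ge 1\big)\le K_0 q.
\end{equation*}
Since $m\ge 2n$ (because $\min_v d^+_v\ge 2$ by \cref{cond:main}) and $K_0=o(n)$, we get $K_0 q\le 2K_0^2\Delta^-/m\le 4n^{\varepsilon/10}\Delta^-/n$, and by \cref{lem:s} this is $O(n^{\varepsilon/10-1/2-\eta/6})$. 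Taking $\varepsilon$ small enough that $\varepsilon/10<\eta/6$ makes the exponent equal to $-1/2-\delta$ with $\delta:=\eta/6-\varepsilon/10>0$, which is exactly what the scheme above needs.

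The proof is short because the combinatorial core — the appearance of collisions, hence of positive tree-excess, in out-neighborhoods — has already been carried out for \cref{lemmaG}. The one conceptual point to stress is the difference from that lemma: here the bound $K_0 q$ on $\P(\tx(\cB^+_x(h_\varepsilon))\ge 1)$ is of order $n^{-1/2-\eta/6+\varepsilon/10}$, which is \emph{not} $o(n^{-1})$, so one cannot union-bound over $x\in[n]$ to make every out-neighborhood a tree; one can only control the expected number of vertices that fail to be collision-free, which is precisely the weaker statement $|[n]\setminus V_\varepsilon|\le n^{2/3}$. The only thing to watch is the exponent bookkeeping — the $n^{\varepsilon/20}$ coming from the depth $h_\varepsilon$ against the $n^{1/2-\eta/6}$ bound on $\Delta^-$ from \cref{lem:s} — which is what forces $\varepsilon$ to be taken sufficiently small, consistent with the standing convention on $\varepsilon$.
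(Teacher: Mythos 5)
Your proof is correct and follows essentially the same route as the paper's: a per-vertex bound $\P(\tx(\cB^+_x(h_\varepsilon))\ge 1)\le \P(\Bin(K,q)\ge 1)\le Kq$ from the sequential generation, then a first-moment bound on $|[n]\setminus V_\varepsilon|$ combined with \cref{lem:s} and Markov's inequality, with the same exponent bookkeeping forcing $\varepsilon$ small relative to $\eta$. The only differences are cosmetic (your slightly more careful count $K_0=2(\Delta^+)^{h_\varepsilon}$ of matched tails, and phrasing the final exponent as $-1/6-\delta$ rather than $\varepsilon/10-\eta/6-1/6$).
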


\begin{proof}
    We may assume that $\varepsilon $ is sufficiently small.  As in the proof of \cref{lemmaG} we know that
    \begin{equation}
        \P\(\tx(\cB^+_x(h_\varepsilon))\ge 1 \)\le \P(\Bin(K,q)\ge 1)\le  K q,
    \end{equation}
    where now
    \begin{equation}
        K = (\Delta^+)^{h_{\varepsilon}}\,,\quad q = \frac{K\Delta^-}{m-K}\,.
    \end{equation}
    Therefore, for any $x\in[n]$, $\P\(x\notin V_\varepsilon\)\le Kq$. As a consequence, $\E\[\abs{[n]\setminus V_\varepsilon}\]\le nKq\le K^2\Delta^-$.
    By Markov's inequality and~\cref{eq:Delta:m}
    \begin{equation}\label{YRJF-}
        \P\(\abs{[n]\setminus V_{\varepsilon}} \ge n^{\frac{2}{3}}\)
        \le K^2\Delta^{-}n^{-\frac{2}{3}}
        =O\( n^{\frac{\varepsilon}{10}-\frac{\eta }{6}-\frac{1}{6}}\).
    \end{equation}
    Since $\varepsilon<\eta$, the lemma follows for large enough $n$.
\end{proof}

\section{Random walk}\label{sec:randomwalk}
In this section we introduce the random walk on $G$ and prove our main results concerning convergence to stationarity.
For a given realization of the digraph $G=G_n$ and a probability distribution $\mu$ on $[n]$,
we will denote by $\PP_\mu$ the \emph{quenched law} of the random walk on $G$
with initial distribution $\mu$, i.e.,
the law of the Markov chain on $[n]$ with transition matrix $P$ as in \cref{eq:Pxy}.
When $\mu$ is concentrated on a single vertex $x\in [n]$, i.e., $\mu=\delta_x$,
we write $\PP_x$.
We write $X_t$ for
the position of the random walk at any given time $t\ge 0$.
Notice that,  for any $A\subset [n]$, $\PP_\mu(X_t\in A)=\mu P^t(A)$ is itself a random variable.
When we want to emphasize its dependence on the realization of the
bijection $\omega$ which induces the digraph $G$, we write $\PP^\omega_\mu$.
Thanks to \cref{lemmaG} and \cref{cond:main}(i), we can immediately infer the following property of the quenched law.
\begin{lemma}\label{lemmaG2}
    Fix $\varepsilon>0$ sufficiently small
    and let $V_\varepsilon$ be defined as in \cref{def-hslash*}. For all $t \in\N$ such that $t\le h_\varepsilon$,
    \begin{equation}
        \P\(\max_{x\in[n]}\PP_x(X_t\notin V_{\varepsilon})\le 2^{-t} \)=1-o(1).
    \end{equation}
\end{lemma}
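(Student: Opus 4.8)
The plan is to reduce the statement to a deterministic property of the out-neighbourhood of $x$ on the event $\cG^+(2h_\varepsilon)$ of \cref{eq:defG}. By \cref{lemmaG} we have $\P(\cG^+(2h_\varepsilon))=1-o(1)$, so it suffices to prove that \emph{on} $\cG^+(2h_\varepsilon)$ one has $\PP_x(X_t\notin V_\varepsilon)\le 2^{-t}$ for every $x\in[n]$ and every integer $t\le h_\varepsilon$. The basic remark is that, since $X_s\in\cB^+_x(s)$ for all $s$, every vertex the walk can occupy up to time $t$ has its depth-$h_\varepsilon$ out-neighbourhood contained in $\cB^+_x(t+h_\varepsilon)\subseteq\cB^+_x(2h_\varepsilon)$, which on the good event has tree-excess at most $1$. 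The case $\tx(\cB^+_x(2h_\varepsilon))=0$ is then immediate: $\cB^+_x(2h_\varepsilon)$ is a tree, hence so is $\cB^+_v(h_\varepsilon)$ for every $v\in\cB^+_x(h_\varepsilon)$, so $X_t\in V_\varepsilon$ surely and $\PP_x(X_t\notin V_\varepsilon)=0$.

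It remains to treat $\tx(\cB^+_x(2h_\varepsilon))=1$, where $\cB^+_x(2h_\varepsilon)$ is a tree $T$ rooted at $x$ (edges oriented away from $x$) plus a single extra edge $e^\star=(u^\star,w^\star)$, carrying a unique cycle $\cC$ in the underlying multigraph (possibly a loop or a double edge). I would let $z$ be the vertex of $\cC$ at minimal $T$-distance from $x$, and first pin down which vertices of $\cB^+_x(h_\varepsilon)$ fall outside $V_\varepsilon$: a vertex $v$ is ``bad'' exactly when $\cB^+_v(h_\varepsilon)$ contains all of $\cC$, and one checks that this happens if and only if $v$ lies on the directed $T$-path from $x$ to $z$ and is within $T$-distance $h_\varepsilon-1$ of $u^\star$ (here one uses that $w^\star$ enters the exploration no later than $u^\star$, so that $w^\star$ and the $z$-to-$w^\star$ branch of $\cC$ are automatically inside $\cB^+_v(h_\varepsilon)$ once $u^\star$ is). Consequently the bad vertices reachable from $x$ form a directed sub-path $\gamma$ of $T$ ending at $z$ (possibly empty).

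The heart of the argument is then to bound $\PP_x(X_t\in V(\gamma))=\sum_{\p}\w(\p)$, the sum ranging over length-$t$ paths from $x$ into $V(\gamma)$. Since the minimum out-degree is at least $2$ by \cref{cond:main}, every length-$t$ path satisfies $\w(\p)\le 2^{-t}$, so it is enough to show that \emph{at most one} such path exists. I would split according to whether the path uses $e^\star$. If it uses only edges of $T$, then $X_t$ sits at $T$-depth exactly $t$, so $X_t\in V(\gamma)$ forces $t\le \operatorname{dist}_T(x,z)$ and pins $X_t$ to the unique vertex of $\gamma$ at depth $t$, reached along a unique $T$-path. If it uses $e^\star$ at least once, then the walk must first reach $u^\star$ along the forced $T$-path and jump to $w^\star$, after which it stays among the descendants of $w^\star$ in $T$; hence it can re-enter $V(\gamma)$ only when $w^\star=z$, and then only at $z$ itself, and since reaching $z$ through $e^\star$ requires descending to $u^\star$ (whose $T$-depth is at least that of $z$) and traversing $e^\star$, this forces $t>\operatorname{dist}_T(x,z)$ and again corresponds to a unique path (straight down to $u^\star$, the edge $e^\star$, then a fixed number of turns around $\cC$). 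As the two cases require $t$ in disjoint ranges, there is at most one length-$t$ path from $x$ into $V(\gamma)$, giving $\PP_x(X_t\notin V_\varepsilon)\le 2^{-t}$ on $\cG^+(2h_\varepsilon)$ and hence the lemma. I expect this last near-tree combinatorial count to be the main obstacle, the delicate point being to rule out that the walk repeatedly cycles through $e^\star$ and thereby accumulates several length-$t$ paths into $V(\gamma)$ of comparable weight.
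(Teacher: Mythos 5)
Your high-level strategy is the same as the paper's: reduce via \cref{lemmaG} to showing that, deterministically on $\cG^+(2h_\varepsilon)$, there is at most one path of length $t\le h_\varepsilon$ from $x$ to a vertex outside $V_\varepsilon$, each such path having weight at most $2^{-t}$ (the paper asserts this in two lines; you try to prove it in full). But your detailed combinatorial step has a genuine gap: the claimed ``if and only if'' characterization of the bad vertices fails precisely when $e^\star$ closes a \emph{directed} cycle, i.e.\ when $w^\star$ is a $T$-ancestor of $u^\star$ (so $z=w^\star$). In that case, whenever the cycle length $L$ is at most $h_\varepsilon$, every vertex $v$ on the cycle \emph{strictly below} $z$ also satisfies $\cC\subseteq\cB^+_v(h_\varepsilon)$: from $v$ one descends to $u^\star$, crosses $e^\star$ to $z$, and descends back to $v$, a closed walk of length $L\le h_\varepsilon$ covering all of $\cC$. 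Such $v$ lie outside $V_\varepsilon$ but not on the $T$-path from $x$ to $z$, so your set $\gamma$ is strictly smaller than the set of reachable bad vertices, and your final estimate bounds $\PP_x(X_t\in V(\gamma))$ rather than $\PP_x(X_t\notin V_\varepsilon)$. Concretely, take $x\to z\to u^\star$ in $T$ with $e^\star=(u^\star,z)$: at $t=2$ the walk can sit at $u^\star\notin V_\varepsilon$ via the tree path, an event your count declares impossible (it allows only $t\le \mathrm{dist}_T(x,z)=1$ or $t\ge \mathrm{dist}_T(x,u^\star)+1=3$). A smaller imprecision: ``$w^\star$ enters the exploration no later than $u^\star$'' is not what you need (nor always true); the correct and sufficient fact is $\mathrm{depth}(w^\star)\le \mathrm{depth}(u^\star)+1$, because $e^\star$ is an edge and $T$ is the BFS tree.

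The lemma and your scheme do survive with the corrected bad set. In the directed-cycle case the bad vertices reachable from $x$ form a sub-path of the $T$-path from $x$ to $u^\star$ ending at $u^\star$ (nonempty only if $L\le h_\varepsilon$), and the same dichotomy applies: a tree-only path of length $t$ forces $t\le \mathrm{dist}_T(x,u^\star)$ and pins the endpoint to the unique vertex of that path at depth $t$, while any path using $e^\star$ must first run straight down to $u^\star$, so $t\ge \mathrm{dist}_T(x,u^\star)+1$, and then $t-\mathrm{dist}_T(x,u^\star)-1$ determines uniquely, by Euclidean division by $L$, how many turns of the cycle are made and where the path stops; the two regimes of $t$ are disjoint, so at most one length-$t$ path ends outside $V_\varepsilon$, which is exactly the deterministic claim the paper uses.
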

\begin{proof}
From \cref{lemmaG}, it is sufficient to prove that the event $\cG^+(2h_\varepsilon)$ implies $\PP_x(X_t\notin V_\varepsilon)\le 2^{-t}$ for all $x\in[n]$ and for all $t\le h_\varepsilon$. Under the event $\tx(\cB^+_x(2h_\varepsilon))\le 1$ there is at most one path of length $t\le h_\varepsilon$ from $x$ to any  point $y\notin V_\varepsilon$. Since each path of length $t$ has weight at most $2^{-t}$ the conclusion follows.
\end{proof}

A key object in our analysis is the so-called \emph{annealed law}, obtained by averaging the quenched law over the environment:
\begin{equation}
    \label{eq:annx}\P^{{\rm an}}_{x}=\E\[\PP_x\]=\frac1{m!}\sum_\omega\PP^\omega_x\,,\qquad \P^{{\rm an}}_{\mu}=\sum_{x\in [n]}\mu(x)\P^{{\rm an}}_{x},
\end{equation}
where $\omega$ ranges over all bijections from $E^+$ to $E^-$.

It will also be important to consider the average over the environment of the quenched law of $K$ independent walks.   As observed in~\cite{bordenave2018,bordenave2019} the corresponding annealed law is a very powerful tool in estimating high order moments of random variables such as $ \PP_\mu(X_t\in A)$ for $A\subset[n]$ and $t\ge 0$.   More precisely,
for any probability distribution $\mu$ on $[n]$, for any subset $A\subset[n]$ and for all $t,K\in\bbN$,
\begin{equation}\label{eq:def-annealing}
\E\[\big(\mu P^t(A)\big)^K\]
=
\E\[\big(\PP_\mu(X_t\in A)\big)^K \]
=
\P^{{\rm an},K}_{\mu}\(X_t^{(k)}\in A,\:\forall k\in [K]\),
\end{equation}
where the \emph{annealed law} $\P_\mu^{{\rm an},K}$ is the deterministic law of a non-Markovian process
\begin{equation}
\left\{X_s^{(k)}\,,\;s\in\{0,\dots,t\},\;k\in\{1,\dots, K\}\right\},
\end{equation}
which can be described as follows. Start with an empty matching. For each $k$, given the first $k-1$ walks $(X_s^{(\ell)})_{s\le t,\ell\le k-1}$, to generate the $k$-th walk $X^{(k)}$,
\begin{enumerate}[(i)]
    \item
    start the $k$-th walk at a random vertex $X_{0}^{(k)}\eql \mu$.
    \item for all $s\in\{0,\dots,t-1\}$: select one of the tails of  $X_{s}^{(k)}$ uniformly at random, and call it $e$:
    \begin{itemize}
        \item If $e$ was already matched by one of the previous walks, or by $X^{(k)}$ itself at a previous step, to some head $f$, then let $X_{s+1}^{(k)}=v_f$.
        \item If $e$ is still unmatched select a uniformly random head, $f$, among the unmatched ones, match it to $e$, and let $X_{s+1}^{(k)}=v_f$.
    \end{itemize}
\end{enumerate}We may view the walks as generating the environment (the digraph) as they activate new matchings (the edges). In particular, for any $K,t\ge 1$, the digraph $G$ may be sampled using the edges revealed by the $K$ walks up to time $t$ and then by completing with a uniform matching of the remaining heads and tails.

\subsection{Law of large numbers}
Using the annealed process as a computational tool,
we prove a quenched law of large numbers for the weight of the path determined by the  random walk trajectory,
thus extending results in~\cite{bordenave2018} previously obtained in the case of bounded in-degrees.
In particular, we show that for any $t$ of order $\log n$,
\ac{whp} the quenched law of the random walk is concentrated on trajectories
which have weight $\ee^{-\hh t(1+o(1))}$, where $\hh$ is the entropy in \cref{eq:H}.
The proof follows very closely the original argument in~\cite{bordenave2018},
while some minor technical difficulties due to the unbounded in-degree setting
are overcome using the $2+\eta$ moment condition in \cref{cond:main}.
\begin{proposition}\label{LLN}
    Let $t=\Theta(\log(n))$ and fix a sequence $\theta=\theta_n\in(0,1)$. Recall the definitions of path $\p$ in \cref{eq:traj}, of weight $\w(\p)$ in \cref{eq:weight} and $\cP(x,y,t,G)$. Call
    \begin{equation}Q_{x,t}(\theta):=\sum_{y \in [n]}\sum_{\p\in\cP(x,y,t,G)}\w(\p)\ind_{\w(\p)>\theta}.\end{equation}
    If for $\lambda\neq \hh$ we have
    \begin{equation}\label{eq:cond-theta}
    -\frac{\log(\theta)}{t}\longrightarrow\lambda,
    \end{equation}
    then
    \begin{equation}
    \max_{x\in [n]}\left|Q_{x,t}(\theta)-\ind(\lambda> \hh) \right|\overset{\P}{\longrightarrow}0.
    \end{equation}
\end{proposition}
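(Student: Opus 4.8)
The plan is to read $Q_{x,t}(\theta)$ as a quenched probability. A length-$t$ path $\p$ started at $x$ is exactly a possible trajectory of the walk, with $\w(\p)=\PP_x(\text{walk follows }\p)$; hence $Q_{x,t}(\theta)=\PP_x\big(\w(\p_t)>\theta\big)$, where $\p_t$ is the (random) path traced by the walk up to time $t$ and $-\log\w(\p_t)=\sum_{s=0}^{t-1}\log d^+_{X_s}$. Thus \cref{LLN} is a law of large numbers, uniform in the starting vertex, for the additive functional $\tfrac1t\sum_{s<t}\log d^+_{X_s}$, whose target value is $\hh$ by \cref{eq:H}. Writing $-\log\theta=(1+o(1))\lambda t$, we have $\{\w(\p_t)>\theta\}=\{\sum_{s<t}\log d^+_{X_s}<(1+o(1))\lambda t\}$, which lies strictly below (resp.\ above) the typical value $\hh t$ when $\lambda<\hh$ (resp.\ $\lambda>\hh$), so it suffices to prove $\max_x Q_{x,t}(\theta)\inprob 0$ in the first case and $\max_x\big(1-Q_{x,t}(\theta)\big)\inprob 0$ in the second. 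Since $\log d^+_v\in[\log 2,\log\Delta^+]$ for all $v$, the regimes $\lambda\le\log 2$ and $\lambda\ge\log\Delta^+$ are trivial ($Q_{x,t}(\theta)$ is eventually $\equiv 0$, resp.\ $\equiv 1$), and otherwise one may assume $\lambda$ stays in the fixed compact interval $[\log 2,\log\Delta^+]$ at distance at least some fixed $\delta>0$ from $\hh=\hh_n$.

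The engine of the proof is the annealed $K$-walk process of \cref{eq:def-annealing}, valid for functionals of the whole trajectory and not only of $X_t$: for every fixed $K\in\bbN$,
\[
\bbE\big[Q_{x,t}(\theta)^K\big]=\P^{{\rm an},K}_{x}\big(\w(\p^{(k)}_t)>\theta\ \ \forall k\in[K]\big).
\]
As long as no walk selects a previously matched tail (a \emph{collision}), the $K$ walks use pairwise disjoint matchings, so they are independent, and each increment $\log d^+_{X^{(k)}_s}$ with $s\ge 1$ is obtained by choosing a uniformly random unmatched head, whose vertex has a law within total variation $O(Kt\Delta^-/m)$ of $\muin$ --- negligible by \cref{lem:s}, since $\Delta^-=O(n^{1/2-\eta/6})$ forces $Kt\Delta^-/m=o(1)$. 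Applying the Azuma--Hoeffding inequality to the martingale with increments $\log d^+_{X^{(k)}_s}-\bbE[\log d^+_{X^{(k)}_s}\mid\cF_{s-1}]$, bounded by $\log\Delta^+$, then gives, on the collision-free event, $\PP_x\big(\w(\p^{(k)}_t)>\theta\big)\le\exp\!\big(-c\,\delta^2 t/(\log\Delta^+)^2\big)\le e^{-c't}$ when $\lambda<\hh$, for some $c'=c'(\delta,\Delta^+)>0$, and the symmetric estimate when $\lambda>\hh$.

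It remains to absorb the collisions, and this is the delicate point. Using $\sum_v\muin(v)^2\le\sum_v(d^-_v)^2/m^2=O(1/m)$ (a consequence of \cref{eq:2+eta}), a coincidence-counting / factorial-moment estimate bounds the probability of at least $j$ collisions among the $K$ walks up to time $t$ by $\big(CK^2t^2/m\big)^j/j!$ with $CK^2t^2/m=o(1)$. Given a collision pattern with $j$ collisions, at most $j$ of the $Kt$ increments are \emph{forced} (each still lying in $[\log 2,\log\Delta^+]$), while the remaining $\ge Kt-j$ are fresh uniform-unmatched-head choices, hence jointly $\muin$-distributed up to the negligible correction above; since $\{\w(\p^{(k)}_t)>\theta\ \forall k\}\subseteq\{\sum_k\sum_{s<t}\log d^+_{X^{(k)}_s}<(1+o(1))K\lambda t\}$, running the same Azuma--Hoeffding bound on the free increments yields a conditional probability at most $e^{-c'Kt}$ for all $j$ below a threshold $J_0=\Theta(\log n)$, beyond which $\big(CK^2t^2/m\big)^j/j!$ is already $n^{-\omega(1)}$. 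Summing over $j$ therefore gives
\[
\bbE\big[Q_{x,t}(\theta)^K\big]\le e^{-c'Kt}(1+o(1))\qquad\text{uniformly in }x\in[n].
\]
Because $t=\Theta(\log n)$, the right-hand side is at most $n^{-c''K}$ for a fixed $c''>0$; choosing $K$ large enough that $c''K>2$ and applying Markov's inequality and a union bound, $\P\big(\max_xQ_{x,t}(\theta)>\varepsilon\big)\le\varepsilon^{-K}\sum_x\bbE[Q_{x,t}(\theta)^K]\le\varepsilon^{-K}n^{-1}\to 0$. The case $\lambda>\hh$ is handled identically with $1-Q_{x,t}(\theta)=\PP_x(\w(\p_t)\le\theta)$ in place of $Q_{x,t}(\theta)$, and the two conclusions combine to give \cref{LLN}.

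I expect the genuine obstacle to be exactly this uniformity in $x$ together with the unbounded in-degrees: a first-moment bound is not enough, because the large-deviation rate multiplied by $t/\log n$ need not exceed the union-bound cost when $t/\log n$ is small --- hence the passage to the $K$-th moment with $K$ chosen according to $t/\log n$ --- and the collision bookkeeping inside the annealed $K$-walk process has to be carried out with room to spare against polynomially-large losses. This is precisely where the $(2+\eta)$-moment hypothesis \cref{eq:2+eta} enters, via \cref{lem:s}: the bound $\Delta^-=O(n^{1/2-\eta/6})$ controls the total-variation corrections, while $\sum_v(d^-_v)^2=O(n)$ controls the scarcity of revisits (hence of collisions), uniformly over starting vertices.
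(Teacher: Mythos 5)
Your reduction of \cref{LLN} to the two one-sided statements, the reading of $Q_{x,t}(\theta)$ as the quenched probability $\PP_x(\w(\p_t)>\theta)$, and the use of annealed $K$-walk moments together with the total-variation comparison of fresh-head steps to $\muin$ (cost $O(Kt\Delta^-/m)$, controlled by \cref{lem:s}) are all sound and in the spirit of the paper. The genuine gap is in the collision bookkeeping, which is the heart of the matter. Your claim that ``given a collision pattern with $j$ collisions, at most $j$ of the $Kt$ increments are forced'' conflates two different events: a walk entering the revealed digraph through a fresh head (probability $O(Kt\Delta^-/m)$ per step) and a walk traversing an already-matched tail. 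The latter costs only a factor $\ge 1/\Delta^+$ per step and can persist for entire stretches of a trajectory; in particular, all $K$ walks start at the same vertex $x$, which lies in the revealed digraph from walk $2$ onwards, so whole trajectories can be forced while producing zero ``collisions'' in your sense. Concretely, suppose $\hh>\log 2$, fix $\lambda\in(\log 2,\hh)$, and take $x$ with $d^+_x=2$, $d^-_x\ge 2$. With probability at least $c\,m^{-2}$ the first walk matches both tails of $x$ to heads of $x$ within its first two steps; on this event every path of length $t$ from $x$ stays at $x$ and has weight $2^{-t}>\theta$, so $Q_{x,t}(\theta)=1$ and $\E[Q_{x,t}(\theta)^K]\ge c\,n^{-2}$ for every $K$. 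This contradicts your claimed uniform bound $\E[Q_{x,t}(\theta)^K]\le e^{-c'Kt}(1+o(1))\le n^{-c''K}$ as soon as $c''K>2$, which is precisely the regime you need; so the central moment estimate, and with it the union bound as you run it, does not stand (note this does not threaten \cref{LLN} itself, since the trap event has probability $O(n^{-2})$, but it does break a per-vertex bound obtained this way).

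This is exactly the obstruction the paper's proof is organized around, and it is resolved by two devices your proposal lacks. First, one does not work with $Q_{x,t}$ directly but with the smoothed quantity $\bar Q_{x,t}(\theta)=\sum_y P^{\ell}(x,y)Q_{y,t}(\theta)$ with $\ell=3\log\log n$, restricted to tree-like starting points $x\in V_\varepsilon$ as in \cref{def-hslash*}; general $x$ are then handled via \cref{lemmaG2}. On $\{x\in V_\varepsilon\}$ the first $\ell$ steps of all $K$ walks necessarily live on a tree, so the probability that a walk remains on previously revealed edges for $\ell$ steps is at most $K2^{-\ell}=o(1)$ --- this is what kills the trapping and retracing scenarios above. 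Second, no exponential-in-$t$ rate is attempted: each walk's conditional success probability is only compared with the annealed iid value $q_t(\theta)$, which tends to $\ind(\lambda>\hh)$ by the ordinary law of large numbers, giving $\E[(\ind_{x\in V_\varepsilon}\bar Q_{x,t}(\theta))^K]\le(q_t(\theta)+\delta/2)^K$; taking $K=\lfloor\log^2 n\rfloor$, Markov's inequality against $(q_t(\theta)+\delta)^K$ is then superpolynomially small and beats the union bound over $x$, and the case $\lambda>\hh$ follows by the same argument applied to $1-\bar Q$. Your Azuma step is not needed, and, as the example shows, the uniform exponential rate it promises is not available; what is needed is the burn-in plus the restriction to $V_\varepsilon$.
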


\begin{proof}
    Fix sequences $t=\Theta\(\log(n)\)$ and $\theta=\theta_n$
    such that \cref{eq:cond-theta} holds.
    Let $\ell=3\log\log(n)$.
    Let us consider the averaged probability
    \begin{equation}
    \bar Q_{x,t}(\theta)\coloneqq\sum_{y\in[n]}P^\ell(x,y)Q_{y,t}(\theta).
    \end{equation}
We will show that
$\bar Q_{x,t}(\theta)$ is well approximated by
    \begin{equation}
    q_t(\theta)\coloneqq \P\(\prod_{k=1}^t \frac{1}{D_k^+}>\theta \)=\P\(\frac{1}{t}\sum_{k=1}^t\log(D_k^+)<-\frac{\log(\theta)}{t} \),
    \end{equation}
    where $(D_k^+)_{k\in \N}$ is an \ac{iid} sequence of random variables with law
    \begin{equation}
    \P(D^+_{k}=\ell )=\sum_{v\in[n]}\muin(v)\ind(d_v^+=\ell) \quad \text{for }\ell\ge 2.
    \end{equation}
    Using \cref{eq:H} and the law of large numbers for the bounded sequence $\big(\log(D^+_k)\big)_{k\in\N}$ we have
    \begin{equation}\label{eq:q_t}
    q_t(\theta)\longrightarrow\begin{cases}
    1 & \text{if } \lambda>\hh, \\
    0 & \text{if }\lambda <\hh.
    \end{cases}
    \end{equation}

    We first show  that \cref{LLN} is implied by the following convergence
    \begin{equation}\label{eq:claim}
    \max_{x\in V_\varepsilon}\left|\bar Q_{x,t}(\theta)-q_t(\theta) \right|\overset{\P}{\longrightarrow}0,
    \end{equation}
    where $V_{\varepsilon} $ is defined in \cref{def-hslash*} and $\varepsilon\in(0,\eta/2)$, where $\eta\in(0,1)$ is as in  \cref{cond:main}.
    Since the weight of a path of length $\ell$ is always in $[(\Delta^+)^{-\ell}, 2^{-\ell}]$ we may estimate
    \begin{equation}
        \begin{aligned}
            \max_{x\in [n]}Q_{x,t}(\theta)& \le \max_{x\in[n]}P^{\ell}(x,[n]\setminus V_\varepsilon)
            +\max_{x\in V_\varepsilon}\bar Q_{x,t-2\ell}(\theta 2^\ell) \\
            & \le 2^{-\ell}+o_{\P}(1)+\max_{x\in V_\varepsilon}\bar Q_{x,t}\(\theta 2^{\ell}(\Delta^+)^{-2\ell}\)         \\
            & \le q_t\(\theta 2^{\ell}(\Delta^+)^{-2\ell}\)+o_{\P}(1),
        \end{aligned}
    \end{equation}
    where the second line follows from \cref{lemmaG2} and the third line from \cref{eq:claim}.
    Similarly,
    \begin{equation}
        \begin{aligned}
            \min_{x\in [n]}Q_{x,t}(\theta) & \ge  \min_{x\in[n]}P^{\ell}(x, V_\varepsilon)\min_{x\in V_\varepsilon}\bar Q_{x,t-2\ell}\(\theta (\Delta^+)^{2\ell}\) \\
            & \ge  (1-2^{-\ell}-o_{\P}(1))\min_{x\in V_\varepsilon}\bar Q_{x,t-2\ell}\(\theta (\Delta^+)^{2\ell}\)        \\
            & \ge                                \min_{x\in V_\varepsilon}\bar Q_{x,t}\(\theta (\Delta^+)^{2\ell}\)+o_{\P}(1)                           \\
            & \ge           q_t\(\theta (\Delta^+)^{2\ell}\)+o_{\P}(1).
        \end{aligned}
    \end{equation}
    Moreover, notice that if for some $\theta'$ it holds $\log(\theta')=\log(\theta)+O(\log\log(n))$ then
    \begin{equation}
    |q_t(\theta)-q_t(\theta')|\to 0.
    \end{equation}
    Thus, \cref{eq:q_t,eq:claim} imply  \cref{LLN}.

To prove  \cref{eq:claim}, we show that
for all $\delta>0$
    \begin{equation}\label{eq:claim2}
    \P\(\ind_{x\in V_\varepsilon}\bar Q_{x,t}(\theta)\ge q_t(\theta)+\delta \)=o(n^{-1}).
    \end{equation}
    This, together with a union bound over $x\in V_\varepsilon$, establishes one half of \cref{eq:claim}; the other half can be obtained in the same fashion replacing
    $\bar Q_{x,t}(\theta)$ by $1-\bar Q_{x,t}(\theta)$ and $q_t(\theta)$ by $1-q_t(\theta)$, which amounts to inverting the inequality signs in the definition of $Q_{x,t}(\theta)$ and $q_t(\theta)$.

We now prove \cref{eq:claim2}.
    By Markov's inequality, for all $K\ge 1$,
    \begin{equation}
    \P\(\ind_{x\in V_\varepsilon}\bar Q_{x,t}(\theta)\ge q_t(\theta)+\delta \)\le \frac{\E\[\(\ind_{x\in V_\varepsilon}\bar Q_{x,t}(\theta)\)^K \]}{(q_t(\theta)+\delta)^K}.
    \end{equation}
    Hence, it is enough to show that if $K=\floor{\log^2(n)}$, for all $\delta>0$ and $n$ large enough one has
    \begin{equation}\label{eq:claim3}
\E\[\(\ind_{x\in V_\varepsilon}\bar Q_{x,t}(\theta)\)^K \]\le \(q_t(\theta)+\tfrac{\delta}{2}\)^K.
    \end{equation}
    Notice that
    \begin{equation}
    \E\[\(\ind_{x\in V_\varepsilon}\bar Q_{x,t}(\theta)\)^K \]\le \P^{{\rm an},K}_x(B_K),
    \end{equation}
    where $\P^{{\rm an},K}_x$ is the law of $K$ annealed walks of length $\ell+t$ all started at $x$, see \cref{eq:def-annealing}, and
    for all $j\le K$, $B_j$ is the event that
    \begin{enumerate}[(i)]
        \item the union of the first $j$ trajectories up to time $\ell$, that is $(X_s^{(1)},\dots,X_s^{(j)})_{s\le \ell}$, forms a directed tree.
        \item for each $i\le j$, the last $t$ steps of the $i$-th walk, that is $(X_s^{(i)})_{s \in[\ell+1,\ell+t]}$, define a path $\p$ of weight $\w(\p)>\theta$.
    \end{enumerate}
    Since
    \begin{equation}
    \P^{{\rm an},K}_x(B_K)=\P^{{\rm an},K}_x(B_1)\prod_{j=2}^{K}\P^{{\rm an},K}_x\left(B_j\cond{}B_{j-1}\right),
    \end{equation}
    it is enough to show that, uniformly in $j\le K$,
    \begin{equation}\label{eq:claim4}
    \P^{{\rm an},K}_x\left(B_j\cond{}B_{j-1}\right)\le q_t(\theta)+\frac\delta{2}.
    \end{equation}
    In order to check that \cref{eq:claim4} holds, we note that, given $B_{j-1}$:
    \begin{enumerate}[(i)]
        \item either the $j$-th walk attains length $\ell$ before reaching an unmatched tail: thanks to the tree structure, there are at most $K-1$ possible paths of length $\ell$ to follow starting at $x$, and each has weight at most $2^{-\ell}$. Thus, the conditional probability of this scenario is less than $K2^{-\ell} = o(1)$.
        \item or the $j$-th walk has reached an unmatched tail by time $\ell$: then, the remainder of the path after the first unmatched tail can be coupled with an \ac{iid} sample from the in-degree distribution on $[n]$ at a total-variation cost less than $\frac{\Delta^- K(t+\ell)^2}{m}$, and the latter is deterministically $o(1)$ thanks to \cref{lem:s}. Indeed, there are at most $t+\ell$ heads matched in the generation of the $j$-th walk and the probability that the coupling fails at a given step is bounded uniformly by $\frac{\Delta^- K(t+\ell)}{m}$, as all heads can be chosen. Thus, the conditional probability that the walk meets the requirement in that case is at most $q_t(\theta)+o(1)$.
    \end{enumerate}
\end{proof}

\subsection{A weighted out-neighborhood construction}\label{suse:out-phase}

Following an idea introduced in~\cite{bordenave2018,bordenave2019}, we now consider a further construction of the out-neighborhood of a given vertex that reveals only the directed paths
which have a sufficiently large probability to be followed by the random walk.

Fix $\gamma\in (0,1)$. All parameters defined below depend implicitly on $\gamma$. Let
\begin{equation}\label{OMLZ}
    \hhigh
    \coloneqq
    (1+\gamma)\hh
    ,
\end{equation}
where $\hh$ is the entropy in \cref{eq:H}. For any integer $s\ge 1$ and every constant $\gamma>0$, let $\cG_x(s)$ be the weighted directed graph spanned by the set of paths of length at most $s$, starting from $x$, and having weight $\w(\p) \ge \ee^{-\hhigh s}$.
As in~\cite[Section~4.1]{bordenave2019},
we construct a sequence
$\procG$, where $\cG^\ell$ is a subgraph of $\cG_{x}(s)$ with $\ell$ edges, constructing $\cG_{x}(s)$ edge by edge, and such that $\cT^\ell$ is a spanning tree of $\cG^\ell$ for each $\ell$. We call $\kappa_{x} = \kappa_{x}(s)$ the random number of edges needed to construct the whole digraph $\cG^{\kappa_{x}}= \cG_{x}(s)$. We now explain the detailed construction of the sequence $\procG$. As initialization, let $\cE_0$ be the set of tails of $x$, $\cF_0=\emptyset$ and let $\cG^0=\cT^0$  be the digraph containing only $x$ and no edges. Then, for all $\ell\ge 1$:
\begin{enumerate}[(i)]
    \item Let ${\cE}_{\ell-1}$ be the set of \emph{unmatched} tails which are incident to a node in
          $\cG^{\ell-1}$. For a tail $e \in {\cE}_{\ell-1}$, define its \emph{cumulative
              weight} by
          \begin{equation}\label{PLGD}
              \widehat{\w}(e) \coloneqq \frac{\w(\p)}{d_{v_e}^{+}} ,
          \end{equation}
where $\p$ is the unique path from $x$ to
          $v_e$ in $\cT^{\ell-1}$.
          In words, $\widehat{\w}(e)$ is the probability for the random walk to follow $\p$ and
          then the edge containing $e$.
    \item  Pick $e_{\ell} \in {\cE}_{\ell-1}$ such that
          \begin{enumerate}
              \item $v_{e_\ell}$ is at distance at most $s-1$ from $x$,
              \item $\widehat{\w}(e_\ell) \ge \w_{\min}\coloneqq\ee^{-\hhigh s}$,
              \item $e_\ell$ has maximum cumulative weight among all tails in ${\cE}_{\ell-1}$ which satisfy
                    (a) and (b), and use some arbitrary rule to break ties if needed.
          \end{enumerate}
    \item If no such $e_{\ell}$ exists, then terminate and let $\kappa_{x}
              = \ell-1$.
    \item Otherwise, pair $e_\ell$ with a head $f_{\ell}$ chosen uniformly at random in $E^-\setminus \cF_{\ell-1}$ and set $\cF_\ell=\cF_{\ell-1}\cup\{f_\ell\}$.
          Let $\cG^{\ell}$ be the resulting partial pairing. If $v_{f_{\ell}} \in
              V(\cG^{\ell-1})$, the set of vertices of $\cG^{\ell-1}$, then let $\cT^{\ell}=\cT^{\ell-1}$; otherwise let
          $\cT^{\ell}=\cT^{\ell-1} \cup (e_{\ell}, f_{\ell})$. Return to step (i).
\end{enumerate}

When the process terminates, the construction yields $\cG_x(s)\coloneqq \cG^{\kappa_{x}}$ and $\cT_x(s) \coloneqq\cT^{\kappa_{x}}$.
By~\cite[Lemma 11]{bordenave2018}, one has the deterministic bound
\begin{equation}\label{eq:weights}
   \w_{\min}\le  \widehat{\w}(e_{\ell}) \le \frac{2}{2+\ell} ,
\end{equation}
for all $\ell\le \kappa_{x}$.
From this, it follows that,
\begin{equation}\label{eq:kappax:s}
    \kappa_{x}(s) \le \frac{2}{\w_{\min}}=  2\ee^{\hhigh s }.
\end{equation}

The main motivation for the above construction is the fact that with high probability the random walk trajectories up to time $s\le (1-o(1))\tent$ are concentrated on the tree $\cT_x(s)$ provided the starting point $x$ is locally tree-like, as we now explain. Note that the probability that a walk trajectory $(X_0,\dots,X_s)$ with $X_0=x$ stays on the tree $\cT_x(s)$ may be written as
\begin{equation}\label{eq:bennett1}
p(x,s) =\sum_{y\in  [n]}\sum_{\p\in\cP(x,y,s,\cT_x(s))}\w(\p),
\end{equation}
where $\cP(x,y,s,\cT_x(s))$ denotes the set of all paths of length $s$ from $x$ to $y$ in $\cT_x(s)$.
\begin{lemma}\label{lem:bennett}
Recall the definition of $V_\varepsilon$ in \cref{def-hslash*}.
For all $\varepsilon >0$, $\gamma>0$, and for any $s\le(1-\gamma)T_\ent$,
\begin{equation}\label{eq:bennett01}
\min_{x\in V_\varepsilon} \, p(x,s) \inprob 1.
\end{equation}
\end{lemma}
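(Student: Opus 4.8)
The plan is to write $1-p(x,s)$ as the probability that the length-$s$ walk trajectory from $x$ either escapes the weighted out‑neighbourhood $\cG_x(s)$ (built with threshold $\w_{\min}=\ee^{-\hhigh s}$), or stays inside $\cG_x(s)$ but crosses an edge outside its spanning tree $\cT_x(s)$ — a \emph{collision} edge — and to control the two contributions separately. The short‑range regime is immediate: if $s\le h_\varepsilon$, then for every $x\in V_\varepsilon$ the ball $\cB_x^+(s)\subseteq\cB_x^+(h_\varepsilon)$ is a tree, every path of length at most $s$ has weight at least $2^{-s}\ge\ee^{-(1+\gamma)\hh s}=\w_{\min}$ (using $\hh\ge\log 2$, a consequence of $d_v^+\ge2$), so $\cG_x(s)=\cB_x^+(s)$ is a tree, $\cT_x(s)=\cG_x(s)$, and $p(x,s)=\sum_{y}\sum_{\p\in\cP(x,y,s,G)}\w(\p)=1$; thus $\min_{x\in V_\varepsilon}p(x,s)=1$ deterministically and there is nothing to prove. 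So assume $h_\varepsilon<s\le(1-\gamma)\tent$, hence $s=\Theta(\log n)$. Let $\bar p(x,s)$ be the probability that the walk from $x$ stays inside $\cG_x(s)$ for $s$ steps, and set $w_{\mathrm{coll}}(x):=\bar p(x,s)-p(x,s)\ge0$, the weight of those length-$s$ trajectories that remain in $\cG_x(s)$ but use a collision edge; then $1-p(x,s)\le(1-\bar p(x,s))+w_{\mathrm{coll}}(x)$.

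For the first term, by the very definition of $\cG_x(s)$ every length-$s$ path from $x$ of weight $>\w_{\min}$ lies entirely in $\cG_x(s)$, so $\bar p(x,s)\ge Q_{x,s}(\w_{\min})$ with $Q$ as in \cref{LLN}. Since $-\log\w_{\min}/s=(1+\gamma)\hh>\hh$ and $s=\Theta(\log n)$, \cref{LLN} gives $\max_{x\in[n]}\bigl(1-Q_{x,s}(\w_{\min})\bigr)\inprob0$, whence $\max_{x\in[n]}(1-\bar p(x,s))\inprob0$. (If $\hh=\hh_n$ does not converge one argues along subsequences: on a subsequence with $\hh_n\to\hh_\ast$, apply \cref{LLN} with the constant exponent $\lambda\in(\hh_\ast,(1+\gamma)\hh_\ast)$ — threshold $\ee^{-\lambda s}$, which eventually lies strictly between $\w_{\min}$ and $1$ — and use that $Q_{x,s}(\cdot)$ is non‑increasing.)

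For the collision term, a trajectory contributing to $w_{\mathrm{coll}}(x)$ follows the unique $\cT_x(s)$‑path to the tail‑vertex of some collision edge $(e_\ell,f_\ell)$ and then crosses it, so it carries weight at most $\widehat{\w}(e_\ell)\le\tfrac{2}{2+\ell}$ by \cref{eq:weights}; hence
\begin{equation*}
 w_{\mathrm{coll}}(x)\le\sum_{\ell=1}^{\kappa_x(s)}\frac{2}{2+\ell}\,\ind(\text{a collision occurs at step }\ell),\qquad\kappa_x(s)\le2\ee^{(1+\gamma)\hh s}\le2n^{1-\gamma^2}.
\end{equation*}
Conditionally on the construction up to step $\ell$, a collision at step $\ell$ has probability at most $\sum_{v\in V(\cG^{\ell-1})}d_v^-/(m-\ell)$, and since $|V(\cG^{\ell-1})|\le\ell$ this is $O\bigl((\ell/n)^{(1+\eta)/(2+\eta)}\bigr)$ by Hölder's inequality (as in the proof of \cref{lem:s}); plugging this in gives $\E[w_{\mathrm{coll}}(x)]=O\bigl(n^{-\gamma^2(1+\eta)/(2+\eta)}\bigr)$, uniformly in $x$. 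To make this hold simultaneously over all $x\in V_\varepsilon$ — where the hypothesis $x\in V_\varepsilon$ is genuinely needed, since a vertex $x$ carrying a self‑loop has $1-p(x,s)\ge1/d_x^+$ — I would estimate $\E[w_{\mathrm{coll}}(x)^K]$ for $K$ of order $\log^2 n$ by expanding over $K$‑tuples of construction steps and peeling the conditionally‑rare collision indicators one distinct index at a time, in the same spirit as the proof of \cref{LLN}: each collision contributes both a probability factor $O\bigl((\ell/n)^{(1+\eta)/(2+\eta)}\bigr)$ and a weight factor $\tfrac{2}{2+\ell}$, and for $x\in V_\varepsilon$ — so that $\cG_x(s)$ has no short cycles — combining these two gains across the tuple forces $\E[w_{\mathrm{coll}}(x)^K]=n^{-\omega(1)}$. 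A union bound over $x\in[n]$ together with Markov's inequality then gives $\max_{x\in V_\varepsilon}w_{\mathrm{coll}}(x)\inprob0$, and combining with the previous paragraph proves the lemma.

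The main obstacle is precisely this last high‑moment estimate. The expected number of collisions created while building $\cG_x(s)$ need not be $o(1)$ when $\gamma$ is small, so one cannot simply argue that whp there are no collisions; the argument must instead thread both the polynomial rarity of each collision and the fact that a collision at step $\ell$ diverts at most $\tfrac{2}{2+\ell}$ of the walk's mass through the combinatorics of the $K$‑th moment expansion — the bookkeeping of repeated indices, grouped by block size, being the technical heart. This mirrors, and mildly refines in view of the unbounded in‑degrees, the corresponding step in~\cite{bordenave2019}.
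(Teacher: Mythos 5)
Your decomposition is the same as the paper's: split the mass of length-$s$ trajectories leaving $\cT_x(s)$ into (a) paths of weight below $\w_{\min}=\ee^{-\hhigh s}$, handled by \cref{LLN}, and (b) paths that cross a non-tree ("collision") edge of $\cG_x(s)$, bounded by $\sum_{\ell}\widehat\w(e_\ell)\ind(\text{collision at }\ell)$ with $\widehat\w(e_\ell)\le\frac{2}{2+\ell}$; your first-moment computation for the collision mass (via H\"older as in \cref{lem:s}) also matches the paper's. But the step you yourself flag as ``the main obstacle'' — a per-vertex tail bound at level $o(n^{-1})$ so that the union bound over $x\in V_\varepsilon$ goes through — is exactly the content of the paper's proof, and you have not supplied it. Markov's inequality from $\E[w_{\mathrm{coll}}(x)]=O(n^{-\gamma^2(1+\eta)/(2+\eta)})$ is far from $o(n^{-1})$ when $\gamma$ is small, so without the concentration step the lemma is not proved.

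Moreover, the $K$-th moment plan as sketched is not a routine adaptation of the \cref{LLN} argument. In \cref{LLN} each peeled conditional probability is uniformly at most $q_t(\theta)+o(1)$, whereas here collisions at late steps are not rare: the conditional collision probability at step $\ell$ is only $O(\sqrt{\ell/n})$, and with $\kappa_x\le 2n^{1-\gamma^2}$ steps the expected \emph{number} of collisions is polynomially large. The smallness of $w_{\mathrm{coll}}(x)$ comes from the interplay between the decaying weights $\frac{2}{2+\ell}$ and the step-dependent collision probabilities, and a raw moment expansion must capture this jointly; you do not indicate how. The paper packages precisely this via Freedman's martingale inequality~\cite{freedman1975}: it defines the weighted collision process $M_\ell$, uses $x\in V_\varepsilon$ (the only place the hypothesis enters) to get $M_\ell=0$ for $\ell\le 2^{h_\varepsilon}$ and hence increments bounded by $2^{-h_\varepsilon+1}$ by the monotonicity in \cref{eq:weights}, bounds the conditional drift by $o(n^{-\gamma^2/2})$ and the conditional variance by $o(n^{-1/2})$ using \cref{lem:s}, and concludes $\P(M_{\kappa_x}\ge \tfrac45\delta)=o(n^{-1})$, after which the union bound over $x\in V_\varepsilon$ finishes the proof. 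To complete your argument you should replace the unexecuted high-moment expansion by this (or an equivalent) martingale concentration step.
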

\begin{proof}
Let $\cP_{x,y}^*=\cP(x,y,s,G)\setminus \cP(x,y,s,\cT_x(s))$
denote the set of paths starting at $x$ and ending at $y$ of length $s$
which are not on the tree $\cT_x(s)$.
We need to show that
\begin{equation}\label{eq:bennett2}
\max_{x\in V_\varepsilon} \, \sum_{y}\sum_{\p\in\cP_{x,y}^*}\w(\p)\inprob 0.
\end{equation}
If a path $\p$ is in $\in \cP_{x,y}^{*}$, then at least one of the following conditions is satisfied:
\begin{enumerate}[(i)]
    \item the weight of the path satisfies $\w(\p) < \ee^{-\hhigh s}$;
    \item $\p$ contains an edge in $\cG_x(s)\setminus\cT_x(s)$.
\end{enumerate}
We call $\cP_{x,y}^{1,*}$ the set of paths in case (i),
and $\cP_{x,y}^{2,*}$ the set of paths in case (ii) but not in case (i).
From the law of large numbers  in \cref{LLN} we know that
\begin{equation}\label{eq:bennett3}
\max_{x\in [n]} \, \sum_{y\in [n]}\sum_{\p\in\cP_{x,y}^{1,*}}\w(\p)\inprob 0.
\end{equation}
It remains to show that
\begin{equation}\label{eq:bennett4}
\max_{x\in V_\varepsilon} \, \sum_{y}\sum_{\p\in \cP_{x,y}^{2,*}}\w(\p)\inprob 0.
\end{equation}

Define a process $(M_{\ell})_{\ell\ge 0}$ by $M_0=0$ and for $\ell\ge 1$
\begin{equation}\label{MTRW}
    M_{\ell}=M_{\ell-1}+ \widehat{\w}(e_\ell) \ind(\ell\le \kappa_{x})\ind(v_{f_{\ell}}\in
    V(\cG^{\ell-1}))
    .
\end{equation}
Notice that
\begin{equation}
M_{\kappa_{x}}=\sum_{y\in [n]}\sum_{\p\in\cP_{x,y}^{2,*}}\w(\p).
\end{equation}
We will show that $\P(M_{\kappa_{x}}>\delta)=o(n^{-1})$, uniformly in $x\in V_\varepsilon$, and for all $\delta>0$. By a union bound over $x\in V_\varepsilon$, this implies the desired claim.

Since $x\in V_\varepsilon$,
it follows that $\cG_x(h_\varepsilon)$ is a tree, where $h_\varepsilon$ is defined as in \cref{def-hslash}.
Therefore, setting $\ell_\varepsilon=2^{h_\varepsilon}$,
one must have  $M_{\ell}=0$ for all $\ell<\ell_\varepsilon$.
Thus, the desired conclusion follows once we prove
\begin{equation}
\P\(M_{\kappa_{x}}-M_{\ell_\varepsilon}>\delta\)=o(n^{-1}).
\end{equation}
To prove it we use a martingale version of Bennett's inequality obtained by Freedman~\cite{freedman1975}.

Since $\cG_x(h_\varepsilon)$ is a tree of size at least $\ell_{\varepsilon}$ and
$\widehat{\w}(e_\ell)$ is decreasing in $\ell$,
we have that $M_{\ell} = 0$ for all $\ell \le \ell_{\varepsilon}$ and, by \cref{eq:weights}, that
\begin{equation}\label{eq:bennett5}
    0\le M_{\ell}-M_{\ell-1}\le 2^{-h_\varepsilon+1}=o(1),
\end{equation}
for all $\ell\ge\ell_\varepsilon+1$.
Let $\cF_{\ell}$ be the filtration associated to $(\cG^\ell,\cT^\ell)$.
Call
\begin{equation}
g_\ell= \sum_{v\in V(\cG^{\ell})} d^-_v,
\end{equation}
and note that by
\cref{eq:kappax:s}, the vertices in $V(\cG^{\ell})$ are at most $\ell \le \kappa_x  \le 2 \ee^{\hhigh (1-\gamma)T_\ent}= 2n^{1-\gamma ^2}$.
Thus, by
Lemma~\ref{lem:s} we have $g_\ell= o(\sqrt{\ell n})$.
Using~\cref{eq:weights},
\begin{equation}
    \begin{aligned}
        \E[M_{\ell}-M_{\ell-1}\mid \cF_{\ell-1}]
         & \le \ind_{\ell\le \kappa_{x}} \frac{\widehat{\w}(e_{\ell}) g_{\ell-1}}{m-(\ell-1)}
        = o\(\frac{1}{\sqrt{n\ell}}\);                                                                  \\
        \E[(M_{\ell}-M_{\ell-1})^2\mid \cF_{\ell-1}]
         & \le \ind_{\ell\le \kappa_{x}} \frac{\(\widehat{\w}(e_{\ell})\)^2 g_{\ell-1}}{m-(\ell-1)}
        =
        o\(
        \frac{1}{\sqrt{n\ell^3}} \)
        .
    \end{aligned}
\end{equation}
Adding over all steps until $\kappa_{x}$, we get
\begin{equation}
    \begin{aligned}
        a &
        \coloneqq
        \sum_{\ell=1}^{\kappa_{x}}
        \E[M_{\ell}-M_{\ell-1}\mid \cF_{\ell-1}]
        \le \:o\(n^{-1/2}\)
        \sum_{\ell=1}^{\kappa_{x}}
        \frac{1}{\sqrt{\ell}}
        =o\(n^{-\gamma ^2/2}\)
        ,
        \\
        b & \coloneqq \sum_{\ell=1}^{\kappa_{x}}
        \E[(M_{\ell}-M_{\ell-1})^2\mid \cF_{\ell-1}]
        \le o\(n^{-1/2}\)
        \sum_{\ell=1}^{\kappa_{x}}
                \frac{1}{\ell^{3/2}}
        = o\(n^{-1/2}\)
        ,
    \end{aligned}
\end{equation}
where we used $\sum^k_{\ell=1} \ell^{-1/2}= O(\sqrt{k})$, $\sum_{\ell\ge 1} \ell^{-3/2} =O(1) $, and $\kappa_x \le 2n^{1-\gamma ^2}$.
For $\ell\ge \ell_\varepsilon$, define
\begin{equation}\label{SQGQ}
    Z_{\ell+1} = \frac{5}{\delta}(M_{\ell+1}-M_{\ell}-\E[M_{\ell+1}-M_{\ell}\mid\cF_{\ell}])
    ,
\end{equation}
which satisfies $|Z_{\ell+1}|\le 1$ by \cref{eq:bennett5}.
Let $\phi_{u} = \sum_{i=\ell_\varepsilon }^u Z_{i+1}$. Then $ (\phi_{u})_{u \ge \ell_\varepsilon}$ is a martingale and
$M_{\kappa_{x}} = a+\frac{\delta}{5}\phi_{\kappa_x}$. When $n$ is large enough, we have
$a\le \delta/5$. So
\begin{equation}
    \P\(M_{\kappa_{x}}\ge \frac{4}{5}\delta\) \le \P\(\phi_{\ell}\ge 3, \;\text{for some}\;\ell\ge \ell_\varepsilon\)
    .
\end{equation}
The conditional variance of $\phi_{\ell}$ is $b'\coloneqq \sum_{i=1}^\ell
    \var\(Z_{i}\cond{} \cF_i\)\le (\delta/5)^{-2}b= o(n^{-1/3})$.
Then by~\cite[Theorem~1.6]{freedman1975},
\begin{equation}
    \P\(\phi_{\ell}\ge 3, \text{for some }\ell\ge \ell_\varepsilon\)\le \ee^3\left(\frac{b'}{3+b'}\right)^{3+b'}
    =
    o\left(n^{-1}\right)
    .\qedhere
\end{equation}
\end{proof}

\subsection{Mixing time}
In this section we prove
\cref{cutoff}. We adapt the arguments in~\cite{bordenave2018}, which established the same result under a bounded degree assumption.
As we will see, the role played by the boundedness of the in-degrees in~\cite{bordenave2018} will be replaced by \cref{lem:s}.
We prove separately the lower and the upper bound on the total variation distance.

\subsubsection{Proof of the upper bound of \texorpdfstring{\cref{cutoff}}{Theorem~\ref{cutoff}}}\label{sec:upper:proof}
Let us first explain the overall strategy of the proof, which is based on ideas introduced in~\cite{bordenave2018,bordenave2019}.
For each pair of vertices $x, y\in[n]$,
we shall define a set $\wt \cP(x,y,t,G)\subset \cP(x,y,t,G)$ of \emph{nice paths} of length $t$ starting at $x$ and ending at $y$.
We will also let $\wt{P}^t(x,y)\le {P}^t(x,y)$ denote
the probability to go from $x$ to $y$ in $t$ steps following a nice path, i.e.,
\begin{equation}\label{eq:ptildexy}
    \wt{P}^t(x,y)
    \coloneqq
    \sum_{\p\in\wt \cP(x,y,t,G)}\w(\p).
\end{equation}
Suppose that for some $\delta>0$ and some probability distribution $\wt\pi$ on $[n]$ it holds that
\begin{equation}\label{eq:proxy}
    \wt{P}^t(x,y)\le (1+\delta)\wt\pi(y)+\frac{\delta}{n},\qquad \forall x,y\in[n].
\end{equation}
For $x\in \bbR$, define $[x]_+=\max\{x,0\}$. If \cref{eq:proxy} holds,
then
\begin{equation}\label{eq01}
    \[\wt\pi(y)-P^t(x,y)\]_+
    \le
    \big[\wt\pi(y)-\wt{P}^t(x,y)\big]_+
    \leq
    (1+\delta)\wt\pi(y)+\frac{\delta}{n}-\wt P^t(x,y) .
\end{equation}
Therefore,
\begin{equation}\label{eq3}
    \begin{aligned}
        \| P^t(x,\cdot)-\wt\pi \|_{\tv}=
        &\frac{1}{2} \sum_{y\in[n]}|\wt\pi(y)-P^t(x,y)|
        =
       \sum_{y\in[n]}\[\wt\pi(y)-P^t(x,y) \]_+                             \\
        \le
        & \sum_{y\in[n]}\Big((1+\delta)\wt\pi(y)+\frac{\delta}{n}-\wt P^t(x,y) \Big)
        =
        2\delta + \wt q(x),
    \end{aligned}
\end{equation}
where $\wt q(x)$ is the probability that the walk starting at $x$ follows a path of length $t$ which
is not nice, i.e.,
\begin{equation}\label{eq:ptildexy2}
    \wt q(x) \coloneqq 1-\sum_{y\in [n]} \sum_{\p\in\wt \cP(x,y,t,G)}\w(\p).
\end{equation}

Next, we define the set of nice paths.
\begin{definition}\label{def:nice}
Let $\eta\in(0,1)$ such that \cref{cond:main} is satisfied. Fix $\varepsilon\in(0,\eta/6)$, $h=h_\varepsilon$, $\gamma=\frac{\varepsilon}{80\log(\Delta^+)}$ and set
\begin{equation}\label{eq:def-s-t}
    s\coloneqq(1-\gamma)\tent,\qquad t\coloneqq s + h+1.
\end{equation}
{Notice that $s= \tent-\frac{h}{4\hh}$, and that our choice of parameters $\varepsilon,h,s,t$ and $\gamma$ is such that $t=(1+\delta')\tent + 1$ for some $\delta'=\delta'(\varepsilon)>0$ such that $\delta'(\varepsilon)\to 0$ as $\varepsilon\to 0$.}
We define the set of nice paths as follows.
Given $x,y\in[n]$, a path $\p$ of length $t$ starting at $x$ and ending at $y$ is \emph{nice}, if
\begin{enumerate}[(i)]
    \item\label{req:1} the first $s$ steps are contained in the tree $\cT_x(s)$ defined in \cref{suse:out-phase};
    \item\label{req:2} the first $s+1$ steps of $\p$ form a path $\p_{s+1}$
        such that
        \begin{equation}
            \w(\p_{s+1})\le n^{-1+2\gamma};
        \end{equation}
    \item\label{req:3} the last $h$ steps of $\p$
        form the unique path in $G$ of length at most $h$ from its origin to its destination.
\end{enumerate}
\end{definition}
Notice that (ii) implies that for every nice path $\p$
\begin{equation}\label{eq:hp4}
\w(\p)\le \w(\p_{s+1})\cdot 2^{-h}\le 2^{-h}n^{-1+2\gamma}\le n^{-1-\delta'},
\end{equation}
{for some $\delta'>0$, where we use the definition of $h=h_\varepsilon$ and $\gamma$, and the fact that $\log(2)>2/3$}.

With this definition at hand, we show
that, \ac{whp}, starting from any locally-tree-like vertex, the quenched probability to follow a nice path converges to 1.
\begin{proposition}\label{prop:nice}
    Recall the definition of $V_\varepsilon$ in \cref{def-hslash*}.
    The probability $\wt q(x)$ defined in \cref{eq:ptildexy2} satisfies
    \begin{equation}
        \max_{x\in V_\varepsilon} \wt q(x)\overset{\P}{\longrightarrow}0.
    \end{equation}
\end{proposition}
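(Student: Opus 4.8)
The plan is to decompose the event that the walk from $x \in V_\varepsilon$ fails to follow a nice path into the three failure modes corresponding to \cref{req:1}, \cref{req:2} and \cref{req:3} in \cref{def:nice}, and to bound each contribution in quenched probability. Write $\wt q(x) \le q_1(x) + q_2(x) + q_3(x)$, where $q_1(x)$ is the quenched probability that the first $s$ steps leave the tree $\cT_x(s)$, $q_2(x)$ is the probability that the first $s+1$ steps form a path of weight exceeding $n^{-1+2\gamma}$, and $q_3(x)$ is the probability that the last $h$ steps do not form the unique path of length $\le h$ between their endpoints.

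First, $\max_{x \in V_\varepsilon} q_1(x) \inprob 0$ is precisely \cref{lem:bennett}: since $s = (1-\gamma)\tent \le (1-\gamma)T_\ent$ and every $x \in V_\varepsilon$ is locally tree-like up to depth $h_\varepsilon$, we have $\min_{x\in V_\varepsilon} p(x,s) \inprob 1$, which is the complementary statement. Next, for $q_2(x)$: conditioned on the walk staying on $\cT_x(s)$, requirement \cref{req:2} asks that the weight $\w(\p_{s+1})$ of the length-$(s+1)$ prefix be at most $n^{-1+2\gamma}$. By construction of $\cT_x(s)$ in \cref{suse:out-phase}, every path of length $\le s$ in $\cT_x(s)$ has weight $\ge \w_{\min} = \ee^{-\hhigh s}$, and one step further multiplies the weight by at most $1/2$; but I actually want an \emph{upper} bound on the weight, so instead I will invoke the law of large numbers \cref{LLN}: with $\lambda$ chosen so that $\ee^{-\lambda(s+1)}$ sits just above $n^{-1+2\gamma}$ (note $\hh s = (1-\gamma)\log n$, so a typical weight is $\ee^{-\hh s} = n^{-(1-\gamma)}$, comfortably below $n^{-1+2\gamma}$), \cref{LLN} gives that $\sum_{y}\sum_{\p \in \cP(x,y,s+1,G)} \w(\p)\ind_{\w(\p) > n^{-1+2\gamma}} \inprob 0$ uniformly in $x \in [n]$, hence $\max_{x\in V_\varepsilon} q_2(x) \inprob 0$.

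Finally, for $q_3(x)$: this is the probability that the portion of the trajectory on the last $h$ steps, which is a path of length $h$ in $G$ ending at some vertex $y$, is not the unique path of length $\le h$ from its starting vertex to $y$. Here I would use \cref{lemmaG} (the event $\cG^+(2h_\varepsilon)$): whp every out-neighborhood $\cB^+_x(2h_\varepsilon)$ has tree-excess at most $1$, so there is at most one ``bad'' vertex in each such ball, and the walk started from $x$ must traverse a cycle-closing edge within the last $h$ steps to violate uniqueness. Since on $\{x \in V_\varepsilon\}$ we moreover have $\tx(\cB^+_x(h_\varepsilon)) = 0$, and the last $h$ steps start from a vertex reached after $s+1 \ge h_\varepsilon$ steps, the relevant ball is still tree-like with high probability, and each individual cycle-closing path has weight $\le 2^{-h_\varepsilon} = o(1)$; combined with \cref{eq:coup1}-type union bounds over the at most $(\Delta^+)^h = n^{\varepsilon/20}$ steps, one gets $\max_{x\in V_\varepsilon} q_3(x) = o(1)$ in probability (in fact whp, as this is really a statement about the digraph alone, not the walk). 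Summing the three bounds gives $\max_{x\in V_\varepsilon}\wt q(x) \inprob 0$.

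The main obstacle will be controlling $q_3(x)$ cleanly: the starting point of the last $h$ steps is a \emph{random} vertex (depending on the walk's trajectory through $\cT_x(s)$), not a fixed vertex, so one cannot directly quote $\cG^+(2h_\varepsilon)$ at that vertex without first arguing that, conditionally on the first $s$ steps, the out-neighborhood of the current position is still well-approximated by a tree. The clean way around this is to observe that the union of $\cT_x(s)$ with all out-neighborhoods of depth $h$ hanging off its leaves is itself an out-neighborhood of $x$ of depth $\le s + h \le \tent \le 2h_\varepsilon$ — no, this is too large; rather, one localizes: it suffices that for the \emph{specific} endpoint reached, which lies in $\partial\cB^+_x(s)$, the depth-$h$ out-ball is a tree, and this follows from a union bound over the at most $\kappa_x(s) \le 2n^{1-\gamma^2}$ vertices of $\cG_x(s)$ together with the per-vertex estimate $\P(\tx(\cB^+_w(h_\varepsilon)) \ge 1) \le K q$ from the proof of \cref{lem:V_*}, which is $O(n^{\varepsilon/20 + \varepsilon/20 - 1/2 + \eta/6})$ — summable over $n^{1-\gamma^2}$ vertices provided $\varepsilon$ is small enough relative to $\eta$. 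This is exactly the kind of bookkeeping the paper defers with ``$\varepsilon$ sufficiently small,'' so I expect the write-up to be short modulo these routine union bounds.
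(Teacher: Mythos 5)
Your decomposition into the three failure modes and your treatment of requirements (i) and (ii) are exactly the paper's argument: (i) is \cref{lem:bennett} applied with $s=(1-\gamma)\tent$, and (ii) follows from \cref{LLN} because $n^{-1+2\gamma}$ lies strictly above the typical weight scale $\ee^{-\hh s}=n^{-(1-\gamma)}$, so the total weight of paths violating (ii) vanishes uniformly in $x$. The problem is your handling of requirement (iii), where you correctly identify the obstacle (the position at time $s+1$ is random, so $\cG^+(2h_\varepsilon)$ at the root $x$ is of no direct use) but your proposed repair does not close the gap. You suggest a union bound over the at most $\kappa_x(s)\le 2n^{1-\gamma^2}$ vertices of $\cG_x(s)$, with per-vertex probability of positive tree-excess of order $(\Delta^+)^{2h_\varepsilon}\Delta^-/m$. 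Since \cref{cond:main} only gives $\Delta^-=O(n^{1/2-\eta/6})$, this per-vertex bound is of order $n^{\varepsilon/10-1/2-\eta/6}$, and multiplying by $n^{1-\gamma^2}$ vertices (let alone taking a further union over the starting points $x$) yields $n^{1/2-\eta/6-\gamma^2+\varepsilon/10}$, which diverges no matter how small you take $\varepsilon$, because $\eta<1$ forces $\eta/6<1/6$. In other words, a \emph{counting} bound on bad vertices cannot work here: whp there really may be polynomially many vertices with positive tree-excess in the explored region (compare \cref{lem:V_*}, which only gives $n^{2/3}$). There is also the secondary issue that $V(\cG_x(s))$ is a random set correlated with the out-balls you are testing, so the per-vertex estimate cannot be applied to it without further conditioning.

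The paper's resolution is a \emph{weighted}, quenched bound rather than a vertex count: the probability of violating (iii) is at most $\PP_x(X_{s+1}\notin V_\varepsilon)$, because if $X_{s+1}\in V_\varepsilon$ then $\cB^+_{X_{s+1}}(h_\varepsilon)$ is a tree and the last $h$ steps automatically form the unique short path to their endpoint. By the Markov property, for any $\ell\le h_\varepsilon$,
\begin{equation}
\PP_x\(X_{s+1}\notin V_\varepsilon\)\le \max_{v\in[n]}\PP_v\(X_\ell\notin V_\varepsilon\),
\end{equation}
and \cref{lemmaG2} shows that, whp over the digraph, the right-hand side is at most $2^{-\ell}$ uniformly in $v$: on $\cG^+(2h_\varepsilon)$, paths of length $\ell$ reaching bad vertices are essentially unique and each carries weight at most $2^{-\ell}$, so the small quenched weight of such paths does the work that no union bound over vertices can do. With this replacement for your $q_3$ step, the rest of your write-up matches the paper's proof.
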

\begin{proof}\label{sec:Bennett}

We are going to check that the three requirements in \cref{def:nice} are satisfied \ac{whp} by the quenched law uniformly in the starting state $x\in V_\varepsilon$. Notice that requirement (i) follows, uniformly in $x\in V_\varepsilon$, by \cref{lem:bennett}. By our choice of the parameter $\gamma$, {$n^{-1+2\gamma}\ge \ee^{-(1-\delta')\hh s}$ if $\delta'>0$ is small enough, and therefore} the requirement in (ii) is a simple consequence of \cref{LLN}, which holds uniformly in $x\in[n]$. Finally, the probability of the event in requirement (iii) can be bounded from above by the probability that at time $s+1$ the walk is in $V_\varepsilon$. Moreover,  for any $\ell\le s$:
\begin{equation}
\min_{x\in V_\varepsilon}\PP_x\(X_{s+1}\in V_\varepsilon\)
\ge 1-\max_{v\in [n]}\PP_v\(X_{\ell}\not\in V_\varepsilon\).
\end{equation}
where the last step follows from \cref{lemmaG2}.
\end{proof}

The last ingredient we need is an approximation $\wt\pi$ for the stationary distribution $\pi$,
which satisfies \cref{eq:proxy}.
To this end,
we define
    \begin{equation}
\label{eq:pitilde}
\wt\pi= \mu_{\rm in}P^{h}.
\end{equation}
\begin{proposition}\label{prop:proxy}
For all $\delta>0$, 
    \begin{equation}
        \P\(\max_{x\in[n]}
        \wt{P}^t(x,y)\le (1+\delta)\mu_{\rm
        in}P^h(y)+\frac{\delta}{n}
        , \;\forall y\in[n]\)=1-o(1)
        .
    \end{equation}
\end{proposition}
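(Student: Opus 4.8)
The plan is to bound $\wt P^t(x,y)$ by decomposing a nice path into its first $s+1$ steps and its last $h$ steps, and then to exploit the tree structure imposed by requirement (i) together with the annealed-moment machinery of \cref{eq:def-annealing}. Write a nice path $\p$ from $x$ to $y$ as the concatenation of $\p_{s+1}$, a path of length $s+1$ in $\cT_x(s)$ ending at some vertex $z$ with $v_{e_s}=v_{f_{s+1}}=z$ and $\w(\p_{s+1})\le n^{-1+2\gamma}$, followed by the \emph{unique} path of length $h$ from $z$ to $y$ (requirement (iii)), whose weight is $\w(\p_{s+1})^{-1}\w(\p)$. Summing over $y$ and over all such decompositions, and using that the map from $(\p_{s+1},\text{last }h\text{ steps})$ to the endpoint $y$ is, for fixed $z$, exactly the $h$-step transition of the quenched walk started from $z$, one obtains the identity
\begin{equation}\label{eq:decomp-nice}
\wt P^t(x,y)=\sum_{z\in[n]}\wt P^{\,s+1}(x,z)\,\ind\{\text{req.\ (iii) holds for }z\to y\}\,P^h(z,y)\le \sum_{z\in[n]}\wt P^{\,s+1}(x,z)\,P^h(z,y),
\end{equation}
where $\wt P^{\,s+1}(x,z)$ is the weight of the length-$(s+1)$ tree-paths from $x$ to $z$ satisfying the weight cap in (ii). So it suffices to show that $\sum_z \wt P^{\,s+1}(x,z)\,P^h(z,y)\le (1+\delta)\muin P^h(y)+\delta/n$ uniformly in $x,y$, \ac{whp}.

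The heart of the matter is thus to prove that the random measure $z\mapsto \wt P^{\,s+1}(x,z)$ is close (after acting by $P^h$) to $\muin$. The strategy, following~\cite{bordenave2018,bordenave2019}, is: (a) observe that $\wt P^{\,s+1}(x,\cdot)$ is supported on the leaves of the weighted tree $\cT_x(s)$ at depth $s+1$, each leaf $z$ carrying weight $\widehat\w(e)$ for the tail $e$ leading to it, and each such weight is at most $n^{-1+2\gamma}$ by (ii) and is $\ge \w_{\min}=n^{-(1-\gamma^2)}$ by the construction in \cref{suse:out-phase}; (b) the marks of these leaves, conditionally on the tree shape, are \emph{essentially i.i.d.}\ with law $\muout$ — this is exactly the coupling with the marked Galton--Watson tree of \cref{sec:GW}, whose failure probability is controlled because the total number of edges revealed is $\kappa_x(s)\le 2n^{1-\gamma^2}=o(n)$ so collisions are rare; (c) therefore $\sum_z \wt P^{\,s+1}(x,z)\,\ind_{\ell(z)=w}$ is, up to the coupling error, a sum of independent nonnegative terms with mean $\muout(w)\sum_z\wt P^{\,s+1}(x,z)\le \muout(w)$ and bounded by $n^{-1+2\gamma}$ each; a Bennett/Freedman-type concentration inequality (as in \cref{lem:bennett}) then shows that for each fixed $w$, \ac{whp}, $\sum_z \wt P^{\,s+1}(x,z)\,\ind_{\ell(z)=w}\le (1+\delta)\muout(w)+\delta n^{-1}/K$ with enough room to union-bound over $x\in[n]$, $w\in[n]$, using that $\muout(w)\le K/m$. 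Pushing this through $P^h$ and recalling $\muout P^h=\muin P^{h}$ when... — more precisely, summing the per-$w$ bounds against $P^h(w,y)$ and using $\sum_w\muout(w)P^h(w,y)=(\muout P^h)(y)$, and noting $\muin P^h(y)\ge$ a deterministic multiple of $(\muout P^h)(y)$ up to the $(2+\eta)$-moment control of \cref{lem:s} — gives \cref{eq:proxy} with $\wt\pi=\muin P^h$.

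The main obstacle I expect is step (c): making the concentration uniform over all $x\in[n]$ \emph{and} all target vertices $w$, while the number of summands (leaves of $\cT_x(s)$) is as large as $\kappa_x\le 2n^{1-\gamma^2}$ and each weight is only $O(n^{-1+2\gamma})$, so that the relevant variance is of order $n^{-1+2\gamma}\cdot\max_w\muout(w)$, which must be beaten by the exponential factor in Freedman's inequality \emph{after} a union bound over $n^2$ pairs — this is why the weight cap $n^{-1+2\gamma}$ in requirement (ii) with $\gamma$ small is essential, and why the coupling error $\kappa_x^2\Delta^+/m=O(n^{1-2\gamma^2})\cdot n^{-1}$ must also be absorbed. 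A secondary technical point is that $\cT_x(s)$ is built by a size-biased, weight-dependent exploration rule rather than a plain \ac{bfs}, so the i.i.d.-marks coupling has to be justified in that adaptive setting exactly as in~\cite[Section~4]{bordenave2019}; but since the mark of each newly revealed head is drawn independently and uniformly among unmatched heads, conditionally on the past the mark distribution is $\muout$ up to the $O(\kappa_x/m)$ correction already accounted for, so no new idea is needed beyond careful bookkeeping.
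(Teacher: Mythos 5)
There are two genuine gaps, and together they show the per-vertex route you propose cannot be pushed through, whereas the paper's proof works with a different conditioning and a different concentration target. First, the exit marks of the weighted out-tree are not $\muout$-distributed but (approximately) $\muin$-distributed: when an unmatched tail $e$ at depth $s$ is paired with a uniformly random unmatched head $f$, the landing vertex $v_f=w$ has probability close to $d_w^-/m=\muin(w)$; the law $\muout$ governs the marks of the \emph{in}-tree $\cT_y^-$ of \cref{sec:GW}, and the roles are reversed for out-explorations. Consequently your argument, as written, produces a bound in terms of $\muout P^h(y)$, and the patch you invoke --- that $\muin P^h(y)$ dominates a deterministic multiple of $\muout P^h(y)$ via \cref{lem:s} --- is false under \cref{cond:main}: there is no lower bound on in-degrees, so $d_z^+/d_z^-$ is unbounded and $\muout P^h(y)$ can exceed any constant multiple of $\muin P^h(y)$; moreover, even a correct constant-factor comparison would not prove the proposition, which needs the sharp factor $(1+\delta)$ in front of $\muin P^h(y)$ (this sharpness is what yields cutoff at $\tent$). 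With the mark law corrected, $\muin P^h$ appears directly and no comparison is needed --- but then the second gap is decisive.

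Second, the vertex-by-vertex concentration in step (c) is not merely hard to make uniform: it is false. For a pair $(x,w)$ with $d_w^\pm=O(1)$ you require $\sum_z\wt P^{\,s+1}(x,z)\ind_{\ell(z)=w}\le(1+\delta)\muout(w)+\delta n^{-1}/K$, a quantity of order $1/n$, yet every tail $e\in\cE_\sigma$ carries weight of order at least $\w_{\min}=n^{-(1-\gamma^2)}\gg 1/n$ (up to the bounded factor $\Delta^+$), so a \emph{single} matching of a head of $w$ to such a tail already violates the bound. That event has probability of order $\kappa_x d_w^-/m$, which is polynomially small but far larger than $n^{-2}$, so \ac{whp} polynomially many pairs $(x,w)$ violate the claimed bound and no union bound can rescue it; nor can the per-vertex error budget be enlarged, since $\sum_w P^h(w,y)$ is polynomially large when $d_y^-$ is large. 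Concentration holds only for the functional aggregated against the in-neighborhood of $y$, and this is exactly the paper's device: it reveals $\cB^-_y(h)$ together with $(\cG_x(s),\cT_x(s))$, writes $\wt P^t(x,y)=\sum_{e\in\cE_\sigma}\sum_{f\in\cF_\sigma}\widehat\w(e)\,\widehat\w(f)\,\ind_{\widehat\w(e)\le n^{-1+2\gamma}}\ind_{\omega(e)=f}$, identifies the conditional mean through $\sum_{f\in\cF_\sigma}\widehat\w(f)\le m\,\muin P^h(y)$ and $\P(\omega(e)=f\mid\sigma)=(m-\kappa_x-\kappa_y)^{-1}$, and then applies Chatterjee's concentration inequality for the residual uniform matching, whose per-swap influence $\|c\|_\infty\le n^{-1-\delta'}$ gives failure probability $\ee^{-\log^2 (n)}$, enough to union bound over all $n^2$ pairs $(x,y)$. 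Conditioning on the in-neighborhood of $y$ and concentrating this bilinear form, rather than the exit measure vertex by vertex, is the missing idea in your proposal.
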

We first conclude the proof of the upper bound in \cref{cutoff}, and then provide the proof of \cref{prop:proxy}.
\begin{proof}[Proof of the upper bound of \cref{cutoff}]
    Fix the parameters $\varepsilon, h,s,t$ and $\gamma$ as above.
    It follows from the argument in
    \cref{eq3} and \cref{prop:nice,prop:proxy} that
    \begin{equation}
 \max_{x\in V_\varepsilon}\|P^{t}(x,\cdot)-\mu_{\rm in}P^{h} \|_{\tv} \overset{\P}{\longrightarrow} 0.
    \end{equation}
Thus, the upper bound in \cref{cutoff}{, i.e., when $\rho>1$,}
holds for all starting states $x\in V_\varepsilon$ with $\wt\pi$ in place of $\pi$.
Setting $ t'=t+\ell$,
    \begin{equation}\label{eq:339}
        \max_{v\in[n]}\|P^{t'}(v,\cdot)-\mu_{\rm in}P^{h} \|_{\tv} \le \max_{x\in V_\varepsilon}\|P^{t}(x,\cdot)-\mu_{\rm in}P^{h} \|_{\tv}+
        \max_{v\in[n]}\PP_v(X_{\ell}\not\in V_\varepsilon).
    \end{equation}
    Taking, e.g., $\ell =\log\log(n)$, by \cref{lemmaG2}, the last term in \cref{eq:339} tends to zero in probability.
    Thus,
    \begin{equation}\label{eqbound}
        \max_{x\in[n]}\|P^{t'}(x,\cdot)-\mu_{\rm in}P^{h} \|_\tv\overset{\P}{\longrightarrow} 0.
    \end{equation}
    Since the latter convergence is uniform in the starting position $x$, it must hold for every initial distribution. Starting at stationarity, it follows by \cref{eqbound} that
    \begin{equation}\label{eqbound2}
        \|\pi-\wt\pi \|_\tv=\|\pi-\mu_{\rm in}P^{h} \|_\tv \overset{\P}{\longrightarrow}  0.
    \end{equation}
   Hence by the triangular inequality
     \begin{equation}\label{eqboundo1}
        \max_{x\in[n]}\|P^{t'}(x,\cdot)-\pi \|_\tv\overset{\P}{\longrightarrow} 0.
    \end{equation}
    {By monotonicity of the total variation distance, this implies the same estimate for all times larger than $t'$. Moreover, since $\varepsilon$ can be taken arbitrarily small, the above defined $t'$ is sufficient to cover the whole range of times of the form $\rho\,\tent$, with fixed $\rho>1$}.
\end{proof}

\begin{proof}[Proof of \cref{prop:proxy}]
Fix the parameters $\varepsilon, h,s,t$ and $\gamma$ as above.
Given $x,y\in[n]$, we first generate the pair $(\cG_x(s),\cT_x(s))$ using the  construction in \cref{suse:out-phase}, and then sample the in-neighborhood of $y$  up to height $h$, $\cB^-_y(h)$,
using the procedure in \cref{suse:local} with the \ac{bfs} rule. Some of the matching defining $\cB^-_y(h)$ may have been already revealed during the construction of  $\cG_x(s)$. Call $\kappa_y$ the additional (random) number of matchings needed to complete the construction of $\cB_y^-(h)$. In total, thanks to \cref{lemmaGW} and \cref{eq:kappax:s}, \ac{whp} and for all $x,y\in [n]$, the total number of matchings revealed is bounded by
\begin{equation}\label{eq:req}
    \kappa_x+\kappa_y\le 2 \ee^{s\hhigh}
  +  n^{\frac{1}{2}+\varepsilon}=
   2 \ee^{(1-\gamma)\tent\hhigh}+ n^{\frac{1}{2}+\varepsilon}\le 3n^{1-\gamma^2},
\end{equation}
for all sufficiently large $n$, where we used the definitions of $\gamma$ and $\hhigh$.

Let $\sigma=\sigma(x,y)$ denote the partial environment obtained after the generation of the neighborhoods $(\cG_x(s),\cT_x(s))$ and $\cB^-_y(h)$. Let $\cW_{x,y}$ be the event that $\sigma$ satisfies \cref{eq:req}. Let $\cF_\sigma$ denote the set of unmatched heads $f$ with $v_f\in \partial\cB^-_y(h)$ that admit a unique path of length $h$ ending at $y$, and with a slight abuse of notation, let $\widehat \w(f)$ denote the weight of such a unique path. Similarly, call $\cE_\sigma$ the set of unmatched tails at height $s$ in $\cT_x(s)$. Notice that, as a consequence of these definitions,
\begin{equation}\label{349}
    \sum_{e\in\cE_\sigma}\widehat\w(e)\le 1,\qquad \sum_{f\in\cF_\sigma}{\widehat \w}(f)\le m\, \mu_{\rm in} P^h(y).
\end{equation}
Moreover, our construction is such that the probability to follow a nice path of length $t$ from $x$ to $y$ can be written as
\begin{equation}\label{eq:nice_traj}
    \wt P^t(x,y)=\sum_{e\in\cE_\sigma}\sum_{f\in\cF_\sigma}\widehat\w(e){\widehat \w}(f)\ind_{\widehat\w(e)\le n^{-1+2\gamma}}\ind_{\omega(e)=f},
\end{equation}
where we use the fact that for a nice path $\p$ the first $s+1$ steps are such that $\w(\p_{s+1})=\widehat\w(e)$ for a suitable $e\in\cE_\sigma$.

Given the partial environment $\sigma$, the sampling of the full environment is completed by using a random permutation $\omega$ of the remaining $m-\kappa_x-\kappa_y$ heads and tails. In particular, conditionally on the  partial environment $\sigma$,
for all $(e,f)\in\cE_\sigma\times\cF_\sigma$, the random variable $\ind_{\omega(e)=f}$
is marginally distributed as a Bernoulli random variable with parameter
$\frac{1}{m-\kappa_x-\kappa_y}$.
Therefore, using \cref{349}, for each $\sigma\in \cW_{x,y}$ we estimate
\begin{equation}\label{eq:use1}
    \begin{aligned}
    \E\[\wt P^t(x,y)\cond{}\sigma\]& = \frac{1}{m-\kappa_x-\kappa_y}\sum_{e\in\cE_\sigma}\sum_{f\in\cF_{\sigma}}\widehat\w(e){\widehat \w}(f)\ind_{\widehat\w(e)\le n^{-1+2\gamma}} \\
     & \le \(1+3n^{-\gamma^2} \)\mu_{\rm in}P^h(y).
    \end{aligned}
\end{equation}
It follows that, uniformly in $x,y$ and $\sigma\in \cW_{x,y}$, for all fixed $\delta>0$ and all $n$ large enough, one has \begin{equation}\label{eq:use2}
    (1+\delta/2)\E\[\wt P^t(x,y) \cond{} \sigma\]\le (1+\delta)\mu_{\rm in}P^h(y).
\end{equation}

A concentration result for  functions of a random permutation due to Chatterjee (see~\cite[Proposition 1.1]{chatterjee2007}) shows that for all $a>0$,
\begin{equation}\label{eq:chatterjee}
    \P\(\left|\wt P^t(x,y)-\E\left[\wt P^t(x,y)\cond{}\sigma\right]\right|\ge a\cond{} \sigma \)\le
 2   \exp\(-\tfrac{a^2}{2\|c\|_\infty\(2\E\[\wt P^t(x,y) \cond{}\sigma\]+a\)}\),
\end{equation}
where, using \cref{eq:hp4},
\begin{equation}\|c\|_{\infty}:=\max_{(e,f)\in\cE_\sigma\times\cF_\sigma}\widehat\w(e){\widehat \w}(f)\ind_{\widehat\w(e)\le n^{-1+2\gamma}}\le 2^{-h} n^{-1+\gamma}\le n^{-1-\delta'} .\end{equation}
Choosing $a=\frac{\delta}{2}\E\left[\wt P^t(x,y)\cond{}\sigma\right]+\frac{\delta}{n}$ in \cref{eq:chatterjee},
and using \cref{eq:use2}, we get that, for all $\delta>0$,
uniformly in $x,y\in[n]$ and $\sigma\in \cW_{x,y}$ one has
\begin{equation}\label{eq:chat-final}
    \P\(\wt P^t(x,y)\ge (1+\delta )\mu_{\rm in}P^{h}(y)+\frac{\delta}{n}\: \bigg\rvert\:\sigma \)\le \exp(-\log^2(n)),
\end{equation}
for all $n$ large enough.
Note that a lower bound of $\E\left[\wt P^t(x,y)\cond{}\sigma\right]$ is not needed here. In fact, this bound also holds if $y$ has in-degree $d_y^-=0$, in which case $ \E\left[\wt P^t(x,y)\cond{}\sigma\right]=0$.

In order to get the desired conclusion, let $\cW:=\cap_{x,y}\cW_{x,y}\supset\cS_\varepsilon^-$; see \cref{eq:seps-}.
Define
\begin{equation}\cZ_{x,y}\coloneqq\left\{\wt P^t(x,y)\le (1+\delta )\mu_{\rm in}P^{h}(y)+\frac{\delta}{n}\right\}.\end{equation}
Then, using \cref{lemmaGW} one has
\begin{equation}
    \begin{aligned}
    \P\(\cap_{x,y\in[n]}\cZ_{x,y}\)& = 1-\P\(\cup_{x,y\in[n]}\cZ_{x,y}^c \)
    \ge 1-\P\(\cup_{x,y\in[n]}\cZ_{x,y}^c\cap \cW \)-\P(\cW^c)                                                \\
    & \ge 1-n^2\:\max_{x,y\in [n]}\P\(\cZ_{x,y}^c\cap \cW_{x,y} \) - o(1).
    \end{aligned}
\end{equation}
Moreover,  \cref{eq:chat-final} implies
\begin{equation}
n^2\:\max_{x,y\in [n]}\P\(\cZ_{x,y}^c\cap \cW_{x,y} \)
\le
   n^2\:\max_{x,y\in [n]} \max_{\sigma\in \cW_{x,y}}\P\(\cZ_{x,y}^c\cond{}\sigma \)=o(1).
   \qedhere
\end{equation}
\end{proof}

In what follows, we will use the following corollary of the upper bound in \cref{cutoff}. For $t\in \bbN$ and $y\in [n]$, define
\begin{equation}\label{eq:def-lambda2}
\mu_t(y)\coloneqq \frac{1}{n}\sum_{x\in[n]}P^t(x,y).
\end{equation}
\begin{corollary}\label{coro:mixing}
    With high probability,
    for all $t=\Omega(\log^3(n))$,
    \begin{equation}
        \max_{x\in[n]}\|P^t(x,\cdot)-\pi \|_\tv\le \ee^{-\log^{3/2}(n)}.
    \end{equation}
In particular,  $\|\mu_t-\pi \|_\tv\le \ee^{-\log^{3/2}(n)}$.
\end{corollary}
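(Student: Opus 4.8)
The plan is to bootstrap the qualitative convergence of \cref{cutoff} into the claimed stretched‑exponential decay, using only the standard contraction property of total variation along a Markov chain. Write $d(k)\coloneqq\max_{x\in[n]}\|P^k(x,\cdot)-\pi\|_{\tv}$. Fixing $\varepsilon>0$ small, I would record from the proof of the upper bound of \cref{cutoff} (specifically \cref{eqboundo1}) that for the \emph{deterministic} time $t'=t'(\varepsilon,n)$ defined there --- of the form $(1+\delta'(\varepsilon))\tent+\log\log n$ --- one has $d(t')\inprob 0$; in particular, \ac{whp} $d(t')\le\tfrac14$. Since all out‑degrees are at least $2$ we have $\hh\ge\log 2$, hence $\tent=\Theta(\log n)$ and, once $\varepsilon$ is fixed, $t'\le C_1\log n$ for an absolute constant $C_1$.

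Next I would invoke the deterministic submultiplicativity $d(s+t)\le 2\,d(s)\,d(t)$, valid for any Markov chain with a unique stationary distribution --- obtained by writing $P^{s+t}(x,\cdot)-\pi=(P^s(x,\cdot)-\pi)P^t$ and applying the Jordan decomposition to the mean‑zero signed measure $P^s(x,\cdot)-\pi$ --- together with the monotonicity $d(k+1)\le d(k)$, which holds because a stochastic matrix contracts the $\ell^1$‑norm of a mean‑zero signed measure. On the \ac{whp} event $\{d(t')\le\tfrac14\}$, an immediate induction then gives $d(jt')\le 2^{-j-1}$ for every integer $j\ge 1$, and hence $d(t)\le 2^{-j-1}$ for all $t\ge jt'$.

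Finally I would take $j=j_n\coloneqq\lceil\log^{3/2}(n)/\log 2\rceil=\Theta(\log^{3/2}n)$, so that $2^{-j_n-1}\le\ee^{-\log^{3/2}(n)}$, while $j_n t'=O(\log^{3/2}(n)\log(n))=o(\log^3 n)$. Consequently, on the same \ac{whp} event and for all large $n$, every $t=\Omega(\log^3 n)$ satisfies $t\ge j_n t'$, whence $d(t)\le\ee^{-\log^{3/2}(n)}$, which is the first assertion. The ``in particular'' statement follows immediately, since $\mu_t=\tfrac1n\sum_{x\in[n]}P^t(x,\cdot)$ is a convex combination of the measures $P^t(x,\cdot)$, so that $\|\mu_t-\pi\|_{\tv}\le\tfrac1n\sum_{x\in[n]}\|P^t(x,\cdot)-\pi\|_{\tv}\le d(t)$ by the triangle inequality. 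I do not anticipate any genuine difficulty here: the only points requiring (routine) care are that $t'$ and $j_n$ be chosen deterministically, so that the \ac{whp} event is well defined, and that $j_n t'=\Theta(\log^{5/2}n)$ stay safely below the threshold $\log^3 n$, which is immediate from $t'=\Theta(\log n)$.
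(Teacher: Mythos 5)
Your proposal is correct and follows essentially the same route as the paper: take a deterministic time of order $\log n$ at which the cutoff upper bound gives \ac{whp} a total variation distance bounded away from $1/2$, then use submultiplicativity of the (worst-case) distance to stationarity (the paper cites $d(ks)\le 2^k d(s)^k$ from Levin--Peres with $s=2\tent$, you iterate $d(s+t)\le 2d(s)d(t)$, which is the same estimate) together with monotonicity to reach $\ee^{-\log^{3/2}(n)}$ for all $t=\Omega(\log^3 n)$. The averaging step for $\mu_t$ is likewise the same.
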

\begin{proof}
Let $d(s)=\max_{x\in[n]}\|P^s(x,\cdot)-\pi \|_\tv$.
It is standard that $d(ks)\le 2^{k}d(s)^k$ for all $k\in \bbN$; see~\cite[Section~4.4]{levin2017}.
The upper bound in \cref{cutoff} implies that \ac{whp} \ $d(2T_\ent)\le 1/2e$.
Therefore if $t=\Omega(\log^3(n))$ one may take $k=\Omega(\log^2(n))$ and $s=2T_\ent$ to conclude.
\end{proof}

\subsubsection{Proof of the lower bound in \texorpdfstring{\cref{cutoff}}{Theorem~\ref{cutoff}}}

We will use the fact that  \cref{LLN} implies that, uniformly on the starting point $x$, the distribution of the location of the random walk at time $t=(1-\beta)\tent$ is concentrated on a set of size $O(n^{1-\beta^2})$.
More precisely, pick $\beta\in(0,1)$
and $t= (1-\beta)\tent$. For all $x,y\in[n]$, call $\cP^\beta_{x,y}$ the set of paths of length $t$ starting at $x$ and ending at $y$ having weight at least $\ee^{-(1+\beta)\hh t}=n^{-1+\beta^2}$. Clearly, for all $x\in[n]$
\begin{equation}\sum_{y\in [n]}\sum_{\p\in\cP^\beta_{x,y}}\w(\p)\le 1 .\end{equation}
Therefore,  for all $x\in[n]$
\begin{equation}\sum_{y\in [n]}|\cP^\beta_{x,y}|
\le n^{1-\beta^2}. \end{equation}
In particular, the set  $S_x:=\left\{y\in[n]\cond{} \cP^\beta_{x,y}\not=\emptyset  \right\}$ satisfies  $|S_x|\le n^{1-\beta^2}$ and, by \cref{LLN},
\begin{equation}\min_{x\in[x]}P^t(x,S_x)=1-o_\P(1).\end{equation}
Thus,
\begin{equation}
    \min_{x\in[n]} \|P^{t}(x,\cdot)-\pi \|_\tv\ge P^{t}(x,S_x)-\pi(S_x)=1-o_\P(1)-\max_{x\in[n]}\pi(S_x).
\end{equation}
\cref{prop:pi-small-set} below with $\delta=\beta^2/6$ implies that $\max_{x\in[n]}\pi(S_x)=o_\P(1)$, which concludes the proof of the  lower bound in \cref{cutoff}.

\begin{proposition}\label{prop:pi-small-set}
For any $\delta\,\in\(0, \tfrac{1}{6}\)$,
    we have
    \begin{equation}\P\(\forall S\subset [n], |S|\le n^{1-6\delta} ,\:\pi(S)\le n^{-\delta/2}  \)=1-o(1).\end{equation}
\end{proposition}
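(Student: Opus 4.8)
The goal is to control $\pi(S)$ uniformly over all small sets $S$. Since $\pi$ is stationary, for any $t$ we have $\pi(S)=\sum_{x\in[n]}\pi(x)P^t(x,S)=\mathbb{E}_\pi[P^t(X_0,S)]$. The key idea is to pick $t$ of order $\log^3 n$ so that, by \cref{coro:mixing}, $P^t(x,S)$ is essentially $\pi(S)$ — but that is circular. Instead, the right move is to pick $t$ \emph{small} (of order $\log\log n$ or a small multiple thereof) and write $\pi(S)=\mathbb{E}_\pi[P^t(X_0,S)]\le \|P^t(x_0,\cdot)-\pi\|$-type bounds are useless; rather, start from $\pi$ and run \emph{backwards}: $\pi(S)=(\pi P^{-t})(S)$ makes no sense for non-reversible chains, so instead use the forward identity $\pi(S)=\sum_{x}\pi(x)P^t(x,S)$ and bound $P^t(x,S)$ via a union bound over paths ending in $S$. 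Concretely, I would fix $t=h_\varepsilon$ (or a constant multiple of $\log\log n$) and write $P^t(x,S)\le \sum_{z\in S}\sum_{\p\in\cP(x,z,t,G)}\w(\p)$. Averaging over the starting vertex against $\pi$ and then bounding $\pi$ by using that $\max_v \pi(v)\le C\log(n)\Delta^-/m$ \ac{whp} (\cref{th:main1}(i)), together with $\Delta^-=O(n^{1/2-\eta/6})$ from \cref{lem:s}, we get $\pi(S)\le C\log(n)\tfrac{\Delta^-}{m}\cdot\#\{\text{paths of length }t\text{ into }S\}$. The number of such paths is controlled by $|\cB_z^-(t)|$ summed over $z\in S$, i.e.\ the total in-neighborhood size, which by \cref{lemmaGW}/\cref{le:coupling} is at most $n^{1/2+\varepsilon}$ per vertex \ac{whp}, hence at most $|S|\, n^{1/2+\varepsilon}$ in total.

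\textbf{Putting the bounds together.} With $t=h_\varepsilon$, on the event $\cS^-_\varepsilon$ (which holds \ac{whp} by \cref{lemmaGW}) every vertex $z$ has $|\cB^-_z(h_\varepsilon)|\le n^{1/2+\varepsilon}$, so the number of length-$t$ paths into $S$ is at most $|S|\,n^{1/2+\varepsilon}$; each such path has weight at most $2^{-t}\le 1$, so trivially $\sum_{x}\sum_{z\in S}\sum_{\p\in\cP(x,z,t,G)}\w(\p)\le |S|\,n^{1/2+\varepsilon}$ — but we want to multiply by $\pi(x)$, not sum freely over $x$. The clean way: $\pi(S)=\sum_{x}\pi(x)P^t(x,S)\le \pi_{\max}\sum_x P^t(x,S)=\pi_{\max}\sum_{z\in S}\sum_x P^t(x,z)=\pi_{\max}\,n\,\mu_t(S)$, where $\mu_t$ is as in \cref{eq:def-lambda2}. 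Now $\mu_t(S)=\tfrac1n\sum_{z\in S}\sum_{x}P^t(x,z)=\tfrac1n\sum_{z\in S}\#\{\text{weighted paths into }z\}$, and $\sum_x P^t(x,z)\le |\cB^-_z(t)|$ since there are at most $|\cB^-_z(t)|$ vertices from which $z$ is reachable in $t$ steps and each contributes total weight $\le 1$. Hence, on $\cS^-_\varepsilon$, $\mu_t(S)\le \tfrac1n |S|\,n^{1/2+\varepsilon}$. Combining with $\pi_{\max}\le C\log(n)\Delta^-/m\le C\log(n)\,n^{1/2-\eta/6}/m$ and $m=\Theta(n)$ (since out-degrees are bounded, $m\le Kn$, and $m\ge 2n$ by \cref{cond:main}(i)), we obtain $\pi(S)\le C\log(n)\,n^{-1/2-\eta/6}\cdot |S|\,n^{1/2+\varepsilon} = C\log(n)\,n^{\varepsilon-\eta/6}\,|S|$. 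For $|S|\le n^{1-6\delta}$ with $\delta<1/6$, this is at most $C\log(n)\,n^{1-6\delta+\varepsilon-\eta/6}$, which is $o(n^{-\delta/2})$ provided $\varepsilon$ is chosen small enough relative to $\eta$ and $\delta$ — but this only works when $6\delta$ is not too small, i.e.\ this crude bound fails for $\delta$ close to $0$.

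\textbf{The main obstacle and the fix.} The crude bound above is too lossy for small $\delta$ because it throws away the decay of path weights: a typical length-$t$ path has weight $\approx e^{-\hh t}$, not $1$. The fix is to take $t$ of order $\varepsilon'\log n$ (a small multiple of $\log n$, as in the lower-bound argument for \cref{cutoff}) rather than $\log\log n$, and use the sharper estimate that the random walk from any $x$ concentrates, after $t=(1-\beta)\tent$ steps, on $|S_x|\le n^{1-\beta^2}$ vertices with $\mu_t$-mass accounted for by \cref{LLN}. More precisely, I would bound $\pi(S)=\sum_x\pi(x)P^t(x,S)$ and split $P^t(x,S)$ according to whether the walk has followed a ``light'' path (weight $<e^{-(1+\beta)\hh t}=n^{-1+\beta^2}$, negligible total weight into the at-most-$n^{1-\beta^2}$-reachable set) or a ``heavy'' one; for heavy paths into $S$, the number of target-source pairs is at most $|S|\cdot n^{1-\beta^2}$ and each heavy path has weight $\ge n^{-1+\beta^2}$ but there are at most $n^{1-\beta^2}$ of them per target (total-weight-$\le 1$ constraint), giving $\sum_x P^t(x,z)\ind_{\text{heavy}}\le n^{1-\beta^2}$ — wait, this still needs the $\pi$-weighting. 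The cleanest route, and the one I would actually write: take $t\asymp \log^3 n$, apply \cref{coro:mixing} to get $\pi(S)\le \mu_t(S)+e^{-\log^{3/2}n}$ where $\mu_t(S)=\tfrac1n\sum_x P^t(x,S)$; then $\mu_t(S)$ is a \emph{deterministic} function of the environment that we bound via the annealed identity \cref{eq:def-annealing}: $\mathbb{E}[(\mu_t(S))^K]=\mathbb{E}[(n^{-1}\sum_x P^t(x,S))^K]$, expand and use that $K$ annealed walks each reach $S$ independently-ish with small probability. The main technical obstacle is thus estimating $\mathbb{E}[(\mu_t(S))^K]$ via the $K$-walk annealed process and then union-bounding over the $\binom{n}{|S|}\le n^{|S|}$ choices of $S$ (together with the at most $n$ choices of $|S|$), choosing $K=\Theta(\log^2 n)$ so that $n^{n^{1-6\delta}}\cdot(\text{moment bound})^{}$ beats the union bound — this is precisely the kind of high-moment + annealed-walk argument used throughout the paper (cf. the proof of \cref{LLN} and \cref{prop:proxy}), and I would model the computation on those, with the in-neighborhood size control from \cref{lemmaGW} and the moment input from \cref{lem:s} doing the heavy lifting to ensure each annealed walk hits a fixed small $S$ with probability $O(|S| \cdot n^{-1+o(1)})$.
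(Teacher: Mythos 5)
Your final route (discard the first two dead-end paragraphs) is the paper's strategy: reduce $\pi(S)$ to $\mu_t(S)$ with $t=\log^3 n$ via \cref{coro:mixing}, bound a high moment of $\mu_t(S)$ through the annealed $K$-walk process, apply Markov, and union bound over sets. However, there is a genuine quantitative gap in the one place where you commit to numbers: you propose $K=\Theta(\log^2 n)$. That cannot work. The union bound is over $\binom{n}{L}\le n^{L}$ sets of size $L=\lceil n^{1-6\delta}\rceil$, i.e.\ over $\exp\(n^{1-6\delta}\log n\)$ sets, so you need $\P\(\mu_t(S)\ge n^{-\delta}\)=o(n^{-L})$ for each fixed $S$. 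With $K=\Theta(\log^2 n)$, even granting a per-walk conditional hit probability as small as $n^{-c\delta}$, Markov only yields $\P\(\mu_t(S)\ge n^{-\delta}\)\le n^{\delta K}\,n^{-c\delta K}=\exp\(-\Theta(\delta\log^3 n)\)$, which is enormously larger than $n^{-L}$ whenever $\delta<1/6$. The moment order must scale with the set size: the paper takes $K=\delta^{-1}L$, so that a per-walk conditional bound $o(n^{-2\delta})$ gives $\E[(\mu_t(S))^K]=o(n^{-2L})$ and Markov at level $n^{-\delta}$ produces exactly the $o(n^{-L})$ needed to beat the union bound.

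A second, smaller inaccuracy: you assert that each annealed walk hits a fixed small $S$ with conditional probability $O\(|S|\,n^{-1+o(1)}\)$. Under heavy-tailed in-degrees this is not what the available estimate gives: the relevant quantity is the number of unmatched heads incident to $S$ together with all previously visited vertices, and \cref{lem:s} bounds the total in-degree of a set of size $\le Kt+L$ by $O\((\,(Kt+L)n)^{\frac12-\frac{\eta^2}{12}}\)$, leading to a per-walk conditional probability of order $\sqrt{L/n}\,n^{o(1)}=n^{-3\delta+o(1)}$ (this is the paper's bound $\frac{Kt}{n}+\frac{tA}{m-Kt}=o(n^{-2\delta})$), not $|S|/n\cdot n^{o(1)}=n^{-6\delta+o(1)}$. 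Fortunately the weaker $o(n^{-2\delta})$ bound suffices once $K\propto L$, but as stated your estimate is unjustified, and it is precisely the $(2+\eta)$-moment condition via \cref{lem:s} — not in-neighborhood sizes from \cref{lemmaGW}, which play no role here — that delivers it.
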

\begin{proof}
It suffices to prove the statement for $S$ of size exactly $L:=\ceil{n^{1-6\delta}}$.
By \cref{coro:mixing}, for $t=\log^3(n)$,
\begin{equation}\label{eq:corollary-mix}
    \max_{v\in[n]}\left|\pi(v)-\mu_t(v) \right|\le \ee^{-\log^{3/2}(n)},
\end{equation}
Hence, it is enough to prove that
\begin{equation}\label{eq:enough}
    \max_{S\::|S|=L}\P(\mu_t(S)\ge n^{-\delta})=o(n^{-L}),
\end{equation}
and then apply a union bound over all sets $S\subset[n]$ of cardinality $L$.

To prove \cref{eq:enough}, fix a set $S$ with cardinality $L$ and let $K=\delta^{-1} L$.
Consider the annealed random walk construction described in the beginning of the section with $K$
walks of length $t$ starting at uniform and independent random vertices.
Call $B_j$, $j\le K$, the event that the first $j$ walks end at a vertex in $S$.
Thanks to \cref{lem:s},
for each $j\le K$ and at each time $s\le t$ there are $A=o(\sqrt{(Kt+L)n})$ unmatched heads incident
to either $S$ or to the vertices visited by the first $j-1$ walks,
or by the $j$-th walk up to time $s$.
Therefore, conditionally on the first $j-1$ walks, the probability that the $j$-th walk ends at $S$ is at most
\begin{equation}\label{KNVY}
  \P_{\rm unif}^{{\rm an},K}\left(B_j\cond{}B_{j-1}\right)\le  \frac{K t}{n}+ \frac{t\: A}{m- Kt}\le \sqrt{L/n}\, \log^{5}n= o(n^{-2\delta}),
\end{equation}
where $\P_{\rm unif}^{{\rm an},K}$ is defined by \cref{eq:annx} with $\mu$ uniform over $[n]$.
Indeed, in order to end in $S$ the walk needs to visit at some $s\le t$ a vertex which is either in $S$ or has already been visited by one of the previous walks. The probability that such an event occurs at the initialization step is bounded by $Kt/n$, while $tA/(m-Kt)$ bounds the probability that the event occurs at some later step.
It follows that
\begin{equation}\label{CVVI}
    \E[(\mu_t(S))^K]=\P_{\rm unif}^{{\rm an},K}(B_K)=\P_{\rm unif}^{{\rm an},K}(B_1)\prod_{j=2}^{K} \P_{\rm unif}^{{\rm an},K}\left(B_j\cond{} B_{j-1}\right) =o(n^{-2L}).
\end{equation}
By Markov's inequality,
\begin{equation}\label{ERXC}
    \P\(\mu_t(S)\ge n^{-\delta}\)\le \frac{\E[(\mu_t(S))^K]}{n^{-L}} = o(n^{-L}).
\end{equation}
This implies \cref{eq:enough}.
\end{proof}

\section{Bulk behavior}
In this section we prove \cref{th:bulk}. As we will see, the distribution $\cL_n$ approximating the bulk values of the stationary distribution can be characterized as the almost sure limit of an $L^2$-bounded martingale.

\subsection{The martingale}
Fix $y\in[n]$, an arbitrary $h\in\bbN$, and consider the random tree $\cT^-_y(h)$ constructed in~\cref{sec:GW} with marks $\ell(\cdot)$. For $a\in \partial \cT^-_y(h)$, define
\begin{equation}
\w_{\cT}(a) \coloneqq d^-_{\ell(a)} \prod_{i=1}^{h}\frac{1}{d^+_{\ell(a_{i})}},
\end{equation}
where $(a_0,\dots,a_h=a)$ is the unique path joining $a$ with the root $a_0$ for witch
$\ell(a_{0}) = y$.
If $a=a_0$, then the empty product is interpreted as $1$ and we define $\w_{\cT}(a_0)= d^-_{y}$ in this case.

Define the random process
\begin{equation}\label{eq:martingale}
M_y(h) =\sum_{a\in \partial \cT^-_y(h)} \w_{\cT}(a), \qquad\forall h \ge 0.
\end{equation}
\begin{lemma}\label{lem:marting}
Let $\cF_{h}$ be the sigma algebra generated by the random tree $\cT_{y}^{-}(h)$. Then $(M_y(h))_{h\ge 0}$ is a martingale satisfying $\E\[M_y(h)\]=d^-_y$ and, uniformly in $h\in\N$, $\var(M_y(h))=O(d^-_y)$.
\end{lemma}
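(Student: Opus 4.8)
The plan is to establish the martingale identity by a one-step computation, and then control the variance via the orthogonality of martingale increments together with the hypothesis $\min_v d^+_v\ge 2$, which forces the ancestral weight products to decay geometrically.

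\emph{Martingale property and mean.} Fix a leaf $a\in\partial\cT^-_y(h)$ with ancestral path $(a_0,\dots,a_h=a)$. Since $\w_\cT(a)/d^-_{\ell(a)}=\prod_{i=1}^{h}1/d^+_{\ell(a_i)}$, every child $b$ of $a$ satisfies $\w_\cT(b)=\frac{d^-_{\ell(b)}}{d^+_{\ell(b)}}\cdot\frac{\w_\cT(a)}{d^-_{\ell(a)}}$. Given $\cF_h$ the mark $\ell(a)$ is fixed and the marks of the $d^-_{\ell(a)}$ children of $a$ are i.i.d.\ with law $\muout$, so, using the key identity $\sum_{z\in[n]}\muout(z)\frac{d^-_z}{d^+_z}=\frac1m\sum_z d^-_z=1$,
\[
\E\Big[\textstyle\sum_{b\ \text{child of}\ a}\w_\cT(b)\ \Big|\ \cF_h\Big]=\w_\cT(a)
\]
(leaves whose mark has in-degree $0$ have zero weight and no children, so they drop out harmlessly). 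Summing over $a\in\partial\cT^-_y(h)$ gives $\E[M_y(h+1)\mid\cF_h]=M_y(h)$; all summands are nonnegative and each generation is a.s.\ finite, so integrability is automatic, and since $M_y(0)=d^-_y$ is deterministic, induction yields $\E[M_y(h)]=d^-_y$ for every $h$.

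\emph{Variance.} For $k\ge1$ write $M_y(k)-M_y(k-1)=\sum_{a\in\partial\cT^-_y(k-1)}D_a$, where $D_a=\frac{\w_\cT(a)}{d^-_{\ell(a)}}\big(\sum_{j=1}^{d^-_{\ell(a)}}\xi_j-d^-_{\ell(a)}\big)$ and the $\xi_j$ are, conditionally on $\cF_{k-1}$, i.i.d.\ copies of $\xi:=d^-_{J}/d^+_{J}$ with $J\sim\muout$. The children-families of distinct leaves are conditionally independent given $\cF_{k-1}$ and $\E[D_a\mid\cF_{k-1}]=0$, so
\[
\E\big[(M_y(k)-M_y(k-1))^2\mid\cF_{k-1}\big]=\si_\xi^2\sum_{a\in\partial\cT^-_y(k-1)}\frac{\w_\cT(a)^2}{d^-_{\ell(a)}},\qquad \si_\xi^2:=\var(\xi).
\]
One checks $\si_\xi^2\le\E[\xi^2]=\frac1m\sum_z\frac{(d^-_z)^2}{d^+_z}\le\frac1m\sum_z(d^-_z)^2=O(1)$ by \cref{cond:main} (e.g.\ via the moment bounds used in \cref{SLDK} together with $d^+_z\ge2$), and that $\frac{\w_\cT(a)^2}{d^-_{\ell(a)}}=\w_\cT(a)\prod_{i=1}^{k-1}\frac1{d^+_{\ell(a_i)}}\le 2^{-(k-1)}\w_\cT(a)$, again because $d^+\ge2$. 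Hence the conditional increment variance is at most $\si_\xi^2\,2^{-(k-1)}M_y(k-1)$; taking expectations, using $\E[M_y(k-1)]=d^-_y$, and summing the orthogonal increments,
\[
\var(M_y(h))=\sum_{k=1}^{h}\E\big[(M_y(k)-M_y(k-1))^2\big]\le \si_\xi^2\,d^-_y\sum_{k\ge1}2^{-(k-1)}\le 2\si_\xi^2\,d^-_y=O(d^-_y),
\]
uniformly in $h$.

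The computation is essentially routine; the one point needing care is that $\w_\cT$ depends on the marks along the ancestral path, not merely on the shape of the tree, so the one-step recursion must be carried out mark by mark. The structural fact that makes the variance bound work uniformly in $h$ is the assumption $\min_v d^+_v\ge2$: it produces the factor $2^{-(k-1)}$ that renders the series of increment variances geometric while keeping each term dominated by $M_y(k-1)$.
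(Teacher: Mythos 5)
Your proof is correct and follows essentially the same route as the paper: the same one-step recursion based on $\E[d^-_\cJ/d^+_\cJ]=1$ for the martingale property, the same formula $\si_\xi^2\sum_{a}\w_\cT(a)^2/d^-_{\ell(a)}$ for the conditional increment variance, and the same combination of orthogonality of increments with the hypothesis $\min_v d^+_v\ge 2$ to get a geometrically summable bound. The only (harmless) difference is at the last step: you bound each conditional increment variance pointwise by $\si_\xi^2\,2^{-(k-1)}M_y(k-1)$ and use $\E[M_y(k-1)]=d^-_y$, whereas the paper derives the recursion $\E[\Sigma_{h+1}\mid\cF_h]\le \tfrac12\Sigma_h$ and iterates it; both yield $\var(M_y(h))\le 2\si_\xi^2 d^-_y$ uniformly in $h$.
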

\begin{proof}
For simplicity, write $M_{h}=M_{y}(h)$, and note $M_0= \w_{\cT}(a_0)=d^-_y$.
For each $a\in\cT^-_y(h)$, let $N(a)$ denote the set of its children.
Then,  for all $h\ge0$,
\begin{equation}\label{eq:m1m0}
    M_{h+1}-M_{h}=\sum_{a\in\cT_y^-(h)}\frac{\w_\cT(a)}{d_{\ell(a)}^-}\( \sum_{b\in N(a)} \left(\frac{d^-_{\ell(b)}}{d^+_{\ell(b)}} - 1\right)\).
\end{equation}
Let $\cJ$ denote a random vertex in $[n]$ distributed as $\muout(x)$ defined in~\cref{eq:def-muout}. Using
\begin{equation}\label{eq:mean1}
    \E\[\frac{d_\cJ^-}{d_\cJ^+} \]=\sum_{v\in[n]}\frac{d_v^+}{m}\frac{d_v^-}{d_v^+}=1,
\end{equation}
and the fact that the marks in the tree are distributed according to $\muout$, we obtain
\begin{equation}\E[M_{h+1}\mid \cF_{h}]=M_{h}.\end{equation} Hence, $(M_h)_{h\ge 0}$ is a martingale with expectation $d_y^-$.    It remains to compute its variance. Let \begin{equation}\Sigma_h\coloneqq\var(M_{h+1}-M_h\mid \cF_h).\end{equation} By \cref{eq:m1m0},
$M_{h+1}-M_h$ is given by
\begin{equation}
    \sum_{a\in\cT_y^-(h)}\frac{\w_\cT(a)}{d_{\ell(a)}^-}\,Y_a,
\end{equation}
where, for each $a$,  $Y_a$ is the sum of $d^-_{\ell(a)}$ \ac{iid} copies of the random variable
$\frac{d^-_\cJ}{d^+_\cJ}-1$.
Therefore, by conditioning on $\cF_h$, we obtain
\begin{equation}\label{eq:sigma0}
    \Sigma_h=(A-1)\sum_{a\in\cT_y^-(h)}\frac{\w_\cT(a)^2}{d_{\ell(a)}^-} ,
\end{equation}
where, by \cref{cond:main},
\begin{equation}\label{eq:def-A}
    A\coloneqq  \E\[\(\frac{d^-_\cJ}{d^+_\cJ}\)^2\]=\sum_{v\in[n]}\frac{d_v^-}{m}
    \left(\frac{d_v^-}{d_v^+}\right)^{2}
    =O(1).
\end{equation}
Since
\begin{equation}\label{eq:sigma00}
    \sum_{b\in\cT_y^-(h+1)}\frac{\w_\cT(b)^2}{d_{\ell(b)}^-}=\sum_{a\in\cT_y^-(h)}\frac{\w_\cT(a)^2}{(d_{\ell(a)}^-)^2}\sum_{b\in N(a)}\frac{d_{\ell(b)}^-}{(d_{\ell(b)}^+)^2},
\end{equation}
we conclude that
\begin{equation}\label{eq:sigmah}
    \E\[\Sigma_{h+1}\mid \cF_{h}\]=  \E\[\frac{d^-_\cJ}{(d^+_\cJ)^2}\] \Sigma_{h}.
\end{equation}
Using  \cref{eq:mean1} and the fact that the out-degrees are at least $2$ we see that $\E\[\Sigma_{h+1}\mid \cF_{h}\] \le \frac12\Sigma_{h}$.
Thus, taking the expectation and applying induction on $h$, we have
\begin{equation}\label{eq:EBIU}
    \E\[\Sigma_{h}\] \le 2^{-h}\Sigma_0, \qquad \forall h \ge 0,
\end{equation}
where $\Sigma_0=\var(M_1)=(A-1)d_y^-$.
By orthogonality of the martingale increments, and using \cref{eq:EBIU}, it follows that
\begin{equation}
    \var(M_h) = \sum_{i=0}^{h-1} \E\[\Sigma_i\] \le 2(A-1)d_y^-.
\end{equation}
\end{proof}

\begin{corollary}\label{coro:marting}
    Fix $n\in\bbN$ and let $\cI$ be a uniform random vertex in $[n]$.
    Define
    \begin{equation}
    \Phi_h:=\frac{1}{\pl d \pr}M_{\cI}(h), \qquad h\ge 0,
    \end{equation}
where $\pl d \pr=m/n$ is the average degree. Then $(\Phi_h)_{h\ge 0}$ is a martingale satisfying $\E\[\Phi_h\]=1$ and its limit $\Phi_\infty\coloneqq\lim_{h\to\infty} \Phi_h$ exists almost surely and in $L^2$. Moreover, there exists  $C>0$ independent of $n,h$
such that for all $h\ge 0$
    \begin{equation}\label{eq:exp-l2-conv}
    \E\[\(\Phi_h-\Phi_\infty\)^2 \]\le C 2^{-h}.
    \end{equation}
\end{corollary}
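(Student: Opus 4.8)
The plan is to reduce everything to \cref{lem:marting} by conditioning on the random vertex $\cI$. Let $\cG_h$ denote the filtration generated by $\cI$ together with the marked tree $\cT^-_\cI(h)$. On the event $\{\cI=y\}$ the process $(M_\cI(h))_{h\ge0}$ coincides with $(M_y(h))_{h\ge0}$, so the conditional identity $\E[M_y(h+1)\mid \cF_h]=M_y(h)$ from \cref{lem:marting} immediately gives $\E[\Phi_{h+1}\mid\cG_h]=\Phi_h$, i.e.\ $(\Phi_h)_{h\ge0}$ is a $(\cG_h)$-martingale. Taking expectations and using $\E[M_y(h)]=d^-_y$, together with $\langle d\rangle=m/n$, yields $\E[\Phi_h]=\frac{1}{n\langle d\rangle}\sum_{y\in[n]}d^-_y=\frac{m}{n\langle d\rangle}=1$.

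Next I would establish uniform $L^2$-boundedness. Conditioning on $\cI$ gives $\E[\Phi_h^2]=\frac{1}{\langle d\rangle^2 n}\sum_{y\in[n]}\E[M_y(h)^2]=\frac{1}{\langle d\rangle^2 n}\sum_{y\in[n]}\bigl(\var(M_y(h))+(d^-_y)^2\bigr)$. By \cref{lem:marting} we have $\var(M_y(h))\le 2(A-1)d^-_y$ uniformly in $h$, where $A=O(1)$ by \cref{cond:main}; moreover $\sum_y d^-_y=m=n\langle d\rangle$ and $\sum_y (d^-_y)^2=O(n)$ (since $(d^-_y)^2\le(d^-_y)^{2+\eta}$ whenever $d^-_y\ge1$, so \cref{cond:main}(iii) applies); and $\langle d\rangle=m/n\ge2$ because of \cref{cond:main}(i). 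Combining these bounds gives $\sup_{h\ge0}\E[\Phi_h^2]\le C$ for some $C$ independent of $n$. Hence $(\Phi_h)_{h\ge0}$ is an $L^2$-bounded martingale, and the martingale convergence theorem gives the existence of $\Phi_\infty=\lim_h\Phi_h$, the convergence holding almost surely and in $L^2$; since $\cI$ takes only finitely many values, $\Phi_\infty=\frac1{\langle d\rangle}M_\cI(\infty)$ where $M_y(\infty)=\lim_h M_y(h)$ from \cref{lem:marting}.

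For the geometric rate \cref{eq:exp-l2-conv}, I would exploit orthogonality of the martingale increments of $(M_y(h))_{h\ge0}$, which is already used in the proof of \cref{lem:marting}: $\E[(M_y(\infty)-M_y(h))^2]=\sum_{i\ge h}\E[(M_y(i+1)-M_y(i))^2]=\sum_{i\ge h}\E[\Sigma_i]$, with $\Sigma_i$ as defined there. The recursion $\E[\Sigma_{i+1}\mid\cF_i]\le\frac12\Sigma_i$ established in that proof gives $\E[\Sigma_i]\le 2^{-i}\Sigma_0=2^{-i}(A-1)d^-_y$ (note $A\ge1$ by Jensen), whence $\E[(M_y(\infty)-M_y(h))^2]\le 2^{1-h}(A-1)d^-_y$. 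Averaging over $y$, $\E[(\Phi_h-\Phi_\infty)^2]=\frac{1}{\langle d\rangle^2 n}\sum_{y\in[n]}\E[(M_y(\infty)-M_y(h))^2]\le\frac{2(A-1)}{\langle d\rangle^2 n}\,2^{-h}\sum_{y\in[n]}d^-_y=\frac{2(A-1)}{\langle d\rangle}\,2^{-h}$, which is at most $C\,2^{-h}$ since $A=O(1)$ and $\langle d\rangle\ge2$.

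I do not expect a genuine obstacle here: the only point requiring care is to check that every constant entering the estimates — $A$, the implied constant in $\var(M_y(h))=O(d^-_y)$, the implied constant in $\sum_y(d^-_y)^2=O(n)$, and the lower bound $\langle d\rangle\ge2$ — is uniform in $n$, which is precisely what \cref{cond:main} guarantees. Everything else is standard martingale bookkeeping built directly on \cref{lem:marting}.
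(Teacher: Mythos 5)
Your proposal is correct and follows essentially the same route as the paper: the martingale property, mean one, and $L^2$ control come from averaging Lemma~\ref{lem:marting} over the uniform vertex $\cI$, and the rate \cref{eq:exp-l2-conv} is obtained from orthogonality of the increments together with the geometric decay $\E[\Sigma_{i+1}\mid\cF_i]\le\tfrac12\Sigma_i$, yielding the same bound $2^{-h+1}(A-1)/\pl d\pr$. The extra details you supply (e.g.\ $\sum_y (d_y^-)^2=O(n)$ and $\pl d\pr\ge 2$ from \cref{cond:main}) are exactly the uniformity checks implicit in the paper's terser argument.
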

\begin{proof}
The first assertion follows from \cref{lem:marting} by averaging over $y$. We are left to show \cref{eq:exp-l2-conv}. Arguing as in \cref{eq:sigmah,eq:EBIU},
\begin{equation}
    \label{eq:CDHY}
\begin{aligned}
\E[(\Phi_h-\Phi_\infty)^2]&=\sum_{j=h}^\infty \E\[\var\( \Phi_{j+1}-\Phi_{j} \cond{}\cF_{j} \) \]\\
&\le\E\left[\var(\Phi_1-\Phi_0)\cond{}\cF_0\right]\sum_{j=h}^\infty 2^{-j}\le 2^{-h+1}\:\frac{(A-1)}{\pl d \pr},
\end{aligned}
\end{equation}
where $A$, defined as in \cref{eq:def-A}, is uniformly bounded in $n$ thanks to \cref{cond:main}.
\end{proof}

\subsection{Proof of \texorpdfstring{\cref{th:bulk}}{Theorem~\ref{th:bulk}}}

Fix $n\in\N$ and consider the martingale $(\Phi_h)_{h\ge 0}$ in \cref{coro:marting}.
It follows from~\cite[Lemma 16]{bordenave2018} that
the random variable $ \Phi_\infty$ has law $\cL_n$ as in \cref{eq:distr-lim-mart}.
Hence, we are left to show that as $n\to\infty$ the convergence in \cref{eq:w1-conv} takes place.

Thanks to the characterization of $\cW_1$ convergence via non-expansive functions (see~\cite[Lemma 19]{bordenave2018}), it is enough to show that, for all $g:\R\to\R$ such that $|g(x)-g(y)|\le |x-y|$ and for all $x,y\in\R$,
\begin{equation}\label{eq:char}
    \frac{1}{n}\sum_{v\in[n]}g(n\pi(v))- \E[g(\Phi_\infty)] \overset{\P}{\longrightarrow}0.
\end{equation}
Since $|g(x)-g(y)|\le |x-y|$, for all $x,y\in\R$, for all $h\in\N$,
\begin{equation}\label{eq:char01}
    \Big|\frac{1}{n}\sum_{v\in[n]}g(n\pi(v))- \frac{1}{n}\sum_{v\in[n]}g(n\mu_{\rm in}P^h(v)) \Big|\le 2 \|\pi-\mu_{\rm in}P^h \|_\tv.
\end{equation}
From now we fix $h=h_\varepsilon$ as in \cref{def-hslash}. For definiteness, we take $\varepsilon=\eta/10$, where $\eta\in(0,1)$ is such that \cref{cond:main} holds.
Hence, by \cref{eqbound2},
\begin{equation}\label{eq:char001}
    \frac{1}{n}\sum_{v\in[n]}g(n\pi(v))=\frac{1}{n}\sum_{v\in[n]}g(n\mu_{\rm in}P^h(v))+o_{\P}(1).
\end{equation}
We now show that the first term on the \ac{rhs} of \cref{eq:char001} concentrates, that is
\begin{equation}\label{eq:claim-mart}
    \frac{1}{n}\sum_{v\in[n]}g(n\mu_{\rm in}P^h (v))=\frac{1}{n}\sum_{v\in[n]}\E\Big[g(n\mu_{\rm in}P^h(v))\Big]+o_\P(1).
\end{equation}
For every realization of the matching $\omega$ inducing the digraph,
define
\begin{equation}Z(\omega)\coloneqq \frac{1}{n}\sum_{v\in[n]}g(n\mu_{\rm in}P^h(v)).\end{equation}

Consider a realization $\omega'$ obtained from $\omega$ by switching two edges: there exists $e,e'\in E^+$ and $f,f'\in E^-$ such that $(\omega(e),\omega(e'))=(f,f')$, and $(\omega'(e),\omega'(e'))=(f',f)$, while $\omega=\omega'$ for all other tails in $E^+$. Then, as in \cref{eq:char01}
\begin{equation}
    |Z(\omega)-Z(\omega')|\le 2 \|\mu_{\rm in}P_\omega^h-\mu_{\rm in}P_{\omega'}^h  \|_\tv\eqqcolon b,
\end{equation}
where $P_\omega$ denotes the transition matrix of the random walk in the digraph induced by $\omega$.
Let $\cB^{+,\omega}_{v}$ denote the out-neighborhood of $v$ up to height $h$ in the digraph induced by $\omega$.
Notice that the probability that a random walk starting
with distribution $\mu_{\rm in}$ reaches a vertex $v$
after $h$ steps coincides under $\omega$ and $\omega'$ for all vertices $v\notin \cQ(\omega,\omega')$,
where
\begin{equation}
\cQ(\omega,\omega')\coloneqq \cB^{+,\omega}_{v_f}\cup\cB^{+,\omega'}_{v_f}\cup \cB^{+,\omega}_{v_{f'}}\cup\cB^{+,\omega'}_{v_{f'}}.
\end{equation}
Therefore, we can bound
\begin{equation}
    \begin{aligned}
    b&=\sum_{v\in[n]}\left|\mu_{\rm in} P_{\omega}^h(v)- \mu_{\rm in} P_{\omega'}^h(v) \right|\ind_{v\in\cQ(\omega,\omega') }
    \le 4(\Delta^+)^h
    W(\omega,\omega'),
    \end{aligned}
\end{equation}
where we use the simple uniform bound $\max_{v\in[n],\bar{\omega}}|\cB^{+,\bar{\omega}}_v|\le (\Delta^+)^h$,
and we define
\begin{equation}
    W(\omega,\omega') \coloneqq \max_{\bar{\omega}\in\{\omega,\omega'\}}\max_{v\in[n]}\mu_{\rm in} P_{\bar{\omega}}^h(v).
\end{equation}
Consider the event
\begin{equation}
    \cA=\left\{\omega: \; \max_{v\in[n]}\mu_{\rm in} P_\omega^h(v)\le n^{-\frac{1}{2}-\frac{\varepsilon}{5}} \right\}.
\end{equation}
If $\omega,\omega'\in\cA$ and since  $(\Delta^+)^h= n^{\frac{\varepsilon}{20}}$, then
$ b\le n^{-\frac{1}{2}-\frac{\varepsilon}{10}}$.
By a generalization of Azuma's inequality (see, e.g., Theorem 3.7 in~\cite{mcdiarmid1998}),
we have that, for all $\delta>0$,
\begin{equation}\label{eq:combes}
    \P\(\big| Z-\E[Z] \big|\ge \delta \)\le 2\(\P(\cA^c)+\exp\(-\frac{\delta^2}{m b^2} \) \).
\end{equation}
Since $m b^2\to 0$ as $n\to\infty$, to conclude the proof of \cref{eq:claim-mart} it suffices to show that $\P(\cA^c)=o(1)$.

Fix $v\in[n]$,  $K=2/\varepsilon$ and notice that
\begin{equation}
    \P\(\mu_{\rm in}P^h(v)>n^{-\frac{1}{2}-\frac{\varepsilon}{5}} \)\le \frac{\E\[\(\mu_{\rm in}P^h(v)\)^K \]}{n^{-\frac{K}{2}-\frac{K\varepsilon}{5}}}.
\end{equation}
Using the annealed process as in \cref{KNVY,CVVI}, replacing the uniform measure by $\mu_{\rm in}$ and $t$ by $h$, we infer that, for $n$ large enough
\begin{equation}\E\[\(\mu_{\rm in}P^h(v)\)^K \]\le \(\frac{Kh \Delta^-}{n}\)^K\le n^{-\frac{K}{2}-K\varepsilon},\end{equation}
where in the last inequality we used \cref{lem:s} and the fact that $\varepsilon<\eta/6$.
Therefore,
\begin{equation}
    \P\(\mu_{\rm in}P^h(v)>n^{-\frac{1}{2}-\frac{\varepsilon}{5}} \)\le n^{-\frac{4K\varepsilon}{5}}=o\(n^{-1} \).
\end{equation}
By a union bound over $v\in[n]$ we get $\P(\cA^c)=o(1)$. This ends the proof of \cref{eq:claim-mart}.

Thanks to \cref{eq:char001,eq:claim-mart}, the proof of \cref{th:bulk} will be completed by showing
\begin{equation}\label{eq:finale}
    \left|\E[Z]-\E\[g\(\Phi_\infty\)\] \right|\to 0\,,\quad n\to\infty.
\end{equation}
Since $|g(x)-g(y)|\le |x-y|$, using Cauchy-Schwarz inequality and \cref{eq:exp-l2-conv}  we have
\begin{equation}
    \left|\E\[g\(\Phi_{h}\)\]-\E\[g\(\Phi_\infty\)\]\right| \le \E\[\left|\Phi_{h}-\Phi_\infty\right|\]\le \sqrt{\E\[\left(\Phi_{h}-\Phi_\infty\right)^2\]}=O(2^{-h/2}).
\end{equation}
Hence, by the triangular inequality, it suffices to show that $\left|\E[Z]-\E\[g\(\Phi_h\)\] \right|\to 0$.

Let $\wt\P$ denote the joint law of the uniform random choice of $\cI\in[n]$ and the coupled construction of the in-neighborhood of $\cI$ and the random tree in \cref{sec:GW}, with root having label $\cI$. Notice that if the coupling succeeds, then one has
$\Phi_k=n\mu_{\rm in}P^{k}(\cI)$ for all ${k\le h}$. Call $\cC$ the event that the coupling succeeds. By \cref{coro:coupling} we know that $\wt\P(\cC^c)\le n^{-1/4}$. Following the same argument as in \cref{eq:char01} and by Cauchy-Schwarz inequality,
\begin{equation}   \label{eq:CS}
    \big|\E[Z]-\E[g(\Phi_{h})]\big| \le\wt\E\[\big|n\mu_{\rm in}P^h(\cI)-\Phi_{h} \big|\:\cdot \ind_{\cC^c} \]
    \le\sqrt{\wt\P(\cC^c)\:\wt\E\[\big(n\mu_{\rm in}P^h(\cI)-\Phi_{h} \big)^2 \]},
\end{equation}
where $\wt\E$ denote taking expectation under the law $\wt\P$.
Therefore, it is enough to show that
\begin{equation}\label{eq:est2}
    \wt\E\[\big(n\mu_{\rm in}P^h(\cI)-\Phi_{h} \big)^2\] = o(n^{1/4}).
\end{equation}
To prove \cref{eq:est2} we write
\begin{equation}
    \wt\E\[ \big(n\mu_{\rm in}P^h(\cI)-\Phi_{h} \big)^2\]\le 2\E\[(n\mu_{\rm in}P^h (\cI) )^2 \]+2\E\[\Phi_h^2 \].
\end{equation}
By \cref{coro:marting}, $\E[\Phi_h^2]=O(1)$. Concerning the first term above, notice that
\begin{equation}
    \label{eq:JWVX}
    \begin{aligned}
        \E\[\(n\mu_{\rm in}P^h(\cI) \)^2 \]&=n\sum_{x\in[n]}\E\[(\mu_{\rm in} P^h(x))^2\]\\
        &\le\frac{1}{n}\sum_{x\in[n]}\E\Big[\Big(\textstyle{\sum_{y\in[n]}} d_y^-\ind_{x\in\cB^+_y} \Big)^2 \Big]\\
        &=\frac{1}{n}\sum_{x,y,z\in[n]}
        d_y^-d_z^-\P\(x\in\cB^+_y,\:x\in\cB^+_z  \).
    \end{aligned}
    \end{equation}
Arguing as in \cref{eq:LLXS}, we obtain
\begin{equation}
    \P\(x\in\cB^+_y,\:x\in\cB^+_z \)\le (\Delta^+)^h\frac{d_x^-}{n} \,\ind(y=z) +
    (\Delta^+)^{2h}\frac{d_z^- d_x^-}{n^2} + (\Delta^+)^{4h}\frac{(d_x^-)^2}{n^2}.
\end{equation}
Recall that \cref{cond:main} implies that $\sum_{x\in[n]}(d_x^-)^2=O(n)$. Thus,
we obtain
\begin{equation}
    \E\[\(n\mu_{\rm in}P^h(\cI) \)^2\] = O\((\Delta^+)^{4h}\).
\end{equation}
By our choice of $\varepsilon$ and $h=h_\varepsilon$, we have $(\Delta^+)^{4h}=o(n^{1/4})$, which ends the proof of \cref{eq:est2}.

\section{Lower bounds}\label{sec:lowbounds}

\subsection{Access probabilities to the maximum in-degree vertex}\label{sec:SMindeg}

\begin{proposition}\label{prop:LB2}
Assume that $\Delta^-=\Delta^-_n\to \infty$ as $n\to \infty$.
For every sufficiently small $\varepsilon>0$ and for any $y\in [n]$ with $d^-_y=\Delta^-$, we have
\begin{equation}\label{eq:GYOS}
    \P\(
        \min_{x\in V_{\varepsilon}}
            P^{t}(x,y) \geq (1-\varepsilon)\frac{\Delta^-}{m}
    \)
    =
    1-o(1)
    ,
\end{equation}
where $t=(1-\gamma)\tent+h_\varepsilon+1$,
$\gamma=\frac{\varepsilon}{80\log(\Delta^+)}$,
and $\tent$, $h_\varepsilon$ and $V_{\varepsilon}$ are defined as in \cref{eq:H,def-hslash,def-hslash*}.
\end{proposition}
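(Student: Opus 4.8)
The plan is to bound the lower tail by restricting the quenched transition probability to a suitable subfamily of paths of length $t$ from $x$ to $y$, in the spirit of the proof of the upper bound in \cref{cutoff}. Fix $y$ with $d_y^-=\Delta^-$, and let $s=(1-\gamma)\tent$, $t=s+h+1$, $h=h_\varepsilon$, $\gamma=\tfrac{\varepsilon}{80\log(\Delta^+)}$ be as in \cref{def:nice}. First reveal the in-neighborhood $\cB^-_y(h)$ via the \ac{bfs} rule of \cref{sec:SG}, and then, for each $x\in V_\varepsilon$, reveal the weighted out-neighborhood $(\cG_x(s),\cT_x(s))$ of \cref{suse:out-phase} conditionally on $\cB^-_y(h)$; let $\sigma=\sigma(x,y)$ denote the resulting partial environment. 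Write $\wt P^t(x,y)$ for the quenched probability that the walk follows a path whose first $s$ steps lie in $\cT_x(s)$, whose length-$(s+1)$ prefix has weight at most $n^{-1+2\gamma}$, and whose last $h$ steps trace a path of $\cB^-_y(h)$ from a vertex of $\partial\cB^-_y(h)$ to $y$; for a lower bound the uniqueness requirement~(iii) of \cref{def:nice} may be dropped, which also prevents the later exploration $\cG_x(s)$ from spoiling these paths by creating shortcuts inside $\cB^-_y(h)$. Since these are genuine paths in $G$, $P^t(x,y)\ge\wt P^t(x,y)$, and, exactly as in the proof of \cref{prop:proxy}, averaging over the uniform completion of $\sigma$ gives
\begin{equation*}
\E[\wt P^t(x,y)\mid\sigma]=\frac{1}{m-\kappa_x-\kappa_y}\Big(\sum_{e\in\cE_\sigma}\widehat\w(e)\,\ind_{\widehat\w(e)\le n^{-1+2\gamma}}\Big)\Big(\sum_{f\in\cF_\sigma}\widehat\w(f)\Big),
\end{equation*}
where $\cF_\sigma$ is now the set of still-unmatched heads incident to $\partial\cB^-_y(h)$ and $\widehat\w(f)$ the weight of the length-$h$ path of $\cB^-_y(h)$ from $v_f$ to $y$. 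It therefore suffices to bound each factor from below and to concentrate $\wt P^t(x,y)$ around its conditional mean, uniformly in $x\in V_\varepsilon$.

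For the in-factor, $\sum_{f\in\cF_\sigma}\widehat\w(f)$ equals the martingale value $M_y(h)$ of \cref{eq:martingale} minus the weight $L_x$ carried by the heads of $\partial\cB^-_y(h)$ that get matched while revealing $\cG_x(s)$. By \cref{le:coupling} we have $\cB^-_y(h)=\cT^-_y(h)$ \ac{whp} (here $(\Delta^-)^2n^{-1/3}=o(1)$ since $\Delta^-=O(n^{1/2-\eta/6})$ by \cref{lem:s}), and on this event $M_y(h)=\sum_{v\in\partial\cB^-_y(h)}d_v^-\,\w(v\to y)$, which by \cref{lem:marting} and Chebyshev's inequality equals $(1+o_\P(1))\Delta^-$ because $\var(M_y(h))=O(\Delta^-)=o((\Delta^-)^2)$ as $\Delta^-\to\infty$. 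For the leakage, each head incident to $\partial\cB^-_y(h)$ is matched during the construction of $\cG_x(s)$ with probability $O(\kappa_x/m)=O(n^{-\gamma^2})$ by \cref{eq:kappax:s}; a $K$-th moment estimate with $K=K(\gamma)$ a large enough constant (using $\sum_{f_1,\dots,f_K}\prod_i\widehat\w(f_i)\le M_y(h)^K$, and bounding the probability that a prescribed set of heads is entirely matched during $\cG_x(s)$ by the corresponding power of $O(n^{-\gamma^2})$) yields $\P(L_x>\varepsilon_1\Delta^-)=o(n^{-1})$ for any fixed $\varepsilon_1>0$. A union bound over $x$ then gives $\sum_{f\in\cF_\sigma}\widehat\w(f)\ge(1-2\varepsilon_1)\Delta^-$, \ac{whp}, uniformly in $x\in V_\varepsilon$.

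For the out-factor, I would use that $\sum_e\widehat\w(e)=p(x,s)$ when $e$ ranges over all depth-$s$ tails of $\cT_x(s)$ (notation of \cref{suse:out-phase}), that the walk mass carried by length-$(s+1)$ paths of weight larger than $n^{-1+2\gamma}$ is $o_\P(1)$ uniformly in $x$ by \cref{LLN} (the relevant exponent $\tfrac{1-2\gamma}{1-\gamma}\hh$ being $<\hh$), and that the depth-$s$ tails consumed while revealing $\cB^-_y(h)$ — at most $|\cB^-_y(h)|\le n^{1/2+\varepsilon}$ of them by \cref{lemmaGW} — carry, after the cut $\widehat\w(e)\le n^{-1+2\gamma}$, total weight at most $n^{1/2+\varepsilon}n^{-1+2\gamma}=o(1)$. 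Together with $\min_{x\in V_\varepsilon}p(x,s)\inprob 1$ from \cref{lem:bennett}, this gives $\sum_{e\in\cE_\sigma}\widehat\w(e)\,\ind_{\widehat\w(e)\le n^{-1+2\gamma}}\ge 1-\varepsilon_2$, \ac{whp}, uniformly in $x\in V_\varepsilon$. Using $\tfrac{1}{m-\kappa_x-\kappa_y}\ge\tfrac1m$, on a good event of probability $1-o(1)$ one then has $\E[\wt P^t(x,y)\mid\sigma]\ge(1-2\varepsilon_1-\varepsilon_2)\tfrac{\Delta^-}{m}$ for every $x\in V_\varepsilon$. Finally, conditionally on $\sigma$, $\wt P^t(x,y)$ is a function of the uniform matching with influences bounded by $\|c\|_\infty\le 2^{-h}n^{-1+2\gamma}\le n^{-1-\delta'}$ for some $\delta'=\delta'(\varepsilon)>0$; Chatterjee's inequality~\cite[Proposition 1.1]{chatterjee2007}, applied exactly as in \cref{eq:chatterjee}, then gives $\P(\wt P^t(x,y)<(1-\varepsilon)\tfrac{\Delta^-}{m}\mid\sigma)\le\exp(-c\,n^{\delta'})=o(n^{-1})$ provided $2\varepsilon_1+\varepsilon_2<\varepsilon$ (the exponent being of order $\varepsilon^2\Delta^- n^{\delta'}\ge\varepsilon^2 n^{\delta'}$, using $\Delta^-\ge1$). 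A union bound over $x\in[n]$, together with $P^t(x,y)\ge\wt P^t(x,y)$, then gives \cref{eq:GYOS}.

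The step I expect to be the main obstacle is the uniformity in $x\in V_\varepsilon$ of the in-factor lower bound: since $|V_\varepsilon|$ may be of order $n$, one cannot afford the crude bound on the out/in exploration interaction that sufficed for the upper bound in \cref{prop:proxy}, and one has to control the leaked weight $L_x$ through a moment estimate sharp enough to survive the union bound over all starting vertices.
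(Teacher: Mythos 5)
Your proposal is correct and follows essentially the same route as the paper's proof of \cref{prop:LB2}: the same factorization $\E[\wt P^t(x,y)\mid\sigma]\ge\tfrac1m A\cdot B$ over a two-stage exploration, with the out-factor controlled via \cref{lem:bennett} and \cref{LLN}, the in-factor via the coupling of \cref{le:coupling} and the martingale $M_y(h)$ with Chebyshev (exactly where $\Delta^-\to\infty$ enters), and then Chatterjee's inequality with $\|c\|_\infty\le n^{-1-\delta'}$ plus a union bound over $x\in V_\varepsilon$. The only differences are implementation details: you use a single in-then-out exploration order and bound the two leakage terms by a deterministic count and a $K$-th moment estimate (where tuples with repeated heads should be handled via $\widehat\w(f)\le n^{-2\gamma}$ rather than a naive $(Cn^{-\gamma^2})^K$ factor), whereas the paper uses stochastic domination by independent variables and Hoeffding's inequality for both leakages; either way yields the required $o(n^{-1})$ per-vertex bounds.
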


Throughout \cref{sec:SMindeg} we write $h=h_\varepsilon$,  and set $s$ and $t$ as in \cref{eq:def-s-t}.
Fix $x\in [n]$.
Recall the out-neighborhood exploration defined in~\cref{suse:out-phase}
which exposes $\cG_x(s)$ and $\cT_x(s)$ in
at most $\kappa_x=\kappa_{x}(s)\le 2n^{1-\gamma^2}$ steps (see \cref{eq:kappax:s}).
We let $\sigma^+_x$ be the partial pairing obtained after the generation of $\cG_x(s)$.

Generate the in-neighborhood of $y$ using the sequential generation in~\cref{sec:SG} process according to the \ac{bfs} rule. Conditional on $\sigma^+_x$, the in-neighborhood generation constructs another sequence $\procH$ that exposes edge by edge the subgraph induced by $\cB^-_y(h)$. Let $\kappa_{y}$ be the number of edges that have been paired during the generation of the in-neighborhood. Let $\sigma=\sigma(x,y)$ be the partial pairing revealed after the two exploration processes.  Let $\omega$ be a complete pairing of half-edges chosen uniformly at random among all extensions of $\sigma$.

Given $\sigma$, call $\cE_\sigma$ the set of unmatched tails at height $s$ in $\cT_x(s)$ and $\cF_\sigma$ the set of unmatched heads incident to $\partial\cB^{-}_y(h)$ that admit a unique path of length $h$ ending at $y$.
Recall that, for $f\in \cF_\sigma$, ${\widehat \w}(f)$ is defined as before \cref{349}.

We now use the notion of nice path given in~\cref{def:nice}.
Let $\wt P^t(x,y)\le P^t(x,y)$ be the probability of following a nice path of length $t$ from $x$ to $y$.
Conditional on $\sigma$, \cref{eq:nice_traj} holds:
\begin{equation}
    \wt{P}^{t}(x,y)
    =
    \sum_{e\in \cE_{\sigma}}
    \sum_{f\in \cF_{\sigma}}
    \widehat{\w}(e) \widehat{\w}(f) \ind_{\widehat{\w}(e)\le n^{-1+2\gamma}} \ind_{\omega(e)=f}
\end{equation}
Observe that
\begin{equation}\label{ZYZI2}
    \E\left[\wt{P}^{t}(x,y) \cond{} \sigma \right]
    \ge
    \frac{1}{m} A_{x,y}(\sigma) B_{x,y}(\sigma)
    ,
\end{equation}
where
\begin{equation}
A_{x,y}(\sigma) = \sum_{e \in \cE_{\sigma}} \widehat\w(e)\ind_{\widehat\w(e)\le n^{-1+2\gamma}}\quad \text{and}\quad
B_{x,y}(\sigma) = \sum_{f \in \cF_{\sigma}} {\widehat \w}(f).
\end{equation}

Choose $\delta > 0$ sufficiently small such that $(1-\delta)^{5} > 1-\varepsilon$. Since $y$ is fixed, we define
\begin{equation}\label{GJQL2}
\cY_{x}=\{\sigma: A_{x,y}(\sigma)\ge (1-\delta)^2, B_{x,y}(\sigma)\ge (1-\delta)^2 \Delta^-\}.
\end{equation}
If $\sigma \in \cY_{x}$, we have
\begin{equation}
    \E[\wt{P}^{t}(x,y) \,| \, \sigma]
\ge
\frac{(1-\delta)^4 \Delta^-}{m}
.
\end{equation}
Define also the event
\begin{equation}
\cY= \cap_ {x\in [n]} \(\{x\notin V_{\varepsilon}\} \cup\cY_{x}\).
\end{equation}
We state the following fact that we will prove later.
\begin{lemma}\label{lem:typ2}
    We have $\P\(\cY\)= 1-o(1)$.
\end{lemma}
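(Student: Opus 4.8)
The plan is to bound the two inequalities defining $\cY_x$ separately, each uniformly over $x\in V_\varepsilon$, and then take a union bound over the at most $n$ vertices $x$. The bound on $A_{x,y}(\sigma)$ is the soft one: up to a negligible correction from the in-exploration of the fixed vertex $y$, $A_{x,y}(\sigma)$ is the probability that the walk from $x$ stays on $\cT_x(s)$ for its first $s$ steps and, at step $s+1$, satisfies requirement (ii) of \cref{def:nice}. Hence $A_{x,y}(\sigma)=1-o_\P(1)$ uniformly over $x\in V_\varepsilon$ follows from the two facts established in the proof of \cref{prop:nice}: requirement (i) holds whp uniformly over $x\in V_\varepsilon$ by \cref{lem:bennett}, and requirement (ii) holds whp uniformly over $x$ by \cref{LLN}, the threshold $n^{-1+2\gamma}$ lying above the typical weight $\ee^{-\hh s}=n^{-1+\gamma}$ of a length-$s$ path. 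In particular $\min_{x\in V_\varepsilon}A_{x,y}(\sigma)\ge(1-\delta)^2$ whp.

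The substantial part is the lower bound on $B_{x,y}(\sigma)$, and we first isolate the dependence on $y$. Since $d_y^-=\Delta^-=O(n^{1/2-\eta/6})$ by \cref{lem:s}, taking $a=\eta/7$ in \cref{le:coupling} shows that whp the in-neighbourhood $\cB_y^-(h)$ coincides with the marked Galton--Watson tree $\cT_y^-(h)$; on this event $\cB_y^-(h)$ is a genuine tree, so each vertex at distance $h$ from $y$ carries a unique length-$h$ path to $y$, and the total weighted boundary mass $\sum_{f\in H_y}\widehat\w(f)$, with $H_y$ the set of heads incident to $\partial\cB_y^-(h)$, equals the martingale $M_y(h)$ of \cref{lem:marting}. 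Since $\E[M_y(h)]=\Delta^-$, $\var(M_y(h))=O(\Delta^-)$ and $\Delta^-\to\infty$, Chebyshev's inequality gives $M_y(h)=(1+o_\P(1))\Delta^-$. On the coupling event one then has the identity $B_{x,y}(\sigma)=M_y(h)-\mathrm{loss}_x$, where $\mathrm{loss}_x:=\sum_{f\in H_y}\widehat\w(f)\,\ind(f\text{ is matched during the construction of }\cG_x(s))$; indeed the heads of $\partial\cB_y^-(h)$ are never matched by the in-exploration of $y$, so the only boundary heads dropped from $\cF_\sigma$ are those consumed by the weighted out-exploration of $x$.

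It thus remains to show $\max_{x\in V_\varepsilon}\mathrm{loss}_x=o_\P(\Delta^-)$. Reveal $\cB_y^-(h)$ first (this is legitimate since $\mathrm{loss}_x$ is a deterministic function of the matching) and work conditionally on it. For each fixed $x$, the construction of $\cG_x(s)$ performs at most $\kappa_x\le 2\ee^{\hhigh s}=2n^{1-\gamma^2}$ matchings by \cref{eq:kappax:s}, each pairing a tail with a head drawn uniformly among the currently unmatched ones. Tracking $L_\ell:=\sum_{j\le\ell}\widehat\w(f_j)\,\ind_{f_j\in H_y}$, the predictable mean increment is at most $2M_y(h)/m$ and the predictable conditional second moment at most $2^{-h}\cdot2M_y(h)/m$, using $\sum_{f\in H_y}\widehat\w(f)=M_y(h)$ and the deterministic bound $\widehat\w(f)\le2^{-h}$ on a length-$h$ weight. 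Summing over $\ell\le\kappa_x$ yields a predictable mean $O(n^{-\gamma^2}\Delta^-)=o(\Delta^-)$ and predictable quadratic variation $O(n^{-\gamma^2}2^{-h}\Delta^-)$. Since $2^{-h}=2^{-h_\varepsilon}=n^{-c}$ for some $c=c(\varepsilon)>0$, Freedman's inequality \cite{freedman1975} gives, for every fixed $b>0$, $\P(L_{\kappa_x}\ge b\Delta^-)\le\exp(-\Omega(b^2\Delta^-n^{c}))=o(n^{-1})$, since $n^{c}$ dominates $\log n$ and $\Delta^-\ge1$; a union bound over $x\in V_\varepsilon$ gives $\max_{x\in V_\varepsilon}\mathrm{loss}_x=o_\P(\Delta^-)$. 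Therefore $\min_{x\in V_\varepsilon}B_{x,y}(\sigma)\ge(1-o_\P(1))\Delta^-\ge(1-\delta)^2\Delta^-$ whp, and combining with the bound on $A$ gives $\P(\cY)=1-o(1)$.

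The main obstacle is precisely the control of $\mathrm{loss}_x$: although $\cG_x(s)$ is an enormous subgraph (almost $n$ edges), it consumes only an $o(1)$ fraction of the \emph{weighted} in-boundary mass of $y$; but since we must union-bound over all $x\in V_\varepsilon$, a polynomial Markov bound does not suffice and one is forced onto the stretched-exponential Freedman estimate. This in turn requires care about the order in which $\cB_y^-(h)$ and $\cG_x(s)$ are revealed, so that the Galton--Watson coupling for $y$ can be invoked before $\cG_x(s)$ is exposed.
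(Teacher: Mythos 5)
Your proof is correct and follows essentially the same route as the paper: the same decomposition into the $A$- and $B$-bounds, the same use of \cref{prop:nice} (via \cref{lem:bennett} and \cref{LLN}) for the $A$-term, and for the $B$-term the same reveal-$\cB^-_y(h)$-first coupling with the marked Galton--Watson tree, Chebyshev via \cref{lem:marting}, and a per-$x$ concentration bound of order $o(n^{-1})$ on the weighted boundary mass consumed by the out-exploration of $x$, followed by a union bound over $x$. The only cosmetic differences are that you control this loss with a martingale/Freedman estimate where the paper uses stochastic domination by independent variables plus Hoeffding, and that you assert rather than spell out the negligibility of the correction to $A_{x,y}$ coming from the in-exploration of $y$ (which is immediate on the coupling event from $\kappa_y\le n^{1/2}$ and the weight cap $n^{-1+2\gamma}$).
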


\begin{proof}[Proof of~\cref{prop:LB2}]
Define the event
\begin{equation}
\cZ_{x} = \left\{P^{t}(x,y) \ge (1-\varepsilon)\frac{\Delta^{-}}{m}\right\}.
\end{equation}
Let $\sigma\in \cY_x$.
Swapping two pairings in $\omega$ which are not fixed by $\sigma$ can change $\wt{P}^{t}(x,y)$ by at most $\|c\|_\infty\le n^{-1-\delta'}$ by~\cref{eq:hp4}.
Chatterjee's inequality in \cref{eq:chatterjee} implies that
\begin{equation}\label{SKEM2}
    \begin{aligned}
        \P\(\cZ_x^c\mid \sigma\)
        &\le
        \P\(
            \wt{P}^{t}(x,y) \le (1-\delta)\E[\wt{P}^{t}(x,y) \,| \, \sigma]
            \cond
            \sigma
        \)
        \\
        &
        \le 2 \exp\left(-\frac{(1-\delta)^{4}(2+\delta)^{-1} \delta^2 \Delta^{-}}{2 \|c\|_\infty m}\right)
        =
        o\left(n^{-1} \right)
        .
    \end{aligned}
\end{equation}
We then have
\begin{equation}\label{FINP}
    \begin{aligned}
    &
    \P\(
        \cup_{x \in [n]} (\{x\in V_\varepsilon\}\cap\cZ_{x}^c)
    \)
    \le
    \P\(
        \cup_{x\in [n]}
        \(\cZ_{x}^c\cap \cY_{x}\)
    \)
    +
    \P\(\cY^{c}\)
    =
    o(1)
    ,
    \end{aligned}
\end{equation}
where the first term is bounded by~\cref{SKEM2} and the second by \cref{lem:typ2}.
\end{proof}

\begin{proof}[Proof of~\cref{lem:typ2}]
Define the events
\begin{equation}\label{NNMQ2}
\begin{aligned}
    \cY^{(1)} &= \cap_{x\in [n]}\left(\{x \notin V_{\varepsilon}\} \cup \{A_{x,y}(\sigma)\ge (1-\delta)^2\}\right),\\
    \cY^{(2)} &=\cap_{x\in [n]} \{B_{x,y}(\sigma)\ge (1-\delta)^2\Delta^-\},
\end{aligned}
\end{equation}
and note that $\cY = \cY^{(1)}\cap  \cY^{(2)}$.

We first focus on $\cY^{(1)}$. Let $\cE_{x}(s)$ be the set of tails at height $s$ in $\cT_x(s)$. Write
\begin{equation}\label{eq:defsA}
A_{x}(\sigma)
=\sum_{e \in \cE_{x}(s)} \widehat\w(e)\ind_{\widehat\w(e)\le n^{-1+2\gamma}}
\quad
\text{and}
\quad
\overline{A}_{x,y}(\sigma) =A_{x}(\sigma) -A_{x,y}(\sigma).
\end{equation}
Let $\wt{q}_0(x)$ and $\wt{q}(x)$ be the probabilities that the walk starting at $x$ violates the condition of a nice
path within the first $s$ and $t$ steps respectively.
So $\wt{q}_0(x)\le \wt{q}(x)$. \cref{prop:nice} implies that \ac{whp},
\begin{equation}\label{eq:A0}
\min_{x\in V_\varepsilon} A_{x}(\sigma) \ge
1- \max_{x\in V_\varepsilon}\wt{q}_0(x) \ge 1- \max_{x\in V_\varepsilon} \wt{q}(x) \ge 1-\delta,
\end{equation}
for sufficiently large $n$.

We now turn our attention to the in-neighborhood exploration to bound $\overline{A}_{x,y}(\sigma)$.
Let $\cT^-_y$ be a Galton-Watson tree as defined in~\cref{sec:GW}.
Consider the event
\begin{equation}
\cC_{1}=\{\cB^-_y(h)=\cT^-_y(h),\kappa_y\le n^{1/2}\}
.
\end{equation}
That is, the coupling succeeds up to depth $h$, and not too many edges are revealed by it.
Since under $\cC_{1}$ we have $\kappa_y\le |\cB^-_y(h)|$,
\cref{le:coupling} implies that $\P(\cC_{1})=1-o(1)$,
provided that $\varepsilon<\eta$.

Recall that $\widehat{\w}(e)\le n^{-1+2\gamma}$ for all $e\in \cE_x(s)$.
Moreover, $\E[\ind_{\cC_{1}}\ind_{e\notin \cE_\sigma}]= \P\(\cC_{1}, e\notin \cE_\sigma\)$
is uniformly bounded from above by $\frac{\kappa_y}{m-\kappa_x-\kappa_y}\le n^{-1/2}$.
Therefore, the random variable $\ind_{\cC_{1}}\overline{A}_{x,y}(\sigma)$
is stochastically dominated by $X=\sum_{i=1}^m X_i$,
where the $X_i$ are independent random variables satisfying
$X_i\in [0,n^{-1+2\gamma}]$
and $\E[X]\le mn^{-3/2+2\gamma}\le n^{-1/3}$ for $\varepsilon$ small enough.
Using Hoeffding's inequality for the sum of independent bounded random variables
(see, e.g., Theorem 2.5 in~\cite{mcdiarmid1998}),
we obtain
\begin{equation}\label{eq:A_Hoeff}
\P\(\ind_{\cC_{1}}\overline{A}_{x,y}(\sigma)\ge \delta/2\)
\le \P\(X\ge \E[X]+\delta/3\)\le \exp\(-\frac{2\delta^2}{9m n^{-2+4\gamma}}\)
=o(n^{-1}).
\end{equation}
By $\P(\cC_{1})=1-o(1)$ and a union bound with \cref{eq:A_Hoeff}, whp
\begin{equation}\label{eq:overA}
    \max_{x\in[n]} \overline{A}_{x,y}(\sigma) =\max_{x\in [n]} \ind_{\cC_{1}}\overline{A}_{x,y}(\sigma) \le \delta/2.
\end{equation}
Combining \cref{eq:A0,eq:overA}, whp
\begin{equation}
\min_{x\in V_\varepsilon}A_{x,y}(\sigma) \ge \min_{x\in V_\varepsilon} A_{x}(\sigma) - \max_{x\in[n]} \overline{A}_{x,y}(\sigma)\ge (1-\delta)-\frac{\delta}{2}\ge (1-\delta)^2\;.
\end{equation}
for $\delta\le 1/2$, which  implies that $\P\((\cY^{(1)})^c\)=o(1)$.

It remains to bound the probability of $\cY^{(2)}$. It will be convenient to interchange the order of the out- and in-neighborhood exploration processes. We first generate $\cB^-_y(h)$ which reveals the set of heads at distance $h$ from $y$, denoted by $\cF_{y}(h)$, and then generate $\cG_x(s)$. Let $\sigma^-_y$ be the partial paring revealed by generating $\cB^-_y(h)$, and call again $\sigma=\sigma(x,y)$ the partial pairing obtained after the generation of both $\cB^-_y(h)$ and $\cG_x(s)$.

For all $j\ge 0$, define
\begin{equation}\label{eq:def-gamma}
    \Gamma_y(j) = \sum_{z\in \partial \cB^-_h(j)} d^-_z P^j(z,y).
\end{equation}
Under $\cC_{1}$, there is a unique path of length  $h$ from each head $f$ incident to $\partial \cB_y^-(h)$ to $y$, so we can write
\begin{equation}\label{eq:defBs}
\Gamma_y(h) = \sum_{f\in\cF_{y}(h)} {\widehat \w}(f) \quad\text{and} \quad \overline{B}_{x,y}(\sigma) = \Gamma_y(h)-B_{x,y}(\sigma) .
 \end{equation}
Recall the definition of $M_y(h)$ given in~\cref{eq:martingale}.
By Chebyshev's inequality and using~\cref{lem:marting}, we have
\begin{equation}\label{eq:chevy_mart}
    \P\(M_y(h)\le (1-\delta)d^-_y\)\le \frac{\var(M_y(h))}{\delta^2 (d^-_y)^2}\le \frac{1}{\delta^2 \Delta^-} = o(1),
\end{equation}
as $\Delta^-\to \infty$. Moreover, under $\cC_{1}$, we have $\{\Gamma_y(h)=M_y(h)\}$.

Consider the event
\begin{equation}
    \label{eq:MZRM}
    \cC_{2}
    = \{\Gamma_y(h) \ge (1-\delta)\Delta^-\}
    .
\end{equation}
Using \cref{le:coupling} and \cref{eq:chevy_mart}, we have that
\begin{equation}
    \label{eq:B0}
        \P\(\cC_{2}^{c}\)
        \le \P\(\cC_{2}^{c}, \cC_{1}\) + \P\(\cC_{1}^c\)
        \le \P\(M_y(h) < (1-\delta)\Delta^-\) + o(1) =o(1).
\end{equation}

Fix $x\in [n]$. Conditional on $\sigma^-_y$, we now generate $\cG_x(s)$ obtaining the partial pairing $\sigma=\sigma(x,y)$.
Now we argue as in \cref{eq:A_Hoeff} to bound $\overline{B}_{x,y}(\sigma)$.
On the one hand, for any $f\in \cF_{y}(h)$ we have ${\widehat \w}(f)\le 2^{-h} \le n^{-2\gamma}$ and
\begin{equation}\label{eq:sum_max}
\sum_{f\in\cF_{y}(h)} ({\widehat \w}(f))^2\le
\left(\max_{f\in\cF_{y}(h)}  {\widehat \w}(f) \right)
\sum_{f\in\cF_{y}(h)} {\widehat \w}(f) \le  n^{-2\gamma}\Gamma_y(h).
\end{equation}
On the other hand, $\E[\ind{}_{\cC_{1}}\ind{}_{f\notin \cF_{\sigma}}]\le \frac{\kappa_x}{m-\kappa_x-\kappa_y}\le 3n^{-\gamma^2}$. Fix a realization $\sigma_y^-$, let $\ell=|\cF_y(h)|$ and let $f_1,f_2\dots, f_{\ell}$ be an arbitrary ordering of $\cF_y(h)$. Let $(Y_i)_{i\in [\ell]}$ be independent random variables such that $Y_i={\widehat \w}(f_i)$ with probability $3n^{-\gamma^2}$ and $Y_i=0$ with probability $1- 3n^{-\gamma^2}$.
Then, $Y=\sum_{i\in [\ell]} Y_i$ stochastically dominates $\ind{}_{\cC_{1}}\overline{B}_{x,y}(\sigma)$ conditionally on $\sigma_y^-$.
Moreover, the expected value of $Y$ satisfies $\bbE[Y|\sigma_y^-]\le 3n^{-\gamma^2}\Gamma_y(h)= o(\Gamma_y(h))$
and, by \cref{eq:sum_max}, the sum of the squared ranges of the random variables $(Y_i)_{i\in [\ell]}$ is at most $n^{-2\gamma}\Gamma_y(h)$.
Applying Hoeffding's inequality,
\begin{equation}
    \label{eq:B_Hoeff}
    \begin{aligned}
        \P\(\ind_{\cC_{1}} \ind_{\cC_{2}}\overline{B}_{x,y}(\sigma)\ge \delta\Gamma_y(h) \)
        &
        \le
        \E
        \[
        \ind_{\cC_{2}}
        \P\(Y\ge
        \E\left[Y \cond{} \sigma^-_{y}\right]
        +\frac{\delta}{2}\Gamma_y(h) \cond \sigma^-_{y}\)
        \]
        \\
        &
        \le
        \E
        \[
        \ind_{\cC_{2}}
        \exp\(-\frac{\delta^2\Gamma_y(h)}{2 n^{-2\gamma}}\)
        \]
        =
        o(n^{-1})
        .
    \end{aligned}
\end{equation}
Since $\P(\cC_{1})=1-o(1)$, by \cref{eq:B0} and a union bound with \cref{eq:B_Hoeff}, whp
\begin{equation}\label{eq:overB}
\max_{x\in[n]} \overline{B}_{x,y}(\sigma)
=
\max_{x\in [n]} \ind_{\cC_{1}} \ind_{\cC_{2}}
\overline{B}_{x,y}(\sigma)
\le \delta\Gamma_y(h).
\end{equation}
Combining \cref{eq:B0,eq:overB}, \ac{whp} we have,
\begin{equation}
\min_{x\in [n]} B_{x,y}(\sigma) \ge  \Gamma_y(h)-\max_{x\in [n]}  \overline{B}_{x,y}(\sigma)\ge   (1-\delta)\Gamma_y(h) \ge (1-\delta)^2\Delta^-,
\end{equation}
and we conclude that $\P\((\cY^{(2)})^c\)=o(1)$.
\end{proof}

\subsection{Access probabilities to large in-degree vertices}\label{sec:LMindeg}

For any $a\in (0,1)$, define the set
\begin{equation}\label{def-V(alpha)}
    V(a)
=    \left\{
        y \in [n]
        \;:\;
        d^{-}_{y} > n^{a}
    \right\}
    .
\end{equation}
Note that by \cref{lem:s}, for any $a > \frac{1}{2} - \frac{\eta}{6}$, $V(a)=\emptyset$.
\begin{proposition}\label{prop:LB1}
For all $\varepsilon>0$ sufficiently small, $\gamma=\gamma(\varepsilon)\coloneqq\frac{\varepsilon}{80\log (\Delta^+)}$ and $a \in (2\gamma,1)$,
\begin{equation}
  \P\(
         \min_{x\in V_\varepsilon}
            P^{t}(x,y) \ge (1-\varepsilon)\frac{d_{y}^{-}}{m}\,,\;\;\forall y\in V(a)
    \)
    =
    1-o(1)
    ,
\end{equation}
where $t=(1-\gamma)\tent+1$, and $\tent$ and $V_{\varepsilon}$ are defined as
in~\cref{eq:H,def-hslash*}.
\end{proposition}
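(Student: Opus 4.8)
The plan is to follow the proof of \cref{prop:LB2} closely, with one genuine simplification: since here $t=(1-\gamma)\tent+1=s+1$ with $s\coloneqq(1-\gamma)\tent$, there is no terminal passage of length $h_\varepsilon$ through an in-neighborhood of $y$, and the role that $B_{x,y}(\sigma)$ played there is now taken directly by the number of still-unmatched heads of $y$. Fix $\delta>0$ small enough that $(1-\delta)^3>1-\varepsilon$. If $a>\tfrac12-\tfrac\eta6$ then $V(a)=\emptyset$ by \cref{lem:s} and there is nothing to prove, so we may assume $V(a)\neq\emptyset$.

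For each $x\in[n]$ I would first reveal the weighted out-neighborhood $(\cG_x(s),\cT_x(s))$ of \cref{suse:out-phase}, obtaining a partial pairing $\sigma=\sigma(x)$ that exposes $\kappa_x\le 2n^{1-\gamma^2}$ edges by the deterministic bound \cref{eq:kappax:s}. Let $\cE_\sigma$ be the set of tails at height $s$ in $\cT_x(s)$ (all of them necessarily unmatched, since the construction never selects them) and set $A_x(\sigma)\coloneqq\sum_{e\in\cE_\sigma}\widehat\w(e)\ind_{\widehat\w(e)\le n^{-1+2\gamma}}$ exactly as in \cref{eq:defsA}. Since no in-neighborhood of $y$ is revealed, there is no correction term $\overline A_{x,y}(\sigma)$, and the argument for \cref{eq:A0} (which only uses \cref{prop:nice}, hence \cref{LLN} and \cref{lem:bennett}) gives directly that \ac{whp} $\min_{x\in V_\varepsilon}A_x(\sigma)\ge 1-\delta$. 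Next, for $y\in V(a)$ let $\cF^{y}_\sigma$ be the set of unmatched heads of $y$ after $\sigma(x)$ has been revealed. The number of heads of $y$ matched during the construction of $\cG_x(s)$ is stochastically dominated by a $\Bin(\kappa_x,\frac{d_y^-}{m-\kappa_x})$ variable, whose mean is $O(n^{-\gamma^2}d_y^-)$; since $d_y^->n^a$, a multiplicative Chernoff bound yields $\P(|\cF^{y}_\sigma|<(1-\delta)d_y^-)\le 2^{-\delta n^a}$ for $n$ large, and a union bound over all $x\in[n]$ and all $y\in V(a)$ shows that \ac{whp} $|\cF^{y}_\sigma|\ge(1-\delta)d_y^-$ simultaneously for every such pair.

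Adapting the notion of nice path in \cref{def:nice} to paths of length $t=s+1$ (so that requirement (iii) there is void), introduce
\[
\wt P^t(x,y)\coloneqq\sum_{e\in\cE_\sigma}\sum_{f\in E^-_y}\widehat\w(e)\,\ind_{\widehat\w(e)\le n^{-1+2\gamma}}\,\ind_{\omega(e)=f}\ \le\ P^t(x,y),
\]
which counts the paths of length $s+1$ that stay in $\cT_x(s)$ for the first $s$ steps, have weight at most $n^{-1+2\gamma}$, and land at $y$. Conditionally on $\sigma=\sigma(x)$ the completion is a uniform matching of the $m-\kappa_x$ remaining half-edges, so $\P(\omega(e)\in E^-_y\mid\sigma)=|\cF^{y}_\sigma|/(m-\kappa_x)$ and hence
\[
\E\big[\wt P^t(x,y)\mid\sigma\big]=A_x(\sigma)\,\frac{|\cF^{y}_\sigma|}{m-\kappa_x}\ \ge\ (1-\delta)^2\,\frac{d_y^-}{m}
\]
on the good event of the previous paragraph, uniformly over $x\in V_\varepsilon$ and $y\in V(a)$. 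The permutation-dependent sum has coefficients bounded by $\|c\|_\infty\le n^{-1+2\gamma}$, and since $a>2\gamma$ this gives $\E[\wt P^t(x,y)\mid\sigma]/\|c\|_\infty\ge c_0\,n^{a-2\gamma}$ with $c_0=c_0(\delta,K)>0$. Feeding $a'=\delta\,\E[\wt P^t(x,y)\mid\sigma]$ into Chatterjee's inequality \cref{eq:chatterjee} then produces
\[
\P\Big(\wt P^t(x,y)<(1-\delta)^3\,\tfrac{d_y^-}{m}\ \Big|\ \sigma\Big)\le 2\exp\big(-c_1\,n^{a-2\gamma}\big)=o(n^{-2}),
\]
and summing this over $x\in V_\varepsilon$ and $y\in V(a)$, together with the two \ac{whp} events above and $\wt P^t\le P^t$, completes the argument because $(1-\delta)^3>1-\varepsilon$.

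The main obstacle is the same as in \cref{prop:LB2}: the uniform lower bound $A_x(\sigma)\ge 1-\delta$ over $x\in V_\varepsilon$; but this is supplied essentially verbatim by \cref{prop:nice} and needs no new work. What requires a little care is, first, the Chernoff control of the unmatched heads of $y$—elementary in itself, but needed uniformly over the possibly large set $V(a)$—and, second, the bookkeeping showing that the hypothesis $a>2\gamma$ makes $\|c\|_\infty$ polynomially negligible against $\E[\wt P^t(x,y)\mid\sigma]$, so that Chatterjee's inequality delivers a per-pair failure probability $o(n^{-2})$ and the union bound over $V_\varepsilon\times V(a)$ survives; this uniformity over all of $V(a)$ is precisely what distinguishes the statement from \cref{prop:LB2}.
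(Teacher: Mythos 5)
Your proposal is correct and follows essentially the same route as the paper's own proof: reveal only $(\cG_x(s),\cT_x(s))$, lower bound $A_x(\sigma)$ uniformly over $x\in V_\varepsilon$ via \cref{prop:nice}, control the unmatched heads of $y$ by Binomial domination plus Chernoff (this is the paper's event $\cY^{(2)}$ in \cref{lem:typ}), and then apply Chatterjee's inequality with $\|c\|_\infty\le n^{-1+2\gamma}$ and $d_y^->n^a$, $a>2\gamma$, to get a per-pair failure probability $o(n^{-2})$ before the union bound over $V_\varepsilon\times V(a)$. The only differences are cosmetic (e.g.\ the exact form of the Chernoff estimate), so no further comment is needed.
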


Throughout \cref{sec:LMindeg} we write $h=h_\varepsilon$ and set
\begin{equation}\label{eq:def-s-t3}
t\coloneqq s+1 , \qquad
s\coloneqq (1-\gamma)\tent.
\end{equation}

Note that $s=\tent- \frac{h}{4\hh}$ as in our previous proofs but this time the overall time $t$ is smaller than the mixing time $\tent$. Fix $x\in [n]$ and $y\in V(a)$.  Generate $\cG_x(s)$ and $\cT_x(s)$ as described in~\cref{suse:out-phase}.  In contrast to the previous section, here we do not generate the in-neighborhood of $y$. Let $\sigma= \sigma^+_x$. Call $\cE_\sigma$ the set of all tails at height $s$ in $\cT_s(x)$, by definition they are all unmatched. Call $\cF_\sigma$ the set of unmatched heads in $E^-_y$. Let $\omega$ be a complete pairing of half-edges chosen uniformly at random among all extensions of $\sigma$.

We need to slightly adjust the notion of nice path in \cref{def:nice}, by letting $t=s+1$. In particular, condition (3) is now void.
Recall that $\wt{P}^{t}(x,y)\le P^{t}(x,y)$ is the quenched probability of
a random walk following a nice path of length $t$ starting at $x$ and ending at $y$.
Conditional on $\sigma$ we have
\begin{equation}
    \wt{P}^{t}(x,y)
    =
    \sum_{e\in \cE_{\sigma}}
    \sum_{f\in \cF_{\sigma}}
    \widehat{\w}(e) \ind_{\widehat{\w}(e)\le n^{-1+2\gamma}} \ind_{\omega(e)=f}
    =
    \sum_{e\in \cE_\sigma}
    \sum_{f\in E^-_y}
    \widehat{\w}(e) \ind_{\widehat{\w}(e)\le n^{-1+2\gamma}} \ind_{f\in \cF_{\sigma}}\ind_{\omega(e)=f}.
\end{equation}
Define,
\begin{equation}\label{IIBC}
    A_{x}(\sigma)
    =
    \sum_{e \in \cE_\sigma} \widehat{\w}(e) \ind_{\widehat{\w}(e)\le n^{-1+\gamma}}
    \quad
    \text{and}
    \quad
    B_{x,y}(\sigma) = \abs{\cF_\sigma}.
\end{equation}
Choose $\delta > 0$ sufficiently small such that $
(1-\delta)^{3} > 1-\varepsilon$. Let
\begin{equation}\label{GJQL}
    \cY_{x,y}\coloneqq
    \left\{
        \sigma
        \,:\,
        A_{x}(\sigma)\ge 1-\delta, B_{x,y}(\sigma)\ge (1-\delta)
        d^{-}_{y}\
    \right\}
    .
\end{equation}
Then, for all $\sigma\in \cY_{x,y}$, we have
\begin{equation}
      \E\left[\wt{P}^{t}(x,y)  \cond  \sigma\right]
\ge  \frac{1}{m} A_{x}(\sigma) B_{x,y}(\sigma)
\ge
\frac{(1-\delta)^2 d^{-}_{y}}{m}
.
\end{equation}
Write
\begin{equation}
\cY\coloneqq\bigcap_ {x\in V_{\varepsilon}, y\in V(a)} \cY_{x,y}
.
\end{equation}
\begin{lemma}\label{lem:typ}
We have $\P\(\cY\)=1-o(1)$.
\end{lemma}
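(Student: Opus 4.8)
The plan is to handle separately the two requirements defining $\cY_{x,y}$ in \cref{GJQL}: the escaping-mass condition $A_x(\sigma)\ge 1-\delta$, which depends only on the out-neighborhood exploration producing $\cG_x(s)$, and the unmatched-heads condition $B_{x,y}(\sigma)\ge(1-\delta)d_y^-$. Accordingly I introduce the auxiliary events
\begin{equation*}
\cY^{(1)}=\bigcap_{x\in[n]}\big(\{x\notin V_\varepsilon\}\cup\{A_x(\sigma)\ge 1-\delta\}\big),\qquad
\cY^{(2)}=\bigcap_{x\in[n],\,y\in V(a)}\{B_{x,y}(\sigma)\ge(1-\delta)d_y^-\},
\end{equation*}
and observe, exactly as in the proof of \cref{lem:typ2}, that $\cY\supseteq \cY^{(1)}\cap\cY^{(2)}$; hence it suffices to prove $\P\big((\cY^{(1)})^c\big)=o(1)$ and $\P\big((\cY^{(2)})^c\big)=o(1)$.

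For $\cY^{(1)}$ I would argue as in the proof of \cref{prop:nice} and of \cref{lem:typ2}: $A_x(\sigma)$ is the $x$-rooted tree-confinement mass $p(x,s)$ of \cref{eq:bennett1} minus the mass carried by length-$s$ paths whose weight exceeds the truncation level, so it is bounded below by $1-\wt q(x)$, where $\wt q(x)$ is the probability of following a non-nice path. \cref{lem:bennett} (tree confinement up to time $s\le(1-\gamma)\tent$) and the law of large numbers \cref{LLN} (the truncation loss is $o_\P(1)$ uniformly) then give $\min_{x\in V_\varepsilon}A_x(\sigma)\ge 1-\delta$ \ac{whp}, which is precisely $\P\big((\cY^{(1)})^c\big)=o(1)$.

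The substance of the proof is $\cY^{(2)}$, which I would treat pair by pair. Fix $x\in[n]$ and $y\in V(a)$. By \cref{eq:kappax:s} together with the choices $s=(1-\gamma)\tent$ and $\hhigh=(1+\gamma)\hh$, the exploration of $\cG_x(s)$ pairs at most $\kappa_x\le 2n^{1-\gamma^2}$ heads, each drawn uniformly from the at least $m-\kappa_x\ge m/2$ currently unmatched heads. Revealing these pairings one at a time, the probability that a given step matches a still-unmatched head of $y$ is at most $d_y^-/(m-\kappa_x)\le 2d_y^-/m$, so the number $M$ of heads of $y$ matched during the construction of $\cG_x(s)$ is stochastically dominated by $\Bin\big(2n^{1-\gamma^2},2d_y^-/m\big)$, whose mean is $O(n^{-\gamma^2}d_y^-)$ since $m=\Theta(n)$. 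A Chernoff bound for the binomial then yields, for $n$ large, $\P(M\ge\delta d_y^-)\le 2^{-\delta d_y^-}\le 2^{-\delta n^a}=o(n^{-2})$, using $d_y^->n^a$ and that $a>2\gamma>0$ is a fixed positive constant. Since $B_{x,y}(\sigma)=d_y^--M$, a union bound over the at most $n^2$ pairs $(x,y)$ gives $\P\big((\cY^{(2)})^c\big)=o(1)$, and combining with the bound on $\cY^{(1)}$ completes the proof.

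The only genuinely delicate point is the $\cY^{(2)}$ step: the per-pair failure probability must beat a union bound over $\Theta(n^2)$ pairs even when $y$ has the smallest admissible in-degree, $d_y^-$ barely above $n^{2\gamma}$. This is what forces a Chernoff rather than a second-moment estimate, and it goes through precisely because $n^a$ diverges faster than any power of $\log n$. The $\cY^{(1)}$ step is not new — it is the reduction to \cref{lem:bennett} and \cref{LLN} already carried out for \cref{prop:nice} and \cref{lem:typ2}.
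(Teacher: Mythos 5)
Your proposal is correct and follows essentially the same route as the paper: the same decomposition into $\cY^{(1)}$ and $\cY^{(2)}$, the reduction of $\cY^{(1)}$ to the bound $A_x(\sigma)\ge 1-\wt q(x)$ and \cref{prop:nice}, and for $\cY^{(2)}$ the stochastic domination of the number of consumed heads of $y$ by a binomial with mean $o(d_y^-)$, followed by a Chernoff bound and a union bound over the $O(n^2)$ pairs. The only cosmetic differences (writing $\cY\supseteq\cY^{(1)}\cap\cY^{(2)}$ instead of equality, and a base-$2$ rather than base-$\ee$ Chernoff exponent) do not affect the argument.
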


\begin{proof}[Proof of~\cref{prop:LB1}]
Define
\begin{equation}
\cZ_{x,y} \coloneqq \left\{P^{t}(x,y) \ge (1-\varepsilon)\frac{d^{-}_{y}}{m}\right\}.
\end{equation}
Let $\sigma\in \cY_{x,y}$.
By definition, a nice path $\p$ has $\w(\p)\le n^{-1+2\gamma}$.
Therefore, swapping two pairings in $\omega$ which are not fixed by $\sigma$ can change
$\wt{P}^{t}(x,y)$ by at most $\|c\|_\infty  \le   n^{-1+2\gamma}$.

Applying Chatterjee's inequality given in \cref{eq:chatterjee}
and using $y\in V(a)$ with $a>2 \gamma$, we obtain
\begin{equation}\label{MEVJ}
\begin{aligned}
    \P\(
    \cZ_{x,y}^c
    \cond
    \sigma
    \)
    &
    \le
    \P\(
    \wt{P}^{t}(x,y)
    \le
    (1-\delta) \E\left[\wt{P}^{t}(x,y)\cond{} \sigma\right]
    \cond
    \sigma
    \)
    \\
    &
    \le 2\exp\left(-\frac{(1-\delta)^{2}(2+\delta)^{-1}\delta^2 d^{-}_{y}}{2\|c\|_\infty m}\right)
    \\
    &
    \le \exp \left(-\Theta(n^{a-2\gamma})\right)
    =
    o\(n^{-2} \).
\end{aligned}
\end{equation}
We then have
\begin{equation}\label{ZAZD}
    \begin{aligned}
    &
    \P\(
        \cup_{x \in V_{\varepsilon}, y \in V(a)} \cZ_{x,y}^c
    \)
    \le
    \P\(
        \cup_{x \in V_{\varepsilon}, y \in V(a)}
        (\cZ_{x,y}^c\cap \cY_{x,y})
    \)
    +
    \P\(\cY^{c}\)
    =
    o(1)
    ,
    \end{aligned}
\end{equation}
where the first term is bounded using a union bound and~\cref{MEVJ}, and the second term by \cref{lem:typ}.

\end{proof}

\begin{proof}[Proof of~\cref{lem:typ}]

Define the events
\begin{equation}\label{NNMQ}
    \begin{aligned}
        \cY^{(1)} &= \cap_{x\in [n]}\left(\{x \notin V_{\varepsilon}\} \cup \{A_{x}(\sigma)\ge 1-\delta\}\right),\\
        \cY^{(2)} &=\cap_{x\in [n]}\cap_{y \in V(a)} \{B_{x,y}(\sigma)\ge (1-\delta)d^{-}_{y}\},
    \end{aligned}
\end{equation}
and note that
\begin{equation}
    \cY =  \cY^{(1)}\cap \cY^{(2)}.
\end{equation}
Recall that $\wt{q}(x)$ is the probability of not following a nice path,
as defined as in \cref{eq:ptildexy2}.
Since the definition of nice path used in this section is less restrictive, we have $A_x(\sigma)\ge 1-\wt{q}(x)$. \cref{prop:nice} directly implies that $\P\(\cY^{(1)}\)=1-o(1)$.

Let us now show that $\P\(\cY^{(2)}\)=1-o(1)$. Fix $x\in [n]$ and $y \in V(a)$.
Recall that $\sigma$ has paired $\kappa_x\le n^{1-\delta'}$ tails (see \cref{eq:kappax:s}).
For each such tail,
the probability of pairing it to a head in $E^-_{y}$ is uniformly bounded from above by
$q\coloneqq \frac{d^{-}_{y}}{m-\kappa_x}$.
So the number of heads in $E_y^-\setminus \cF_{\sigma}$ is stochastically dominated by a Binomial random variable with parameters $\kappa_x$ and $q$.
Since $\kappa_x q= o(d^{-}_{y})$ and $y\in V(a)$,
Chernoff's inequality (e.g., Corollary 2.4 in~\cite{janson2011a}) implies,
for all $\delta>0$,
\begin{equation}
    \P\(B_{x,y}(\sigma)<(1-\delta)d^-_y\)= \P\(|E^-_y\setminus \cF_{\sigma}| > \delta d^{-}_{y}\) \le
    \exp\(-\delta d^{-}_{y}\) = o(n^{-2}).
\end{equation}
The desired bound follows from a union bound over $x\in [n]$ and $y\in V(a)$.
\end{proof}

\subsection{Lower bounds on stationary values}\label{sec:completeingLB}

The following result implies that lower bound on the access probabilities give lower bounds on the stationary values.
\begin{lemma}\label{lem:access_stationary}
Let $\varepsilon>0$, $K=K_n>0$, $t=t_n\in \N$ and $Y=Y_n\subset [n]$. Suppose the following holds \ac{whp}:
\begin{equation}
P^t(x,y)\ge K, \quad \forall x\in V_\varepsilon, \:  y\in Y.
\end{equation}
Then, the following holds \ac{whp}:
\begin{equation}
\pi(y)\ge (1-\varepsilon)K, \quad \forall y\in Y.
\end{equation}
\end{lemma}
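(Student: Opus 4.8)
The plan is to use the stationarity identity $\pi = \pi P^{t}$, which holds for every $t\in\N$, to transfer the lower bound on the access probabilities to a lower bound on $\pi$. Since $\pi(y)=\sum_{x\in[n]}\pi(x)P^{t}(x,y)\ge \sum_{x\in V_\varepsilon}\pi(x)P^{t}(x,y)$, on the event (of probability $1-o(1)$ by hypothesis) on which $P^{t}(x,y)\ge K$ for all $x\in V_\varepsilon$ and $y\in Y$, we get the deterministic bound $\pi(y)\ge K\,\pi(V_\varepsilon)$, simultaneously for all $y\in Y$.

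It then remains to show that $\pi(V_\varepsilon)\ge 1-\varepsilon$ \ac{whp}, equivalently $\pi([n]\setminus V_\varepsilon)=o_{\P}(1)$. This follows by combining \cref{lem:V_*}, which gives $\abs{[n]\setminus V_\varepsilon}\le n^{2/3}$ \ac{whp}, with \cref{prop:pi-small-set}: choosing $\delta\in(0,1/6)$ small enough that $n^{1-6\delta}\ge n^{2/3}$ (for instance $\delta=1/20$), \cref{prop:pi-small-set} gives that \ac{whp} $\pi(S)\le n^{-\delta/2}$ for every $S$ with $\abs{S}\le n^{1-6\delta}$. Applying this with $S=[n]\setminus V_\varepsilon$, on the intersection of these two high-probability events we obtain $\pi([n]\setminus V_\varepsilon)\le n^{-\delta/2}=o(1)$, hence $\pi(V_\varepsilon)\ge 1-o(1)\ge 1-\varepsilon$ for $n$ large enough.

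Intersecting the hypothesis event with the event $\{\pi(V_\varepsilon)\ge 1-\varepsilon\}$, both of which occur \ac{whp}, we conclude that \ac{whp} $\pi(y)\ge K\,\pi(V_\varepsilon)\ge (1-\varepsilon)K$ for all $y\in Y$, as claimed. There is no genuine obstacle here: the entire content is the observation that $\pi$ is $P^{t}$-invariant together with the fact, already established in \cref{lem:V_*,prop:pi-small-set}, that the exceptional set $[n]\setminus V_\varepsilon$ carries negligible stationary mass; the only point to keep track of is that the conclusion is a deterministic consequence once one is on the intersection of the two relevant high-probability events.
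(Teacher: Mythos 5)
Your proof is correct and follows essentially the same route as the paper: both use the invariance $\pi=\pi P^{t}$ restricted to $V_\varepsilon$, and then control $\pi([n]\setminus V_\varepsilon)$ by combining \cref{lem:V_*} with \cref{prop:pi-small-set} (the paper takes $\delta=1/18$, you take $\delta=1/20$; either choice makes $n^{1-6\delta}\ge n^{2/3}$). No gaps.
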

\begin{proof}
We may assume that $\varepsilon$ is sufficiently small with respect to the constant $\eta>0$ appearing in \cref{eq:2+eta}. By applying~\cref{lem:V_*} and~\cref{prop:pi-small-set} with $\delta=1/18$,
we have that \ac{whp}
\begin{equation}\label{eq:pi_*}
\pi_0 \coloneqq \sum_{v\in V_{\varepsilon}} \pi(v)
=
1 -\sum_{v\notin V_{\varepsilon}} \pi(v)
\ge
1- \max_{|S|\le n^{2/3}}  \pi(S) = 1-o(1).
\end{equation}
Using~\cref{eq:pi_*}, we can conclude that \ac{whp}, for every $y\in Y$
\begin{equation}\label{XODF}
\pi(y)
= \sum_{x\in [n]} \pi(x) P^{t}(x,y)
\ge
\sum_{x\in V_{\varepsilon}} \pi(x) P^{t}(x,y)
\ge
\pi_0
\min_{x\in V_{\varepsilon}} P^{t}(x,y)
\ge
(1-\varepsilon)K
.
\end{equation}
\end{proof}

\begin{proof}[Proof of~\cref{eq:lowb} in~\cref{th:main1}]
Again, we may assume that $\varepsilon$ is sufficiently small with respect to the constant $\eta>0$ appearing in \cref{eq:2+eta}.
We apply~\cref{prop:LB2} to $y\in [n]$ with $d^{-}_{y}=\Delta^-$. Let $h=h_\varepsilon$, $\gamma=\frac{\varepsilon}{80\log(\Delta^+)}$,  $s=(1-\gamma)\tent$ and $t=s+h+1$. Then, \ac{whp} and uniformly over $x\in V_\varepsilon$
\begin{equation}
P^{t}(x,y)\ge (1-\varepsilon)\frac{\Delta^-}{m}.
\end{equation}
By \cref{lem:access_stationary} with $Y=\{y\}$, we conclude that whp $\pi_{\max}\ge \pi(y)\ge (1-2\varepsilon)\frac{\Delta^-}{m}$. As $\varepsilon$ can be made arbitrarily small,~\cref{eq:lowb} holds.
\end{proof}

The following is a  direct consequence of \cref{prop:LB1,lem:access_stationary}.
\begin{corollary}\label{coro:lower_bound_large_degs}
Fix $\varepsilon,a>0$ and let $V(a)$ as in \cref{def-V(alpha)}. Then \ac{whp}, for every $y\in V(a)$ we have $\pi(y)\ge (1-\varepsilon)\frac{d^-_y}{m}$.
\end{corollary}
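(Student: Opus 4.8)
The plan is to combine \cref{prop:LB1} with \cref{lem:access_stationary}, exactly in the spirit of the proof of \cref{eq:lowb} but using access bounds for the whole set $V(a)$ rather than for a single vertex. First I would fix $a>0$ and only afterwards take $\varepsilon>0$ small: since $\Delta^+\le K$ is bounded, the parameter $\gamma(\varepsilon)=\frac{\varepsilon}{80\log\Delta^+}$ is of order $\varepsilon$, so for all sufficiently small $\varepsilon$ one has $a>2\gamma(\varepsilon)$, which is precisely the admissibility condition of \cref{prop:LB1}. That proposition then yields that \ac{whp}, with $t=(1-\gamma)\tent+1$,
\[
\min_{x\in V_\varepsilon}P^{t}(x,y)\ge (1-\varepsilon)\frac{d_y^-}{m},\qquad \forall y\in V(a).
\]

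Next I would feed this into \cref{lem:access_stationary} with $Y=V(a)$, $t$ as above, and access bound $K=K(y)\coloneqq(1-\varepsilon)d_y^-/m$. The statement of \cref{lem:access_stationary} is phrased for a deterministic constant $K$, but its proof uses nothing about $K$ beyond the pointwise access inequality: the only extra ingredient is $\pi_0\coloneqq\sum_{v\in V_\varepsilon}\pi(v)=1-o(1)$ \ac{whp}, which follows from \cref{lem:V_*} and \cref{prop:pi-small-set} and is entirely independent of the endpoint. Hence the same one-line computation
\[
\pi(y)=\sum_{x\in[n]}\pi(x)P^{t}(x,y)\ge \pi_0\min_{x\in V_\varepsilon}P^{t}(x,y)\ge (1-o(1))(1-\varepsilon)\frac{d_y^-}{m}\ge (1-2\varepsilon)\frac{d_y^-}{m}
\]
holds \ac{whp} simultaneously for all $y\in V(a)$, for $n$ large enough. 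Since $a$ was fixed first and $\varepsilon>0$ can be taken arbitrarily small, relabelling $2\varepsilon$ as $\varepsilon$ gives the corollary.

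I do not expect any genuine obstacle here: all of the probabilistic content is already in place, namely in \cref{prop:LB1} (the weighted out-neighborhood construction of \cref{suse:out-phase}, the nice-path bookkeeping, and Chatterjee's concentration inequality applied uniformly over $y\in V(a)$) and in the small-set and mixing estimates underlying \cref{lem:access_stationary}. The only mild points requiring care are the order of the quantifiers ($a$ must be fixed before $\varepsilon$ is taken small enough that $a>2\gamma(\varepsilon)$) and the harmless observation that the constant $K$ in the statement of \cref{lem:access_stationary} may be replaced throughout by an endpoint-dependent lower bound $K(y)$ without any change to its proof.
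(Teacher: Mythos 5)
Your proposal is correct and follows exactly the route the paper takes: the paper derives this corollary as a direct consequence of \cref{prop:LB1} and \cref{lem:access_stationary}, which is precisely your combination, and your remarks about fixing $a$ before choosing $\varepsilon$ small enough that $a>2\gamma(\varepsilon)$ and about the harmless replacement of the constant $K$ by the endpoint-dependent bound $K(y)=(1-\varepsilon)d_y^-/m$ are the right (implicit) readings of those statements. Nothing further is needed.
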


\section{Upper bounds}\label{sec:UB}

This section is devoted to the proof of the upper bound in \cref{th:main1}. The proof is based on the analysis of the \emph{annealed process} introduced in \cref{sec:randomwalk}.  More precisely, we will need to control the high moments
of the random distribution $\mu_t$ defined in \cref{eq:def-lambda2}
for  $t=\log^3(n)$. Thanks to \cref{coro:mixing}, the measure $\mu_t$ is a good approximation of the stationary distribution $\pi$, and this will give the desired result.

In what follows we will consider $\eta\in(0,1)$ satisfying \cref{cond:main},
$\varepsilon\in(0,\eta/6)$, $h=h_\varepsilon$ as in \cref{def-hslash} and $\cG=\cG^+(h)$ as in \cref{eq:defG}.
\begin{lemma}\label{lemmaM}
    For any constant $C>0$, taking $t=\log^3(n)$ and $K=C\log(n)$, one has
    \begin{equation}\label{cond3}
\E\[ \ind_{\cG}\(\mu_t(y)\)^K\]\le \left(\frac{10 K \Delta^-}{n} \right)^K,
    \end{equation}
 for all $y\in[n]$ and all $n$ sufficiently large.
 \end{lemma}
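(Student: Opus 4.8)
The natural tool here is the annealed $K$-walk construction from \cref{sec:randomwalk}. By \cref{eq:def-annealing}, applied with $\mu$ the uniform distribution on $[n]$ and $A=\{y\}$, one has $\E[(\mu_t(y))^K]=\P^{{\rm an},K}_{\rm unif}\(X_t^{(k)}=y,\ \forall k\in[K]\)$; and since $\cG=\cG^+(h)$ is an event on the realized digraph — which in this construction is obtained by completing the partial matching revealed by the $K$ walks with a uniform matching — the same identity yields $\E[\ind_\cG(\mu_t(y))^K]=\P^{{\rm an},K}_{\rm unif}\(\{X_t^{(k)}=y\ \forall k\}\cap\cG\)$. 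I would then replace $\ind_\cG$ by $\ind_{\wt\cG}$, where $\wt\cG\supseteq\cG$ is the event, measurable with respect to the $K$ walks alone, that every $\cB_x^+(h)$ inside the revealed multigraph $H$ (the union of the $K$ trajectories, each of length $t$, so $|E(H)|\le Kt=O(\log^4 n)$) has tree-excess at most $1$. Writing $\wt\cG=\cap_{j=1}^K\wt\cG_j$ with $\wt\cG_j$ depending only on the first $j$ walks, the estimate will be obtained by peeling the walks one at a time.

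The heart of the argument is a per-walk bound: conditionally on the trajectories of the first $j-1$ walks (all ending at $y$), on the partial environment $\sigma_{j-1}$ they reveal, and on the event $\wt\cG_{j-1}$ that the revealed multigraph $H_{j-1}$ is already tree-like at scale $h$, the probability that walk $j$ ends at $y$ with the final environment still in $\wt\cG$ is at most $10K\Delta^-/m$. To prove this I would dissect the trajectory of walk $j$ according to the last step, say $t-1-k$, at which it draws a previously unmatched head, calling $v=X_{t-k}^{(j)}$ the vertex carrying that head, so that steps $t-k,\dots,t-1$ all traverse already-matched half-edges. If $k=0$ then $v=y$, and since the uniform fresh head lands on one of the at most $d_y^-\le\Delta^-$ heads of $y$ among at least $m-Kt$ unmatched ones, this case contributes at most $\Delta^-/(m-Kt)$. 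If $k\ge1$, the vertex $v$ must admit a directed path of length exactly $k$ to $y$ inside the already-revealed edge set, and walk $j$ must then follow such a path; each path of length $k$ has weight at most $2^{-k}$ because all out-degrees are $\ge 2$, so this scenario has probability at most $2^{-k}(m-Kt)^{-1}\sum_\p d^-_{\,{\rm start}(\p)}$, the sum running over revealed directed paths of length $k$ ending at $y$.

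This is where $\cG$ does its work. On $\wt\cG$ the revealed multigraph is tree-like at every scale up to $h$ (at most one extra edge per $\cB_x^+(h)$), and it has only polylogarithmically many edges; combining this with the fact that each of the $\le j-1$ previously revealed trajectories already ends at $y$, one can control both the number and the $2^{-k}$-weighted total in-degree of the revealed length-$k$ paths ending at $y$, showing that the $k\ge1$ contributions sum (over $k\ge1$) to at most $C'K\Delta^-/(m-Kt)$ for an absolute constant $C'$. Adding the two cases and using $m\ge 2n$ (since $\min_v d_v^+\ge 2$) and $Kt=o(n)$, the per-walk probability is at most $10K\Delta^-/n$ for $n$ large. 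Peeling off the $K$ walks one by one — which is legitimate because $\wt\cG=\cap_j\wt\cG_j$ with $\wt\cG_j$ adapted to the first $j$ walks — then gives $\P^{{\rm an},K}_{\rm unif}\(\{X_t^{(k)}=y\ \forall k\}\cap\wt\cG\)\le(10K\Delta^-/n)^K$, and hence \cref{cond3}.

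The main obstacle I anticipate is precisely making the estimate of the third paragraph quantitative: extracting from the tree-excess bound built into $\cG$, together with the polylogarithmic size of the revealed multigraph, a bound on the number and weights of the directed paths along which walk $j$ can return to $y$ after its last fresh head that is sharp enough for the geometric decay $2^{-k}$ to win and for the sum over $k$ to contribute only a bounded factor (and not a factor of order $t$). Minor points also needing care are trajectories that visit $y$ strictly before time $t$, the exact half-edge counts entering the fresh-head probabilities, and the passage from ``$\cG$ holds on $G$'' to the revealed-multigraph event $\wt\cG$ used in the peeling.
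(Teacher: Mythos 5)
Your overall strategy is the same as the paper's: bound $\E[\ind_{\cG}\mu_t(y)^K]$ by the annealed law of $K$ walks, replace $\cG$ by a compatibility event measurable with respect to the revealed matching, peel the walks one at a time, and for each walk decompose according to the last time a fresh head is drawn, using the tree-excess bound to limit the number of revealed paths to $y$ and the minimum out-degree $2$ to get the weight $2^{-k}$. However, the step you yourself flag as the anticipated obstacle is a genuine gap, and the route you propose for closing it does not work. Your claim is that the tree-excess event, the polylogarithmic size of the revealed multigraph, and the fact that the previous walks end at $y$ suffice to bound the $2^{-k}$-weighted contribution of revealed length-$k$ paths into $y$ by $C'K\Delta^-/(m-Kt)$. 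Deterministically this is false: the number of revealed in-neighbours of $y$ at a given distance $j\le h$ is only bounded by $\min\{Kt,\Delta^-\}$ (for instance, level $1$ is bounded by the number of heads of $y$ matched so far, which can a priori be as large as $d_y^-=\Delta^-$, possibly polynomial in $n$), and the previous walks ending at $y$ only forces this count to be \emph{at least} of order $\ell-1$, it gives no upper bound. With only the deterministic bound $|V(D)|\le Kt+1$ your per-walk estimate becomes $O(Kt\,\Delta^-/n)$, i.e.\ you lose a factor $t=\log^3(n)$, which would degrade the conclusion of \cref{th:main1} from $C\log(n)\Delta^-/m$ to $C\log^4(n)\Delta^-/m$.

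The missing ingredient in the paper's proof is an additional high-probability event carried along in the peeling: $H_\ell=\{\forall j\le h:\ |\partial\cB_y^{-,D_\ell}(j)|\le 2K\}$, i.e.\ each level of the \emph{revealed} in-neighbourhood of $y$ has at most $2K$ vertices. One conditions on $B_{\ell-1}\cap F_{\ell-1}\cap H_{\ell-1}$ rather than on $B_{\ell-1}\cap F_{\ell-1}$ alone; then the entry of the $\ell$-th walk within the last $h$ steps contributes at most $\frac{\Delta^-}{n}(2Ka+1)2^{-a+1}$ at remaining time $a$, which sums over $a$ to $O(K\Delta^-/n)$, while entries more than $h$ steps before time $t$ are handled by chaining $q_D(z,y,\cdot)\le 2^{-h+1}$ (note that for $k>h$ your bound of at most two paths of length $k$ fails, but the chained bound suffices). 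Finally, $\P^{\mathrm{an}}(H_K^c)$ must be shown to be $o\big((\log(n)\Delta^-/n)^K\big)$ by binomial estimates (each of the $Kt$ steps hits a relevant head with probability at most $2K\Delta^-/(m-Kt)$), and this term is added to the product of conditional probabilities. Without introducing such an event and its tail estimate, the peeling argument as you describe it cannot reach the bound $(10K\Delta^-/n)^K$.
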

\begin{proof}
    Consider the non-Markovian process of $K$ annealed walks, each of length $t$, starting
    at independent uniformly random vertices. Let $\P^{\mathrm{an}}=\P^{\mathrm{an},K}_{\rm unif}$ denote their joint law as defined by \cref{eq:annx} with $\mu$ uniform over $[n]$.  Call $D_{\ell}$ the digraph generated by the first $\ell$ walks and, for $s\ge 1$, we define  $D_{\ell,s}$ as the union of $D_{\ell-1}$ and the edges generated by $X_0^{(\ell)},\dots,X_{s-1}^{(\ell)}$. Note that $D_{\ell-1}=D_{\ell,1}$. We also write $D_{\ell,0}=D_{\ell-1}$. 

    As usual, with a slight abuse of notation, 
    we identify a digraph $G'\subset G$ with the partial matching of heads and tails that defines it.
    Recall the definition of  $\cP(x,y,s,G')$ in \cref{suse:local}.
    Let $\text{\rm dist}_{G'}(x,y)$ be the length of the shortest path starting at $x$ and ending at
    $y$ in $G'$ and $\cB^{-,G'}_y(h)$ ($\partial\cB^{-,G'}_y(h)$) the set of vertices
    $x\in[n]$ such that $\text{\rm dist}_{G'}(x,y)\le h$ ($\text{\rm dist}_{G'}(x,y)=h$).

    Notice that, by definition,
    \begin{equation}\label{eq:Dk1}
    D_{\ell-1}\subset D_{\ell,s}\subset D_{\ell},\qquad\forall \ell\in\{1,\dots,K \},\: s\in\{1,\dots, t \},
    \end{equation}
    where we used the convention $D_0=\emptyset$. Moreover, for all $\ell\in\{1,\dots,K \}$ we have $|D_{\ell }|\le \ell t$.
    We say that $D_\ell$ is \emph{compatible} with an event $\cA$,
    denoted by $D_\ell\sim \cA$,
    if there exists a realization of the environment $\omega$ that contains $D_\ell$ and such that
    $\omega\in \cA$.
    In particular, $D_\ell \sim \cG$ if all out-neighborhoods of depth $h$ have tree-excess at most
    $1$ in $D_\ell$.
    Fix $y\in[n]$ and let us consider the following events which implicitly depend on $y$: for every
    $\ell\in\{1,\dots, K\}$
    \begin{equation}
        \begin{aligned}
            B_\ell &\coloneqq \{X_t^{(1)}=\cdots=X_t^{(\ell)}=y \},\\
            F_\ell&\coloneqq\{D_\ell\sim \cG \},\\
            H_\ell&\coloneqq\{\forall j\le h,\: |\partial\cB^{-,D_{\ell}}_y(j)|\le 2K \}.
        \end{aligned}
    \end{equation}
    Notice that
    \begin{equation}\label{fin}
    \E[\ind_{\cG}\mu_t(y)^K] \le\P^{\mathrm{an}}(B_K\cap F_K\cap H_K)+\P(H_K^c).
    \end{equation}
    Moreover, letting $B_0=F_0=H_0$ be sure events and using the monotonicity of the events defined above,
    \begin{equation}\label{cond111}
    \P^{\mathrm{an}}(B_K\cap F_K\cap H_K)=\prod_{\ell=1}^K \P^{\mathrm{an}}\left(B_\ell\cap F_\ell\cap H_{\ell} \cond B_{\ell-1}\cap F_{\ell-1}\cap H_{\ell-1}\right).
    \end{equation}
    Therefore, in order to prove \cref{cond3} it suffices to show that
    \begin{equation}\label{cond1}
    \P^{\mathrm{an}}\left(B_\ell\cap F_\ell\cap H_\ell \cond B_{\ell-1}\cap F_{\ell-1}\cap H_{\ell-1} \right)\le\frac{9K \Delta^-}{n} ,\qquad\forall \ell\in\{1,\dots, K\},
    \end{equation}
    and, moreover,
    \begin{equation}\label{cond11}
    \P^{\mathrm{an}}(H_K^c)=o\(\(\frac{\log(n) \Delta^-}{n}\)^K\).
    \end{equation}
    Plugging \cref{cond1,cond11,cond111} into \cref{fin} we obtain \cref{cond3}.

    We start by proving \cref{cond11}. For $j\in\{1,\dots,h \}$, set
    \begin{equation}H_{K,j}=\{|\partial\cB^{-,D_{K}}_y(j)|\le 2K \}, \end{equation}
    so we may write
        \begin{equation}\label{cond5}
        \P^{\mathrm{an}}(H_K^c)= \sum_{j\le h}\P^{\mathrm{an}}(H_{K,j}^c\cap (\cap_{i<j} H_{K,i})).
        \end{equation}
    Notice that $H_{K,1}^c$ is the event
    that the $K$ walks have matched more than $2K$ heads of $y$. Hence, for all $n$ large enough,
    \begin{equation}\label{eq:boundH1}
    \P^{\mathrm{an}}(\hh^c_{K,1})\le \P\(\Bin\(Kt,\frac{\Delta^-}{m-Kt}\)>2K \)\le \(\frac{Kt\Delta^-}{n}\)^{2K} \le \(\frac{\Delta^-\log(n)}{n}\)^{\tfrac{3}{2}K}.
    \end{equation}
For $j\in\{2,\dots, K\}$, under the event $\cap_{i<j}H_{K,i}$, there are at most $2K\Delta^-$ heads that can be matched to violate $H_{K,j}$, and therefore
    \begin{equation}\label{eq:boundHh}
        \begin{aligned}
    \P^{\mathrm{an}}(H_{K,j}^c \cap (\cap_{i<j}H_{K,i}))
    &\le  \P\(\Bin\(Kt,\frac{2K\Delta^-}{m-Kt}\)>2K \) \\
    &\le \(\frac{2K^2 t\Delta^-}{n}\)^{2K}
    \le \(\frac{\Delta^-\log(n)}{n}\)^{\tfrac{3}{2}K}.
        \end{aligned}
    \end{equation}
        Plugging \cref{eq:boundH1} and \cref{eq:boundHh} into \cref{cond5}, we obtain \cref{cond11}.

    We now turn to the proof of \cref{cond1}.
    We fix a realization $D_{\ell-1}$ of the partial matching generated by the first $\ell-1$ walks,
    and assume that it satisfies $B_{\ell-1}\cap F_{\ell-1}\cap H_{\ell-1}$.
    Let $V(D_{\ell,s})$ be the set of vertices previously visited by the other walks or by the
    $\ell$-th walk itself up to time $s-1$ for $\ell \ge 1$, together with $y$.
    For the event $B_\ell$ to occur, the $\ell$-th walk must enter at some time $s\in\{0,\dots,t\}$
    and then traverse only edges in $D_{\ell,s}$ from time $s$ up to time $t$.
    If $s\in\{1,\dots,t\}$,
    the event that the $\ell$-th walk enters in $D_{\ell,s}$ at time $s$ in a given vertex 
    $z\in V(D_{\ell,s})$ has  probability bounded above by $d_z^-/(m-t\ell)\le d_z^-/n$,
    uniformly in the realization of $D_{\ell,s}$.
    On the other hand, the probability that the $\ell$-th walk enters in $D_{\ell-1}$ at time $s=0$ in $z\in D_{\ell-1}$ is $1/n$.

    Given $D_{\ell,s}=D$ and $z\in V(D)$, let $q_D(z,z',r)$ denote the probability that a walk started
    at $z$ at time $0$ arrives in $z'$ at time $r$ by traversing only edges in $ D$.
    Note that $ q_D(z,z',r)=0$ if $z'\notin V(D)$, $q_{D}(z,z,0)=1$, and that
    \begin{equation}
q_D(z,z',r)\le     2^{-r}|\cP\(z,z',r,D\)|,
    \end{equation}
    since the out-degrees are at least $2$. In conclusion, we can bound uniformly in $D_{\ell-1}$
        \begin{equation}\label{eq:boundan2}
         \P^{\mathrm{an}}(B_\ell\cap F_\ell\cap H_\ell\:\big\rvert \:D_{\ell-1}  )
        \le \sum_{s=0}^{t}\max_{D\in \cA_\ell(s)} \sum_{z\in V(D)}\frac{d^-_z}{n}\,q_D(z,y,t-s),
        \end{equation}
with the convention that $d_z^-$ must be replaced by $\max\{d^-_z,1\}$ when $s=0$, and where we define the set $\cA_\ell(s)$ of all possible realizations $D$ of $D_{\ell,s}$ such that
    \begin{equation}\label{eq:D}
\P^{\rm an}(D_{\ell,s}=D,\:B_{\ell}\cap F_\ell\cap H_\ell\ | \ D_{\ell-1})>0.
\end{equation}

    We split the interval $s\in\{0,\dots,t\}$
    into two parts:
    \begin{equation}I_1=\{0,\dots, t-h\},\qquad I_2=\{ t-h+1,\dots, t\}.\end{equation}
    Observe that if $s\in  I_1$, for every realization $D$ of $D_{\ell,s}$:
    \begin{equation}\label{eq:qdest}
q_D(z,y,t-s) = \sum_{z'\in V(D)} q_D(z,z',t-s-h)q_D(z',y,h)\le \max_{z'\in V(D)}2^{-h}|\cP(z',y,h,D)|.
    \end{equation}
Thanks to the event $F_\ell$,
we know that $|\cP(z',y,h,D)|\le 2$ for all $z'\in V(D)$,
and $D\in\cA_\ell(s)$.
Moreover, $|V(D)|\le (1+K t)$, and therefore
    \begin{equation}\label{eq:qzzz}
        \begin{aligned}
            &\sum_{s\in I_1}\max_{D\in \cA_\ell(s)} \sum_{z\in V(D)}\frac{d^-_z}{n}\,q_D(z,y,t-s) \\ 
            &\qquad  \le\sum_{s\in I_1}\max_{D\in \cA_\ell(s)}\frac{|V(D)|\Delta^-}{n}2^{-h+1}
            \le |I_1|
            \frac{(1+K t)\Delta^-}{n}\,2^{-h+1}=o\(\frac{\Delta^-}{n}\).
        \end{aligned}
    \end{equation}

    We now turn to bound the sum in \cref{eq:boundan2} for $s\in I_2$. Notice that
    \begin{equation}\label{eq:sumzvd}
        \sum_{s\in I_2}\max_{D\in\cA_\ell(s)}\sum_{z\in V(D)}\frac{d^-_z}{n}q_D(z,y,t-s) =
        \sum_{a=0}^{h-1}\max_{D\in\cA_\ell(t-a)} \sum_{j=0}^{a}
        \sum_{z\in\partial \cB_y^{-,D}(j)}
        \frac{d^-_z}{n}\,q_D(z,y,a).
    \end{equation}
    As the event $F_\ell$ holds, if $D\in\cA_\ell( t-a)$,  if $a\leq h$, we may estimate
    \begin{equation}\label{cond44}
        q_D(z,y,a)\le 2^{-a}|\cP(z,y,a,D)|\le 2^{-a+1}.
    \end{equation}
    Therefore, using that $H_\ell$ holds and \cref{cond44}, for all $D\in\cA_\ell(t-a)$, it follows that
    \begin{equation}\label{cond4}
        \sum_{j=0}^{a}
        \sum_{z\in\partial \cB_y^{-,D}(j)}\frac{d^-_z}{n}\,q_D(z,y,a)\le \frac{\Delta^-}{n}(2Ka+1)2^{-a+1}.
    \end{equation}
    Using \cref{cond4} we see that \cref{eq:sumzvd} is bounded above by
    $8\frac{\Delta^-}{n}K$.
    Recalling \cref{eq:boundan2,eq:qzzz}, we conclude the
    validity of \cref{cond1}.
\end{proof}

\begin{proof}[Proof of the upper bound in \cref{th:main1}] We show how the desired bound follows by the moment estimates in \cref{lemmaM} and \cref{coro:mixing}. Fix $C>0$ and consider
    \begin{equation}\label{617}
        \begin{aligned}
    \P\(\max_{y\in [n]}\mu_t(y)> \frac{20\: C\log(n) \Delta^-}{n}\)&\le\P\(\max_{y\in [n]}\mu_t(y)\ind_{\cG}> \frac{20\: C\log(n) \Delta^-}{n}\)+\P(\cG^c)\\
    &\le n\max_{y\in [n]}\P\(\mu_t(y)\ind_{\cG}> \frac{20\: C\log(n) \Delta^-}{n}\)+o(1),
        \end{aligned}
    \end{equation}
    where the second inequality follows by a union bound over $y\in[n]$ and by \cref{lemmaG}. By Markov's inequality, for $K=C\log(n)$, and using \cref{lemmaM},
    \begin{equation}\label{618}
    \P\(\mu_t(y)\ind_{\cG}> \frac{20 K \Delta^-}{n}\)\le
 \frac{\E[\ind_{\cG}\mu_t(y)^K]}{\(\frac{20 K \Delta^-}{n}\)^K}\le 2^{-K}.
    \end{equation}
    Choosing $C>1/\log(2)$ we conclude that the probability in \cref{618} is $o(n^{-1})$.
    Since we may take $C$ such that  $20C<29$,
    plugging \cref{618} into \cref{617} we obtain, \ac{whp}
    \begin{equation}\label{eq:finn}
        \max_{y\in[n]} \mu_t(y)< \frac{29 \log(n) \Delta^-}{n}.
    \end{equation}

    Define the event $\cE=\{\forall y\in[n],\; 
	\pi(y) \leq \mu_t(y)+ 2\ee^{-\log^{3/2}(n)}    
    \}$,
    with $t=\log^3(n)$,  and observe that
    \begin{equation}
        \begin{aligned}
            \P\(n\pi_{\max}> 30 \log(n)\Delta^- \)&\le \P\(n\pi_{\max}>  30\log(n)\Delta^-;\:\cE\)+\P(\cE^c)
                                                \\&\le \P\(\max_{y\in[n]} \mu_t(y)>  \frac{30\log(n)\Delta^-}{n}-2\ee^{-\log^{3/2}(n)} \) + \P(\cE^c)\\
            &\le  \P\(\max_{y\in[n]} \mu_t(y)>\frac{29\log(n)\Delta^-}{n}\) + \P(\cE^c).
        \end{aligned}
    \end{equation}
    \cref{coro:mixing} implies $\P(\cE^c)=o(1)$ and the desired conclusion follows from \cref{eq:finn}.
\end{proof}

\section{Power-law behavior: Proof of \texorpdfstring{\cref{th:main2}}{Theorem~\ref{th:main2}}}\label{sec:PL}

In this section we prove \cref{th:main2}.
Recall the definition of $\phi(k)$ in \cref{nt} as the proportion of vertices having in-degree $k$.
The lower bound in \cref{hppsi1} is an immediate corollary of the results in \cref{sec:completeingLB}.
Indeed, from \cref{coro:lower_bound_large_degs} 
it follows that \ac{whp} for all $y\in[n]$ such that $d_y^->n^a$,
one has {$n\pi(y)>\frac{n^a}{2\pl d\pr}$,
where $\pl d\pr$ is the average degree}.
Therefore, for all $a\in(0,1/\kappa)$,
\begin{equation}
\psi(n^a,\infty)\geq \phi(2\pl d\pr n^a,\infty)\geq n^{-a\kappa -\varepsilon}\,,
\end{equation}
for all $\varepsilon >0$ by the assumed power-law behavior of the degree sequence.

The rest of this section is concerned with the proof of the upper bound in \cref{th:main2}. As announced in \cref{rem:general_ubtail}, we actually prove a slightly more general result; see  \cref{th:klight} below.

\begin{definition}\label{def:klight}
A bi-degree sequence is \emph{$\kappa$-light} if for all $\varepsilon>0$, for all $a>0$ we have
            \begin{equation}\label{eq:size_biased_tail}
            \sum_{k>n^a} k\phi(k)\le n^{-a(\kappa-1)+\varepsilon}.
            \end{equation}
\end{definition}
\begin{proposition}\label{pr:hp4}
    If $\bfd_n$ satisfies~\cref{cond:main}, then $\bfd_n$ is $(2+\eta)$-light, where $\eta>0$ is such that \cref{eq:2+eta} holds.
    Moreover, if $\bfd_n$ has power-law behavior with {index} $\kappa>2$ as in \cref{hp},  then $\bfd_n$ is $\kappa$-light.
\end{proposition}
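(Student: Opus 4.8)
The plan is to reduce both statements to a single estimate on a partial sum of the in-degrees. Since $\phi(k)=\tfrac1n|\{v\in[n]:d_v^-=k\}|$, for every $a>0$ one has $\sum_{k>n^a}k\,\phi(k)=\tfrac1n\sum_{v:\,d_v^->n^a}d_v^-$, so in both cases it is enough to bound this quantity.

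For the first claim I would use a one-line truncation argument. On the set $\{d_v^->n^a\}$ we may write $d_v^-=(d_v^-)^{2+\eta}\,(d_v^-)^{-(1+\eta)}\le (d_v^-)^{2+\eta}\,n^{-a(1+\eta)}$, so that
\begin{equation*}
\frac1n\sum_{v:\,d_v^->n^a}d_v^-\le \frac{n^{-a(1+\eta)}}{n}\sum_{v\in[n]}(d_v^-)^{2+\eta}\le C\,n^{-a(1+\eta)},
\end{equation*}
using \cref{eq:2+eta}. Since $(2+\eta)-1=1+\eta$, for $n$ large this is at most $n^{-a(\kappa_0-1)+\varepsilon}$ with $\kappa_0=2+\eta$, which is precisely $(2+\eta)$-lightness. (One may assume $\eta<1$ without loss of generality, since bounded $(2+\eta)$-moment implies bounded $(2+\eta')$-moment for $\eta'<\eta$.)

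For the second claim I would decompose the tail dyadically. For $j\ge0$ let $\cK_j=\{k\in\N:\,2^jn^a<k\le 2^{j+1}n^a\}$; the blocks $\cK_j$ cover $\{k>n^a\}$, on $\cK_j$ one has $k\le 2^{j+1}n^a$, and $\sum_{k\in\cK_j}\phi(k)=\phi(2^jn^a,\infty)-\phi(2^{j+1}n^a,\infty)\le \phi(2^jn^a,\infty)$, whence
\begin{equation*}
\sum_{k>n^a}k\,\phi(k)\le \sum_{j\ge0}2^{j+1}n^a\,\phi(2^jn^a,\infty).
\end{equation*}
I would then apply the power-law upper bound, in the form recorded in the remark after \cref{def:PL} (namely $\phi(n^b,\infty)\le n^{-b\kappa+\varepsilon'}$ for every $\varepsilon'>0$, every $b>0$, and $n$ large), with $n^b=2^jn^a$, which gives $\phi(2^jn^a,\infty)\le n^{-a\kappa+\varepsilon'}2^{-j\kappa}$. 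Substituting,
\begin{equation*}
\sum_{k>n^a}k\,\phi(k)\le 2\,n^{a(1-\kappa)+\varepsilon'}\sum_{j\ge0}2^{j(1-\kappa)}=C_\kappa\,n^{-a(\kappa-1)+\varepsilon'},
\end{equation*}
with $C_\kappa<\infty$ since $\kappa>2>1$. Taking $\varepsilon'=\varepsilon/2$ and absorbing $C_\kappa$ into $n^{\varepsilon/2}$ for $n$ large yields $\kappa$-lightness. Note that this second argument uses only the power-law upper tail, not \cref{eq:2+eta}.

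The only delicate point is the uniformity of the power-law bound over the exponents $b_j=a+j\tfrac{\log2}{\log n}$, which depend weakly on $n$; this is not a genuine obstacle, since the reformulation in the remark after \cref{def:PL} is uniform in $b$ once $n$ is large, and in any event only $O(\log n)$ of the blocks $\cK_j$ are nonempty (for instance because $\Delta^-=O(n^{1/2})$ by \cref{lem:s}, or directly because $\phi(n^{1/\kappa+\varepsilon'},\infty)=0$), so the tail of the geometric series contributes nothing. All remaining steps are elementary, so I do not expect any real difficulty in this proof.
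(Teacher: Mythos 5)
Your proof is correct. The first claim is handled exactly as in the paper: the truncation $k\le k^{2+\eta}\,n^{-a(1+\eta)}$ on $\{k>n^a\}$ combined with \cref{eq:2+eta} is precisely the paper's one-line argument. For the second claim the underlying idea is also the same — split the size-biased tail $\sum_{k>n^a}k\,\phi(k)$ into multiplicative blocks and apply the power-law upper bound at the left endpoint of each block — but the block structure differs: you use dyadic blocks $(2^jn^a,2^{j+1}n^a]$, so that $O(\log n)$ thresholds with $n$-dependent exponents $a+j\tfrac{\log 2}{\log n}$ appear, whereas the paper uses $O(1/\varepsilon)$ blocks of polynomial width $n^{\varepsilon}$, so that only the finitely many fixed exponents $a+r\varepsilon$, $r\le\ceil{1/\varepsilon}$, are ever invoked and the question of uniformity of \cref{hp} in the exponent never arises. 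You correctly single out that uniformity as the only delicate point of your variant; it does hold — by monotonicity of $t\mapsto\phi(t,\infty)$, a fixed grid of exponents with spacing $\varepsilon/(2\kappa)$, and $\phi(n^{1/\kappa+\varepsilon'},\infty)=0$ for the top range — but note that this grid argument is essentially the paper's coarser decomposition in disguise, so the two proofs differ only in bookkeeping: the paper's wide blocks lose a harmless extra $n^{\varepsilon}$ per block but avoid any uniformity discussion, while your dyadic blocks yield the cleaner geometric series $\sum_{j\ge0}2^{j(1-\kappa)}<\infty$ at the cost of that one extra observation.
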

\begin{proof}
For any $a>0$ and using the bounded $(2+\eta)$-moment we have
        \begin{equation}
        \sum_{k>n^a} k \phi(k) \le  n^{-a(1+\eta)} \sum_{k>n^a}k^{2+\eta}\phi(k)= O(n^{-a(1+\eta)}).
        \end{equation}
This proves the first assertion.

To prove the second one, for any $\varepsilon>0$ and using~\cref{hp}, we have
        \begin{equation}
        \sum_{k\ge \floor{n^a}} k \phi(k) =
        \sum_{r=0}^{\ceil{1/\varepsilon}} \sum_{k= \floor{n^{a+r\varepsilon}}}^ {\ceil{n^{a+(r+1)\varepsilon}}} k \phi(k)
        \le \sum_{r=0}^{\ceil{1/\varepsilon}} \ceil{n^{a+(r+1)\varepsilon}} n^{-(a+r\varepsilon)\kappa+\varepsilon} =O(n^{-a(\kappa-1)+2\varepsilon}).
        \end{equation}
Since $\varepsilon$ can be arbitrarily small, this implies the claim.

\end{proof}
From the previous facts, we see that the upper bound in \cref{th:main2} and the claim in \cref{rem:general_ubtail} both follow from the next result.

\begin{theorem}\label{th:klight}
    Suppose $\bfd_n$ satisfies \cref{cond:main} and assume that $\bfd_n$ is $\kappa$-light for some $\kappa>2$.
    Then for all $\varepsilon>0$, \ac{whp} for all $a>0$,
    \begin{equation}\label{eq:psiklight}
        \psi(n^a,\infty)\le n^{-a\kappa+\varepsilon}.
    \end{equation}
\end{theorem}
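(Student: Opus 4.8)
The plan is to refine the moment estimate underlying the upper bound in \cref{th:main1}, replacing the crude global bound $\Delta^-$ on in-degrees by the in-degrees actually seen near the target vertex, and to feed the $\kappa$-light hypothesis in through the size-biased tail $\sum_{k>n^{a}}k\phi(k)\le n^{-a(\kappa-1)+\varepsilon}$.

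\emph{Reductions.} Fix $\varepsilon>0$. Since $t\mapsto\psi(n^t,\infty)$ is non-increasing, it is enough to prove, for each $a$ in the finite grid $\{j\varepsilon/(2\kappa):1\le j\le J\}$ with $J\varepsilon/(2\kappa)\ge 1/2$, that whp $\psi(n^a,\infty)\le n^{-a\kappa+\varepsilon/2}$, and then to round an arbitrary $a>0$ down to the nearest grid point (losing at most $\varepsilon/2$ in the exponent since grid spacing times $\kappa$ equals $\varepsilon/2$). For $a\le\varepsilon/\kappa$ the bound is trivial because $\psi\le1$; for $a\ge1/2$ it is trivial because, by \cref{th:main1} and \cref{lem:s}, whp $n\pi_{\max}=O(\log n\cdot n^{1/2-\eta/6})=o(n^{a})$, so $\psi(n^{a},\infty)=0$. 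Hence I may fix $a\in(0,1/2)$. Moreover, by \cref{coro:mixing} with $t=\log^3n$ we have whp $n\pi(y)\le n\mu_t(y)+o(1)$ for all $y$, so it suffices to show that whp $\#\{y:n\mu_t(y)>n^{a}/2\}\le n^{1-a\kappa+\varepsilon/2}$.

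\emph{Splitting off the heavy vertices.} Set $a'=(1-\varepsilon_1)a$ with $\varepsilon_1>0$ small, and $W:=V(a')=\{z:d_z^->n^{a'}\}$; by $\kappa$-lightness, $|W|\le n\cdot n^{-a'}\sum_{k>n^{a'}}k\phi(k)\le n^{1-a'\kappa+\varepsilon_*}$ \emph{deterministically} (for any $\varepsilon_*>0$), and $\sum_{z\in W}d_z^-=n\sum_{k>n^{a'}}k\phi(k)\le n^{1-a'(\kappa-1)+\varepsilon_*}$. Decomposing each length-$t$ trajectory reaching $y$ according to the last time it visits $W$ gives $\mu_t(y)=A_y+B_y$, where $nA_y=\sum_x\wt P^t(x,y)$ is the total weight of length-$t$ walks from $x$ to $y$ that avoid $W$, and $B_y=\frac1n\sum_{r=0}^{t}\sum_{z\in W}\bigl(\sum_x P^r(x,z)\bigr)Q^{t-r}(z,y)$, with $Q^{j}(z,y)$ the weight of length-$j$ walks from $z$ to $y$ avoiding $W$ after leaving $z$ and $Q^0(z,y)=\ind_{z=y}$. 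It suffices to bound $\#\{y:nA_y>n^{a}/4\}$ and $\#\{y:nB_y>n^{a}/4\}$. For the first I would rerun the proof of \cref{lemmaM} with $K$ a large \emph{constant}, using $K$ annealed walks all ending at $y$ and all avoiding $W$: since every vertex these walks visit, and every vertex of the revealed partial environment, lies outside $W$, its in-degree is $\le n^{a'}$, so every occurrence of $\Delta^-$ in \cref{cond1} and \cref{cond11} (and in the boundary estimates behind \cref{eq:boundH1,eq:boundHh}) may be replaced by $n^{a'}$, with $d_y^-\le n^{a}$ handling the depth-one terms. This yields $\E[\ind_{\cG}(nA_y)^K]\le(10Kn^{a'})^{K}$. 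Markov's inequality and a union bound then give $\sum_y\P(\ind_{\cG}\,nA_y>n^{a}/4)\le n\,(40K)^{K}n^{-\varepsilon_1aK}\to0$ once $K$ is a fixed constant with $\varepsilon_1aK>1$; since $\P(\cG^{c})=o(1)$, whp there is no $y$ with $nA_y>n^{a}/4$.

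\emph{Walks through $W$, and conclusion.} For $B_y$ a first moment suffices: $\sum_y Q^{t-r}(z,y)\le\sum_y P^{t-r}(z,y)=1$ and $\sum_x P^r(x,z)=n\mu_r(z)$ give $\sum_y B_y\le\sum_{r=0}^{t}\mu_r(W)$, while the annealed first-moment bound $\E[P^r(x,z)]=\P^{\rm an}_x(X_r=z)\le\frac1n+\frac{rd_z^-}{m-r}$ yields $\E[\sum_yB_y]\le\sum_{r=0}^{t}\E[\mu_r(W)]=O\bigl(t^{2}n^{-a'(\kappa-1)+\varepsilon_*}\bigr)$ after using the bound on $\sum_{z\in W}d_z^-$. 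Hence $\E[\#\{y:nB_y>n^{a}/4\}]\le4n^{1-a}\E[\sum_yB_y]=O\bigl(n^{1-a-a'(\kappa-1)+2\varepsilon_*}\bigr)$; since $1-a-a'(\kappa-1)=1-a\kappa+(\kappa-1)\varepsilon_1a$ and $\kappa>2$, choosing $\varepsilon_1,\varepsilon_*$ small enough that $(\kappa-1)\varepsilon_1a+2\varepsilon_*<\varepsilon/2$ makes this $n^{1-a\kappa+\varepsilon/2}$ times a vanishing power of $n$, so by Markov whp $\#\{y:nB_y>n^{a}/4\}\le n^{1-a\kappa+\varepsilon/2}$ (or $=0$ when the threshold is below $1$). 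Combining, whp (on $\cG$ and on the mixing event) $\#\{y:n\mu_t(y)>n^{a}/2\}\le0+n^{1-a\kappa+\varepsilon/2}$, i.e.\ $\psi(n^{a},\infty)\le n^{-a\kappa+\varepsilon/2}$; a union bound over the finitely many grid values of $a$ completes the proof. The main obstacle is the refined version of \cref{lemmaM}: one must carefully verify that, once the annealed walks are constrained to avoid $W$, \emph{all} in-degrees entering that proof's bookkeeping — those of vertices at which a later walk can re-enter the already-exposed subgraph $D_{\ell,s}$, and those controlling the boundary sizes $|\partial\cB^{-,D_\ell}_y(j)|$ — are $\le n^{a'}$, and that the auxiliary bad-event probabilities remain negligible against $(n^{a'-1})^{K}$ when $K$ is only a fixed constant rather than $\Theta(\log n)$; the remaining steps are routine given the tools already developed in the paper.
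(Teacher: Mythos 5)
Your proposal is correct in outline, but it proves \cref{th:klight} by a genuinely different mechanism than the paper. The paper builds the $a$-skeleton $\xi_a$ of high-weight out-paths emanating from the heavy set $V(a)$ (\cref{suse:skeleton}), shows deterministically via $\kappa$-lightness that it is small (\cref{lem:sigma}), and then splits the target vertices $y$ according to whether $y\in V(\xi_{a-\delta})$: the skeleton vertices are counted directly (\cref{prop:EZa}), while for $y$ outside the skeleton the constant-$K$ moment bound \cref{le:mom-indind} lets the annealed walks pass through heavy entry vertices $z\in V(a)$ but caps their contribution by $\frac{d_z^-}{n}\,\w_{a,z}=n^{a-1}$ using the defining property of the skeleton. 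You instead split the walk measure itself, $\mu_t(y)=A_y+B_y$, by the last visit to $W=V(a')$: the through-$W$ mass is handled in aggregate by a Fubini/first-moment argument ($\E\sum_y B_y\lesssim t^2\sum_{z\in W}d_z^-/m$, then Markov on the count), which uses exactly the size-biased tail of \cref{def:klight} and is more elementary than the skeleton construction; the $W$-avoiding mass is handled by a constrained-walk rerun of \cref{lemmaM} with $\Delta^-$ replaced by $n^{a'}$ and $K$ a large constant. The reductions (grid over $a$, \cref{coro:mixing}, disposing of large $a$ via the $\pi_{\max}$ bound of \cref{th:main1} and \cref{lem:s}) are the same in both arguments. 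What your route buys is a simpler treatment of the heavy vertices; what it costs is that the whole burden falls on the constrained moment lemma, which is comparable in difficulty to \cref{le:mom-indind}, and it loses the structural information the skeleton provides (which vertices can carry large stationary mass).

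One spot deserves the care you yourself flag: you cannot literally reuse the paper's treatment of the boundary events $H_\ell$, because in \cref{lemmaM} the term $\P(H_K^c)$ is added \emph{unconditionally}, and the bounds \cref{eq:boundH1,eq:boundHh} with $\Delta^-$ are not negligible against $(n^{a'-1})^K$ when $a'$ is small and $K$ is a fixed constant. You must instead bound $\P(B_K^W\cap H_K^c)$, using that under the avoidance event every vertex of $D_K$ is visited by a $W$-avoiding walk (or equals $y\notin W$), hence has in-degree at most $n^{a'}$; the resulting binomial domination along the exploration (if a walk ever enters $W$ the event fails, otherwise at most $2Kn^{a'}$ heads are dangerous at each level) gives $\P(B_K^W\cap H_K^c)\le h\,(2K^2 t\,n^{a'-1})^{2K}$, which is indeed negligible. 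With that adjustment made explicit, and the entry-probability factors $d_z^-/n$ in the analogue of \cref{eq:boundan2} bounded by $n^{a'-1}$ for the same reason, your argument goes through.
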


\subsection{The \texorpdfstring{$a$}{a}-skeleton}\label{suse:skeleton}
The proof of \cref{th:klight} is based on the following construction.
For any $a\in(0,1/\kappa)$,
the \emph{$a$-skeleton} $\xi_a$  is the partial matching of heads and tails defined as follows.
Recall the definition of $V(a)$ in \cref{def-V(alpha)}.
For any $z\in V(a)$, 
call $W_{a,z}$ the set of paths $\p$ starting at $z$ whose weight $\w(\p)$ is at least 
\begin{equation}
    \w_{a,z} \coloneqq \frac{n^a}{d_{z}^-}.
\end{equation}
Note that we have not fixed the length of the paths in this definition, and that $W_{a,z}$ covers a portion of the out-neighborhood of $z$ with depth growing logarithmically as a function of
$d_z^-$.
The $a$-skeleton is defined by
\begin{equation}\xi_a=\bigcup_{z\in V(a)}W_{a,z}.\end{equation}
We call $V(\xi_a)$ the set of $y\in[n]$ such that at least one of the heads or tails of $y$ is matched in $\xi_a$.
In particular, $V(a)\subset V(\xi_a)$.

\begin{lemma}\label{lem:sigma}
    Under the assumptions of \cref{th:klight}, for all $a\in(0,1/\kappa)$, $\varepsilon>0$,
    $\xi_a$ has at most  $n^{1-a\kappa+\varepsilon}$ edges for $n$ large enough. In particular,
    \begin{equation}\label{eq:vsia}
        |V(\xi_a)|\le n^{1-a\kappa+\varepsilon}.
    \end{equation}
\end{lemma}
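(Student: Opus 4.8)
The plan is to bound $|\xi_a|$ by writing the skeleton as the union $\bigcup_{z\in V(a)}W_{a,z}$, controlling the number of edges contributed by each $W_{a,z}$ by roughly $d_z^-/n^a$, and then summing over $z\in V(a)$, at which point the $\kappa$-light hypothesis takes care of $\sum_{z\in V(a)}d_z^-$. It is worth noting in advance that this will be a purely deterministic estimate given $\bfd_n$: the bound on the size of each $W_{a,z}$ holds for every realization of $G$, which is why the conclusion is stated with ``for $n$ large enough'' and not ``whp''.

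First I would observe that, since every out-degree is at least $2$ by \cref{cond:main}, a path of length $\ell$ has weight at most $2^{-\ell}$, so every $\p\in W_{a,z}$, having $\w(\p)\ge \w_{a,z}=n^a/d_z^-$, has length at most $\log_2(d_z^-/n^a)\le\log_2\Delta^-$; in particular $W_{a,z}$ is finite. To count its edges, for each edge $e$ appearing in some path of $W_{a,z}$ I fix one such path and take its prefix ending with $e$; truncating a path only removes weight factors lying in $[0,1/2]$, so this prefix still has weight $\ge\w_{a,z}$, and the map sending $e$ to this prefix is injective because the prefix ends with the edge $e$. Hence $W_{a,z}$ has at most as many edges as there are paths out of $z$ of positive length and weight $\ge\w_{a,z}$. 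Since for each fixed length $\ell$ the weights of the length-$\ell$ paths out of $z$ sum to $\sum_{y}P^\ell(z,y)=1$, at most $1/\w_{a,z}=d_z^-/n^a$ of them can have weight $\ge\w_{a,z}$; summing over the at most $\log_2\Delta^-$ admissible lengths, $W_{a,z}$ has at most $(d_z^-/n^a)\log_2\Delta^-$ edges.

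Summing over $z\in V(a)$ gives $|\xi_a|\le(\log_2\Delta^-/n^a)\sum_{z:\,d_z^->n^a}d_z^-=(\log_2\Delta^-/n^a)\,n\sum_{k>n^a}k\phi(k)$, and here I would invoke $\kappa$-lightness, i.e.\ \cref{eq:size_biased_tail} with $\varepsilon/2$, so that $\sum_{k>n^a}k\phi(k)\le n^{-a(\kappa-1)+\varepsilon/2}$ for large $n$, together with $\Delta^-=O(n^{1/2})$ from \cref{lem:s}, to conclude $|\xi_a|=O(\log n\cdot n^{1-a\kappa+\varepsilon/2})\le n^{1-a\kappa+\varepsilon}$ for $n$ large. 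The vertex bound \cref{eq:vsia} then follows: each edge of $\xi_a$ has two endpoints and $V(a)\subset V(\xi_a)$, so $|V(\xi_a)|\le 2|\xi_a|+|V(a)|$, and $|V(a)|=n\,\phi(n^a,\infty)\le n\cdot n^{-a}\sum_{k>n^a}k\phi(k)\le n^{1-a\kappa+\varepsilon/2}$, again by $\kappa$-lightness. I do not expect a genuine obstacle here, since the argument is elementary counting; the one point to get right is the injective assignment of edges of $W_{a,z}$ to weighted paths, which is what converts the identity ``the length-$\ell$ path weights sum to $1$'' into an edge count, while the extra logarithmic factor is absorbed harmlessly into the $\varepsilon$.
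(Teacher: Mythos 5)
Your argument is correct and follows essentially the same route as the paper: decompose $\xi_a$ into the sets $W_{a,z}$, bound the number of edges of each by roughly $n^{-a}d_z^-$, and sum over $z\in V(a)$ using $\kappa$-lightness, absorbing constants and logarithmic factors into $n^{\varepsilon}$. The only difference is local: the paper gets the per-vertex bound $2n^{-a}d_z^-$ from the greedy weighted-exploration estimate \cref{eq:kappax:s}, whereas you derive the slightly weaker but equally sufficient bound $n^{-a}d_z^-\log_2\Delta^-$ by an elementary count, injecting edges into weighted prefixes and using that the weights of length-$\ell$ paths out of $z$ sum to $1$.
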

\begin{proof}
    Fix some $z\in V(a)$ and notice that the set $W_{a,z}$ can be generated with the weighted out-neighborhood construction given in \cref{suse:out-phase},
    with the only difference that here we impose no constraint on the distance to the root $z$,
    and the minimal weight $\w_{\min}$ is now replaced by $\w_{a,z}$.
    Thus, it follows from the argument used in \cref{eq:kappax:s},
    that the number of edges in $W_{a,z}$ is at most $2/\w_{a,z}=2 n^{-a}d_z^-$.
    Since the degree sequence is $\kappa$-light, the total number of edges in $\xi_a$ is deterministically bounded by
    \begin{equation}
        \sum_{z\in V(a)}2 n^{-a}d_z^-
        \le 2n^{-a}\sum_{k>n^a} k n \phi(k)\le n^{1-a\kappa+\varepsilon}.
    \end{equation}
    This finishes the proof.
\end{proof}
\begin{remark}\label{rem:lowerb}
Let us observe that \ac{whp} if  $y\in V(\xi_a)$ then $n\pi(y)\ge \frac{n^{a}}{2\pl d\pr}$.
Indeed, if $y\in V(a)$ then this is a consequence of \cref{coro:lower_bound_large_degs}.
If instead $y\in V(\xi_a)\setminus V(a)$,
then by definition of $a$-skeleton there exists a vertex $z\in V(a)$ and a path $\p$ starting at $z$
and ending at $y$ with $\w(\p)\ge \w_{a,z}=n^a/d_z^-$.
Thus, if $s$ denotes the length of $\p$, then
\begin{equation}
    n\pi(y)\ge n\pi(z)P^s(z,y)\ge n\pi(z)\w_{a,z} 
    \ge 
    n \frac{d_{z}^{-}}{2 m}\frac{n^{a}}{d_{z}^{-}} =
    \frac{n^{a}}{2\pl d\pr},
\end{equation}
where we used again \cref{coro:lower_bound_large_degs} to lower bound $\pi(z)$. The heart of the proof of   \cref{th:klight} will consist in showing that for all $y\notin V(\xi_a)$ except for at most
$n^{o(1)}$ of them one has $n\pi(y)\le n^{a+o(1)}$.
\end{remark}

\subsection{Proof of \texorpdfstring{\cref{th:klight}}{Theorem~\ref{th:klight}}}\label{sec:proofofub}

Recall that
\begin{equation}\label{def:psia}
\psi(n^a,\infty)=\frac1n\sum_{y\in[n]}\ind(\pi(y)>n^{a-1}).
\end{equation}
We need to prove that for any $\varepsilon>0$,
\begin{equation}\label{claim:psia}
\bbP\(\exists a\in(0,\infty):\; \psi(n^a,\infty)>n^{-a\kappa +\varepsilon}\)=o(1).
\end{equation}
Let $t=\log^3(n)$ and consider $\mu_t$ as in \cref{eq:def-lambda2}. Define
\begin{equation}\label{def:Za}
Z_a=\sum_{y\in[n]}\ind(\mu_t(y)>\tfrac12\,n^{a-1}).
\end{equation}
The next lemma is the main technical estimate in this section.
\begin{lemma}\label{lem:zlemma}
For any $\varepsilon>0$, for any fixed $a\in(0,1/\kappa)$,
\begin{equation}\label{claim:psiaoz}
\bbP\(Z_a>n^{1-a\kappa +\varepsilon}\)=o(1).
\end{equation}
\end{lemma}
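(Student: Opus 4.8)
The plan is to follow closely the moment argument of~\cref{lemmaM} underlying the upper bound on $\pi_{\max}$, with the $a$-skeleton as the decisive new ingredient. First a reduction: if $a\ge \tfrac12-\tfrac\eta6$ then $\Delta^-=O(n^{1/2-\eta/6})=o(n^a)$ by~\cref{lem:s}, so by the upper bound in~\cref{th:main1} and~\cref{coro:mixing}, \ac{whp} $\max_{y}\mu_t(y)\le \pi_{\max}+\ee^{-\log^{3/2}n}\le C\log(n)\tfrac{\Delta^-}{m}+\ee^{-\log^{3/2}n}<\tfrac12 n^{a-1}$, whence $Z_a=0$. So assume $a<\tfrac12-\tfrac\eta6$. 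Fix $\delta_0=\delta_0(\varepsilon,\kappa,a)\in(0,a/2)$ and $\varepsilon_0>0$ with $\delta_0\kappa+\varepsilon_0<\varepsilon$, and set $a'\coloneqq a-\delta_0$. By~\cref{lem:sigma} (applied with $a'$ and $\varepsilon_0$), \emph{deterministically} $|V(\xi_{a'})|\le n^{1-a'\kappa+\varepsilon_0}\le n^{1-a\kappa+\varepsilon}$. Writing $Z_a=Z_a^{\mathrm{in}}+Z_a^{\mathrm{out}}$ with $Z_a^{\mathrm{in}}\coloneqq\sum_{y\in V(\xi_{a'})}\ind(\mu_t(y)>\tfrac12 n^{a-1})$, we have $Z_a^{\mathrm{in}}\le n^{1-a\kappa+\varepsilon}$, so it suffices to show $\P(Z_a^{\mathrm{out}}\ge 1)=o(1)$.

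The key deterministic fact is that for $y\notin V(\xi_{a'})$ there are no heavy paths into $y$: if $\p$ is any path in $G$ from a vertex $z$ to $y$, then $d_z^-\,\w(\p)\le n^{a'}$, since otherwise $\w(\p)\ge n^{a'}/d_z^-=\w_{a',z}$ with $d_z^->n^{a'}$, so $\p\in W_{a',z}\subseteq\xi_{a'}$, forcing $y\in V(\xi_{a'})$. With $t=\log^3 n$ and $K=C\log n$ I would then run the $K$-walk annealed process of~\cref{lemmaM} and bound $\E[\ind_{\cG}\ind_{y\notin V(\xi_{a'})}(\mu_t(y))^K]$ through the telescoping product of $\P^{\mathrm{an}}(B_\ell\cap F_\ell\cap H_\ell\mid B_{\ell-1}\cap F_{\ell-1}\cap H_{\ell-1})$, where now $H_\ell$ additionally requires that none of the first $\ell$ walks visits a vertex of in-degree exceeding $n^{a/2}$ other than $y$. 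Conditionally on $B_{\ell-1}\cap F_{\ell-1}\cap H_{\ell-1}$, and on the fact that every path into $y$ revealed so far obeys $d_z^-\,\w\le n^{a'}$, one splits the position at which the $\ell$-th walk settles into the already-revealed digraph $D$ by its distance to $y$: for entries within distance $h_\varepsilon$, the tree-excess bound of $F_\ell$ gives $|\cP(z,y,r,D)|\le 2$, and with the skeleton fact $\tfrac{d_z^-}{n}q_D(z,y,r)\le 2n^{a'-1}$ for every such $z$, contributing $O(\operatorname{polylog}(n)\,n^{a'-1})$ (since $|V(D)|\le 1+Kt$); for entries at distance $\ge h_\varepsilon$, the entry in-degree is at most $n^{a/2}$ by the new clause of $H_\ell$, so $\sum_{z\in V(D)}d_z^-\le \operatorname{polylog}(n)\,n^{a'}$, and one gains the factor $q_D(z,y,r)\le 2^{-h_\varepsilon+1}=n^{-\Omega(\varepsilon)}$, making this part $o(n^{a'-1})$ because $a'>a/2$. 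Hence each factor is at most $\operatorname{polylog}(n)\,n^{a'-1}\le n^{a-1-\delta_0/2}$ for $n$ large, and — checking, as for~\cref{cond11} in the proof of~\cref{lemmaM} but using that vertices of in-degree $>n^{a/2}$ have density $O(n^{-(a/2)(1+\eta)})$ by~\cref{lem:s} and that each walk reaches $y$ with conditional probability $\le n^{a-1-\delta_0/2}$ — that the event $H_K^c\cap B_K$ contributes $o\big(n^{(a-1-\delta_0/2)K}\big)$, one obtains $\E[\ind_{\cG}\ind_{y\notin V(\xi_{a'})}(\mu_t(y))^K]\le 2n^{(a-1-\delta_0/2)K}$.

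By Markov's inequality and a union bound over $y\in[n]$,
\begin{equation*}
\E\big[\ind_{\cG}\,Z_a^{\mathrm{out}}\big]\le \sum_{y\in[n]}\frac{\E[\ind_{\cG}\ind_{y\notin V(\xi_{a'})}(\mu_t(y))^K]}{(\tfrac12 n^{a-1})^K}\le n\,\big(4n^{-\delta_0/2}\big)^{K}= n^{1+C\log 4-\frac{\delta_0C}{2}\log n}=o(1),
\end{equation*}
so $\P(Z_a^{\mathrm{out}}\ge 1)\le \E[\ind_{\cG}Z_a^{\mathrm{out}}]+\P(\cG^c)=o(1)$ by~\cref{lemmaG}. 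Together with the deterministic bound $Z_a^{\mathrm{in}}\le n^{1-a\kappa+\varepsilon}$ this gives $\P(Z_a>n^{1-a\kappa+\varepsilon})=o(1)$, completing the plan. (The upgrade from this fixed-$a$ statement to~\cref{th:klight}, simultaneously over all $a>0$, is then carried out via the monotonicity of $a\mapsto\psi(n^a,\infty)$, a finite net of exponents, the trivial bound $\psi(n^a,\infty)\le 1$, and~\cref{lem:s}.)

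The step I expect to be the main obstacle is the control of the ``long-excursion'' contribution — trajectories that wander for $\Omega(\log n)$ steps before approaching $y$ — which the skeleton does not see, since their weight may be super-polynomially small, far below $n^{a'}/d_z^-$. These are tamed by the two facts used above: \ac{whp} the $K$-walk exploration discovers no vertex of polynomially-large in-degree away from $y$, so the relevant entry in-degrees are $\le n^{a/2}$; and such long suffixes carry the extra decay $2^{-h_\varepsilon}=n^{-\Omega(\varepsilon)}$. The hypothesis $a<\tfrac12-\tfrac\eta6$ — the only case not already dispatched by the $\pi_{\max}$ bound — is precisely what makes the resulting $n^{a/2-1-\Omega(\varepsilon)}$ negligible against the target threshold $\tfrac12 n^{a-1}$.
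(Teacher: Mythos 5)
Your overall architecture matches the paper's (the $a$-skeleton, the $K$-walk annealed moment bound on $\ind_{\cG}\ind_{y\notin V(\xi_{a'})}\mu_t(y)^K$, Markov plus \cref{lemmaG}), and your handling of the skeleton part via the deterministic cardinality bound of \cref{lem:sigma} is fine — indeed any $y$ with $d_y^->n^{a'}$ lies in $V(a')\subset V(\xi_{a'})$, so your $Z_a^{\mathrm{in}}$ absorbs the large-in-degree vertices that the paper treats separately in \cref{prop:EZa}. The genuine gap is in your treatment of the long-excursion entries: you replace the paper's device by an extra clause in $H_\ell$ (no walk visits a vertex of in-degree $>n^{a/2}$ other than $y$) and then assert, "as for \cref{cond11} in the proof of \cref{lemmaM}", that $\P^{\mathrm{an}}(H_K^c\cap B_K)=o\big(n^{(a-1-\delta_0/2)K}\big)$. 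That analogy fails. In \cref{lemmaM} the event $H_K$ is violated only if at least $2K$ unlikely matchings occur, which is why its failure probability is of order $(\Delta^-\log n/n)^{\Theta(K)}$, i.e.\ super-polynomially small. Your cap is violated by a \emph{single} visit to a heavy vertex, and the in-degree mass of vertices with $d^->n^{a/2}$ is of order $n^{1-\frac{a}{2}(1+\eta)}$, so $\P^{\mathrm{an}}(H_K^c)$ is only polynomially small — nowhere near $n^{(a-1-\delta_0/2)K}$ with $K=C\log n$. To salvage the bound you must exploit the intersection with $B_K$, i.e.\ show that each walk still reaches $y$ with conditional probability $n^{a-1+o(1)}$ \emph{after} a heavy vertex has been revealed in $D$; but in your scheme that per-walk estimate is exactly what is derived using $H_{\ell-1}$, so the argument as written is circular, and cruder substitutes (e.g.\ the $\Delta^-/n$-type bound of \cref{lemmaM}) are too weak since $\Delta^-$ may be as large as $n^{1/2-\eta/6}\gg n^{a}$ for $a<1/\kappa<1/2$.

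The paper avoids this entirely in \cref{le:mom-indind}: no in-degree cap is imposed; for every entry vertex $z$ one has $\frac{d_z^-}{n}q_D(z,y,r)\le n^{a-1}\,|\cP(z,y,r,D)|$ (from $d_z^-\le n^a$ when $z\notin V(a)$, and from the skeleton fact $q_D(z,y,r)\le \w_{a,z}|\cP(z,y,r,D)|$ when $z\in V(a)$), and the path count is controlled for \emph{all} lengths $r\le 3\log n$ by splitting the path into $O(1)$ segments of length $h=c\log n$ and using tree-excess $\le 1$ per segment (giving $|\cP|\le n^{\varepsilon'}$, see \cref{eq:est-C-2}), while lengths $r>3\log n$ are killed by the $n^{-1}$ decay of \cref{lemmanew1}. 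If you adopt this path-count bound for the mid-range lengths, your cap event $H_\ell$ (and the problematic $\P(H_K^c\cap B_K)$ term) becomes unnecessary, and the rest of your plan — including the union bound over $y$ with $K=C\log n$, which actually yields the stronger conclusion $Z_a^{\mathrm{out}}=0$ \ac{whp}, whereas the paper only needs constant $K$ and a first-moment bound on $Z_a$ — goes through. A minor further slip: in your initial reduction, at $a=\tfrac12-\tfrac\eta6$ the bound $\Delta^-=O(n^{1/2-\eta/6})$ does not give $C\log(n)\Delta^-=o(n^{a})$; take the cutoff strictly above $\tfrac12-\tfrac\eta6$ (or simply run the skeleton argument for all $a\in(0,1/\kappa)$, as the paper does, making the reduction unnecessary).
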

Before proving \cref{lem:zlemma}, let us show that this estimate implies \cref{claim:psia}.
We start by proving that
\begin{equation}\label{claim:psiamax}
\bbP\(\psi(n^{a},\infty)>0\)=o(1),
\end{equation}
for all $a>1/\kappa$.
{The maximum in-degree $\Delta^-$ is the largest $\ell$ such that $\phi(\ell)\ge 1/n$, or equivalently, such that $n\sum_{k\ge \ell} k\phi(k)\ge \ell$. If $\Delta^-> n^{\frac{1}{\kappa}}$, by definition of $\kappa$-light degree sequence, for all $\varepsilon>0$ we have}
\begin{equation}
\Delta^-\le n\sum_{k>n^{1/\kappa}} k\phi(k)\le n^{\frac{1}{\kappa} + \varepsilon}.
\end{equation}
By the upper bound in \cref{th:main1}, we have \ac{whp} $n\pi_{\max}\le \Delta^-n^\varepsilon\le n^{\frac{1}{\kappa} + 2\varepsilon}$, and therefore $\psi(n^{a},\infty)=0$ if $a\ge\frac{1}{\kappa} + 2\varepsilon$.  Since $\varepsilon$ is arbitrarily small, this proves \cref{claim:psiamax} for all $a>1/\kappa$.

To prove \cref{claim:psia}, thanks to the monotonicity of $a\mapsto \psi(n^a,\infty)$ and  \cref{claim:psiamax} one can replace $\exists a\in(0,\infty)$ with $\exists a\in(0,b)$ for any fixed $b>1/\kappa$ in that statement. For simplicity, we take $b=2/\kappa$.
Moreover, since we have restricted to a bounded interval of exponents $a$,  it is actually sufficient to prove that  for any $\varepsilon>0$
\begin{equation}\label{claim:psiao}
\bbP\(\psi(n^a,\infty)>n^{-a\kappa +\varepsilon}\)=o(1),
\end{equation}
for each fixed $a\in(0,2/\kappa)$. Indeed, let $I_\varepsilon$ denote the set of integers in the interval $[0,4/\varepsilon]$. If $a\in(0,2/\kappa)$, there exists $i\in I_\varepsilon$ such that $i\varepsilon/2 \le a\kappa\le (i+1)\varepsilon/2$ and
$\psi(n^{i\varepsilon/(2\kappa)},\infty)\ge \psi(n^a,\infty)$. Therefore, if $\psi(n^a,\infty)>n^{-a\kappa +\varepsilon}$ for some $a\in(0,2/\kappa)$, there must exists $i\in I_\varepsilon$ such that \begin{equation}\psi(n^{i\varepsilon/(2\kappa)},\infty)>n^{-a\kappa +\varepsilon}\ge n^{-i(\varepsilon/2)+\varepsilon/2 }.\end{equation}
Using \cref{claim:psiao},  a union bound over the finite set $I_\varepsilon$ then allows us to conclude   \cref{claim:psia}.

Finally we observe that it is sufficient to establish \cref{claim:psiao} for all fixed $a\in(0,1/\kappa)$.
Indeed, the case $a>1/\kappa$ is covered by \cref{claim:psiamax},
and the case $a=1/\kappa$ follows by the arbitrariness of $\varepsilon$.

By \cref{coro:mixing}, 
we know that \ac{whp} $\mu_t(y)\ge \pi(y) - 1/n$ for all $y\in[n]$,
which implies $n\psi(n^a,\infty)\le Z_a$ for all $a>0$.
Therefore, in order to prove \cref{claim:psiao} it suffices to show \cref{lem:zlemma}.

\subsection{Proof of \texorpdfstring{\cref{lem:zlemma}}{Lemma~\ref{lem:zlemma}}}

To prove \cref{claim:psiaoz}, we fix $\eta\in(0,1)$ such that \cref{cond:main} applies, and define $h=c\log n$,
where $c=c(\eta,\Delta^+)$ can be taken, e.g., as $c=\eta/(200\log\Delta^+)$. As $h\le
2h_{\varepsilon}$, with, e.g., $\varepsilon=\eta/4$, by \cref{lemmaG} the event $\cG=\cG^+(h)$ has probability $1-o(1)$.
The key to our proof will be the following estimate on the moments of $\mu_t(y)$ for $y\notin\xi_a$.
\begin{lemma}\label{le:mom-indind}
    Fix $t=\log^3(n)$. For all constants $\varepsilon>0$,  $a\in(0,1/\kappa)$, $K>1$,
        \begin{equation}\label{expmarkov}
        \max_{y\in[n]}\E\[\ind_{\cG}\ind_{y\notin V(\xi_a)} \mu_t(y)^K\] \le n^{K(a+\varepsilon-1)},
        \end{equation}
        for all $n$ large enough.
\end{lemma}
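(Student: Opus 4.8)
The plan is to bound the $K$-th moment $\E[\ind_{\cG}\ind_{y\notin V(\xi_a)}\mu_t(y)^K]$ via the annealed process of $K$ independent walks of length $t$ started at independent uniform vertices, exactly in the spirit of the proof of \cref{lemmaM}, but now exploiting the extra information that $y\notin V(\xi_a)$. Writing $\P^{\mathrm{an}}=\P^{\mathrm{an},K}_{\rm unif}$ and letting $D_\ell$ be the digraph generated by the first $\ell$ walks (with $D_{\ell,s}$ the partial digraph after the first $\ell-1$ walks plus the first $s$ steps of the $\ell$-th), I would introduce the events $B_\ell=\{X_t^{(1)}=\dots=X_t^{(\ell)}=y\}$, $F_\ell=\{D_\ell\sim\cG\}$, and a new event $\Xi_\ell$ recording that the partial digraph $D_\ell$ is ``compatible'' with $y\notin V(\xi_a)$, i.e.\ that no head or tail of $y$, and more generally none of the matchings that $\xi_a$ would force, has been revealed in a way that places $y\in V(\xi_a)$. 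Since these events are monotone along the sequence of walks, $\E[\ind_{\cG}\ind_{y\notin V(\xi_a)}\mu_t(y)^K]\le \P^{\mathrm{an}}(B_K\cap F_K\cap \Xi_K)=\prod_{\ell=1}^K\P^{\mathrm{an}}(B_\ell\cap F_\ell\cap\Xi_\ell\mid B_{\ell-1}\cap F_{\ell-1}\cap\Xi_{\ell-1})$, so it suffices to show that each conditional factor is at most $n^{a+\varepsilon-1}$.

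To bound a single factor I would repeat the decomposition in the proof of \cref{lemmaM}: conditionally on $D_{\ell-1}$, for $B_\ell$ to occur the $\ell$-th walk must enter the already-revealed digraph $D_{\ell,s}$ at some time $s\le t$ in some vertex $z$ and then follow only edges of $D_{\ell,s}$ up to time $t$; this contributes $\sum_s\max_D\sum_{z\in V(D)}\frac{d_z^-}{n}q_D(z,y,t-s)$, with $q_D(z,y,r)\le 2^{-r}|\cP(z,y,r,D)|$. As before, the event $F_\ell$ gives $|\cP(z',y,h,D)|\le 2$, so the contribution of entrance times $s\le t-h$ is $o(\Delta^-/n)=o(n^{a-1})$ (using $a>0$ and \cref{lem:s}), and is negligible. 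The main new point is the contribution of entrance times $s\in(t-h,t]$, i.e.\ $\sum_{a'=0}^{h-1}\sum_{j=0}^{a'}\sum_{z\in\partial\cB_y^{-,D}(j)}\frac{d_z^-}{n}q_D(z,y,a')$: here I need to show that the relevant in-neighborhood of $y$ inside $D$ is small. This is where $y\notin V(\xi_a)$ enters decisively — if $z$ lies at distance $j\le h$ from $y$ via a path in $D$ of weight $w$ and $d_z^-$ were as large as $n^a/w$, then that path together with $z\in V(a)$ would force $y\in V(\xi_a)$ (or more precisely: any $z$ reaching $y$ with a path of weight $\ge n^a/d_z^-$ would, if $z\in V(a)$, put $y\in V(\xi_a)$, and a walk reaching $y$ with total weight $\ge n^{a-1}$ through such a vertex would likewise be excluded). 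I would therefore argue that, on $\Xi_\ell\cap F_\ell$, every realization $D$ with positive probability satisfies $\sum_j\sum_{z\in\partial\cB_y^{-,D}(j)}d_z^- q_D(z,y,\cdot)\le n^{a+o(1)}$: the number of vertices reaching $y$ within $h$ steps in $D$ is at most $K t\le \log^5 n$ (the walks reveal at most that many edges), each has $q_D(z,y,a')\le 2^{-a'+1}$, and any $z$ contributing nontrivially has $d_z^-q_D(z,y,a')\le d_z^- \w(\p)$ bounded by $n^a$ up to the reciprocal-weight constraint imposed by $y\notin V(\xi_a)$. Summing over $a'$ and $j$ as in \cref{lemmaM} then yields a factor $\le n^{a+\varepsilon-1}$ for $n$ large.

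The principal obstacle I anticipate is making the event $\Xi_\ell$ and the phrase ``$D$ compatible with $y\notin V(\xi_a)$'' genuinely precise and checking that the exclusion actually kills the $s\in(t-h,t]$ term: one must verify that if a walk reaches $y$ at time $t$ with total weight exceeding $\tfrac12 n^{a-1}$ by passing through a vertex $z$ of the already-revealed digraph at time $s>t-h$, then either $z\in V(a)$ and the traversed path has weight $\ge n^a/d_z^-$, forcing $y\in V(\xi_a)$, or $z\notin V(a)$ so $d_z^-\le n^a$ and the local-tree bound on $|\partial\cB_y^{-,D}(j)|$ (coming from $F_\ell$ together with the $\le Kt$ revealed edges) caps the sum. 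This bookkeeping — separating the ``large-in-degree'' and ``small-in-degree'' ancestors of $y$ and routing the first class into the $\xi_a$-exclusion — is the delicate step; once it is in place, the multiplicative estimate over the $K$ walks and the final appeal to \cref{coro:mixing} (already used to pass from $\mu_t$ to $\pi$ in \cref{sec:proofofub}) are routine.
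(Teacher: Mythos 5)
Your overall strategy coincides with the paper's: run $K$ annealed walks, adjoin to $B_\ell,F_\ell$ a compatibility event recording that the revealed digraph is compatible with $\{y\notin V(\xi_a)\}$ (the paper's $E_\ell$), factorize over $\ell$, and bound each conditional factor by decomposing over the entry time $s$ and entry vertex $z$ into the revealed digraph $D$, using the key observation that on the compatible event every path in $D$ from a vertex $z\in V(a)$ to $y$ has weight below $\w_{a,z}=n^a/d_z^-$, so that $\frac{d_z^-}{n}\,q_D(z,y,\cdot)\le n^{a-1}|\cP(z,y,\cdot,D)|$. This is exactly how the paper exploits $y\notin V(\xi_a)$; note, however, that one must first reduce to $y\notin V(a)$ (trivial otherwise, since then $y\in V(\xi_a)$), because the first factor $\P(B_1)\le \tfrac1n+\tfrac{t d_y^-}{n}$ and the entry term at $z=y$ require $d_y^-\le n^a$ — a reduction your proposal does not state.

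The genuine gap is the treatment of entrance times $s\le t-h$. You claim their contribution is ``$o(\Delta^-/n)=o(n^{a-1})$, using $a>0$ and \cref{lem:s}'', but \cref{lem:s} only gives $\Delta^-=O(n^{1/2-\eta/6})$, and under the power-law hypothesis $\Delta^-$ is typically of order $n^{1/\kappa}$; since $a\in(0,1/\kappa)$ may be arbitrarily small, $\Delta^-/n$ is not $o(n^{a-1})$, and even the refined early-entry bound $t\,|V(D)|\,2^{-h}\,\Delta^-/n$ (with $h=c\log n$ and $c$ small, so $2^{-h}=n^{-c\log 2}$) does not reach $n^{a+\varepsilon-1}$ for small $a$. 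In \cref{lemmaM} this step was harmless because the target there was of order $\log(n)\Delta^-/n$; here the target can be much smaller than $\Delta^-/n$, so no factor $\Delta^-$ may survive. The paper repairs precisely this point by splitting according to the in-degree of the entry vertex: for $z\notin V(a)$ it uses $d_z^-\le n^a$ and $q_D\le 1$ at \emph{all} times, giving $(t+1)(Kt+1)n^{a-1}\le n^{a+\varepsilon-1}$; for $z\in V(a)$ it shows, by iterating the $h$-block estimate coming from $D\sim\cG$ and $|V(D)|=n^{o(1)}$, that $q_D(z,y,j)\le n^{-1}$ whenever $j\ge 3\log n$ (which kills even a $\Delta^-$ prefactor), and in the remaining window $j\le 3\log n$ — not merely $j\le h$ as in your proposal — it applies the $\xi_a$-exclusion together with the path-count bound $|\cP(z,y,j,D)|\le n^{\varepsilon}$, obtained by cutting paths into at most $3/c$ blocks of length $h$ and using that $\cG$ allows at most two paths of length at most $h$ between any pair of vertices. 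So your exclusion window $(t-h,t]$ is too narrow and your bound outside it fails; both defects are fixable by the paper's splitting, but as written the estimate does not close.
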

\begin{proof}
Fix $a\in(0,1/\kappa)$ and $y \in [n]$.
We may assume that $y\notin V(a)$, 
since otherwise $y\in V(\xi_a)$ and the estimate becomes trivially satisfied.

We use the same construction based on the $K$ annealed walks as in \cref{lemmaM}.
With the notation used in that proof,
recall that $D_\ell$ is the digraph generated by the first $\ell$ trajectories 
and $D_{\ell,s}$ is the union of $D_{\ell-1}$ 
and the edges generated by the $\ell$-th walk up to time $s-1$.
For any $z\in V(D_\ell)\cap V(a)$,
define $W_{a,z}(D_\ell)$ as the union of all paths $\p$ contained in $D_\ell$,
starting at $z$, and such that $\w(\p)\ge \w_{a,z}$.
Recall the definition of an event being \emph{$D_\ell$ compatible} given in \cref{sec:UB}.
In particular, $D_\ell\sim \{y\notin V(\xi_a)\}$,
if $y\notin V(W_{a,z}(D_\ell))$ for all $z\in V(D_\ell)\cap V(a)$.
For $\ell\in\{1,\dots, K\}$, we consider the events
\begin{equation}E_\ell=\{D_\ell\sim \{y\notin V(\xi_a)\}\}, \quad
F_\ell=\{D_\ell\sim \cG \},
\quad
B_\ell = \{X_t^{(1)}=\cdots=X_t^{(\ell)}=y \}.
\end{equation}
We may write
\begin{equation}
    \begin{aligned}
    \E[\ind_{\cG}\ind_{y\not\in\xi_a}\mu_t(y)^K]        &\le\P^{\mathrm{an}}(B_K\cap E_K\cap F_K)\\
\label{need3}   &=\P^{\mathrm{an}}(B_1\cap E_1\cap F_1)\prod_{\ell=2}^K \P^{\mathrm{an}}(B_\ell\cap E_\ell \cap F_\ell\big\rvert B_{\ell-1}\cap E_{\ell-1}\cap F_{\ell-1}).
    \end{aligned}
\end{equation}

Bounding the first term is simple.
Indeed, the event that the first walk visits $y$ up to time $t$ has probability bounded by
$\frac{1}{n} + \frac{t d_y^-}{n}\le n^{a+\varepsilon-1}$,
since it has probability $1/n$ of hitting $y$ at time $0$ 
and probability at most $\frac{d_y^-}{m-t} \le \frac{d_y^-}{n} \le n^{a-1}$ 
of visiting $y$ for the first time at any subsequent step.
It follows that
\begin{equation}
    \P(B_1\cap E_1\cap F_1)\le\P(B_1)\le n^{a+\varepsilon-1}.
\end{equation}
Hence, it is suffices to show that
\begin{equation}\label{need2}
\P^{\mathrm{an}}(B_\ell\cap E_{\ell} \cap F_\ell\big\rvert B_{\ell-1}\cap E_{\ell-1}\cap F_{\ell-1} )\le n^{a+\varepsilon-1},\qquad\forall \ell\in\{2,\dots, K\}.
\end{equation}

Let $D_{\ell-1}$ denote a realization of the partial matching generated by the first $\ell-1$ walks,
and assume that $D_{\ell-1}$ satisfies $B_{\ell-1}\cap F_{\ell-1}\cap E_{\ell-1}$. 
For the event $B_\ell$ to occur, 
the $\ell$-th walk must enter at some time $s\in\{0,\dots,t\}$ in $D_{\ell,s}$.
Arguing exactly as in \cref{eq:boundan2} we estimate
\begin{equation}\label{eq:boundan}
\P^{\mathrm{an}}(B_\ell\cap F_\ell\cap E_\ell\:\big\rvert \:D_{\ell-1}  )\le\sum_{s=0}^{t}\max_{D\in \cA_\ell(s)} \sum_{z\in V(D)}\frac{d^-_z}{n}q_D(z,y,t-s),
\end{equation}
where we define the set $\cA_\ell(s)$ as in \cref{eq:D} with the only difference that the event $H_\ell$ is replaced by $E_\ell$.

We split the last sum in \cref{eq:boundan} according to whether $z$ is in $V(a)$.
If $z\notin V(a)$, for all $s\le t$, one has
\begin{equation}
    \sum_{z\in V(D)\setminus V(a)}\frac{d^-_z}{n}q_D(z,y,t-s)\le n^{a-1}|V(D)|.
\end{equation}
Since $|V(D)|\le Kt+1$ and $t=\log^3(n)$, 
the contribution of this term to the \ac{rhs} of \cref{eq:boundan} 
is at most $(t+1)(K t +1) n^{a-1}\le n^{a-1+\varepsilon}$.
Therefore, we may restrict to estimating the contribution of the terms corresponding to $z\in V(a)$.

We now show that, for every $D\sim \cG$ 
it is unlikely that a walk stays on $D$ for $3\log(n)$ steps.
First observe that
\begin{equation}\label{lemmaneew1}
    \max_{z\in V(D)}\sum_{x\in V(D)}q_D(z,x,h)
    \le 2|V(D)|2^{-h}
    \le n^{-\frac{c}{2}},
\end{equation}
where the first inequality is a consequence of $D\sim \cG$, as in \cref{eq:qdest}, 
and the second inequality follows from $|V(D)|\le Kt+1=n^{o(1)}$ and $h=c\log n$.
By the Markov property, for all $t\in \bbN$
\begin{equation}\label{lemmaneew2}
    \max_{z\in V(D)}\sum_{x\in V(D)}q_D(z,x,t) \le n^{-\floor{\frac{t}{h}}\frac{c}{2}}.
\end{equation}
Therefore, if $j\ge 3\log(n)$,
\begin{equation}\label{lemmanew1}
    \max_{z\in V(D)} q_D(z,y,j) 
    \le \max_{z\in V(D)}\sum_{x\in V(D)} q_D(z,x,j)
    \le n^{-\floor{3/c}c/2}
    \le n^{-1}.
\end{equation}
By \cref{lemmanew1},
\begin{equation}\label{lemmanew2}
    \sum_{s=0}^{t-3\log n}\sum_{z\in V(D)}\frac{d^-_z}{n}q_D(z,y,t-s)
    \le \frac{t}n\sum_{z\in V(D)}\frac{d^-_z}{n} 
    \le  \frac{\pl d \pr t}n
    \le n^{a+\varepsilon-1}.
\end{equation}

Hence we can restrict to the case $z\in V(a)$ and $s\in\{t-3\log(n), \dots,t\}$ in \cref{eq:boundan}.
If the event $E_\ell$ holds,
and the entry vertex $z$ of $D_{\ell,s}=D$ is in $V(a)$,
then  each path from $z$ to $y$ in $D$ has weight smaller than $\w_{a,z}$.
Therefore,
\begin{equation}\label{lemmaneew3}
    q_D(z,y,t-s)\le \w_{a,z}|\cP(z,y,t-s,D)|.
\end{equation}
Since $\frac{d^-_z}{n}\w_{a,z}{=} n^{a-1}$, it remains to show that under the event $D\sim \cG$ and for all fixed $\varepsilon>0$,
\begin{equation}\label{claim}
    \max_{z\in D}\max_{j\le 3\log(n)}|\cP(z,y,j,D)| \le n^\varepsilon,
\end{equation}
for all $n$ large enough.

Let $L = \floor{\frac{j}{h}}$.
To prove \cref{claim}, we split each path of length $j$ in $D$ from any $z$ to $y$
into $L+1$ consecutive paths of which the first $L$ have length $h$ 
and the last one has length $h' \le h$.
Note that
\begin{equation}\label{eq:def-Lh}
    L \le \frac{j}{h} \le \frac{3}{c}.
\end{equation}
Letting $u_{i}$ denote the end vertex of the $i$-th sub-path,
we have
\begin{equation}
    |\cP(z,y,j,D)|= \sum_{u_1\in V(D)}\cdots\sum_{u_{L}\in V(D)}|\cP(z,u_1,h,D)|\:|\cP(u_1,u_2,h,D)|\cdots|\cP(u_{L},y,h',D)|.
\end{equation}
Since $D\sim\cG$, for every $u,u'\in V(D)$ the number of paths of length $h'\le h$ from $u$ to $u'$ in $D$ is bounded by $2$. Thus,
\begin{equation}\label{eq:est-C-2}
    |\cP(z,y,j,D)|\le \sum_{u_1\in V(D)}\cdots\sum_{u_{L}\in V(D)}2^{L+1}
    \le 2\(2 |V(D)| \)^{L} 
    \le 2\(2 (K t +1) \)^{L} 
    \le n^{\varepsilon},
\end{equation}
for any $\varepsilon>0$, if $n$ is large enough.
\end{proof}

\begin{proposition}\label{prop:EZa}
    Fix any $a\in(0,1/\kappa)$ and consider $Z_a$ as in \cref{def:Za}. Then, for all $\varepsilon>0$,
    \begin{equation}\label{expmarkovZ}
    \E\[\ind_{\cG}Z_a\] \le n^{1-a\kappa+\varepsilon},
    \end{equation}
    for all $n$ large enough.
\end{proposition}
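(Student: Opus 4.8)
The plan is to deduce \cref{prop:EZa} by combining the deterministic skeleton bound of \cref{lem:sigma} with the moment estimate of \cref{le:mom-indind}, after one key adjustment. Observe that \cref{le:mom-indind} only furnishes $\E[\ind_\cG\ind_{y\notin V(\xi_a)}\mu_t(y)^K]\le n^{K(a-1+\varepsilon')}$, whose right-hand side equals, up to the harmless factor $n^{K\varepsilon'}$, the $K$-th power of the threshold $\tfrac12 n^{a-1}$ defining $Z_a$ in \cref{def:Za}; a naive $K$-th moment Markov bound at exponent $a$ therefore exhibits no decay in $n$. The remedy is to perform the splitting of $Z_a$ relative to the skeleton at a \emph{strictly smaller} exponent $a'=a-\delta$: this worsens the bound on $|V(\xi_{a'})|$ by only a factor $n^{\delta\kappa}$, but it opens up a genuine polynomial gap between the moment bound $n^{K(a'-1+\varepsilon')}$ and the unchanged threshold $n^{K(a-1)}$, which, after summing over the $n$ vertices and taking $K$ a large enough \emph{constant}, is more than enough.

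Concretely, I would fix $a\in(0,1/\kappa)$ and $\varepsilon>0$, choose $\delta\in(0,a)$ with $(\kappa+\tfrac12)\delta<\varepsilon/2$, and set $a'\coloneqq a-\delta\in(0,1/\kappa)$, $\varepsilon'\coloneqq\delta/2$, fixing a constant $K>1$ with $K\delta/2\ge a\kappa$. Splitting the sum in \cref{def:Za} according to whether $y\in V(\xi_{a'})$ yields
\begin{equation}
Z_a\ \le\ |V(\xi_{a'})|\ +\ \sum_{y\notin V(\xi_{a'})}\ind\!\big(\mu_t(y)>\tfrac12 n^{a-1}\big).
\end{equation}
By \cref{lem:sigma} applied with exponent $a'$ and with $\varepsilon'$ in place of $\varepsilon$, the first term is deterministically at most $n^{1-a'\kappa+\varepsilon'}=n^{1-a\kappa+\delta\kappa+\varepsilon'}\le n^{1-a\kappa+\varepsilon/2}$. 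For the second term, Markov's inequality with the $K$-th power gives, for every $y\in[n]$,
\begin{equation}
\ind_\cG\ind_{y\notin V(\xi_{a'})}\ind\!\big(\mu_t(y)>\tfrac12 n^{a-1}\big)\ \le\ \frac{2^K}{n^{K(a-1)}}\,\ind_\cG\ind_{y\notin V(\xi_{a'})}\,\mu_t(y)^K,
\end{equation}
and hence, taking expectations, summing over $y\in[n]$, and invoking \cref{le:mom-indind} with exponent $a'$ and parameter $\varepsilon'$,
\begin{equation}
\E\Big[\ind_\cG\sum_{y\notin V(\xi_{a'})}\ind\!\big(\mu_t(y)>\tfrac12 n^{a-1}\big)\Big]\ \le\ \frac{2^K}{n^{K(a-1)}}\cdot n\cdot n^{K(a'-1+\varepsilon')}\ =\ 2^K n^{1-K\delta/2}\ \le\ 2^K n^{1-a\kappa}.
\end{equation}
Adding the two bounds, $\E[\ind_\cG Z_a]\le n^{1-a\kappa+\varepsilon/2}+2^K n^{1-a\kappa}\le n^{1-a\kappa+\varepsilon}$ for all $n$ large enough, since $\delta,\varepsilon',K$ are fixed and the input estimates hold once $n$ exceeds the finite thresholds in \cref{lem:sigma,le:mom-indind}. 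This is \cref{prop:EZa}.

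The step I expect to require the most care is precisely the bookkeeping around the reduced exponent $a'=a-\delta$: one must arrange that the same $\delta$ simultaneously (i) is strictly larger than the slack $\varepsilon'$ available in \cref{le:mom-indind}, so that the moment-versus-threshold ratio $n^{-K(\delta-\varepsilon')}$ genuinely decays and can be pushed below $n^{-a\kappa}$ by a constant choice of $K$, and (ii) is small enough that the induced loss $n^{\delta\kappa}$ in the skeleton bound $|V(\xi_{a'})|$ stays within the target error $n^{\varepsilon}$. Beyond this, the argument is a routine combination of Markov's inequality, a union bound over $y\in[n]$, and the two cited lemmas.
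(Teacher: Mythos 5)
Your proof is correct, and it follows the same overall skeleton-based strategy as the paper: split $Z_a$ according to membership in the skeleton taken at the slightly reduced exponent $a'=a-\delta$, and control the off-skeleton vertices by Markov's inequality at the $K$-th moment via \cref{le:mom-indind}; your bookkeeping (taking $\varepsilon'=\delta/2$, $(\kappa+\tfrac12)\delta<\varepsilon/2$, and a fixed $K$ with $K\delta/2\ge a\kappa$) is sound, and the shift to $a'$ is indeed exactly what makes the Markov step decay. The one genuine difference is the treatment of the on-skeleton contribution. The paper first peels off the vertices with $d_y^->n^{a-\delta}$ using $\kappa$-lightness, and then, for the remaining low in-degree vertices, estimates the probability $\P\(y\in V(\xi_{a-\delta})\)\le d_y^- n^{-a\kappa+2\varepsilon}$ that $y$ is swallowed during the generation of the skeleton (see \cref{eq:E1-1}) and sums over $y$ using $\sum_y d_y^-=m=O(n)$. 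You instead bound the entire on-skeleton sum pointwise by the deterministic cardinality bound $|V(\xi_{a-\delta})|\le n^{1-(a-\delta)\kappa+\varepsilon'}$ of \cref{lem:sigma}, which automatically covers the high-degree vertices since $V(a-\delta)\subset V(\xi_{a-\delta})$. Because only the expectation of $Z_a$ is needed in \cref{prop:EZa}, this coarser deterministic bound suffices and makes the argument slightly shorter; the paper's per-vertex probabilistic estimate is finer, but that extra precision is not used anywhere downstream, so nothing is lost by your route.
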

\begin{proof}
    Fix $a\in(0,1/\kappa)$, $\varepsilon\in(0,a/4)$, and $\delta=\varepsilon/\kappa$.
    Let $V_k$ denote the set of $y\in[n]$ with $d_y^-=k$.
    Let $\cE(y) = \{\ind_{\cG}\mu_t(y)>\tfrac12 n^{a-1}\}$.
    We have
    \begin{equation}
    \E[\ind_{\cG}Z_a] =
    \sum_{y\in[n]} \P\(\cE(y) \)
    = \sum_{k\ge 0}\sum_{y\in V_k}\P\(\cE(y)\).
    \end{equation}
    First note that
    \begin{equation}
        \sum_{k>n^{a-\delta}}\sum_{y\in V_k}\P\(\cE(y)\)\le \sum_{k>n^{a-\delta}}n\phi(k)\le n^{\delta-a+1}\sum_{k>n^{a-\delta}}k\phi(k)\le n^{-a\kappa + 1 +2\varepsilon},
    \end{equation}
    where the last bound follows from the assumption that the degree sequence is $\kappa$-light.
    Thus, in the rest of the proof we restrict to $k\le n^{a-\delta}$, 
    i.e., $y \notin V(a-\delta)$.

    For all $y\in V$, let
    \begin{equation}
        \label{eq:LJAQ}
        \cE_1(y)=\cE(y) \cap \{y\in V(\xi_{a-\delta})\},
        \qquad
        \cE_2(y)=\cE(y) \cap \{y\notin V(\xi_{a-\delta})\}.
    \end{equation}
    Therefore, by the arbitrariness of $\varepsilon$, the desired statement follows if we prove
    \begin{equation}\label{eq:E1}
    \sum_{k\le  n^{a-\delta}}\sum_{y\in V_k}\P(\cE_i(y))\le    n^{-a\kappa + 1 +3\varepsilon},\qquad i=1,2.
    \end{equation}

    To prove \cref{eq:E1} for $i=1$, we use the rough bound $\P(\cE_1(y))\le \P(y\in V(\xi_{a-\delta}))$. Since $y\notin V(a-\delta)$, in the generation of $\xi_{a-\delta}$ at least one head incident to $y$ has been matched.
    By \cref{lem:sigma} and the choice of $\delta=\varepsilon/\kappa$, we know that $\xi_{a-\delta}$ contains at most $n^{1-a\kappa+2\varepsilon}$ edges. Thus, the probability that during the generation of $\xi_{a-\delta}$ one of the heads of a given $y\notin V(a-\delta)$ gets matched is bounded by the probability that a binomial random variable with parameters $N=n^{1-a\kappa+2\varepsilon}$ and $p=d_y^-/(m-N)\le d_y^-/n$ is positive. Thus
    \begin{equation}\label{eq:E1-1}
        \P(y\in V(\xi_{a-\delta})) \le N p \le   d_y^- n^{-a\kappa+2\varepsilon}.
    \end{equation}
    Summing over $k \le n^{a-\delta}$ and $y\in V_k$, 
    we obtain \cref{eq:E1} for $i=1$.

    We actually prove a stronger estimate than \cref{eq:E1} for $i=2$.
    Indeed, for every $K>0$:
    \begin{equation}\label{eq:E2}
        \P\(\cE_2(y) \)=    \P\(\ind_{\cG}\ind_{ y\not\in\xi_{a-\delta}}\mu_t(y) >\tfrac12 n^{a-1} \)\le \frac{2^K\E\[\ind_{\cG} \ind_{y\not\in\xi_{a-\delta}} \mu_t(y)^K  \]}{n^{K(a-1)}}.
    \end{equation}
    By \cref{le:mom-indind}, for all $\varepsilon'>0$, taking $n$ large enough,
    \begin{equation}
        \E\[\ind_{\cG} \ind_{y\not\in\xi_{a-\delta}} \mu_t(y)^K  \]\le n^{K(a-\delta+\varepsilon'-1)}.
    \end{equation}
    Choosing $\varepsilon'=\delta/3$, the right hand side of \cref{eq:E2}  can be bounded by $n^{-\delta K/2}$. Therefore
    \begin{equation}\label{eq:E2-final}
        \sum_{k\le  n^{a-\delta}}\sum_{y\in V_k}\P(\cE_2(y))
        \le n^{1-\delta K/2}.
    \end{equation}
    The desired estimate follows by choosing, e.g., $K=\ceil{\frac{2}{\delta}}$.
\end{proof}

We are now able to conclude the proof of \cref{lem:zlemma}. 
Recall that all we needed is the estimate \cref{claim:psiaoz}.
We write
\begin{equation}
    \P\(Z_a > n^{1-\kappa a+\varepsilon} \)\le \P\(\ind_{\cG}Z_a> n^{1-\kappa a+\varepsilon} \)+\P(\cG^c).
\end{equation}
By \cref{lemmaG}, $\P(\cG^c)=o(1)$.
By \cref{prop:EZa},
$\E[\ind_{\cG}Z_a]\le n^{1-\kappa a+\varepsilon/2}$ 
and therefore \cref{claim:psiaoz} is a consequence of Markov's inequality.

\section{Power-law for PageRank: Proof of \texorpdfstring{\cref{lbtailPR}}{Theorem~\ref{lbtailPR}}}\label{sec:pagerank}

\subsection{Lower bound}
Let us take $\alpha=\alpha_n$ a sequence in $(0,1)$ 
and assume that 
\begin{equation}
\limsup_{n\to\infty}\alpha_n\le 1-\delta,
\end{equation} for some $\delta>0$.
For $x\in [n]$, it follows from \cref{eq:pi-PR} that
\begin{equation}
    \begin{aligned}
        \pi_{\alpha,\lambda}(x)
        &=\sum_{k=0}^\infty \alpha(1-\alpha)^k\lambda P^k(x)\\
        &\ge \alpha(1-\alpha)\lambda_{\min}\sum_{y\in[n]}P(y,x) \\
        &\ge \alpha(1-\alpha)\lambda_{\min}\,\frac{d_x^-}{\Delta^+}\ge \alpha\,\frac{\delta n^{-\varepsilon}}{2\Delta^+}\,\frac{d_x^-}{n}\,,\label{eq:lowerboPR}
    \end{aligned}
\end{equation}
where $\lambda_{\min}=\min_{y\in[n]}\lambda(y)$, and we have used $1-\alpha\ge \delta/2$,  $\lambda_{\min} \ge n^{-1-\varepsilon}$ by \cref{hp:lambda},
for all $\varepsilon>0$, and $n$ large enough.

On the other hand, by the definition of total variation distance (see \cref{eq:def-TV}) 
and the monotonicity of distance to equilibrium,
for all $x \in [n]$ and $k \ge t \in \N$,
\begin{equation}
    \label{eq:EWSA}
    \lambda P^{k}(x) 
    \ge
    \pi(x)-\|\lambda P^k - \pi\|_\tv
    \ge
    \pi(x)-\|\lambda P^t - \pi\|_\tv.
\end{equation}
Thus, for any $t\in\bbN$, 
\begin{equation}
    \begin{aligned}
        \pi_{\alpha,\lambda}(x)     
        &  \ge \sum_{k=t}^\infty \alpha(1-\alpha)^k\lambda P^k(x)\\
        &\ge (1-\alpha)^t\(\pi(x)
        - \|\lambda P^t - \pi\|_\tv\).
    \end{aligned}
\end{equation}
Taking $t=\log^3(n)$, by \cref{coro:mixing},
\ac{whp} $\|\lambda P^t - \pi\|_\tv\le n^{-1}$.
Thus, \ac{whp} for all $x\in [n]$
\begin{equation}\label{eq:lowerpi}
    n\pi_{\alpha,\lambda}(x)
    \ge \max\left\{\frac{\delta n^{-\varepsilon}}{2\Delta^+}\alpha d_x^-,(1-\alpha)^t (n\pi(x) - 1)
    \right\} =: u(x,\alpha).
\end{equation}

By the lower bound in \cref{coro:lower_bound_large_degs},
\ac{whp}  for all $x\in V(a)$, $a>0$, we have $n\pi(x)\ge \frac{d_x^-}{2\pl d\pr}$ where $\pl d\pr=m/n$.  If $\alpha \le n^{-\varepsilon}$ then $(1-\alpha)^t\ge 1/2$ for $n$ large enough.
Thus,  for all $\varepsilon>0$, $\max\left\{\alpha,(1-\alpha)^t\right\}\ge n^{-\varepsilon}$ for $n$ large enough. We obtain that \ac{whp}
\begin{equation}u(x,\alpha)\ge n^{-2\varepsilon}d_x^- \max\left\{\alpha,(1-\alpha)^t\right\} \ge n^{-3\varepsilon} d_x^-.\end{equation}
It follows that \ac{whp}
\begin{equation}
    n\psi_{\alpha,\lambda}(n^a,\infty)\ge  \sum_{x\in V(a+3\varepsilon)}
    \ind(u(x,\alpha)>n^a)
    \ge
    |V(a+3\varepsilon)|
    =n\phi(n^{a+3\varepsilon},\infty)
    .
\end{equation}
Hence, by the power-law assumption on the in-degree sequence, 
we conclude that for all $\varepsilon>0$,
\ac{whp} for all $a\in(0,1/\kappa)$,
$\psi_{\alpha,\lambda}(n^a,\infty)\ge n^{-a\kappa -(3\kappa+1)\varepsilon}$.

\subsection{Upper bound}
By Proposition 8 in~\cite{caputo2021} we have
\begin{equation}
    \max_{x\in [n]}\|P_{\alpha,\lambda}^t(x,\cdot)-\pi_{\alpha,\lambda}\|_{\tv}
    \le 2(1-\alpha)^t\max_{x\in [n]}\|P^t(x,\cdot)-\pi\|_{\tv}.
\end{equation}
Thus, by \cref{coro:mixing}, for $t=\log^3(n)$, \ac{whp}
\begin{equation}\label{eq:cormix}
    \max_{x\in[n]}\|P_{\alpha,\lambda}^t(x,\cdot)-\pi_{\alpha,\lambda} \|_\tv\le \ee^{-\log^{3/2}(n)}.
\end{equation}
Call  $\mu^{\alpha,\lambda}_t$ the probability measure
\begin{equation}\mu^{\alpha,\lambda}_t(y)=\frac1n\sum_{x\in[n]}P_{\alpha,\lambda}^t(x,y).\end{equation}
From \cref{eq:cormix} we have, \ac{whp}, $|\mu^{\alpha,\lambda}_t(y)-\pi_{\alpha,\lambda}(y)|\le n^{-1}$ for all $y\in[n]$.
Let us also introduce
\begin{equation}\label{def:Za1}
Z^{\alpha,\lambda}_a
=
\sum_{y\in[n]}\ind(\mu^{\alpha,\lambda}_t(y)>\tfrac12\,n^{a-1}).
\end{equation}
Then, \ac{whp} 
\begin{equation}\label{DDCB}
    \psi_{\alpha,\lambda}(n^a,\infty) 
    =
    \sum_{y\in[n]}\ind(\pi_{\alpha,\lambda}(y)>n^{a-1})
    \le
    Z^{\alpha,\lambda}_a.
\end{equation}
Thus, the upper bound in \cref{lbtailPR} follows from \cref{claim:psiaoz1} in the following lemma.

\begin{lemma}\label{lem:zzlemma}
    Fix an arbitrary sequence $\alpha=\alpha_n\in[0,1]$,
    and an arbitrary sequence of probability measures $\lambda=\lambda_n$ on $[n]$.
    Then, under \cref{cond:main},
    \begin{equation}\label{def:Zaz}
        \max_{x\in[n]}\pi_{\alpha,\lambda}(x)\le \frac{30\log(n)\Delta_*}n\,, 
    \end{equation}
    where $\Delta_*=\Delta^- + n\lambda_{\max}$ and $\lambda_{\max}=\max_{z\in[n]}\lambda(x)$.
    Moreover, if the empirical in-degree distribution has
    power-law behavior with index $\kappa>2$ and $\lambda$ satisfies~\cref{hp:lambda}, 
    then for any $a\in (0,1/\kappa)$
    \begin{equation}\label{claim:psiaoz1}
        \bbP\(Z^{\alpha,\lambda}_a > n^{1-a\kappa +\varepsilon}\)=o(1).
    \end{equation}
\end{lemma}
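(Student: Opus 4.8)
The plan is to deduce both displays from moment bounds on the random measure $\mu^{\alpha,\lambda}_t$ with $t=\log^3(n)$, in parallel with the simple random walk: \cref{def:Zaz} from an analogue of \cref{lemmaM} combined with the proof of the upper bound of \cref{th:main1}, and \cref{claim:psiaoz1} from analogues of \cref{le:mom-indind} and \cref{prop:EZa}. Recall it has already been recorded, via Proposition~8 of~\cite{caputo2021} and \cref{coro:mixing}, that \ac{whp} $|\mu^{\alpha,\lambda}_t(y)-\pi_{\alpha,\lambda}(y)|\le n^{-1}$ for all $y\in[n]$, so it suffices to bound $\mu^{\alpha,\lambda}_t$. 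The new tool is an annealed description of $K$ independent $(\alpha,\lambda)$-PageRank walks: each walk starts at a uniform vertex and, at each of its $t$ steps independently, with probability $\alpha$ teleports to a $\lambda$-distributed vertex (revealing no matching) and with probability $1-\alpha$ performs one step of the annealed random walk of \cref{sec:randomwalk} (revealing a new matching only if the sampled tail was still unmatched). Just as before, $\E[(\mu^{\alpha,\lambda}_t(y))^{K}]=\P^{{\rm an},K}(X^{(k)}_t=y\ \text{for all }k\le K)$.

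The key device is the following decomposition. Condition on the partial environment $D_{\ell-1}$ produced by the first $\ell-1$ walks, and let $D_{\ell,s}$ be its enlargement by the matchings the $\ell$-th walk reveals in its first $s$ steps. If the $\ell$-th walk is at $y$ at time $t$, let $s\in\{0,\dots,t\}$ be the last time at which it teleports or reveals a fresh matching; after $s$ it moves only along edges already present in $D_{\ell,s}$ and ends at $y$. The probability of entering $D_{\ell,s}$ at a vertex $z$ at time $s$ is $1/n$ when $s=0$ and at most $\alpha\lambda(z)+d^-_z/n$ when $s\ge1$ (teleporting to $z$, or matching a fresh tail to one of the $d^-_z$ heads of $z$), and, conditionally on that, the chance of then staying on $D_{\ell,s}$ until reaching $y$ is $q_D(z,y,t-s)$ in the notation of \cref{lemmaM}. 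A union bound over $s$ and $z$ thus gives the analogue of \cref{eq:boundan2} in which each weight $d^-_z/n$ is replaced by $\alpha\lambda(z)+d^-_z/n\le \lambda_{\max}+d^-_z/n$. Since $\Delta_*/n=\lambda_{\max}+\Delta^-/n$ and, crucially, $\sum_{z\in V(D)}\lambda(z)\le|V(D)|\,\lambda_{\max}$ (rather than the trivial $\alpha\le1$), every subsequent inequality in the proof of \cref{lemmaM} carries over verbatim with $\Delta^-$ replaced by $\Delta_*$; the auxiliary events controlling in-neighbourhoods of $y$ are untouched, since teleportations reveal no matchings. Hence, for $K=C\log n$, $\E[\ind_{\cG}(\mu^{\alpha,\lambda}_t(y))^{K}]\le(10K\Delta_*/n)^{K}$ for every $y\in[n]$, and Markov's inequality, a union bound over $y$, \cref{lemmaG} and the approximation above then reproduce word for word the proof of the upper bound of \cref{th:main1}, yielding \cref{def:Zaz}.

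For \cref{claim:psiaoz1} one repeats the argument of \cref{sec:proofofub}. Fix $a\in(0,1/\kappa)$ and $\delta=\varepsilon/(3\kappa)$, and split $\E[\ind_{\cG}Z^{\alpha,\lambda}_a]=\sum_y\P(\ind_{\cG}\mu^{\alpha,\lambda}_t(y)>\tfrac12 n^{a-1})$ into three parts. The contribution of $y\in V(a-\delta)$ is at most $n\sum_{k>n^{a-\delta}}\phi(k)\le n^{1-a\kappa+\varepsilon}$ by the $\kappa$-lightness supplied by \cref{pr:hp4}. The contribution of $y\in V(\xi_{a-\delta})\setminus V(a-\delta)$ is bounded through $\P(y\in V(\xi_{a-\delta}))$ using \cref{lem:sigma}, exactly as in \cref{prop:EZa} --- both concern $G$ alone and need no change. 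Finally, for $y\notin V(\xi_{a-\delta})$ one runs the decomposition above and proves the PageRank analogue of \cref{le:mom-indind}: an entry vertex $z\in V(a-\delta)$ contributes, for any $\varepsilon'>0$, at most $n^{a-\delta-1+\varepsilon'}$ per walk, because on $\{D\sim\cG\}\cap\{y\notin V(\xi_{a-\delta})\}$ every $z$-to-$y$ path in $D$ has weight below $n^{a-\delta}/d^-_z$ and only $n^{o(1)}$ such paths exist on scales $\le 3\log n$, so $\tfrac{d^-_z}{n}\cdot\tfrac{n^{a-\delta}}{d^-_z}=n^{a-\delta-1}$ and, by \cref{hp:lambda}, $\alpha\lambda(z)\cdot\tfrac{n^{a-\delta}}{d^-_z}\le\lambda_{\max}\le n^{-1+\varepsilon'}$; an entry vertex $z\notin V(a-\delta)$ contributes at most $|V(D)|(\alpha\lambda(z)+d^-_z/n)\le n^{a-\delta-1+o(1)}$ per walk, again using \cref{hp:lambda} to bound $\alpha\lambda(z)$; and the scenario in which the walk stays on $D$ for more than $3\log n$ steps is negligible exactly as in \cref{le:mom-indind}. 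Taking $K=\lceil 2/\delta\rceil$ powers makes this part $O(n^{1-a\kappa+\varepsilon})$, so $\E[\ind_{\cG}Z^{\alpha,\lambda}_a]\le n^{1-a\kappa+\varepsilon}$, and \cref{claim:psiaoz1} follows from Markov's inequality and $\P(\cG^c)=o(1)$. The power-law assumption on $\phi$ is used only through $\kappa$-lightness, which by \cref{pr:hp4} is equivalent to it, and through \cref{lem:sigma}.

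The only genuinely new point is setting up the ``last teleportation or fresh matching'' decomposition so that the annealed bookkeeping of \cref{lemmaM} and \cref{le:mom-indind} transfers unchanged; once this is arranged, teleportations add merely an extra $\alpha\lambda(z)$ at each entry vertex --- absorbed into $\Delta_*=\Delta^-+n\lambda_{\max}$, and $O(n^{-1+\varepsilon})$ under \cref{hp:lambda} --- and, by resetting the walk, they can only reduce the chance of it lingering near $y$, so no estimate is lost and the parameter $\alpha$ indeed makes the analysis no harder. The one subtlety worth care is that the $\lambda$-contributions must be summed against $|V(D)|\,\lambda_{\max}$, not against $\alpha\le1$, which is precisely what yields the sharp factor $\Delta_*$ in \cref{def:Zaz}.
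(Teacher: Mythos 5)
Your proposal is correct and follows essentially the same route as the paper: the annealed $(\alpha,\lambda)$-walk construction, the ``last teleportation or fresh matching'' decomposition that replaces $d_z^-/n$ by $d_z^-/n+\lambda(z)$ in the analogues of \cref{eq:boundan2,eq:boundan}, and then rerunning \cref{lemmaM} with $\Delta^-$ replaced by $\Delta_*$ for \cref{def:Zaz} and \cref{lem:zlemma} (via \cref{le:mom-indind,prop:EZa}, using $\lambda_{\max}\le n^{-1+\varepsilon}$) for \cref{claim:psiaoz1}. The extra details you supply on bounding the $\lambda$-contributions per entry vertex are exactly what the paper leaves implicit.
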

\begin{proof}
We introduce the annealed construction for PageRank walks with uniform starting vertices. Adapting the discussion in \cref{eq:def-annealing}, we write, for all $t,K\in\bbN$,
\begin{equation}\label{eq:def-annealingPR}
\E\[\big(\mu^{\alpha,\lambda}_t(A)\big)^K\]=\E\[\big(\PP_{\alpha,\lambda}(X_t\in A)\big)^K \]=\P^{{\rm an},K}_{\alpha,\lambda}\(X_t^{(k)}\in A,\:\forall k\in [K]\),
\end{equation}
where $A\subset[n]$,
\begin{equation}
\PP_{\alpha,\lambda}(X_t\in A)=\frac1n\sum_{x\in [n]}\sum_{y\in A}P_{\alpha,\lambda}^t(x,y),
\end{equation}
and $\P^{{\rm an},K}_{\alpha,\lambda}$ is the law of the non-Markovian process
\begin{equation}
    \left\{X_s^{(k)}\,,\;s\in\{0,\dots,t\},\;k\in\{1,\dots, K\}\right\}\,,
\end{equation} 
which can be described as follows. Start with an empty matching. For each $k\in[K]$, given the first $k-1$ walks $(X_s^{(\ell)})_{s\le t,\,\ell\le k-1}$, to generate the $k$-th walk,
\begin{enumerate}[(i)]
    \item
    start the $k$-th walk, i.e., $X^{(k)}$, at a uniformly random vertex $X_{0}^{(k)}\in[n]$;
    \item for all $s\in\{0,\dots,t-1\}$: select one of the tails of  $X_{s}^{(k)}$ uniformly at random, call it $e$, and draw an independent Bernoulli($\alpha$) random variable $U$,
    \begin{itemize}
        \item If $U=0$ and $e$ was already matched by one of the previous walks, or by $X^{(k)}$ itself at a previous step, to some head $f$, then let $X_{s+1}^{(k)}=v_f$;
        \item If $U=0$ and $e$ is still unmatched, then  select a uniformly random head, $f$, among the unmatched ones, match it to $e$, and let $X_{s+1}^{(k)}=v_f$;
        \item If $U=1$, then select a random vertex $Y \eql \lambda$ and set $X_{s+1}^{(k)}=Y$.
    \end{itemize}
\end{enumerate}

To prove the upper bound on $\pi_{\alpha,\lambda}$ \cref{def:Zaz},
we are going to show that, under \cref{cond:main},
if $K=\Theta(\log n)$, then
\begin{equation}\label{def:Zazz}
\E\[ \ind_{\cG}\(\mu^{\alpha,\lambda}_t(y)\)^K\]\le \left(\frac{10 K \Delta_*}{n} \right)^K,
\end{equation}
Note that this is the statement of \cref{lemmaM} with $\mu_t$ replaced by $\mu^{\alpha,\lambda}_t$ and $\Delta^-$ replaced by $\Delta_*$.
Once this bound is established, then the same argument in \cref{617,eq:finn} yields the estimate \cref{def:Zaz}.

Going over the proof of \cref{lemmaM} step by step,
we see that up to \cref{eq:boundan2} nothing is changed in the argument,
while \cref{eq:boundan2} continues to hold provided we replace $d_z^-/n$ with $d_z^-/n +
\lambda(z)$.  This is obtained by considering the last time $s$ such that the $\ell$-th walk enters
the set $D$ of previously activated edges $s$ and,
from then on, stays on $D$ without undergoing any teleportation.
The vertex $z\in D$ where this last entry occurs can be reached either by activating a fresh edge,
which contributes at most $d_z^-/n$ or by a teleportation which has probability $\lambda(z)$.
Once this modification is made, all arguments can be repeated without any change,
and \cref{def:Zazz} follows.

The same reasoning shows that the proof of \cref{lem:zlemma} goes through with the only change that
$d_z^-/n$ must be replaced by $d_z^-/n + \lambda(z)$ in \cref{eq:boundan}.
Using also the assumption
$\lambda_{\max}\le n^{\varepsilon-1}$ for all $\varepsilon>0$, this implies \cref{claim:psiaoz1}.
\end{proof}

To prove the upper bound in \cref{lbtailPR}, we can use \cref{eq:cormix} and \cref{lem:zzlemma} and the claim follows exactly as in \cref{sec:proofofub}.

\begin{remark}\label{rem:remalpha}
Note that the walk in the above proof differs from previously introduced annealed
walks only in that it is now possible to teleport, which corresponds to the event $U=1$.
Whenever this event occurs,
the walk does not activate any new matching and therefore the environment is left unchanged.
Since the main challenge in the analysis of the annealed walk is represented by the presence of the previously activated matching,
this feature makes the case of PageRank surfers actually simpler than the case with $\alpha=0$.
This also explains why the upper bound on $\psi_{\alpha,\lambda}(n^a,\infty)$ in \cref{lbtailPR}
holds \emph{uniformly} in the choice of $\alpha=\alpha_n\in[0,1]$.
Moreover, for the degree sequence it is sufficient to assume $\kappa$-lightness.
\end{remark}

\begin{remark}\label{rem:pPR}
We observe that the estimates above immediately imply  the bounds on the maximum PageRank score in \cref{rem:maxPR}. The lower bound is a consequence of
\cref{eq:lowerboPR}. 
The upper bound has been established in \cref{lem:zzlemma}.
\end{remark}

\section{Tightness of estimates on \texorpdfstring{$\pi_{\max}$}{pi(max)}}\label{sec:examples}

In this section we discuss the tightness of the bounds in~\cref{th:main1},
providing examples that show that both bounds in \cref{eq:upb} and \cref{eq:lowb}
cannot be substantially improved in general.

We first focus on~\cref{eq:upb}.
Fix $\varepsilon>0$.
Let $\Delta=\lceil \ee^{1/\varepsilon}\rceil$ and $\delta = 2$.
Consider a degree sequence $\bfd_{n}$ with half of the $n$ vertices having degrees $(\Delta,\delta)$,
and the other half of degrees $(\delta, \Delta)$.
Note that
\begin{equation}
\frac{\log \delta}{\log \Delta}
\le
\varepsilon \log 2
\le
\varepsilon
.
\end{equation}
Then, by Theorem~1.6 in~\cite{caputo2020a},
there exists a constant $c=c(\varepsilon)>0$ such that \ac{whp}
\begin{equation}\label{eq:LB_example}
\pi_{\max} \ge \frac{c \log^{1-\varepsilon}n}{n},
\end{equation}
proving that~\cref{eq:upb} is tight up to a sub-logarithmic multiplicative factor.

The rest of the section is devoted to provide
a wide class of examples where~\cref{eq:lowb} is tight.
We call a bi-degree sequence $\bfd_{n}$ \emph{extremal} if
there exists $w\in [n]$ such that $d_{w}^{-} = \Delta^{-}$ and for any $z\neq w$,
\begin{equation}\label{eq:extremal_degs}
 d^-_z=o\( \frac{d^-_w}{\log n}\).
\end{equation}

The following can be seen as a refinement of~\cref{eq:upb} for extremal sequences.
\begin{proposition}\label{lem:example}
    Let $\bfd_{n}$  be an extremal bi-degree sequence satisfying~\cref{cond:main}.
    Then, for any $\varepsilon>0$ whp
    \begin{equation}
        \abs*{\frac{\pi_{\max}}{\Delta^{-}/m}-1} \le \varepsilon.
    \end{equation}
    Moreover, the maximum stationary value is attained uniquely at the vertex of maximum in-degree.
\end{proposition}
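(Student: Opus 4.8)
The plan is to combine the upper bound \cref{eq:upb} of \cref{th:main1} with the sharper lower bound machinery of \cref{sec:completeingLB}, but applied more carefully to the single vertex $w$ of maximum in-degree. First I would observe that since the sequence is extremal and satisfies \cref{cond:main}, we have $\Delta^-=d_w^-\to\infty$ (indeed $\Delta^-$ must diverge, for otherwise $d_z^-=o(1/\log n)$ for all $z\neq w$ is impossible with $m=\sum d_z^-\ge 2n$, using the minimum out-degree at least $2$ and \cref{eq:def-m}). Hence \cref{prop:LB2} applies to $y=w$: for $\varepsilon>0$ small, \ac{whp} $\min_{x\in V_\varepsilon}P^t(x,w)\ge(1-\varepsilon)\Delta^-/m$ with $t=(1-\gamma)\tent+h_\varepsilon+1$, and then \cref{lem:access_stationary} gives $\pi(w)\ge(1-2\varepsilon)\Delta^-/m$ \ac{whp}. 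Combined with $\pi_{\max}\le C\log(n)\Delta^-/m$ from \cref{eq:upb} this already localizes $\pi_{\max}$ up to a log factor, but we need the matching upper bound $\pi(z)\le(1+\varepsilon)\Delta^-/m$ for \emph{all} $z$, which requires a genuinely new estimate exploiting extremality.

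The core step is to show that for every $z\in[n]$, \ac{whp} $n\pi(z)\le(1+\varepsilon)\Delta^-$, and moreover that the unique near-maximizer is $w$. For this I would revisit the moment bound of \cref{lemmaM} (equivalently, the annealed walk analysis in \cref{sec:UB}), but track the in-degree of the \emph{entry vertex} more precisely. Recall from \cref{eq:boundan2} that, on the good event $\cG=\cG^+(h)$, $\P^{\mathrm{an}}(B_\ell\cap F_\ell\cap H_\ell\mid D_{\ell-1})\le\sum_{s=0}^t\max_D\sum_{z'\in V(D)}\frac{d^-_{z'}}{n}q_D(z',y,t-s)$. The key point is that the dominant contribution comes from $s\in I_2$ (the last $h$ steps), where $q_D(z',y,t-s)$ can be of order $1$ only when $z'$ lies in a small in-neighborhood $\partial\cB_y^{-,D}(j)$ with $j\le h$; the total in-degree mass of such vertices other than $w$ is, by extremality, at most $(\#\text{vertices in }\cB_y^-(h))\cdot o(\Delta^-/\log n)$, which is $o(\Delta^-)$ since the relevant neighborhoods have size $n^{o(1)}$ under $\cG$ and $H_\ell$ (size at most $2K$ per generation, with $K=C\log n$). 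Meanwhile the contribution from $s\in I_1$ is $o(\Delta^-/n)$ exactly as in \cref{eq:qzzz}, and the contribution of a possible visit to $w$ itself is at most $(1+o(1))\Delta^-/n$. Carrying this through the product over $\ell\le K$ as in \cref{cond111,cond3}, we get $\E[\ind_\cG\mu_t(w')^K]\le((1+\varepsilon)\Delta^-/n)^K$ for all $w'$, and a Markov inequality plus union bound over $[n]$ with $K=\Theta(\log n)$ gives $\max_{w'}n\mu_t(w')\le(1+2\varepsilon)\Delta^-$ \ac{whp}; then \cref{coro:mixing} transfers this to $\pi$. This yields $\pi_{\max}\le(1+2\varepsilon)\Delta^-/m$, and together with the lower bound on $\pi(w)$ proves $|\pi_{\max}/(\Delta^-/m)-1|\le\varepsilon$ after relabeling $\varepsilon$.

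For the uniqueness claim --- that the maximum is attained \emph{only} at $w$ --- I would show that $\pi(z)\le(1-c)\Delta^-/m$ \ac{whp} for every $z\neq w$, for some $c>0$ (in fact $c\to1$). Here I would rerun the annealed bound above but now with $y=z\neq w$ fixed: since $z$'s own in-degree is $o(\Delta^-/\log n)$, the only way for $\mu_t(z)$ to be comparable to $\Delta^-/n$ is for many annealed walks to reach $z$ by first passing near $w$ and then traversing a short path from $w$ to $z$; but the total number of such short paths (length $\le h$) ending at $z$ starting anywhere in the explored neighborhood is $O(1)$ on $\cG$ (tree-excess at most $1$, cf.\ \cref{claim} and \cref{eq:est-C-2}), and each carries weight $\le 2^{-1}$, while the probability of the walk being near $w$ at time $\approx t-h$ is itself only $O(\Delta^-/n)$; hence a second factor is lost and we get $n\pi(z)\le n^{o(1)}$ times the relevant small quantity --- in any case $o(\Delta^-)$. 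A cleaner route: combine the already-proven $\pi(w)\ge(1-\varepsilon)\Delta^-/m$ with the normalization $\sum\pi=1$ and the concentration of the bulk (\cref{th:bulk}, or \cref{prop:pi-small-set}) to argue no other vertex can carry mass close to $\Delta^-/m$ without forcing $\sum\pi>1$; but this only works if $\Delta^-/m$ is bounded below, i.e.\ $\Delta^-=\Omega(n)$, which extremality does not guarantee, so the annealed argument is the robust one. \textbf{The main obstacle} is precisely the sharp bookkeeping in the annealed estimate: showing that every entry-vertex contribution other than a direct visit to $w$ sums to $o(\Delta^-/n)$, uniformly over the (random) explored digraph $D$, using only that in-neighborhoods have $n^{o(1)}$ size on $\cG\cap H_\ell$ and that all degrees except $d_w^-$ are $o(\Delta^-/\log n)$ --- the factor $\log n$ in \cref{eq:extremal_degs} is exactly what is needed to absorb the $K=\Theta(\log n)$ repetitions and the $n^{o(1)}=(K t)^{O(1)}$ neighborhood sizes, so the estimate must be done with care to see the constants line up.
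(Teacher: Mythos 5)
Your overall strategy is the same as the paper's: the lower bound via \cref{prop:LB2} and \cref{lem:access_stationary} (and your observation that extremality plus \cref{cond:main} forces $\Delta^-\to\infty$ matches the paper's remark), and the upper bound by rerunning the annealed moment estimate of \cref{lemmaM} with the entry vertex tracked, so that extremality \cref{eq:extremal_degs} absorbs the factor $K=\Theta(\log n)$, followed by Markov, a union bound and \cref{coro:mixing}. However, there is a genuine gap in the step that is supposed to produce the sharp constant. In the annealed bound \cref{eq:boundan2} one takes a maximum over all partial environments $D$ compatible with the conditioning event, and on the event $\cG$ alone (tree-excess of out-neighborhoods at most $1$) a compatible $D$ may contain a short cycle through $w$ --- even a self-loop. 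In that case the entry-at-$w$ term is $\frac{\Delta^-}{m}\sum_{a\le h-1}q_D(w,y,a)$, and $\sum_a q_D(w,y,a)$ can be as large as $2$ or $3$ under $\cG$, so your claim that ``the contribution of a possible visit to $w$ itself is at most $(1+o(1))\Delta^-/n$'' does not follow; you only get a constant multiple of $\Delta^-/m$, which is exactly the loss you were trying to avoid. The paper fixes this by intersecting with an additional whp event $\cH=\{\tx(\cB^-_w(h))=\tx(\cB^+_w(h))=0\}$ (proved via \cref{le:coupling} and a variant of \cref{lemmaG}), under which there is no cycle through $w$ of length $\le h$ (giving the $a=0$ term only when $y=w$) and at most one path from $w$ to $y\ne w$, of weight at most $1/2$; this is what yields \cref{lemmaM:1} with constants $1+\gamma$ at $y=w$ and $\tfrac12+\gamma$ elsewhere. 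Your proposal never introduces such an event, and without it the maximum over compatible $D$ defeats the sharp bound.

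Two further points. First, your bookkeeping ``(size of $\cB^-_y(h)$)$\,\cdot\,o(\Delta^-/\log n)=o(\Delta^-)$'' is not valid as stated, since the relevant neighborhood has up to $2K$ vertices per level over $h=\Theta(\log n)$ levels; the correct accounting (as in \cref{cond4}) uses the geometric weight decay $2^{-a}$ so that the level sums contribute only $O(K)$, and then $K\max_{z\ne w}d_z^-=o(\Delta^-)$ by \cref{eq:extremal_degs} --- you gesture at this at the end, but the argument as written does not close. Second, your uniqueness claim that $n\pi(z)$ is $o(\Delta^-)$ for every $z\ne w$ is false: an out-neighbor $z$ of $w$ satisfies $\pi(z)\ge\pi(w)/d_w^+$, which is of order $\Delta^-/m$ (and about half of $\pi(w)$ when $d_w^+=2$). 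The correct separation, which the paper's \cref{lemmaM:1} delivers, is $\pi(z)\le(\tfrac12+o(1))\Delta^-/m$ for $z\ne w$ --- the factor $\tfrac12$ coming from minimum out-degree $2$ --- against $\pi(w)\ge(1-o(1))\Delta^-/m$ from \cref{eq:lowb}; this still gives uniqueness, but not by the mechanism you describe (entering at $w$ costs one factor $\Delta^-/n$ and the subsequent path to $z$ costs only a constant weight, not a second small factor).
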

Note that any non-trivial extremal sequence satisfies $\Delta^- \log^{-1}(n)\to \infty$ as $n\to \infty$. Requiring such condition is natural in view of the existence of sequences whose $\pi_{\max}$ exhibits $\log^{1-o(1)}(n)$ deviations with respect to $\Delta^-/m$ (see \cref{eq:LB_example}).

The lower bound follows from \cref{eq:lowb}.
The idea for the upper bound is to mimic the proof of the upper bound \cref{eq:upb}
while removing the factor $C\log{n}$ from it.
Let $w$ be the vertex attaining the maximum in-degree.
Let $\eta$ be the constant appearing in~\cref{eq:2+eta}.
Recall the definitions of $h_{\varepsilon}$ and $\cG^{+}(h)$ in \cref{def-hslash} and \cref{eq:defG}.
Choose $\varepsilon\in (0,\eta/6)$,
write $h=h_\varepsilon$ and $\cG=\cG^+(h)$,
and define $\cH \coloneqq \{\tx(\cB^-_w(h))=\tx(\cB^+_w(h))=0\}$.

For $t\in \N$, recall the definition of the measure $\mu_{t}$ on $[n]$ given in~\cref{eq:def-lambda2}. Following the argument in the proof of the upper bound in \cref{th:main1}, it suffices to prove the following strengthening of~\cref{lemmaM}.
\begin{lemma}\label{lemmaM:1}
    Let $\bfd_{n}$ be as in~\cref{lem:example}. For any $\gamma,C>0$, $t=\log^{3}(n)$ and $K=C\log(n)$, one has
    \begin{equation}\label{eq:lemmaM:1}
        \E[\ind_{\cG \cap \cH}(\mu_t(y))^K]
        \le
        \left(\(\frac{1}{2}+\frac{\ind({y=w})}{2}+\gamma\) \frac{\Delta^-}{m} \right)^K,
    \end{equation}
    for all $y\in [n]$ and all sufficiently large $n$.
\end{lemma}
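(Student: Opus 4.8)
The plan is to rerun the proof of \cref{lemmaM} almost verbatim, modifying it in two places so as to exploit the extremal structure of $\bfd_n$ together with the extra tree-likeness carried by the event $\cH$. As there, set $t=\log^{3}(n)$, $K=C\log(n)$, run $K$ annealed walks of length $t$ from independent uniform starting vertices with joint law $\P^{\mathrm{an}}$, keep the events $B_\ell$ and $H_\ell$ of that proof unchanged, and only redefine $F_\ell\coloneqq\{D_\ell\sim\cG\cap\cH\}$, i.e.\ compatibility of the partial digraph $D_\ell$ revealed by the first $\ell$ walks with \emph{both} $\cG$ and $\cH$. Since $\cG\cap\cH$ is determined by the environment and $D_K$ is a sub-environment, $\{\cG\cap\cH\}\subseteq\{D_K\sim\cG\cap\cH\}$, so one still obtains $\E[\ind_{\cG\cap\cH}(\mu_t(y))^{K}]\le\P^{\mathrm{an}}(B_K\cap F_K\cap H_K)+\P^{\mathrm{an}}(H_K^{c})$. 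The term $\P^{\mathrm{an}}(H_K^{c})$ is bounded, as in the proof of \cref{lemmaM}, by a multiple of $(\Delta^{-}\log(n)/n)^{3K/2}$, and since $\Delta^{-}=O(n^{1/2-\eta/6})$ by \cref{lem:s}, $m=\Theta(n)$, and $K=C\log n$, this is $o((\Delta^{-}/m)^{K})$, hence negligible. By the monotonicity argument used in \cref{lemmaM} it therefore suffices to prove, uniformly in $\ell\in\{1,\dots,K\}$, uniformly in $y\in[n]$, and for all large $n$, the per-walk bound
\begin{equation*}
\P^{\mathrm{an}}\big(B_\ell\cap F_\ell\cap H_\ell\mid B_{\ell-1}\cap F_{\ell-1}\cap H_{\ell-1}\big)\le\Big(\tfrac12+\tfrac{\ind(y=w)}{2}+\tfrac{\gamma}{2}\Big)\frac{\Delta^{-}}{m};
\end{equation*}
raising this to the $K$-th power and using $K=C\log n\to\infty$ (so that $((\beta+\gamma/2)/(\beta+\gamma))^{K}\to0$ for $\beta=\tfrac12+\tfrac{\ind(y=w)}{2}$) lets one absorb both the additive error $\P^{\mathrm{an}}(H_K^{c})$ and the passage from $\gamma/2$ to $\gamma$, yielding \cref{eq:lemmaM:1}.

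For the per-walk bound I would copy the decomposition of \cref{lemmaM}: fixing a realisation $D_{\ell-1}$ compatible with $B_{\ell-1}\cap F_{\ell-1}\cap H_{\ell-1}$, the $\ell$-th walk must enter the already-revealed digraph at some last time $s\in\{0,\dots,t\}$ at some vertex $z$, after which it stays on $D\coloneqq D_{\ell,s}$, so that
\begin{equation*}
\P^{\mathrm{an}}\big(B_\ell\cap F_\ell\cap H_\ell\mid D_{\ell-1}\big)\le\sum_{s=0}^{t}\ \max_{D\in\cA_\ell(s)}\ \sum_{z\in V(D)}\Big(\tfrac{1}{n}\,\ind_{s=0}+\tfrac{d_z^{-}}{m-Kt}\,\ind_{s\ge1}\Big)\,q_D(z,y,t-s).
\end{equation*}
The first modification is to \emph{keep} the entry probability as $d_z^{-}/(m-Kt)=(1+o(1))d_z^{-}/m$ instead of bounding it by $d_z^{-}/n$ as in \cref{lemmaM}; this is exactly what upgrades the $\Delta^{-}/n$ of \cref{lemmaM} to $\Delta^{-}/m$. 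Splitting $s$ into $I_1=\{0,\dots,t-h\}$ and $I_2=\{t-h+1,\dots,t\}$, the $I_1$ sum is handled as in \cref{lemmaM}: there $t-s\ge h$, so $q_D(z,y,t-s)\le 2^{-h+1}$ by $D\sim\cG$, and with $\abs{V(D)}\le 1+Kt$ the whole $I_1$ contribution is $o(\Delta^{-}/m)$ because $2^{-h}=n^{-\Theta(1)}$ beats all polylogarithmic factors.

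The second modification is the crucial one and concerns $I_2$, where $a\coloneqq t-s\le h-1$. Reorganising $\sum_{z\in V(D)}$ by the distance $j=\mathrm{dist}_D(z,y)$ (only $j\le a$ contributes, and $\abs{\cB^{-,D}_y(a)}\le 2K(a+1)$ by $H_\ell$) and isolating the single vertex $w$: for $z\neq w$ one uses the extremality hypothesis $\max_{z\neq w}d_z^{-}=o(\Delta^{-}/\log n)$, the bound $q_D(z,y,a)\le 2^{-a+1}$ from $D\sim\cG$, and $\abs{\cB^{-,D}_y(a)}\le 2K(a+1)$; summing over $a\le h-1$ and using $\sum_{a\ge0}2K(a+1)2^{-a+1}=O(K)=O(\log n)$, this contributes $O(\log n)\cdot o(\Delta^{-}/(m\log n))=o(\Delta^{-}/m)$. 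For $z=w$ one exploits $\cH$: since $D\sim\cH$, the partial ball $\cB^{+,D}_w(h)$ is a subgraph of a tree (a compatible extension of $D$ has $\tx(\cB^{+}_w(h))=0$), hence a forest, hence contains no directed cycle; any directed walk of length at most $h-1$ from $w$ therefore stays inside $\cB^{+,D}_w(h)$ and is the unique walk to its endpoint, of length $j_0\coloneqq\mathrm{dist}_D(w,y)$, so $q_D(w,y,a)=0$ unless $a=j_0\le h-1$, in which case $q_D(w,y,j_0)\le 2^{-j_0}$. Hence the $z=w$ contribution to $I_2$ is at most $(1+o(1))\tfrac{\Delta^{-}}{m}2^{-j_0}\ind(j_0\le h-1)$, which is at most $(1+o(1))\tfrac{\Delta^{-}}{m}$ when $y=w$ (then $j_0=0$) and at most $(1+o(1))\tfrac{\Delta^{-}}{2m}$ when $y\neq w$ (then $j_0\ge1$), i.e.\ at most $(1+o(1))\big(\tfrac12+\tfrac{\ind(y=w)}{2}\big)\tfrac{\Delta^{-}}{m}$ in both cases. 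Adding the $I_1$ contribution and the $z\neq w$ part of $I_2$, the per-walk bound holds with $\gamma/2$ replaced by a quantity that is $o(1)$ uniformly in $\ell,y,D$, hence $\le\gamma/2$ for $n$ large.

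The main obstacle is this $z=w$ step: one must argue cleanly that compatibility with $\cH$ forces the partial ball $\cB^{+,D}_w(h)$ to be a forest (via a compatible extension of $D$), that a short directed walk from $w$ cannot leave this forest and is therefore unique, so that the contribution of the unique heavy vertex $w$ is governed by $2^{-\mathrm{dist}_D(w,y)}$ — and, crucially, that this distance is at least $1$ as soon as $y\neq w$, which is precisely what produces the sharp constant $\tfrac12+\tfrac{\ind(y=w)}{2}$ in place of the cruder $1$. Everything else is a bookkeeping variant of the proof of \cref{lemmaM}.
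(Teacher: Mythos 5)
Your proposal is correct and follows essentially the same route as the paper's proof: the same annealed $K$-walk moment bound with the events $B_\ell,F_\ell,H_\ell$, the same $I_1/I_2$ time split with entry probability $d_z^-/(m-Kt)$, the extremality hypothesis to make all entries at $z\neq w$ contribute $o(\Delta^-/m)$, and compatibility with $\cH$ (tree-excess zero of $w$'s out-ball) to show the entry-at-$w$ term is governed by a unique path of length $\mathrm{dist}_D(w,y)$, yielding the factor $\tfrac12+\tfrac{\ind(y=w)}{2}$. The only differences are presentational (explicitly redefining $F_\ell$ to include $\cH$-compatibility and the $\gamma/2$ bookkeeping for absorbing $\P^{\mathrm{an}}(H_K^c)$), which the paper leaves implicit.
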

\begin{proof}
\cref{le:coupling} ensures that the coupling between $\cB^-_w(h)$ and $\cT^-_w(h)$ succeeds whp, thus $\P(\tx(\cB^-_w(h))=0)=1-o(1)$. A similar argument as the one in~\cref{lemmaG} but only for the out-neighborhood of $w$ implies that $\P(\tx(\cB^+_w(h))=0)=1-o(1)$. Combining it with~\cref{lemmaG} to bound the probability of $\cG$, we have $\P(\cG\cap \cH)=1-o(1)$.

The proof is very similar to that of \cref{lemmaM}.
We reuse the notation defined there and omit the identical details.
As in~\cref{cond11}, we have
\begin{equation}\label{eq:H^c:1}
    \P^{\mathrm{an}}\(H_K^c \)= o\left((\Delta^-/m)^K\right),
\end{equation}
Having defined $B_{\ell}, F_{\ell}, H_{\ell}$ it suffices to bound the terms in~\cref{cond111}.
As in the proof of~\cref{lemmaM},
if the event $B_{\ell}$ holds then let $s$ be the time that the walk enters $D_{\ell,s}$
and traverses only edges in $D_{\ell,s}$ until reaching $y$.
Write  $I_1=\{0,\dots,t-h\}$ and $I_2=[t]\setminus I_1$.
As in~\cref{eq:qzzz}, the contribution of $s\in I_1$ is $o(\Delta^-/n)$.
To bound the contribution of $s\in I_2$,
we split the left-hand-side of \cref{eq:sumzvd} into two parts depending on whether we enter at $w$ or not:
\begin{equation}\label{eq:qzzzzz}
    \begin{aligned}
        \sum_{a=0}^{h-1}&\max_{D\in\cA_\ell(t-a)} \sum_{z\in V(D)}\frac{d^-_z}{m-Kt}q_D(z,y,a) \\
        &\le \sum_{a=0}^{h-1}\max_{D\in\cA_\ell(t-a)}\frac{d^-_w}{m-Kt} q_D(w,y,a)+ \sum_{a=0}^{h-1}\max_{D\in\cA_\ell(t-a)} \sum_{z\in V(D)\setminus\{w\}}\frac{d^-_z}{n} q_D(z,y,a).
    \end{aligned}
\end{equation}

Let us bound the first contribution in~\cref{eq:qzzzzz}.
If $y=w$, since $D\sim \cH$,
there is no path from $w$ to $w$ of length at least $1$
and at most $h$ and the term is bounded by the contribution of $a=0$, that is $\frac{\Delta^-}{m-tK}=(1+o(1))\frac{\Delta^-}{m}$.
If $y\neq w$, since $D\sim \cH$,
there is at most one path of length at most $h$ from $w$ to $y$.
As the minimum out-degree is at least $2$ by \cref{cond:main}, we have $\sum_{a=0}^{h-1} q_D(w,y,a)\le 1/2$ and the term is bounded by $(1+o(1))\frac{\Delta^-}{2m}$.
Therefore, the first contribution in~\cref{eq:qzzzzz} is bounded
by $(\frac{1}{2}+\frac{\ind(y=w)}{2}+o(1))\frac{\Delta^-}{m}$.

For the second contribution in~\cref{eq:qzzzzz}, the same bound as in~\cref{cond4} gives a total of
\begin{equation}
\sum_{a=0}^{h-1}\frac{(2Ka+1)2^{-a+1}\max_{z\neq w} d_z^-}{n}= O\(\frac{K\max_{z\neq w} d_z^-}{n}\)=o\left(\frac{\Delta^-}{n}\right),
\end{equation}
as $Kd^-_z=o(\Delta^-)$ for all $z\neq w$ by \cref{eq:extremal_degs}.

Putting all the contributions together, we have that~\cref{eq:lemmaM:1} holds for all $y\in [n]$, and the lemma follows.
\end{proof}

\section{Future research directions}\label{sec:future}

A number of open problems arise from empirical observations. Several papers have identified a consistent disagreement between the largest in-degree nodes and the ones attaining the maximum PageRank score in real-world networks (see, e.g.,~\cite{chen2007,volkovich2009}). Outliers in each ranking exhibit correlation but the top sets tend to disagree. This reinforces the idea that  rankings based on stationary values are  much more than the simple in-degree ranking and poses the question of determining under which conditions the top in-degree and top   score nodes coincide. Notably,~\cref{eq:lowb} tells us that the maximum  stationary value is \emph{never} asymptotically smaller than the maximum in-degree divided by
$m$ and these two asymptotically coincide for sequences with an outstanding maximum in-degree vertex (see~\cref{lem:example}).

In contrast, in~\cref{fig:sim} we display the results of a simulation done for the \ac{dcm} with power-law in-degree distribution and constant out-degree, which suggests that asymptotically the two may only differ by a non-trivial  multiplicative factor~\cite{cai2021}.
\begin{figure}
    \begin{center}
        \begin{subfigure}[b]{\linewidth}
            \centering
            \includegraphics[width=0.6\textwidth]{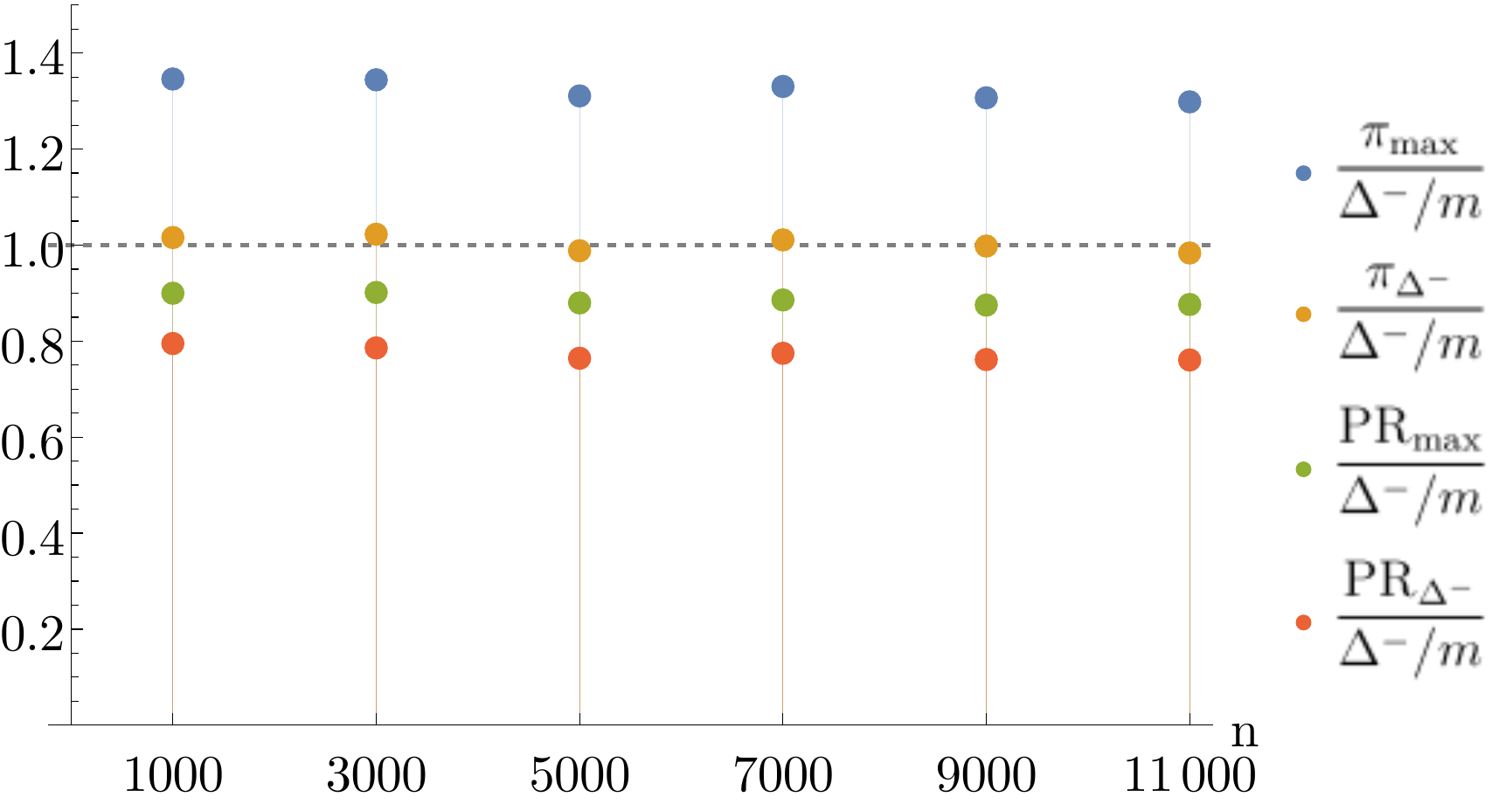}
            \caption{Extremal stationary values on average}
        \end{subfigure}
        \begin{subfigure}[b]{\linewidth}
            \centering
            \includegraphics[width=0.6\textwidth]{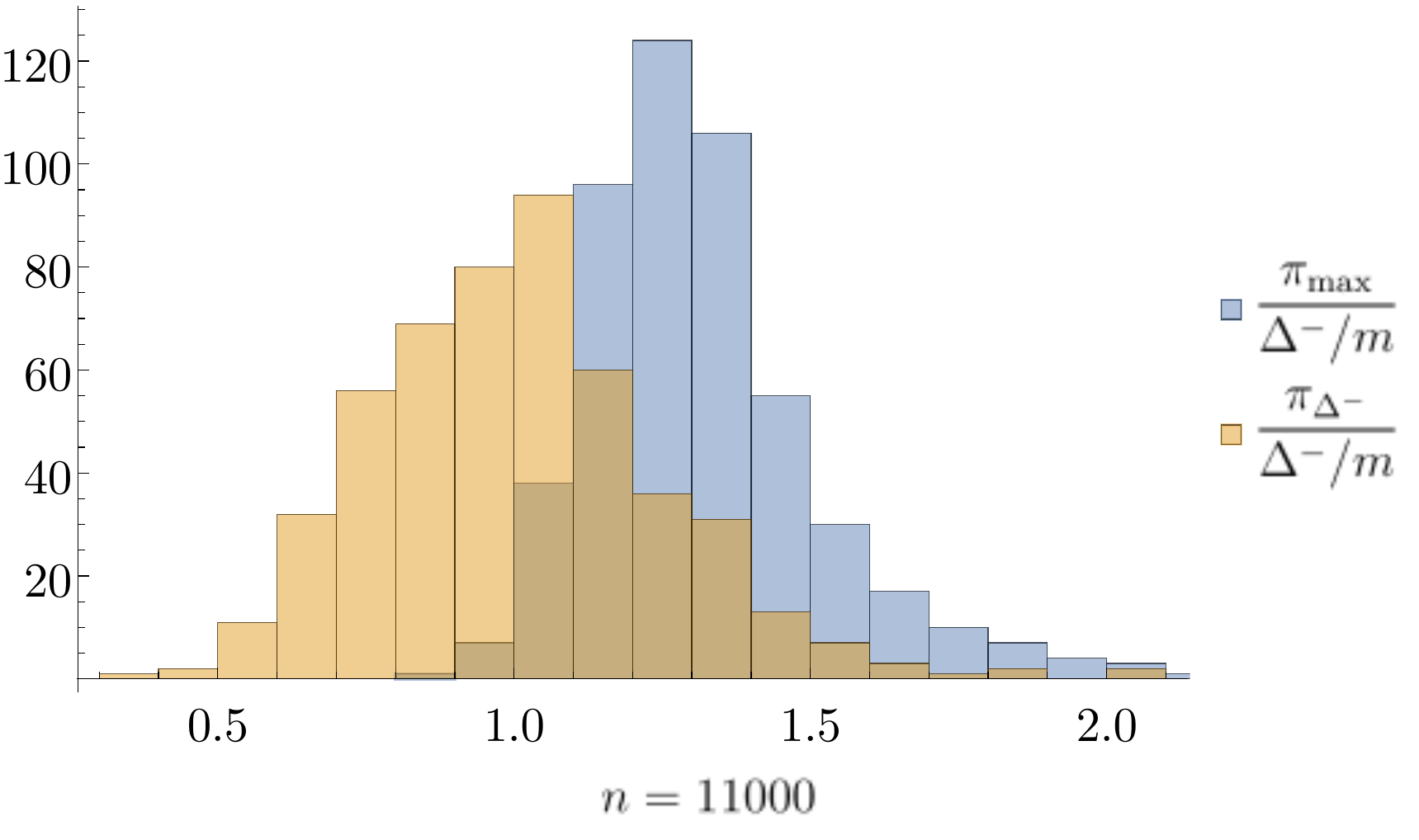}
            \caption{Histogram of stationary values}
        \end{subfigure}
    \end{center}
\caption[Simulation in \ac{dcm} with power-law in-degree distribution]{Simulation in \ac{dcm} with power-law in-degree distribution
    with index $\kappa = \frac{5}{2}$ and out-degree $2$.
    $\pi_{\max}$ and $\pi_{\Delta^{-}}$ denote the maximum stationary value and the stationary value of
    the node with maximum in-degree.
    $\mathrm{PR}_{\max}$ and $\mathrm{PR}_{\Delta^{-}}$ denote the maximum PageRank and the PageRank of
    the node with maximum in-degree, for teleporting probability $\alpha = \frac{1}{4}$ and the
    uniform teleporting distribution $\lambda$.
    We generated $500$ samples for each $n$.}\label{fig:sim}
\end{figure}
It would be interesting to determine under which conditions on the degree sequence, the largest in-degree and the largest stationary (or PageRank) value coincide in order, or asymptotically.

A possible extension of \cref{th:main2} is to study the upper tail of $\psi$ in \cref{eq:def-emperical} for degree sequences satisfying \cref{cond:main} but not necessarily having power-law in-degrees. In such case, \cref{rem:general_ubtail} gives an upper bound, which can be possibly refined if additional information about the upper tail of the empirical in-degree distribution $\phi$ is known. Our results in \cref{sec:PL} suggest that $\psi(n^a,\infty)$ could be approximated by the order of the $a$-skeleton as defined in~\cref{suse:skeleton}.

An important and challenging open problem is the extension of our results to the case of in-degrees with bounded first moment, that is replacing the $2+\eta$ in condition (\labelcref{it3}) of \cref{cond:main} by $1+\eta$, or even $1$. The structure and distances in random graphs with infinite variance degrees is strikingly different~\cite{vanderhofstad2007} from the ones satisfying~\cref{cond:main}. It would be interesting to determine whether the vertices of large in-degree will have a non-negligible effect, speeding-up the mixing time. This case is central in applications, as many real-world networks are believed to have power-law behavior with index $\kappa\in (1,2)$~\cite{pandurangan2002}.

Another interesting open problem concerns the relaxation of condition (\labelcref{it1}) of \cref{cond:main}.
Minimum out-degree at least $2$ is required
to ensure that the random walk has no trivial attractive strongly connected components,
and in particular avoids the existence of dangling nodes (i.e., nodes of out-degree $0$). While this is a necessary requirement for the random walk without teleporting, it is interesting to study the PageRank surfer walk under the presence of dangling nodes.
Condition (\labelcref{it2}) is mainly technical,
facilitating the exploration of out-neighborhoods
and the existence of a law of large numbers (\cref{LLN}).
It would be interesting to obtain a version of~\cref{lbtailPR}
that allowed dangling nodes and arbitrarily large out-degrees satisfying a suitable moment assumption.
Research on related stochastic models suggests that
the effect of the out-degree distribution
and of dangling nodes is essentially negligible~\cite{volkovich2007}.

\bibliographystyle{abbrvnat}
\bibliography{max}

\begin{thebibliography}{52}
\providecommand{\natexlab}[1]{#1}
\providecommand{\url}[1]{\texttt{#1}}
\expandafter\ifx\csname urlstyle\endcsname\relax
  \providecommand{\doi}[1]{doi: #1}\else
  \providecommand{\doi}{doi: \begingroup \urlstyle{rm}\Url}\fi

\bibitem[{Addario-Berry} et~al.(2020){Addario-Berry}, Balle, and
  Perarnau]{addario-berry2020}
L.~{Addario-Berry}, B.~Balle, and G.~Perarnau.
\newblock Diameter and stationary distribution of random r-out digraphs.
\newblock \emph{The Electronic Journal of Combinatorics}, page P3.28, 2020.
\newblock \doi{10/ghd74q}.

\bibitem[Aldous and Bandyopadhyay(2005)]{aldous2005}
D.~J. Aldous and A.~Bandyopadhyay.
\newblock A survey of max-type recursive distributional equations.
\newblock \emph{The Annals of Applied Probability}, 15\penalty0 (2):\penalty0
  1047--1110, 2005.
\newblock \doi{10/bsq6kw}.

\bibitem[Amento et~al.(2000)Amento, Terveen, and Hill]{amento2000}
B.~Amento, L.~Terveen, and W.~Hill.
\newblock Does ``authority'' mean quality? predicting expert quality ratings of
  {{Web}} documents.
\newblock In \emph{Proceedings of the 23rd Annual International {{ACM SIGIR}}
  Conference on {{Research}} and Development in Information Retrieval},
  {{SIGIR}} '00, pages 296--303, {New York, NY, USA}, 2000. {Association for
  Computing Machinery}.
\newblock \doi{10/dmjgds}.

\bibitem[Athreya and Ney(1972)]{athreya1972}
K.~B. Athreya and P.~E. Ney.
\newblock \emph{Branching Processes}.
\newblock Grundlehren Der Mathematischen {{Wissenschaften}}. {Springer-Verlag},
  {Berlin Heidelberg}, 1972.
\newblock \doi{10/dft4}.

\bibitem[Avrachenkov and Lebedev(2006)]{avrachenkov2006}
K.~Avrachenkov and D.~Lebedev.
\newblock {{PageRank}} of scale-free growing networks.
\newblock \emph{Internet Mathematics}, 3\penalty0 (2):\penalty0 207--231, 2006.
\newblock \doi{10/ffckmh}.

\bibitem[Avrachenkov et~al.(2011)Avrachenkov, Litvak, Nemirovsky, Smirnova, and
  Sokol]{avrachenkov2011}
K.~Avrachenkov, N.~Litvak, D.~Nemirovsky, E.~Smirnova, and M.~Sokol.
\newblock Quick detection of top-k personalized {{PageRank}} lists.
\newblock In \emph{International Workshop on Algorithms and Models for the
  Web-Graph}, pages 50--61. {Springer}, 2011.
\newblock \doi{10/dt74jh}.

\bibitem[Banerjee and {Olvera-Cravioto}(2021)]{banerjee2021}
S.~Banerjee and M.~{Olvera-Cravioto}.
\newblock {{PageRank}} asymptotics on directed preferential attachment
  networks.
\newblock \emph{arXiv:2102.08894 [math]}, 2021.
\newblock URL \url{http://arxiv.org/abs/2102.08894}.

\bibitem[Becchetti et~al.(2006)Becchetti, Castillo, Donato, Leonardi, and
  {Baeza-Yates}]{becchetti2006}
L.~Becchetti, C.~Castillo, D.~Donato, S.~Leonardi, and R.~{Baeza-Yates}.
\newblock Using rank propagation and probabilistic counting for link-based spam
  detection.
\newblock In \emph{Proc. of {{WebKDD}}}, volume~6, 2006.

\bibitem[{Ben-Hamou} and Salez(2017)]{ben-hamou2017}
A.~{Ben-Hamou} and J.~Salez.
\newblock Cutoff for nonbacktracking random walks on sparse random graphs.
\newblock \emph{The Annals of Probability}, 45\penalty0 (3):\penalty0
  1752--1770, 2017.
\newblock \doi{10/gbhtxj}.

\bibitem[Berestycki et~al.(2018)Berestycki, Lubetzky, Peres, and
  Sly]{berestycki2018}
N.~Berestycki, E.~Lubetzky, Y.~Peres, and A.~Sly.
\newblock Random walks on the random graph.
\newblock \emph{The Annals of Probability}, 46\penalty0 (1):\penalty0 456--490,
  2018.
\newblock \doi{10/gjj266}.

\bibitem[Blanchet and Stauffer(2013)]{blanchet2013}
J.~Blanchet and A.~Stauffer.
\newblock Characterizing optimal sampling of binary contingency tables via the
  configuration model.
\newblock \emph{Random Structures \& Algorithms}, 42\penalty0 (2):\penalty0
  159--184, 2013.
\newblock \doi{10/f4mtxh}.

\bibitem[Bordenave et~al.(2018)Bordenave, Caputo, and Salez]{bordenave2018}
C.~Bordenave, P.~Caputo, and J.~Salez.
\newblock Random walk on sparse random digraphs.
\newblock \emph{Probab. Theory Relat. Fields}, 170\penalty0 (3):\penalty0
  933--960, 2018.
\newblock \doi{10/gc8nxk}.

\bibitem[Bordenave et~al.(2019)Bordenave, Caputo, and Salez]{bordenave2019}
C.~Bordenave, P.~Caputo, and J.~Salez.
\newblock Cutoff at the ``entropic time'' for sparse markov chains.
\newblock \emph{Probab. Theory Relat. Fields}, 173\penalty0 (1):\penalty0
  261--292, 2019.
\newblock \doi{10/ghcrhr}.

\bibitem[Cai(2021)]{cai2021}
X.~S. Cai.
\newblock {{DCM}}.wl: {{A Mathematica}} package for simulation of random walks
  in {{Directed Configuration Model}}, 2021.
\newblock URL \url{https://github.com/newptcai/DCM.wl}.

\bibitem[Cai and Perarnau(2020{\natexlab{a}})]{cai2020}
X.~S. Cai and G.~Perarnau.
\newblock The giant component of the directed configuration model revisited.
\newblock \emph{arXiv:2004.04998 [cs, math]}, 2020{\natexlab{a}}.
\newblock URL \url{http://arxiv.org/abs/2004.04998}.

\bibitem[Cai and Perarnau(2020{\natexlab{b}})]{cai2020a}
X.~S. Cai and G.~Perarnau.
\newblock The diameter of the directed configuration model.
\newblock \emph{arXiv:2003.04965 [cs, math]}, 2020{\natexlab{b}}.
\newblock URL \url{http://arxiv.org/abs/2003.04965}.

\bibitem[Cai and Perarnau(2020{\natexlab{c}})]{cai2020d}
X.~S. Cai and G.~Perarnau.
\newblock Minimum stationary values of sparse random directed graphs.
\newblock \emph{arXiv:2010.07246 [cs, math]}, 2020{\natexlab{c}}.
\newblock URL \url{http://arxiv.org/abs/2010.07246}.

\bibitem[Caputo and Quattropani(2020)]{caputo2020a}
P.~Caputo and M.~Quattropani.
\newblock Stationary distribution and cover time of sparse directed
  configuration models.
\newblock \emph{Probab. Theory Relat. Fields}, 178\penalty0 (3):\penalty0
  1011--1066, 2020.
\newblock \doi{10/ghd74v}.

\bibitem[Caputo and Quattropani(2021)]{caputo2021}
P.~Caputo and M.~Quattropani.
\newblock Mixing time of {{PageRank}} surfers on sparse random digraphs.
\newblock \emph{Random Structures \& Algorithms}, 2021.
\newblock \doi{10/gjpxsk}.

\bibitem[Chatterjee(2007)]{chatterjee2007}
S.~Chatterjee.
\newblock Stein's method for concentration inequalities.
\newblock \emph{Probab. Theory Relat. Fields}, 138\penalty0 (1-2):\penalty0
  305--321, 2007.
\newblock \doi{10/fm2x4r}.

\bibitem[Chen and {Olvera-Cravioto}(2013)]{chen2013}
N.~Chen and M.~{Olvera-Cravioto}.
\newblock Directed random graphs with given degree distributions.
\newblock \emph{Stochastic Systems}, 3\penalty0 (1):\penalty0 147--186, 2013.
\newblock \doi{10/gjj27p}.

\bibitem[Chen et~al.(2014)Chen, Litvak, and {Olvera-Cravioto}]{chen2014}
N.~Chen, N.~Litvak, and M.~{Olvera-Cravioto}.
\newblock {{PageRank}} in scale-free random graphs.
\newblock In \emph{International Workshop on Algorithms and Models for the
  Web-Graph}, pages 120--131. {Springer}, 2014.
\newblock \doi{10/gjj27n}.

\bibitem[Chen et~al.(2017)Chen, Litvak, and {Olvera-Cravioto}]{chen2017}
N.~Chen, N.~Litvak, and M.~{Olvera-Cravioto}.
\newblock Generalized {{PageRank}} on directed configuration networks.
\newblock \emph{Random Structures \& Algorithms}, 51\penalty0 (2):\penalty0
  237--274, 2017.
\newblock \doi{10/gbrth6}.

\bibitem[Chen et~al.(2007)Chen, Xie, Maslov, and Redner]{chen2007}
P.~Chen, H.~Xie, S.~Maslov, and S.~Redner.
\newblock Finding scientific gems with {{Google}}'s {{PageRank}} algorithm.
\newblock \emph{Journal of Informetrics}, 1\penalty0 (1):\penalty0 8--15, 2007.
\newblock \doi{10/fctrbr}.

\bibitem[Cooper and Frieze(2004)]{cooper2004}
C.~Cooper and A.~Frieze.
\newblock The size of the largest strongly connected component of a random
  digraph with a given degree sequence.
\newblock \emph{Combinatorics, Probability and Computing}, 13\penalty0
  (3):\penalty0 319--337, 2004.
\newblock \doi{10/cn8q5j}.

\bibitem[Cooper and Frieze(2012)]{cooper2012}
C.~Cooper and A.~Frieze.
\newblock Stationary distribution and cover time of random walks on random
  digraphs.
\newblock \emph{J. Comb. Theory Ser. B}, 102\penalty0 (2):\penalty0 329--362,
  2012.
\newblock \doi{10/cv9wbh}.

\bibitem[Donato et~al.(2004)Donato, Laura, Leonardi, and Millozzi]{donato2004}
D.~Donato, L.~Laura, S.~Leonardi, and S.~Millozzi.
\newblock Large scale properties of the webgraph.
\newblock \emph{The European Physical Journal B}, 38\penalty0 (2):\penalty0
  239--243, 2004.
\newblock \doi{10/fhdgcd}.

\bibitem[Durrett(2010)]{durrett2010}
R.~Durrett.
\newblock \emph{Probability: Theory and Examples}, volume~31 of \emph{Cambridge
  Series in Statistical and Probabilistic Mathematics}.
\newblock {Cambridge University Press, Cambridge}, fourth edition, 2010.
\newblock \doi{10/gfj8hd}.

\bibitem[Fountoulakis and Reed(2008)]{fountoulakis2008}
N.~Fountoulakis and B.~A. Reed.
\newblock {The evolution of the mixing rate of a simple random walk on the
  giant component of a random graph}.
\newblock \emph{Random Structures \& Algorithms}, 33\penalty0 (1):\penalty0
  68--86, 2008.
\newblock \doi{10/bkwd9j}.

\bibitem[Freedman(1975)]{freedman1975}
D.~A. Freedman.
\newblock On tail probabilities for martingales.
\newblock \emph{The Annals of Probability}, 3\penalty0 (1):\penalty0 100--118,
  1975.
\newblock \doi{10/fdvpst}.

\bibitem[Garavaglia et~al.(2020)Garavaglia, {van der Hofstad}, Litvak,
  et~al.]{garavaglia2020}
A.~Garavaglia, R.~{van der Hofstad}, N.~Litvak, et~al.
\newblock Local weak convergence for {{PageRank}}.
\newblock \emph{The Annals of Applied Probability}, 30\penalty0 (1):\penalty0
  40--79, 2020.
\newblock \doi{10/gjj27r}.

\bibitem[Haveliwala(2003)]{haveliwala2003}
T.~H. Haveliwala.
\newblock Topic-sensitive {{PageRank}}: A context-sensitive ranking algorithm
  for web search.
\newblock \emph{IEEE transactions on knowledge and data engineering},
  15\penalty0 (4):\penalty0 784--796, 2003.
\newblock \doi{10/cwp6vw}.

\bibitem[Janson(2009)]{janson2009}
S.~Janson.
\newblock The probability that a random multigraph is simple.
\newblock \emph{Combinatorics, Probability and Computing}, 18\penalty0
  (1-2):\penalty0 205--225, 2009.
\newblock \doi{10/bg4m2c}.

\bibitem[Janson(2011)]{janson2011}
S.~Janson.
\newblock Probability asymptotics: Notes on notation.
\newblock \emph{arXiv:1108.3924 [math]}, 2011.
\newblock URL \url{http://arxiv.org/abs/1108.3924}.

\bibitem[Janson et~al.(2011)Janson, Luczak, and Rucinski]{janson2011a}
S.~Janson, T.~Luczak, and A.~Rucinski.
\newblock \emph{Random Graphs}.
\newblock {John Wiley \& Sons}, 2011.
\newblock \doi{10/d8w6m8}.

\bibitem[Lee and {Olvera-Cravioto}(2020)]{lee2020}
J.~Lee and M.~{Olvera-Cravioto}.
\newblock {{PageRank}} on inhomogeneous random digraphs.
\newblock \emph{Stochastic Processes and their Applications}, 130\penalty0
  (4):\penalty0 2312--2348, 2020.
\newblock \doi{10/gjj27q}.

\bibitem[Levin and Peres(2017)]{levin2017}
D.~A. Levin and Y.~Peres.
\newblock \emph{Markov {{Chains}} and {{Mixing Times}}}.
\newblock {American Mathematical Soc.}, second edition, 2017.

\bibitem[Litvak et~al.(2007)Litvak, Scheinhardt, and Volkovich]{litvak2007}
N.~Litvak, W.~R.~W. Scheinhardt, and Y.~Volkovich.
\newblock In-degree and {{PageRank}}: Why do they follow similar power laws?
\newblock \emph{Internet Mathematics}, 4\penalty0 (2-3):\penalty0 175--198,
  2007.
\newblock \doi{10/d4zqj5}.

\bibitem[Lubetzky et~al.(2010)Lubetzky, Sly, et~al.]{lubetzky2010}
E.~Lubetzky, A.~Sly, et~al.
\newblock Cutoff phenomena for random walks on random regular graphs.
\newblock \emph{Duke Mathematical Journal}, 153\penalty0 (3):\penalty0
  475--510, 2010.
\newblock \doi{10/fxd427}.

\bibitem[McDiarmid(1998)]{mcdiarmid1998}
C.~McDiarmid.
\newblock Concentration.
\newblock In \emph{Probabilistic {{Methods}} for {{Algorithmic Discrete
  Mathematics}}}, Algorithms and {{Combinatorics}}, pages 195--248. {Springer},
  {Berlin, Heidelberg}, 1998.
\newblock \doi{10/f58t}.

\bibitem[Newman et~al.(2001)Newman, Strogatz, and Watts]{newman2001}
M.~E. Newman, S.~H. Strogatz, and D.~J. Watts.
\newblock Random graphs with arbitrary degree distributions and their
  applications.
\newblock \emph{Physical review E}, 64\penalty0 (2):\penalty0 026118, 2001.
\newblock \doi{10/fsvfnf}.

\bibitem[{Olvera-Cravioto}(2019)]{olvera-cravioto2019}
M.~{Olvera-Cravioto}.
\newblock {{PageRank}}'s behavior under degree-degree correlations.
\newblock \emph{arXiv:1909.09744 [math]}, 2019.
\newblock URL \url{http://arxiv.org/abs/1909.09744}.

\bibitem[Page et~al.(1999)Page, Brin, Motwani, and Winograd]{page1999}
L.~Page, S.~Brin, R.~Motwani, and T.~Winograd.
\newblock The {{PageRank}} citation ranking: Bringing order to the web.
\newblock Technical report, {Stanford InfoLab}, 1999.
\newblock URL \url{http://ilpubs.stanford.edu:8090/422/}.

\bibitem[Pandurangan et~al.(2002)Pandurangan, Raghavan, and
  Upfal]{pandurangan2002}
G.~Pandurangan, P.~Raghavan, and E.~Upfal.
\newblock Using {{PageRank}} to characterize web structure.
\newblock In \emph{International Computing and Combinatorics Conference}, pages
  330--339. {Springer}, 2002.
\newblock \doi{10/czd5pm}.

\bibitem[Resnick(2007)]{resnick2007}
S.~I. Resnick.
\newblock \emph{Heavy-{{Tail Phenomena}}: {{Probabilistic}} and {{Statistical
  Modeling}}}.
\newblock Springer {{Series}} in {{Operations Research}} and {{Financial
  Engineering}}. {Springer-Verlag}, {New York}, 2007.
\newblock \doi{10/fpr8zr}.

\bibitem[Upstill et~al.(2003)Upstill, Craswell, and Hawking]{upstill2003}
T.~Upstill, N.~Craswell, and D.~Hawking.
\newblock Predicting fame and fortune: {{PageRank}} or indegree?
\newblock In \emph{Proceedings of the Australasian Document Computing
  Symposium, {{ADCS2003}}}, pages 31--40, 2003.

\bibitem[{van der Hofstad} et~al.(2007){van der Hofstad}, Hooghiemstra,
  Znamenski, et~al.]{vanderhofstad2007}
R.~{van der Hofstad}, G.~Hooghiemstra, D.~Znamenski, et~al.
\newblock Distances in random graphs with finite mean and infinite variance
  degrees.
\newblock \emph{Electronic Journal of Probability}, 12:\penalty0 703--766,
  2007.
\newblock \doi{10/fxp3kj}.

\bibitem[van~der Hoorn and {Olvera-Cravioto}(2018)]{hoorn2018}
P.~van~der Hoorn and M.~{Olvera-Cravioto}.
\newblock Typical distances in the directed configuration model.
\newblock \emph{Ann. Appl. Probab.}, 28\penalty0 (3):\penalty0 1739--1792,
  2018.
\newblock \doi{10/ggh2ch}.

\bibitem[Villani(2009)]{villani2009}
C.~Villani.
\newblock \emph{Optimal Transport: Old and New}.
\newblock Grundlehren Der Mathematischen {{Wissenschaften}}. {Springer-Verlag},
  {Berlin Heidelberg}, 2009.
\newblock \doi{10/bgcxnm}.

\bibitem[Volkovich and Litvak(2010)]{volkovich2010}
Y.~Volkovich and N.~Litvak.
\newblock Asymptotic analysis for personalized web search.
\newblock \emph{Advances in applied probability}, 42\penalty0 (2):\penalty0
  577--604, 2010.
\newblock \doi{10/ft99bt}.

\bibitem[Volkovich et~al.(2007)Volkovich, Litvak, and Donato]{volkovich2007}
Y.~Volkovich, N.~Litvak, and D.~Donato.
\newblock Determining factors behind the {{PageRank}} log-log plot.
\newblock In \emph{International Workshop on Algorithms and Models for the
  Web-Graph}, pages 108--123. {Springer}, 2007.
\newblock \doi{10/bqhm9z}.

\bibitem[Volkovich et~al.(2009)Volkovich, Litvak, and Zwart]{volkovich2009}
Y.~Volkovich, N.~Litvak, and B.~Zwart.
\newblock Extremal dependencies and rank correlations in power law networks.
\newblock In \emph{International Conference on Complex Sciences}, pages
  1642--1653. {Springer}, 2009.
\newblock \doi{10/d6fkwf}.

\end{thebibliography}

\end{document}